\newtcolorbox{TitledBox}[2][]{
    myimage,              
    coltitle=black,       
    colbacktitle=white,   
    title=My title,
    attach boxed title to top center={
        yshift=-3mm,
        yshifttext=-1mm},
    attach boxed title to top left={
        xshift=1cm,
        yshift=-2mm},
    boxed title style={
        size=small},
    title={#2},#1}
\newcommand{\IncludeFigure}[2]{
    \begin{center}
            \includegraphics[scale=#1]{#2.jpg}
    \end{center}
}
\newcommand{\TwoColumn}[6]%
{%
\begin{minipage}[#3]{#1\textwidth}%
#5%
\end{minipage}%
\begin{minipage}[#4]{#2\textwidth}%
#6%
\end{minipage}%
}
\newcommand{\TwoColumnTop}[4]%
{%
\begin{minipage}[t]{#1\textwidth}
#3
\end{minipage}%
\begin{minipage}[t]{#2\textwidth}
#4
\end{minipage}%
}
\newcommand{\PullMarginsIn}[1]%
{%
\begin{minipage}[c]{0.3\textwidth}%
\phantom{x}%
\end{minipage}%
\begin{minipage}[c]{0.4\textwidth}%
#1%
\end{minipage}%
\begin{minipage}[c]{0.3\textwidth}%
\phantom{x}%
\end{minipage}%
}
\newcommand{\Comment}[1]{{\color{Brown}#1}}
\newcommand{\OptionalDetails}[1]{
    \ifbool{bForSubmission}
        {%
        }%
        {\begin{quote}\Comment{\footnotesize
        \medskip
        
        \noindent\textbf{Details not for submission}: \\
        \noindent#1}
        \end{quote}
        }
    }
\newcommand{\IfarXivElse}[2]{
    \ifbool{arXivFormat}
        {#1}{#2}
    }
\renewcommand{\mathbf}[1]{\bm{#1} \textbf{ *** Use bm instead of mathbf ***}}
\newcommand{\eqn}{\begin{eqnarray}}
\newcommand{\een}{\end{eqnarray}}
\newtheorem{theorem}{Theorem}[section]
\newtheorem*{theorem*}{Theorem}				
\newtheorem{prop}[theorem]{Proposition}
\newtheorem{lemma}[theorem]{Lemma}
\newtheorem{cor}[theorem]{Corollary}
\newtheorem{definition}[theorem]{Definition}
\newtheorem{remark}[theorem]{Remark}
\newtheorem*{remark*}{Remark}
\numberwithin{equation}{section}
\newcommand{\innp}[1]{\ensuremath{\langle #1 \rangle}}
\newcommand{\BoldTau}
    {{\mbox{\boldmath $\tau$}}}
\newcommand{\bmu}
    {{\mbox{\boldmath $\mu$}}}
\newcommand{\bnu}
    {{\mbox{\boldmath $\nu$}}}
\newcommand{\bxi}
    {{\mbox{\boldmath $\xi$}}}
\newcommand{\BoldAlpha}             
    {{\mbox{\boldmath $\alpha$}}}
\newcommand{\bgamma}
    {{\mbox{\boldmath $\gamma$}}}
\newcommand{\bdelta}
    {{\mbox{\boldmath $\delta$}}}
\newcommand{\BB}[1]{\ensuremath{\mathbb{#1}}}
\newcommand{\R}{{\ensuremath{\BB{R}}}}
\newcommand{\Z}{\ensuremath{\BB{Z}}}
\newcommand{\T}{{\ensuremath{\BB{T}}}}
\newcommand\Tn{{\vec{\T}}}
\newcommand{\iny}{\ensuremath{\infty}}
\newcommand{\grad}{\ensuremath{\nabla}}
\newcommand{\Cover}{\bm{p}}
\newcommand{\periodic}{$1$-periodic in $x_1$\xspace}
\newcommand{\CharFunc}{
    \ifbool{HaveBBM}{
        {\ensuremath{\mathbbm{1}}}
        }
        {
        {\ensuremath{\bm{1}}}
        }
    }
\DeclareMathOperator{\dv}{div} %
\DeclareMathOperator{\curl}{curl} %
\DeclareMathOperator{\dist}{dist} %
\DeclareMathOperator{\supp}{supp} %
\DeclareMathOperator{\image}{image} %
\DeclareMathOperator{\Patch}{Patch} %
\newcommand{\prt}{\ensuremath{\partial}}
\newcommand{\brac}[1]{\ensuremath{\left[ #1 \right]}}
\newcommand{\pr}[1]{\ensuremath{\left( #1 \right)}}
\newcommand{\set}[1]{\ensuremath{\left\{ #1 \right\}}}
\DeclarePairedDelimiterX{\norm}[1]{\lVert}{\rVert}{#1}
\DeclarePairedDelimiterX{\abs}[1]{\lvert}{\rvert}{#1}
\newcommand\tenq[2][1]{%
	\def\useanchorwidth{T}%
	\ifnum#1>1%
		\stackunder[0pt]{\tenq[\numexpr#1-1\relax]{#2}}{\scriptscriptstyle\sim}%
	\else%
		\stackunder[1pt]{#2}{\scriptscriptstyle\sim}%
	\fi%
	}
\DeclarePairedDelimiter{\pbrac}{\lbrack}{\rbrack}
\newcommand{\n}{{\bm{n}}}
\newcommand{\wh}{\widehat}
\newlength\ubwidth
\renewcommand{\epsilon}{\varepsilon}
\newcommand{\eps}{\ensuremath{\varepsilon}}
\newcommand{\Cal}[1]{\ensuremath{\mathcal{#1}}}
\newcommand{\al}{\ensuremath{\alpha}}
\newcommand{\la}{\ensuremath{\lambda}}
\newcommand{\diff}[2]{\frac{ d#1}{d#2}}
\newcommand{\ol}{\overline}
\renewcommand{\le}{\leqslant}
\renewcommand{\ge}{\geqslant}
\newcommand{\Holder}
    {H\"{o}lder\xspace}
\newcommand{\Gronwalls}
    {Gr\"{o}nwall's\xspace}
\newcommand{\Ignore}[1]{}
\newcommand{\Experimental}[1]%
{%
\ifbool{bExperimental}%
	{\bigskip
	\noindent
	\textbf{\color{Brown}*** Start experimental}\\
	#1
}
{
}
}
\definecolor{Correction}{named}{red}
\crefname{cor}{Corollary}{Corollaries} 
\crefname{lemma}{Lemma}{Lemmas}	       
\crefname{section}{Section}{Sections}
\Crefname{section}{Section}{Sections}
\crefname{appendix}{Appendix}{Appendices}
\Crefname{appendix}{Appendix}{Appendices}
\crefname{theorem}{Theorem}{Theorems}
\Crefname{theorem}{Theorem}{Theorems}
\crefname{prop}{Proposition}{Propositions}
\Crefname{prop}{Proposition}{Propositions}
\crefname{conj}{Conjecture}{Conjectures}
\Crefname{conj}{Conjecture}{Conjectures}
\crefname{definition}{Definition}{Definitions}
\Crefname{definition}{Definition}{Definitions}
\crefname{remark}{Remark}{Remarks}
\Crefname{remark}{Remark}{Remarks}
\crefname{figure}{figure}{figures}
\Crefname{figure}{Figure}{Figures}
\crefname{assumption}{Assumption}{Assumptions}
\Crefname{assumption}{Assumption}{Assumptions}
\newcommand{\e}{\bm{\mathrm{e}}}
\renewcommand{\H}{{\bm{\mathrm{H}}}}
\newcommand{\uu}{{\bm{\mathrm{u}}}}   
\newcommand{\vv}{{\bm{\mathrm{v}}}}
\newcommand{\ww}{{\bm{\mathrm{w}}}}
\newcommand{\x}{{\bm{\mathrm{x}}}}
\newcommand{\y}{{\bm{\mathrm{y}}}}
\newcommand{\bb}{{\bm{\mathrm{b}}}}
\renewcommand{\L}{{\Cal{L}}}
\newcommand{\z}{{\bm{\mathrm{z}}}}
\newcommand{\Rep}{\Cal{R}ep}
\DeclareMathOperator{\Res}{Res}
\def\XXint#1#2#3{{\setbox0=\hbox{$#1{#2#3}{\int}$ }
\vcenter{\hbox{$#2#3$ }}\kern-.6\wd0}}
\newcommand{\SQG}{$\al$-SQG\xspace}
\newcommand{\SQGConst}{\cref{e:SQG}$_2$\xspace}
\newcommand{\symdiff}{{\footnotesize\mathbin{\text{$\triangle$}}}}
\begin{document}
\newdateformat{mydate}{\THEDAY~\monthname~\THEYEAR}

\title
	[Periodic $\alpha$-SQG Patches and Fronts]
	{Horizontally Periodic Generalized Surface Quasigeostrophic Patches and Layers}

\author[D. Ambrose, F. Hadadifard, and J. Kelliher]
{David M. Ambrose$^{1}$, Fazel Hadadifard$^{2}$, and James P. Kelliher$^{3}$}
\address{$^1$ Department of Mathematics, Drexel University, 3141 Chestnut Street, Philadelphia, PA 19104, USA}
\email{dma68@drexel.edu}
\address{$^2$ Department of Mathematics, California State University San Marcos, San Marcos, CA 92096, USA}
\email{fhadadifard@csusm.edu}
\address{$^3$ Department of Mathematics, University of California, Riverside, 900 University Ave., Riverside, CA 92521, USA}
\email{kelliher@math.ucr.edu}

\begin{abstract}
We study solutions to the $\alpha$-SQG equations, which interpolate between the incompressible Euler and surface quasi-geostrophic equations. We extend prior results on existence of bounded patches, proving propagation of $H^k$-regularity of the patch boundary, $k \ge 3$, for finite time for patches that are periodic in one spatial dimension. Such periodic patches also encompass layers, or two-sided fronts. As the authors have treated the Euler case in prior work, 
we now primarily focus on the range of $\al$ for which $\al$-SQG lies strictly between the Euler and SQG equations.
\end{abstract}

\maketitle

\markleft{ D.M. AMBROSE, F. HADADIFARD, and J. KELLIHER}

\vspace{-2.5em}

\begin{center}

\medskip
Compiled on {\dayofweekname{\day}{\month}{\year} \mydate\today} at \currenttime\xspace Pacific Time
		
\end{center}

\bigskip

\vspace{-1.5em}

{
\footnotesize
\renewcommand\contentsname{}	

\begin{center}
\begin{minipage}{0.6\linewidth}
\centering
\tableofcontents
\end{minipage}
\end{center}

}



\newpage
\normalsize

%


%
%
\section{Introduction}

\noindent The generalized surface quasi-geostrophic \SQG equations in strong form can be written,
\begin{align}\label{e:SQG}
	(\al\text{-}SQG) \qquad
	\begin{cases}
		\prt_t \theta + \uu \cdot \grad \theta = 0
			&\text{ in }  [0, T] \times \R^2, \\
		\uu  = -\grad^\perp (-\Delta)^{-(1 - \frac{\al}{2})} \theta
			&\text{ in }  [0, T] \times \R^2, \\
		\theta(0) = \theta_0
			&\text{ in }  \R^2.
	\end{cases}
\end{align}
Here, $\uu$ is a velocity field by which the active scalar $\theta$ is transported, with $\uu$ recovered from the scalar via the constitutive law in \cref{e:SQG}$_2$. Because of the presence of the operator $\grad^\perp := (-\prt_2, \prt_1)$, we have  
$\dv \uu = 0$. A solution is to hold until some time $T > 0$, which may be finite.

When used to model atmospheric turbulence for small Rossby and Ekman numbers and constant potential velocity, $\theta$ is the temperature, which is advected by the divergence-free velocity field $\uu$. (See, for instance, \cite{pedlosky1987geophysical}.)

We focus on values of $\al \in [0, 1]$, the two cases of most widespread interest being $\al = 0, 1$, in which case \cref{e:SQG} becomes the following:
\begin{align*}
	\begin{array}{ll}
		\al = 0: & \text{Euler equations}, \\
		\al = 1: & \text{SQG equations}.
	\end{array}
\end{align*}
As in \cite{Gancedo2008}, for $\al \in [0, 2)$, we can write the constitutive law, \cref{e:SQG}$_2$, in the form,
\begin{align}\label{e:ConstitutiveLaw}
	\uu = \grad^\perp G^\al * \theta,
\end{align}
where
\begin{align}\label{e:Greensalpha}
	G^\al(\x) :=
	\begin{cases}
		\dfrac{c_\al}{\abs{\x}^\al} &\text{ if } 0 < \al < 2, \\[12pt]
		c_0 \log \abs{\x} &\text{ if } \al = 0
	\end{cases}
\end{align}
is the Green's function for $-(-\Delta)^{1 - \frac{\al}{2}}$ with $c_\al = \Gamma(\frac{\al}{2})/(\pi 2^{2 - \al} \Gamma(1 - \frac{\al}{2}))$ for $0 < \al < 2$ and $c_0 = (2 \pi)^{-1}$. Observe that
\begin{align}\label{e:alCal}
    \begin{split}
    \lim_{\al \to 0^+} \al c_\al
        &= \lim_{\al \to 0^+}
            \frac{2 \frac{\al}{2} \Gamma(\frac{\al}{2})}
            {4 \pi \Gamma(1)}
        = \frac{\lim\limits_{\beta \to 0} \beta \Gamma(\beta)}{2 \pi}
        = \frac{\Res_\Gamma(0)}{2 \pi} 
        = \frac{1}{2 \pi}
        = c_0,
    \end{split}
\end{align}
since the $\Gamma$ function has a simple pole of residue $1$ at the origin. This means, in particular, that $\al c_\al$ is continuous on $[0, \iny)$ and bounded over $\al \in [0, 1]$.

We are interested here in patch solutions to \SQG, a special type of weak solution in which $\theta(t)$ is the characteristic function of a domain. Our special concern in this paper is a subclass of patch data for which $\theta(t)$ is periodic in one direction. Periodic patches, as we develop them, encompass both periodic \textit{layers} (two-sided \textit{fronts}) and periodic copies of a single bounded domain. See the illustrations in \cite{AHK}, for the Euler equations, the $\al = 0$ case. 

\begin{remark*}
    For concreteness, we choose periodicity in the $x_1$-direction with a period of $1$.   
\end{remark*}

More precisely, we make the following definitions:

\begin{definition}\label{D:WeakSolution}
    Let $\theta \in L^\iny([0, T] \times \R^2)$ and suppose that  $\uu := -\grad^\perp (-\Delta)^{-(1 - \frac{\al}{2})} \theta \in L^\iny([0, T] \times \R^2)$. Then $\theta(t, \x)$ is a weak solution of \SQG with initial scalar $\theta_0$ if for any test function $\phi \in C^1_c([0, T] \times \R^2)$,
    \begin{align}\label{weak:01}
        \int_0^T \int_{\R^2} \theta (t, \x)
            \brac{\prt_t \phi + \uu(t, \x)
                 \cdot \nabla \phi (t, \x)} \, d \x \, dt
            &= \int_{\R^2}
                \brac{(\theta \phi)(t, \x)
                    - \theta_0(\x) \phi(0, \x)} \, d \x.
    \end{align}
    We call a weak solution \textit{periodic}, or, more fully, \periodic, if
    \begin{align*}
        \theta(t, \x + (n, 0)) = \theta(t, \x)
            \text{ for all }(t, \x) \in [0, T] \times \R^2.   
    \end{align*}
\end{definition}

\begin{definition}\label{D:PatchSolution}
Fix $a_1, a_2 \in \R$. We say that a weak solution to \SQG is an \SQG-\textit{patch} solution or simply a \text{patch} on $[0, T] \times \R^2$ if at time $t \in [0, T]$ it is of the form
\begin{align}\label{e:PatchForm}
	\theta(t, \x)
		= \Patch(\Omega(t), a_1, a_2)
		:=
		\begin{cases}
			a_1& \text{if } \x \in \Omega(t), \\
			a_2& \text{if } \x \in (\Omega(t))^{c},
		\end{cases}
\end{align}
for some domain $\Omega(t)$ in $\R^2$.
\end{definition}

\begin{remark*}
Because
$
    \grad^\perp (-\Delta)^{-(1 - \frac{\al}{2})} c
        = (-\Delta)^{-(1 - \frac{\al}{2})} \grad^\perp c
        = 0
$
for any constant $c$,
\begin{align*}
    \grad^\perp &(-\Delta)^{-(1 - \frac{\al}{2})}
            \Patch(\Omega(t), a_1, a_2)
        = \grad^\perp (-\Delta)^{-(1 - \frac{\al}{2})}
            [\Patch(\Omega(t), a_1, a_2) - a_2] \\
        &= \grad^\perp (-\Delta)^{-(1 - \frac{\al}{2})} 
            [\Patch(\Omega(t), a_1 - a_2, 0)]
        = \grad^\perp (-\Delta)^{-(1 - \frac{\al}{2})}
            (a_1 - a_2) \CharFunc_{\Omega(t)}.
\end{align*}
Also, the value of $a_1 - a_2$ plays little role in our analysis so, losing no generality, we assume that $a_1 = 1$, $a_2 = 0$, making our patches of the form,
\begin{align}\label{e:PatchData}
\theta(t, \x)
		= \CharFunc_{\Omega(t)}.
\end{align}
\end{remark*}

We will formulate, for any $\al \in (0, 1]$,  a contour dynamics equations (CDE) for \periodic patches that are compactly supported in the vertical direction. Our main result, stated precisely in \cref{T:ExistenceAlphaLt1}, is as follows:
\begin{theorem*}[Main result, roughly stated]
	For $\al \in (0, 1)$, let $\bgamma_0$ be the boundary of a periodic domain in $\R^2$ bounded in the vertical direction, and having an $H^m$ boundary, $m \ge 3$. There exists a time $T > 0$ for which a unique periodic solution $\bgamma$ to the CDE for \SQG exists in $C([0, T]; H^m)$ with $\prt_t \bgamma \in C([0, T]; L^\iny) \cap L^\iny([0, T]; H^{m - 1})$ and $\bgamma(0) = \bgamma_0$.
\end{theorem*}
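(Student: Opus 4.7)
The plan is to derive a contour dynamics equation (CDE) for horizontally periodic $\alpha$-SQG patches and then run a local-in-time existence argument by energy methods, following the template the authors established for the Euler case in \cite{AHK} but adapted to the more singular kernel arising for $\alpha \in (0,1)$. Starting from \cref{e:ConstitutiveLaw} with $\theta = \CharFunc_{\Omega(t)}$ and integrating by parts, the velocity on $\prt \Omega(t)$ reduces to a line integral whose kernel is $\grad^\perp G^\al$, decaying like $\abs{\x}^{-1-\al}$ at infinity. Summing this kernel over horizontal integer translates yields a well-defined periodized kernel $K^{\rm per}_\al$; convergence of the sum requires both $\al > 0$ and the vertical localization of $\Omega(t)$ granted by the hypothesis. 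Parametrizing the finitely many boundary components within one fundamental period by $s$, the CDE takes the schematic form
\begin{align*}
    \prt_t \bgamma(s,t)
        = c_\al \int K^{\rm per}_\al \bigl(\bgamma(s,t) - \bgamma(s',t)\bigr)\, \prt_{s'} \bgamma(s',t) \, ds',
\end{align*}
and the integrand is weakly singular of order $\abs{s-s'}^{-\al}$ along the diagonal, hence absolutely integrable since $\al < 1$.

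The heart of the proof is an a priori $H^m$ estimate for $\bgamma$. Differentiating the CDE $m$ times in $s$ and pairing with $\prt_s^m \bgamma$ in $L^2_s$, Leibniz produces three families of terms: (i) low-order terms in which all but a few derivatives fall on smooth factors, controlled by Sobolev embedding since $m \ge 3$ gives $H^{m-1} \hookrightarrow C^1$; (ii) the principal terms in which all $m$ derivatives fall on $\bgamma$ inside $K^{\rm per}_\al$; and (iii) mixed terms from derivatives of the chord-arc denominator $\abs{\bgamma(s) - \bgamma(s')}^{-1-\al}$. For family (ii), the bare kernel has formal order $\abs{s-s'}^{-\al-m}$, far beyond integrable. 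The saving is a symmetrization argument: using the antisymmetry of $\grad^\perp G^\al$ under $s \leftrightarrow s'$, one replaces $\prt_{s'}^m \bgamma(s')$ by the finite difference $\prt_{s'}^m \bgamma(s') - \prt_s^m \bgamma(s)$, which absorbs one factor of $\abs{s-s'}$, and then a \Calderon-commutator-type bound controls the remainder by $C(\norm{\bgamma}_{H^m}) \norm{\bgamma}_{H^m}^2$. A uniform lower chord-arc bound $\abs{\bgamma(s,t) - \bgamma(s',t)} \ge c_\star \abs{s-s'}$ must be propagated in parallel, which follows from a separate \Gronwalls argument using $\bgamma \in C^1$.

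To construct solutions I would regularize the kernel at scale $\eps$, turning the CDE into an ODE in $H^m$ with locally Lipschitz right-hand side. Picard--Lindel\"of yields $\bgamma^\eps$, and the energy estimate of the previous paragraph transfers uniformly in $\eps$, so the family $\{\bgamma^\eps\}$ exists on a common interval $[0,T]$ with a uniform $H^m$ bound and is equicontinuous in $H^{m-1}$. Arzel\`a--Ascoli plus weak-$*$ compactness extract a limit $\bgamma \in C([0,T]; H^m)$ solving the CDE, and the claimed regularity of $\prt_t \bgamma$ is read off the right-hand side of the CDE itself. Uniqueness follows from an $L^2$ stability estimate for the difference $\bgamma_1 - \bgamma_2$ of two solutions with the same data: $H^3$ regularity of both is used to bound the nonlinear difference in $L^2_s$ by $C \norm{\bgamma_1 - \bgamma_2}_{L^2}$, and \Gronwalls inequality closes the argument.

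The principal technical obstacle is the symmetrization in family (ii) of the energy estimate. For $\al \in (0,1)$ one has neither the logarithmic cushion of the Euler case $\al = 0$ that makes the cancellation nearly elementary, nor the analytic structure that becomes available at the SQG endpoint $\al = 1$; the intermediate range requires a careful accounting of the kernel's principal-value structure, and the additional bookkeeping required because $K^{\rm per}_\al$ is a periodized rather than a homogeneous kernel makes the symmetrization slightly more involved than in the non-periodic setting. Organizing the estimates so that the constants exposed by the $\al c_\al$ factor from \cref{e:alCal} remain controlled uniformly across $\al \in (0,1)$ is an additional consideration if one wishes to eventually recover the Euler case continuously from these estimates.
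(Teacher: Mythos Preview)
Your overall strategy (energy estimates, regularization, compactness, $L^2$ uniqueness) matches the paper's, but there are two concrete gaps specific to the periodic setting, and one of them originates in an error in your CDE.

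First, the kernel in the boundary-integral form of the velocity is $G^\al$ itself, not $\grad^\perp G^\al$. After integrating by parts, \cref{L:ConstLawSingle} gives
\begin{align*}
    \uu(\x) = \int_{\Tn} G^\al(\x - \bgamma(\beta))\, \prt_\beta \bgamma(\beta)\, d\beta,
\end{align*}
and your own claim that the diagonal singularity is $\abs{s-s'}^{-\al}$ is consistent with $G^\al$, not with $K^\al$. This matters because the periodization you describe is the wrong one: $G^\al(\x) \sim \abs{\x}^{-\al}$, so $\sum_{j\in\Z^*} G^\al(\x - \n_j)$ fails to converge (even after subtracting constants) when $\al \le \tfrac12$; see the discussion following \cref{e:GSum}. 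The paper circumvents this by first periodizing $K^\al$ (whose lattice sum does converge for all $\al > 0$), writing the remainder as $H^\al = \grad^\perp R^\al$ for a stream function $R^\al$ obtained by path-integration, and only then setting $G^\al_p := G^\al + R^\al$; see \cref{e:KandH,e:Galp}. The function $R^\al$ is smooth away from $\L^*$ but grows like $\abs{x_2}^2$ at vertical infinity (\cref{P:BoundsOnR}), and this growth must be carried through all the energy estimates---it is why the paper's bounds on $\prt_\eta^k G^\al_p(\bdelta_\beta(\eta))$ involve powers of $\norm{\bgamma}_{L^2}$ via the quantities $S_n$ of \cref{e:Sn}.

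Second, your chord-arc propagation is incomplete for the periodic problem. The periodized Green's function $G^\al_p(\bdelta_\beta(\eta))$ is singular not only when $\bdelta_\beta(\eta) = 0$ but at every lattice point $\bdelta_\beta(\eta) \in \L^*$, corresponding to one periodic translate of the boundary touching another. The paper therefore augments the usual chord-arc quantity $F_0$ by $F_j(\bgamma)(\beta,\eta) = \abs{\bdelta_\beta(\eta) - \n_j}^{-1}$ for all $j \in \Z^*$ and propagates $F = \max_j F_j$; see \cref{e:FDef} and \cref{P:FBoundOverTime}. Your proposed lower bound $\abs{\bgamma(s) - \bgamma(s')} \ge c_\star \abs{s-s'}$ controls only $F_0$ and gives no handle on $\dist(\bdelta_\beta(\eta), \L^*)$, which is exactly what enters the bound on $\abs{\grad^n R^\al(\bdelta_\beta(\eta))}$ in \cref{e:prtRBound}. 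Without it the energy estimate cannot close.

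A minor difference: the paper regularizes by mollifying $\bgamma$, setting $L_\eps(\bgamma) = \phi_\eps * L(\phi_\eps * \bgamma)$, rather than regularizing the kernel. Either route should work, but mollifying $\bgamma$ lets the regularized energy estimate follow directly from the unregularized one once one checks (\cref{P:InitF}) that mollification at most doubles $F$.
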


The case $\al = 1$, which is not covered in \cref{T:ExistenceAlphaLt1}, is the subject of a future work. In preparation for that work, we include $\al = 1$ in as many of our estimates as feasible.

\begin{remark}\label{R:WeakNotUnique}
    We stress that our uniqueness results apply to solutions to the CDE,
    but not to weak solutions to \SQG, for which uniqueness is not known. That is, we cannot rule out the possibility that a patch at time zero evolves over time to a weak solution that is not a patch; we can show only that exactly one such solution can remain a patch.
    (More precisely, see \cref{P:WeakIfCDE,C:WeakIfCDE}.)
\end{remark}

\subsection{Euler equations: $\alpha = 0$}\label{S:TypesOfSolutions}

Much of our analysis for \SQG for $\al \in (0, 1]$ applies to the Euler equations, $\al = 0$, as well, but in that case the stronger result in \cite{AHK} can be obtained. In \cite{AHK}, three distinct types of weak solution are defined, Type 1 being a solution to $\al$-SQG (for $\al = 0$) on $[0, T] \times \R^2$ with periodic initial data, Type 2 being a solution on a periodic strip ($\Pi$, defined below in \cref{e:Pi}), and Type 3 again being a solution on $[0, T] \times \R^2$ for a single bounded domain, but with the constitutive law adapted to obtain a velocity field that corresponds to periodic copies. Much effort was spent in \cite{AHK} showing the equivalence of these three types of weak solutions, taking great advantage of the techniques developed in \cite{AfendikovMielke2005,GallaySlijepcevic2014,GallaySlijepcevic2015} to work with Type 2 solutions.

In large part because the constitutive law, \SQGConst, for $\al = 0$ gives $\uu$ one full derivative of regularity over that of $\theta$, it is possible to obtain uniqueness of weak solutions in which one only has $\theta$ and $\uu$ in $L^\iny(\R^2)$. One still has $\curl \uu = \theta$, but the constitutive law no longer holds, being replaced by the Serfati identity. This result is due to Serfati \cite{Serfati1995Bounded} (and see \cite{AKLL2015} for a fuller version of the proof). Combined with the results in \cite{AfendikovMielke2005,GallaySlijepcevic2014,GallaySlijepcevic2015}, this makes it possible to put the three types of weak solutions on equal footing without specializing to patch data.

Working with periodic patch data in \cite{AHK}, a contour dynamics equation (CDE) is developed for each type of solution, and it is shown that any such CDE solution is a weak solution. Because uniqueness of weak solutions is known to hold, it immediately follows that the three types of CDE solution are equivalent. This simplifies the subsequent analysis of the boundary regularity of the patch, in which it is shown that $C^{1, \eps}$ regularity is propagated for all time, and allows the results to be stated in terms of weak solutions to the Euler equations. In contrast, see \cref{R:WeakNotUnique}.

Moreover, for the 2D Euler equations, the velocity field is log-Lipschitz, which gives a unique flow map along which vorticity is transported. For any time, the flow map induces a diffeomorphism of $\R^2$ to $\R^2$ that maps any point to its transported position at that time, which prevents self-intersection of a patch boundary. (This also uses that weak solutions are unique and that any CDE solution is a weak solution.) By contrast, for $\al \in (0, 1]$, we  need to incorporate a measure of non-self-intersection into the energy arguments, as in \cref{S:SelfIntersection}.

\subsection{Prior work}
Rodrigo in \cite{Rodrigo2004,Rodrigo2005} considered the evolution of a simplified model for SQG with spatially periodic patches of halfspace-type; that is, with $\theta$ taking one value above a horizontally periodic curve and another value below the curve.  Such solutions are known as  sharp fronts.  Rodrigo's model was formed by  approximating the Green's function rather than using the properly  periodized Green's function.  Fefferman and Rodrigo then considered analytic sharp fronts for a related model problem in \cite{FeffermanRodrigo2011} (again not considering the exact periodic Green's function).

Bounded patch solutions have been studied for the exact SQG evolution, beginning with Gancedo \cite{Gancedo2008}.  This work left open the question of uniqueness for $\al = 1$, subsequently settled by  Cordoba, Cordoba, and Gancedo \cite{CordobaCordobaGancedo2018}. Gancedo and Patel established a blowup criterion for SQG patches,  among other results in \cite{GancedoPatel2021}. In \cite{GancedoNguyenPatel2022}, Gancedo, Nguyen, and Patel extended existence theory for patches to low regularity, allowing the initial bounded patch to have  large curvature (the patch boundary is taken to lie $H^{2+s}$ with $s<1/2$).

Front solutions have been considered in the series of papers \cite{HunterShu2018,HunterShuZhang2018,HunterShuZhang2020Nonlinearity, hunterCMS, hunterPAA, hunterDCDS} by Hunter, Shu, and Zhang.  This work began with the development of approximate models for the evolution of sharp SQG fronts, and the  development of existence theory for these approximate models  \cite{HunterShu2018, HunterShuZhang2018}.  Subsequently, for the full SQG equations of motion, the contour dynamics equation was developed for the motion of sharp fronts  
\cite{HunterShuZhang2020Nonlinearity, hunterCMS}.  One difference between the contour dynamics equations developed in  \cite{HunterShuZhang2020Nonlinearity, hunterCMS} 
and that of the present work is that they consider only curves which are a graph with respect to the horizontal, while we consider more general parameterized curves.  Hunter, Shu, and Zhang proved the existence of global front solutions with small, smooth data in
\cite{hunterPAA}, \cite{hunterDCDS}.
Subsequently, Ai and Avadanei gave a refined
analysis of the front equation, still in the graph case, giving a local well-posedness 
result for data of any size and a global well-posedness result for small data in a 
low-regularity setting \cite{aiAvadanei}.

In this paper, we adapt the approach of Gancedo in \cite{Gancedo2008} to cover the case of periodic patches.

\subsection{Organization of this paper} Our periodic patches can equivalently be treated as patches on a periodic strip: in \cref{S:Lifting} we describe how to move back and forth between the two settings. We derive expressions for the constitutive law for a periodic scalar in \cref{S:ConstitutiveLawScalar}, which we specialize to patch data in \cref{S:ConstitutiveLawPatch}. In \cref{S:CDE}, we derive the CDE for a periodic patch and state the definition of what we mean by a periodic solution to the CDE in \cref{S:CDESolution}. That a periodic CDE solution is a weak solution is shown in \cref{S:CDEIsWeak}.

The key differences between the analysis of the single, bounded-domain patches of \cite{Gancedo2008} and the periodic patches that we study arise from the difference in the Green's function that lies at the heart of the constitutive law. We describe these differences in  \cref{S:AdaptingGancedo}, and explain our approach to adapting the analysis of \cite{Gancedo2008}. A key aspect of the argument in \cite{Gancedo2008} involves a function $F$ that measures how close to self-intersecting the boundary of a patch is---so as to ensure a finite time up to which self-intersection is avoided. For a periodic patch, self-intersection effectively also occurs if the patch can no longer arise as a periodic copy of a single patch: in \cref{S:SelfIntersection}, we describe how we incorporate that type of self-intersection into the function $F$. (Another type of self-intersection occurs for a multiply connected domain when two components of the boundary intersect, a possibility that we control, for finite time, as the last step in our proof of existence.)

We write the CDE equation as $\prt_t \bgamma = L(\bgamma)$, where $L$ is the CDE solution operator. We give detailed estimates on $L$ in \cref{S:LEstimates}. In \cref{S:WellPosednessAlphalt1}, we give the proof of our main result, the short time existence and uniqueness of solutions to the periodic CDE with $H^k$ boundary regularity, $k \ge 3$, for $\al \in (0, 1)$.

In \cref{A:RalBounds}, we give bounds on the singularities and growth at infinity of a key portion of the Green's function from which the constitutive law for a patch is derived. Finally, we give a number of utility lemmas in \cref{A:SomeLemmas}.

\section{Periodizing and Lifting}\label{S:Lifting}

\subsection{Periodizing functions and domains}

To treat \periodic patches, we define the periodic strip,
\begin{align}\label{e:Pi}
	\Pi := \brac{-\tfrac{1}{2}, \tfrac{1}{2}} \times \R \text{ with }
			\set{-\tfrac{1}{2}} \times \R
			\text{ identified with }
			\set{\tfrac{1}{2}} \times \R.
\end{align}
We also define the same subset of $\R^2$ without identifying its sides, setting
\begin{align*} 
	\Pi_p := \brac{-\tfrac{1}{2}, \tfrac{1}{2}} \times \R
        \subseteq \R^2.
\end{align*}

We define the one-dimensional lattice,
\begin{align}\label{e:L}
    \L
		:= \Z \times \set{0}, \quad
	\L^*
		:= \L \setminus (0, 0),
\end{align}
and use the convenient abbreviation
\begin{align}\label{e:nj}
    \n_j := (j, 0)
\end{align}
for the points in $\L$.

In moving back and forth from functions or domains defined in $\Pi$ and their periodic analogs in $\R^2$, we view $\R^2$ as a covering space for $\Pi$, with the covering map,
\begin{align}\label{e:CoveringMap}
    \Cover \colon \R^2 \to \Pi, \quad
	\Cover(x_1, x_2) = (x_1 - \lfloor x_1 + \tfrac{1}{2} \rfloor, x_2).
\end{align}

\begin{definition}\label{D:Identification}
    We will often make, usually without comment, the identification of $\x = (x_1, x_2)$ in $\Pi$ with $\bxi \x := \x = (x_1, x_2) \in \Pi_p \subseteq \R^2$. Such identifications will be done in the context of periodic functions, so the choice of whether $(x_1, \tfrac{1}{2}) = -(x_1, \tfrac{1}{2})$ in $\Pi$ corresponds to $\pm (x_1, \tfrac{1}{2})$ in $\Pi_p$ will be irrelevant. Also, for two points $\x, \y \in \Pi$, we define $\x - \y = \Cover(\bxi \x - \bxi \y)$.
\end{definition}

For any function $f$ on $\Pi$ we define $\Rep(f)$ on $\R^2$ by
\begin{align*}
	\Rep(f)(\x)
		:= f \circ \Cover(\x)
        = f(x_1 - \lfloor x_1 + \tfrac{1}{2} \rfloor, x_2).
\end{align*}
This produces a \periodic version of the function $f$ on $\Pi$---a ``replication'' of $f$.

For any $\Omega_\Pi \subset \Pi$, we also define $\Rep (\Omega_\Pi) \subseteq \R^2$ by
\begin{align*}
	\Rep (\Omega_\Pi)
        := \Cover^{-1}(\Omega_\Pi)
		= \set{\x \in \R^2 \colon (x_1 - \lfloor x_1 + \tfrac{1}{2} \rfloor, x_2) \in \Omega_\Pi},
\end{align*}
and we note that
\begin{align*}
	\Rep (\CharFunc_{\Omega_\Pi}) = \CharFunc_{\Rep (\Omega_\Pi)}.
\end{align*}

Finally, for measurable $f$ on $\R^2$, we define
\begin{align*}
    \Rep_{\R^2}(f)(\x)
        &= \sum_{\bm{\ell} \in \L} f(\x - \bm{\ell}).
\end{align*}
The sum may diverge, but if $f \in L_c^\iny(\R^2)$ then $\Rep_{\R^2}(f)(\x)$ will be finite for all $\x \in \R^2$.

Moreover, we see that if $\Cover(\Omega) = \Omega_\Pi$ then
\begin{align*} 
    \Rep_{\R^2}(\CharFunc_{\Omega})
        &= \Rep(\CharFunc_{\Omega_\Pi}).
\end{align*}
This also allows us to define
\begin{align*} 
    \Rep_{\R^2}(\Omega):= \Rep(\Omega_\Pi),
\end{align*}
whenever $\Omega$ is a lift of $\Omega_\Pi$, noting, then, that $\Rep_{\R^2}(\Omega)$ does not depend upon the particular lift.

\subsection{Lifting domains and paths}
A \textbf{path} in the topological space $X$ is a continuous map from an interval $I$ to $X$. Because each of the paths we utilize will be either a closed path or a path in $\R^2$ that projects via $\Cover$ to a closed path in $\Pi$, we define our paths as follows:

\begin{definition}\label{D:PathLift}
	A \textbf{(closed) path} in the topological space $X$ is a continuous map from the unit circle $\T$ to $X$. A \textbf{(quasi-closed)} path in $\R^2$ is a map $\bgamma$ from $\T$ to $\R^2$ that is continuous except possibly at one point $\z_0 \in \T$, at which $\bgamma(\z_0-) - \bgamma(\z_0+) \in \L$. The quasi-closed path $\bgamma$ in $\R^2$ is a \textbf{lift} or \textbf{lifting} of the closed path $\bgamma_\Pi$ in $\Pi$ if $\Cover \circ \bgamma = \bgamma_\Pi$. 
\end{definition}

\cref{L:Lift} follows as in Section 2.4 of \cite{AHK}, where it was, however, expressed in terms of complex contour integrals.

\begin{lemma}\label{L:Lift}
    Let $\bgamma_\Pi \colon \T \to \Pi$ be a finite-length $C^1$ closed path in $\Pi$ with initial point $\x_{0, \Pi} \in \Pi$. For any $\x_0 \in \Cover^{-1}(\x_{0, \Pi})$ there exists a unique lift to a quasi-closed $C^1$ path $\bgamma$ on $\R^2$ with initial point $\x_0$. If $\bgamma_\Pi$ is $C^1$ then so is $\bgamma$. For any continuous scalar function $f$ on $\Pi$,
    \begin{align}\label{e:PathLiftIntScalar}
        \int_\T f(\bgamma_\Pi(\eta)) \, \abs{\bgamma_\Pi'(\eta)} \, d \eta
            &= \int_\T f \circ \Cover(\bgamma(\eta)) \, \abs{\bgamma'(\eta)}
                \, d \eta
    \end{align}
    and for any continuous vector-valued function $\vv$ on $\Pi$,
    \begin{align}\label{e:PathLiftIntVector}
        \int_\T \vv(\bgamma_\Pi(\eta)) \cdot \bgamma_\Pi'(\eta) \, d \eta
            &= \int_\T \vv \circ \Cover(\bgamma(\eta)) \cdot \bgamma'(\eta)
                \, d \eta.
    \end{align}
\end{lemma}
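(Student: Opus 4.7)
The plan is to construct the lift via standard covering-space path-lifting---viewing $\Cover\colon \R^2\to\Pi$ as the quotient map for the $\L$-action on $\R^2$ by translation---and then to upgrade to $C^1$ regularity and derive the integral identities using the fact that $\Cover$ is locally an isometry. First, I would invoke the path-lifting property of covering maps: because each point of $\Pi$ has a neighborhood evenly covered by its $\L$-translates, for any continuous $\bgamma_\Pi\colon \T\to\Pi$ with $\bgamma_\Pi(\z_0)=\x_{0,\Pi}$ at some basepoint $\z_0\in\T$ and any $\x_0\in\Cover^{-1}(\x_{0,\Pi})$, there is a unique continuous $\bgamma\colon\T\setminus\{\z_0\}\to\R^2$ (parametrizing $\T$ as an interval cut open at $\z_0$) satisfying $\Cover\circ\bgamma=\bgamma_\Pi$ and $\bgamma(\z_0+)=\x_0$. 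Since $\Cover^{-1}(\x_{0,\Pi})=\x_0+\L$ and $\bgamma(\z_0-)$ exists by continuity on the closed interval, $\bgamma(\z_0-)-\bgamma(\z_0+)\in\L$, giving the quasi-closed structure of \cref{D:PathLift}.

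Next, $C^1$ regularity and the integral identities follow from the local structure of $\Cover$. Each point of $\Pi$ has a neighborhood on which $\Cover$ restricts, on each of its sheets, to a pure translation by an element of $\L$; in particular, $\Cover$ is a $C^\infty$ local isometry at every $\bgamma(\eta)$. Consequently, $\bgamma$ is $C^1$ at each $\eta\ne\z_0$ and $\bgamma'(\eta)=\bgamma_\Pi'(\eta)$ as vectors in $\R^2$, so also $\abs{\bgamma'(\eta)}=\abs{\bgamma_\Pi'(\eta)}$. Using $\Cover\circ\bgamma=\bgamma_\Pi$ pointwise on $\T\setminus\{\z_0\}$ together with this equality of tangent vectors,
\begin{align*}
    f(\bgamma_\Pi(\eta))\,\abs{\bgamma_\Pi'(\eta)}
        &= f\circ\Cover(\bgamma(\eta))\,\abs{\bgamma'(\eta)}, \\
    \vv(\bgamma_\Pi(\eta))\cdot\bgamma_\Pi'(\eta)
        &= \vv\circ\Cover(\bgamma(\eta))\cdot\bgamma'(\eta),
\end{align*}
for every $\eta\in\T\setminus\{\z_0\}$; integrating over $\T$ yields \cref{e:PathLiftIntScalar,e:PathLiftIntVector}, since the single missing point $\z_0$ is null.

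The main obstacle is conceptual rather than analytical: one must commit to the right notion of ``closed path'' on $\R^2$ so that path-lifting produces an object the rest of the paper can manipulate, which is precisely the motivation for the quasi-closed convention in \cref{D:PathLift}. Once that bookkeeping is settled, the argument is essentially formal, as $\Cover$ is a local isometry everywhere and all claimed properties transfer immediately from $\bgamma_\Pi$ to $\bgamma$. A minor subtlety worth noting in the write-up is that the lifting theorem itself only requires continuity of $\bgamma_\Pi$; the $C^1$ and finite-length hypotheses are used exclusively to give meaning to the tangent vectors and to guarantee convergence of the integrals in \cref{e:PathLiftIntScalar,e:PathLiftIntVector}.
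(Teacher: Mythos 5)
Your proof is correct, and it takes a genuinely different route from the paper. The paper does not actually give a proof of \cref{L:Lift}; it simply points to Section 2.4 of \cite{AHK}, noting that the argument there was ``expressed in terms of complex contour integrals.'' Your version instead invokes the standard path-lifting and unique-lifting properties of the covering map $\Cover\colon\R^2\to\Pi$ directly in real coordinates, observes that $\Cover$ is locally a translation (hence a $C^\infty$ local isometry), and reads off both the $C^1$ regularity of the lift and the pointwise equalities of integrands from that observation. This is arguably cleaner and more self-contained: it avoids the complex-analytic packaging, makes explicit that the quasi-closed defect $\bgamma(\z_0-)-\bgamma(\z_0+)$ lands in $\L$ precisely because $\Cover^{-1}(\x_{0,\Pi})=\x_0+\L$, and delivers the two integral identities by simple pointwise identification of integrands rather than by contour-integral manipulation.

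Two small remarks, neither a gap. First, your claim $\bgamma'(\eta)=\bgamma_\Pi'(\eta)$ ``as vectors in $\R^2$'' implicitly uses the identification of $T\Pi$ with $\Pi\times\R^2$ furnished by the covering (equivalently, by \cref{D:Identification} and \cref{R:IntegrandMeaning}); it is worth one sentence to say the tangent vector is computed in the chart coming from the translation-branch of $\Cover^{-1}$, so that continuity of $\bgamma_\Pi'$ across the seam $\set{\pm\tfrac12}\times\R$ is unambiguous. Second, you observe that the finite-length hypothesis plays no independent role once $\bgamma_\Pi$ is assumed $C^1$ on the compact $\T$; that is correct, and worth flagging only because the lemma statement lists both.
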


Suppose that $\Omega \subset \R^2$ is such that for some $\Omega_\Pi \subseteq \Pi$, $\Cover(\Omega) = \Omega_\Pi$ and $\Cover|_\Omega$ is injective. The condition that $\Cover|_\Omega$ be injective is equivalent to requiring that
\begin{align*} 
    \Omega \cap (\Omega + \L^*) = \emptyset.
\end{align*}
This yields what we will refer to as a pseudo-lift of $\Omega_\Pi$ to $\Omega$; such pseudo-lifts are non-unique.

We wish to apply \cref{L:Lift} to lift a parameterization $\bgamma_j$ of each boundary component of $\Omega_\Pi \subset \Pi$
in such a way that the boundary components of some pseudo-lift $\Omega$ are parameterized by the lifts of $\bgamma_j$. This is not always possible, because the homotopy class of $\Pi$ and $\R^2$ are not the same (see the example depicted in \Cref{f:SuitableLifting}).

Nonetheless, we can come close enough by following the approach in Section 2.4 of \cite{AHK}. For this purpose, it will be useful to extend our definition of a path to allow it to apply to all boundary components of a domain at once.

\begin{definition}\label{D:ChainLift}
    A \textbf{chain} in the topological space $X$ is a finite collection of paths in $X$. Let $\bgamma_{\Pi, 1}$, $\dots$, $\bgamma_{\Pi, n}$ be $n$ closed paths in $\Pi$. We parameterize the chain by a disjoint union of $n$ copies of $\T$, so that for integrable $f$,
    \begin{align*}
        \Tn
            &:= \bigsqcup_{j = 1}^n \T, \quad
        \int_\Tn f(\bgamma_\Pi(\eta)) \, d \eta:=
            \sum_{j = 1}^n \int_\T
                f(\bgamma_{\Pi, j}(\eta)) \, d \eta.
    \end{align*}
    Abusing notation, we define the iterated integral,
    \begin{align*}
        \int_\Tn \int_\Tn f(\bgamma_\Pi(\beta),
              \bgamma_\Pi(\eta))
                \, d \beta \, d \eta
            &:= \sum_{j = 1}^n
            \int_\T \int_\T f(\bgamma_{\Pi, j}(\beta),
              \bgamma_{\Pi, j}(\eta))
                \, d \beta \, d \eta.
    \end{align*}
    A chain $\bgamma \colon \Tn \to \R^2$ for which $\Cover \circ \bgamma = \bgamma_\Pi$ is a \textbf{lift} or \textbf{lifting} of the chain $\bgamma_\Pi \colon \Tn \to \Pi$.

    Let $\Omega_\Pi$ be a bounded domain in $\Pi$ with $C^1$ boundary, $\prt \Omega$, with a finite number of boundary components, $\Gamma_1, \cdots, \Gamma_n$. Orient each $\Gamma_j$  so that its unit normal vector $\n$ points outward, and parameterize each $\Gamma_j$ by a $C^1$ path $\bgamma_{\Pi, j}$ with the property that its unit tangent vector $\BoldTau = \n^\perp := (-n_2, n_1)$. Define the chain $\bgamma_\Pi \colon \Tn \to \Pi$ as the collection of those $n$ paths.
\end{definition}

The following is a re-expression of Lemma 2.10 of \cite{AHK} (illustrated in \Cref{f:SuitableLifting}):

\begin{lemma}\label{L:LiftedBoundaryIntegral}
    Let $\Omega_\Pi$ be a bounded domain in $\Pi$ having $C^1$ boundary, and let $\bgamma_\Pi \colon \Tn \to \Pi$ be a chain parameterizing $\prt \Omega_\Pi$ as in \cref{D:ChainLift}. There exists a bounded connected set $\Omega \subseteq \R^2$, such that for any continuous vector-valued function $\vv$ on $\Pi$,
    \begin{align*}
        \int_{\prt \Omega_\Pi} \vv \cdot \BoldTau
            = \int_{\prt \Omega} (\vv \circ \Cover) \cdot \BoldTau, \quad
        \int_{\prt \Omega_\Pi} \vv \cdot \n
            = \int_{\prt \Omega} (\vv \circ \Cover) \cdot \n,
    \end{align*}
    where $\BoldTau$, $\n$ are the unit tangent, unit normal vector, respectively (oriented so that $(\n, \BoldTau)$ are in the standard orientation). Moreover, there exists a lift $\bgamma$ of $\bgamma_\Pi$ to a chain in $\R^2$ with $\image \bgamma \subseteq \prt \Omega$, for which we have,    
    as in \cref{e:PathLiftIntVector},
    \begin{align*}
        \int_\Tn \vv(\bgamma_\Pi(\eta)) \cdot \bgamma_\Pi'(\eta) \, d \eta
            &= \int_\Tn \vv \circ \Cover(\bgamma(\eta)) \cdot \bgamma'(\eta)
                \, d \eta
            = \int_\Tn \Rep_\Pi(\vv)(\bgamma(\eta)) \cdot \bgamma'(\eta)
                \, d \eta.
    \end{align*}
\end{lemma}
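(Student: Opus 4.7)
The plan is to construct the set $\Omega$ and the lifted chain $\bgamma$ by an unwrapping procedure, then verify the integral identities using $1$-periodicity of $\vv \circ \Cover$ in $x_1$ together with \cref{L:Lift}.

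First, apply \cref{L:Lift} to each boundary component $\bgamma_{\Pi, j}$, obtaining a quasi-closed $C^1$ lift $\bgamma_j$ whose endpoint jump lies in $\L$, vanishing exactly when $\bgamma_{\Pi, j}$ is null-homotopic in $\Pi$ and lying in $\L^*$ otherwise. The starting points of the $\bgamma_j$ will be fixed after $\Omega$ is built.

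Next, construct $\Omega$. Begin with the natural lift $\Omega^{(0)} := \Cover^{-1}(\Omega_\Pi) \cap \Pi_p$, a bounded but possibly disconnected subset of $\R^2$ whose $\R^2$-boundary splits into (i) arcs that lift pieces of $\prt \Omega_\Pi$, and (ii) vertical \textbf{seam segments} on $\set{\pm \tfrac{1}{2}} \times \R$ arising from the places where $\prt \Omega_\Pi$ crosses the seam of $\Pi$. The identification $(-\tfrac{1}{2}, y) \sim (\tfrac{1}{2}, y)$ pairs each segment on $\set{-\tfrac{1}{2}} \times \R$ with one on $\set{\tfrac{1}{2}} \times \R$ over the same $x_2$-range. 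Form the finite graph whose vertices are the connected components of $\Omega^{(0)}$ and whose edges are these seam pairings; connectedness of $\Omega_\Pi$ forces this graph to be connected. Fix a spanning tree and translate the components by appropriate integer multiples of $\n_1$ so that, along each tree edge, the paired seam segments coincide in $\R^2$. The union $\Omega$ of the translated components is then bounded (finitely many translations, each by a bounded integer), connected (by the spanning-tree construction), and remains a pseudo-lift of $\Omega_\Pi$. Finally, retranslate each $\bgamma_j$ by an element of $\L$ so that $\image \bgamma_j \subseteq \prt \Omega$.

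Then verify the integral identities. Write $\prt \Omega = \big(\bigcup_j \image \bgamma_j\big) \cup \Sigma$, where $\Sigma$ collects the residual seam pairs---those not glued by the spanning tree, together with any seam pair inherent to a single component of $\Omega^{(0)}$ whose closure already touches both sides of $\Pi_p$. Each surviving pair consists of two parallel vertical segments with opposite outward normals, sitting at $x_1$-coordinates that differ by an integer and covering the same $x_2$-range; by $1$-periodicity of $\vv \circ \Cover$ in $x_1$, their contributions to $\int_{\prt \Omega} (\vv \circ \Cover) \cdot \BoldTau$ and $\int_{\prt \Omega} (\vv \circ \Cover) \cdot \n$ cancel. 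What remains equals $\sum_j \int_\T \vv \circ \Cover(\bgamma_j(\eta)) \cdot \bgamma_j'(\eta) \, d \eta$, which by \cref{e:PathLiftIntVector} applied componentwise equals $\int_{\prt \Omega_\Pi} \vv \cdot \BoldTau$; the normal identity is analogous. The displayed chain-integral formula then follows immediately from the construction and a componentwise application of \cref{L:Lift}.

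The hard part is the combinatorial step: choosing integer translates that make $\Omega$ simultaneously bounded, connected, and (modulo canceling seam pairs) bounded by the lifts of $\prt \Omega_\Pi$. This is precisely where the topological mismatch between $\Pi$ and $\R^2$ enters, and the spanning-tree argument above is how I would resolve it, exploiting the connectedness of $\Omega_\Pi$.
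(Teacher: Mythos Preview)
The paper does not give a self-contained proof of this lemma: it simply states that the result ``is a re-expression of Lemma 2.10 of \cite{AHK}'' and points to the figure. So there is no detailed argument in the paper to compare against. That said, the remark immediately following the lemma does indicate the mechanism the authors have in mind: $\prt \Omega$ may contain extra vertical line segments not coming from $\prt \Omega_\Pi$, and ``the integrals along these line segments cancel in pairs.'' Your proposal is built around exactly this idea, so it is consistent with the paper's intended approach.

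Your sketch is reasonable and the spanning-tree gluing is a natural way to organize the construction. Two points deserve care. First, the claim that the glued set $\Omega$ ``remains a pseudo-lift'' is not quite right as stated: once you glue along a tree edge, the seam segment becomes interior to $\Omega$, and the resulting $\Omega$ need not satisfy $\Omega \cap (\Omega + \L^*) = \emptyset$ in any useful sense beyond what you need. What matters for the lemma is only that $\Omega$ is bounded and connected and that $\prt \Omega$ decomposes as (lifted boundary arcs) $\cup$ (residual seam pairs); you should state that directly rather than invoking the pseudo-lift condition. Second, you should be explicit that the residual seam segments in $\Sigma$ really do come in pairs with opposite outward normals and matching $x_2$-ranges at $x_1$-coordinates differing by an integer---this is where the non-tree edges of your graph, and the ``self-seams'' of a single component, both enter, and the cancellation depends on it.
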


\begin{figure}[ht]
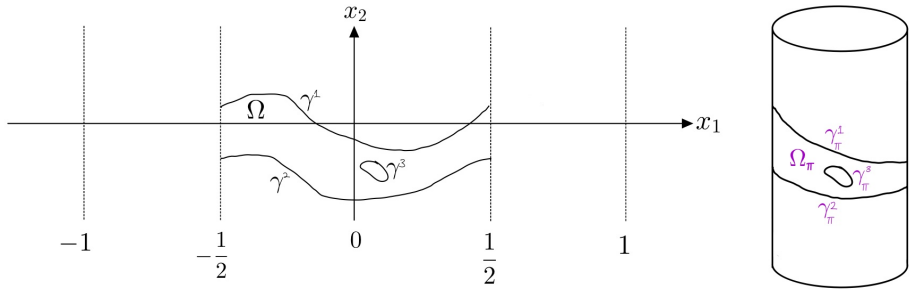

\IncludeFigure{0.2}{LiftSQG}
		
\vspace{-1.2em}
	
\caption{\textit{Example of suitable lift of a non-simply connected domain}}\label{f:SuitableLifting}
\end{figure}

\begin{definition}\label{D:SuitableRep}
    We call $\Omega$ of \cref{L:LiftedBoundaryIntegral} a \textbf{suitable representation} of $\Omega_\Pi$
    and $\bgamma$ a \textbf{suitable lift} of $\bgamma_\pi$.
\end{definition}

\begin{remark*}
    In the context of \cref{L:LiftedBoundaryIntegral}, if $f$ is a continuous scalar-valued function on $\Pi$, it may be that $\image \bgamma \subsetneq \prt \Omega$ so $\int_{\prt \Omega_\Pi} f \ne \int_{\prt \Omega} f \circ \Cover$, because, as explained above Lemma 2.10 of \cite{AHK}, when $\Omega_\Pi$ is not simply connected, $\prt \Omega$ may include portions that are vertical line segments that do not derive from $\prt \Omega_\Pi$ (as occurs in the example in \Cref{f:SuitableLifting}).  By contrast, in integrating the tangential or normal components of a vector-valued function the integrals along these line segments cancel in pairs (as they do for the complex contour integrals in \cite{AHK}). 
\end{remark*}

\subsection{Notation and function spaces}

\begin{remark}[On notation]\label{R:Notation}
    If $f(z_1, \dots, z_n) $ is a function of $n$ variables, we will occasionally find it useful to write $\prt_j f$ for the derivative with respect to the $j^{th}$ variable. So, for instance, $\prt_2 f(t, \eta) = \prt_\eta f(t, \eta)$, but $\prt_2 f(t, \beta - \eta) = - \prt_\eta f(t, \beta -\eta) = \prt_\beta f(t, \beta -\eta)$.
\end{remark}

For any $r \in (0, 1]$, we define the space $C^{0, r}(\Tn)$ to be the space of fractional \Holder continuous functions on $\Tn$, the norm being
\begin{align*}
    \norm{f}_{C^{0, r}(\Tn)}
        &:= \norm{f}_{L^\iny(\Tn)} + \norm{f}_{\dot{C}^{0, r}(\Tn)}, \quad
    \norm{f}_{\dot{C}^{0, r}(\Tn)}
        := \sup_{\beta \ne \eta} \frac{\abs{f(\beta) - f(\eta)}}{\abs{\beta - \eta}^r}.
\end{align*}
When $r \in (0, 1)$ we also write $C^r(\Tn)$ for $C^{0, r}(\Tn)$. When $r = 1$ we get the space of  Lipschitz continuous functions on $\Tn$, $Lip(\Tn) = C^{0, 1}(\Tn)$, $\dot{Lip}(\Tn) = \dot{C}^{0, 1}(\Tn)$.

%
%
\section{The constitutive law for a scalar $\theta$}\label{S:ConstitutiveLawScalar}

\noindent 
In this section, we present the explicit form of the constitutive law \cref{e:ConstitutiveLaw} in three contexts: first, for a single compactly supported scalar field $\theta$; second for a periodic scalar of the form $\Rep_{\R^2}(\theta)$ on $\R^2$; third, for a compactly supported $\theta$ in the periodic strip $\Pi$. The function $\theta$ need not be for patch data---in the process of deriving the CDEs in \cref{S:CDE}, we will then specialize the constitutive law to the form it takes for patch data.

\subsection{Constitutive law for $\theta$ compactly supported}
Let $\theta \in L_c^\iny(\R^2)$. In that case, we can define the stream function $\psi := G^\al * \theta$ for $\uu$, which solves
\begin{align}\label{e:GreensEqOnePatchA}
 	-(-\Delta)^{1 - \frac{\al}{2}} \psi = \theta,
\end{align}
and then set $\uu = \grad^\perp \psi$. Then
\begin{align}\label{e:ConstLawSingle}
	\uu
		&= \grad^\perp (G^\al * \theta)
		= (\grad^\perp G^\al) * \theta
        = K^\al * \theta,
\end{align}
where,
\begin{align}\label{e:Kalpha}
	K^\al(\x)
		&:= \grad^\perp G^\al(\x)
		= - \frac{\al c_\al \x^\perp}{\abs{\x}^{2 + \al}}
            \text{ for } \al \in (0, 1]
\end{align}
and $K^0(x) = (2 \pi \abs{x})^{-2} x^\perp$. For $\al \in (0, 1)$, $K^\al$ is locally integrable and has sufficient decay at infinity so that
\begin{align}\label{e:uLinfSingle}
    \norm{\uu}_{L^\iny}
        &\le \norm{\CharFunc_{B_1(0)} K^\al}_{L^1} \norm{\theta}_{L^\iny}
            + \norm{\CharFunc_{B_1(0)^C} K^\al}_{L^\iny} \norm{\theta}_{L^1}
        \le C \norm{\theta}_{L^1 \cap L^\iny},
\end{align}
so $\uu \in L^\iny$. However, $K^1$ is not locally integrable, and the convolution must be treated as a singular integral operator.  (The case $\al \in (1, 2)$, treated in \cite{CCCGW2012}, is even more singular.)

\subsection{Constitutive law for $\theta$ periodic}\label{S:ConstForPeriodic}

Now, in place of \cref{e:GreensEqOnePatchA}, we want to start with $\theta \in L_c^\iny(\R^2)$ and solve
\begin{align}\label{e:GreensEqR2}
 	-(-\Delta)^{1 - \frac{\al}{2}} \psi = \Rep_{\R^2}(\theta)
\end{align}
by finding a \periodic Green's function, $G^\al_p$, defined on $\R^2$ so that
\begin{align}\label{e:PeriodicBSLaw}
    \psi = G^\al_p * \theta, \quad
    \uu = \grad^\perp \psi = K^\al_p * \theta,
\end{align}
where
\begin{align}\label{e:Kalp}
    K^\al_p := \grad^\perp G^\al_p.
\end{align}

The straightforward way of obtaining $G^\al_p$ and $K^\al_p$ is to start with $G^\al_p$ and then obtain $K^\al_p$ as in  \cref{e:Kalp}. Since $\psi$ is supposed to solve \cref{e:GreensEqR2}, we would expect of $G^\al_p$ that, at least formally,
\begin{align*}
    \psi(\x)
        &= G^\al * \Rep_{\R^2}(\theta)
        = \sum_{j \in \Z} G^\al * \theta(\x - \n_j)
        = \sum_{j \in \Z}
            \int_{\R^2} G^\al(\y) \theta(\x - \n_j - \y) \, d \y \\
        &= \sum_{j \in \Z}
            \int_{\R^2} G^\al(\y - \n_j) \theta(\x - \y) \, d \y
        = \sum_{j \in \Z}
            G^\al(\cdot - \n_j) * \theta(\x)
        = G^\al_p * \theta,
\end{align*}
where $\n_j$ is defined in \cref{e:nj} and
\begin{align}\label{e:GSum}
    \begin{split}
    G^\al_p(\x)
	   &= G^\al(\x) + \sum_{j \in \Z^*}
					\brac{G^\al(\x - \n_j)}
	   = G^\al(\x) + \sum_{j \in \Z^*}
					\brac{G^\al(\x - \n_j) - \frac{1}{\abs{\n_j}^\al}} \\
	   &= \frac{1}{|\x|^{\al}} + \sum_{j \in \Z^*} 
					\brac{\frac{1}{|\x- \n_j|^{\al}} -  \frac{1}{|\n_j|^{\al}}}.
    \end{split}
\end{align}
Here, we have subtracted $\abs{\n_j}^{-\al}$ to allow better convergence of the sum, but even doing so, one finds that the sum converges pointwise only if $\al > \tfrac{1}{2}$. When it does converge, \cref{e:Kalp} gives the corresponding constitutive kernel.

Since the straightforward approach limits us to the range $\al > \tfrac{1}{2}$, we instead take the approach in \cite{AHK}, which is to first obtain a constitutive kernel $K^\al_p$, then from it obtain $G^\al_p$.  So in place of the sum in \cref{e:GSum} for the Green's function, we sum in a similar manner for the kernel $K^\al_p$, writing,
\begin{align}\label{e:KandH}
    \begin{split}
        K^\al_p
            &:= K^\al + H^\al, \\
        H^\al(\x)
            &:= \sum_{j \in \Z^*} K^\al(\x - \n_j).
        \end{split}
\end{align}
(For $\al = 0$, the sum in \cref{e:KandH}$_2$ does not converge, so in \cite{AHK}, the sum was rearranged to pair the terms for $j$ and $-j$. For $\al > 0$, we have sufficient convergence without rearranging the terms, nor would summing in pairs improve the rate of convergence. Also, for $\al = 0$, the resulting sum can be viewed as a complex analytic function, which yields an explicit and convenient expression in \cite{AHK} for $K^0_p$ and $G^0_p$, but that approach is not available for $\al > 0$.)

\begin{remark}
    In our notation, a subscript $p$ indicates that a function is $1$-periodic in $x_1$ as a function on $\R^2$, and so also makes sense as a function on $\Pi$. In \cref{e:KandH}, neither $K^\al$ nor $H^\al$ are periodic, although their sum, $K^\al_p$, is periodic. This is a formal statement, but once we establish the convergence of the sum defining $H^\al(\x)$ we will see that it holds rigorously.
\end{remark}

Since for $j > 2 \abs{\x}$,
\begin{align}\label{e:KTermsConverge}
    &\abs{K^\al(\x - \n_j)}
        = \al c_\al
            \abs[\bigg]{ 
                \frac{(\x - \n_j)^\perp}
                    {\abs{\x - \n_j}^{\al + 2}}
            }
        \le \frac{\al c_\al}{\abs{\x - \n_j}^{\al + 1}}
        \le \frac{C}{j^{1 + \al}},
\end{align}
for any $\al > 0$, the sum defining $H^\al$ in \cref{e:KandH} converges uniformly on compact subsets of 
\begin{align*}
    U
        &:= \R^2 \setminus \L^*,
\end{align*}
where $\L^* = \Z^* \times \set{0}$ as in \cref{e:L}. 
Notice that any derivative of $K^\al(\x - \n_j)$ of order $k \ge 0$ 
decays like $C \abs{x}^{-(\al + 1 + k)};$
then, writing
\begin{align*}
    H^\al(x)
        &= \lim_{N \to \iny} H_N(\x), \quad
        H_N(x) := \sum_{\substack{j = -N\\j \ne 0}}^N
                K^\al(\x - \n_j),
\end{align*}
we see that all the derivatives of $H_N$ 
converge uniformly on compact subsets of $U$ as well. Since each 
$K^\al(\x - \n_j)$ is in $C^\iny(U)$, 
it follows that $H^\al \in C^\iny(U)$. 
Moreover, $\dv K^\al(\x - \n_j) = 0$ on $U$ 
so $\dv H^N = 0$ on $U$ and so in the limit we have $\dv H^\al = 0$ 
on $U$.

Now let
\begin{align*}
    U^*
        := \R^2 \setminus \set{\x \in \R^2 \colon \abs{x_1} \ge \tfrac{1}{2}, x_2 = 0}
        \subseteq U.
\end{align*}
Then $U^*$ is simply connected and $\dv \H^\al = 0$ on $U^*$, so there exists a stream function $R^\al$ on $U^*$ for $H^\al$; that is,
\begin{align*}
    H^\al := \grad^\perp R^\al, \quad
    R^\al(0) = 0
\end{align*}
on $U^*$, where we have fixed the value of $R^\al$ at the origin for uniqueness. We can write the stream function in the form,
\begin{align}\label{e:RAsPathIntegral}
    R^\al(\x) = \int_{\bgamma(\x)} (H^\al)^\perp \cdot d \BoldTau,
\end{align}
where $\bgamma$ is any $C^1$ path in $U^*$ from the origin to $\x \in U^*$ and $\BoldTau$ is the unit tangent vector.

Further, each $K^\al(\x - \n_j)$ is tangential to any ball $B_r(\n_j)$ centered at $\n_j$, and because $K^\al(\x - \n_j)$ is divergence-free, it follows that
\begin{align*}
    \int_\bgamma (K^\al(\x - \n_j))^\perp \cdot d \BoldTau
        &= 0
\end{align*}
for any closed path $\bgamma$ in $\R^2$ for which $\n_j \notin \image \bgamma$. From this it follows that 
\begin{align*} 
    \int_\bgamma (H^\al)^\perp \cdot d \BoldTau
        &= \lim_{N \to \iny} \int_\bgamma \left(H_N\right)^\perp
            \cdot d \BoldTau = 0
\end{align*}
for any closed path $\bgamma$ in $\R^2$ not intersecting $\L^*$. This is enough to show that defining $R^\al(\x)$ as in \cref{e:RAsPathIntegral} for all $\x \in U$ provides a continuous extension of $R^\al$ from $U^*$ to all of $U$.

Finally, we set
\begin{align}\label{e:Galp}
    G^\al_p := G^\al + R^\al.
\end{align}

\begin{lemma}\label{L:uLinfPeriodic}
   If $\theta \in L^\iny_c(\R^2)$ then $\uu := K^\al_p * \theta \in L^\iny(\R^2)$. 
\end{lemma}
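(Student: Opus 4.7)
The natural strategy is to decompose $K^\al_p = K^\al + H^\al$ from \cref{e:KandH} and handle the two resulting convolutions separately. For the first piece, since $\theta \in L^\iny_c(\R^2) \subset L^1(\R^2) \cap L^\iny(\R^2)$, the estimate \cref{e:uLinfSingle} immediately yields
\begin{align*}
    \norm{K^\al * \theta}_{L^\iny} \le C \norm{\theta}_{L^1 \cap L^\iny} < \iny
\end{align*}
for $\al \in (0, 1)$.

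The substance of the argument lies in controlling $H^\al * \theta$. I would choose $R > 0$ with $\supp \theta \subset B_R(0)$, so that
\begin{align*}
    \abs{(H^\al * \theta)(\x)} \le \norm{\theta}_{L^\iny} \int_{B_R(\x)} \abs{H^\al(\z)} \, d \z,
\end{align*}
and the goal reduces to proving $\sup_{\x \in \R^2} \int_{B_R(\x)} \abs{H^\al(\z)} \, d\z < \iny$. Since $K^\al_p$ is $1$-periodic in $x_1$ and $K^\al$ is not a source of nonperiodicity relative to the full kernel, $H^\al = K^\al_p - K^\al$ is also $1$-periodic in $x_1$, so I may assume $\x \in [-\tfrac{1}{2}, \tfrac{1}{2}] \times \R$. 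I would then split $B_R(\x)$ into neighborhoods of the at most $O(R)$ lattice points $\n_j \in \L^*$ it contains, plus the complementary region. Near each such $\n_j$, only the single term $K^\al(\z - \n_j)$ in the series defining $H^\al$ is singular; it satisfies $\abs{K^\al(\z - \n_j)} \le \al c_\al \abs{\z - \n_j}^{-(1+\al)}$, which is locally integrable on $\R^2$ for $\al < 1$, while the remaining series tail converges uniformly on the neighborhood by \cref{e:KTermsConverge} and contributes a bounded remainder. On the complementary region, where $\z$ stays away from $\L^*$, bounding the full series by an integral comparison $\sum_{j \in \Z^*} \bigl(z_2^2 + (z_1 - j)^2\bigr)^{-(1+\al)/2} \lesssim \abs{z_2}^{-\al}$ gives the uniform decay $\abs{H^\al(\z)} \lesssim \abs{z_2}^{-\al}$ for $\abs{z_2}$ large. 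Combining the two regimes yields the required uniform bound.

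The main obstacle is the precise quantitative control of $H^\al$ near $\L^*$ and at vertical infinity, but the pointwise estimates stated in \cref{A:RalBounds} on the associated stream-function piece $R^\al$ (with $H^\al = \grad^\perp R^\al$, as in \cref{e:Galp}) should supply exactly those inputs, so the argument reduces to a routine covering estimate once those bounds are invoked. As the paper itself has noted, the case $\al = 1$ would additionally require care, since $K^1$ is then a singular integral operator and the pointwise estimate \cref{e:uLinfSingle} is no longer available for merely $L^\iny_c$ data; that case is presumably handled via principal value arguments in a subsequent work.
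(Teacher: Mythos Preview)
Your strategy is sound and close to the paper's, but organized differently. The paper does not split $K^\al_p = K^\al + H^\al$; instead it writes $\uu(\x) = \sum_{j \in \Z} K^\al(\cdot - \n_j) * \theta(\x)$ directly, restricts to $\x \in \Pi_p$ by periodicity of $\uu$, and then splits the \emph{sum over $j$}: the finitely many terms with $\abs{j} \le 2R$ are each bounded by \cref{e:uLinfSingle}, while for $\abs{j} > 2R$ the kernel $K^\al(\x - \n_j - \cdot)$ is uniformly bounded on $\supp \theta$ and the tail sum converges. Your approach instead isolates the $j = 0$ term first and then estimates $H^\al * \theta$ via local integrability of $H^\al$. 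Both lead to the same conclusion with comparable effort; the paper's version is slightly cleaner because it avoids any discussion of the structure of $H^\al$ near $\L^*$ or at vertical infinity.

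One genuine slip: your claim that $H^\al$ is $1$-periodic in $x_1$ is false. A direct shift shows $H^\al(\x + \e_1) - H^\al(\x) = K^\al(\x) - K^\al(\x + \e_1) \ne 0$. What \emph{is} periodic is $\uu = K^\al_p * \theta$ itself (since $K^\al_p$ is periodic), and that is all you need to restrict attention to $\x \in \Pi_p$; once $\x$ is in the strip, $\x - \y$ stays in a fixed horizontal band of width $O(R)$ for $\y \in \supp \theta$, so your subsequent covering argument goes through unchanged. Just reroute the periodicity through $\uu$ rather than $H^\al$.
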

\begin{proof}
    Let $\theta \in L^\iny_c(\R^2)$ be 
    such that the support of $\theta$ is contained 
    in the ball $B_R(0)$. Since $\uu$ is $1$-periodic in $x_1$, we can assume that $\x \in \Pi_p$. Then,
    \begin{align*}
        \abs{\uu(\x)}
            &\le \sum_{\abs{j} \le 2 R} \abs{K^\al(\cdot - \n_j) * \theta(\x)}
                + \sum_{\abs{j} > 2 R} \abs{K^\al(\cdot - \n_j) * \theta(\x)}.
    \end{align*}
    But from \cref{e:uLinfSingle},
    \begin{align*}
        \sum_{\abs{j} \le 2 R} \abs{K^\al(\cdot - \n_j) * \theta(\x)}
            &\le C (2R + 1) \norm{\theta}_{L^1 \cap L^\iny}
            \le C (2R + 1)^3 \norm{\theta}_{L^\iny},
    \end{align*}
    and
    \begin{align*}
        &\sum_{\abs{j} > 2 R} \abs{K^\al(\cdot - \n_j) * \theta(\x)}
            \le \sum_{\abs{j} > 2 R}
                    \norm{K^\al(\x - \n_j)}_{L^\iny_\x(\supp \theta)}
                        \norm{\theta}_{L^1} \\
            &\qquad
            \le \sum_{\abs{j} > 2 R}
                    \frac{\al c_\al}{\abs{\x - \n_j}^{1 + \al}}
                        \norm{\theta}_{L^1}
            \le \sum_{\abs{j} > 2 R}
                    \frac{\al c_\al}{(\abs{j}/2)^{1 + \al}}
                        \norm{\theta}_{L^1}
            \le C (2R + 1)^2 \norm{\theta}_{L^\iny}.
            \qedhere
    \end{align*}
\end{proof}

We have the following bounds on $\grad^n R^\al$:
\begin{prop}\label{P:BoundsOnR}
    Let $\al \in (0, 1)$. For any $\x \in \R^2$ with $x_2 \ne 0$,
    \begin{align}\label{e:RalSimplified}
        \begin{split}
        R^\al(\x)
           &= c_\al \sum_{j \in \Z^*}^\iny 
                \brac{
                    \frac{1}{\abs{j}^\al}
                - 
                        \frac{1}{(x_2^2 + (x_1 - j)^2)
                            ^{\frac{\al}{2}}}
                }.
        \end{split}
    \end{align}
    Moreover, for all $\x \in \R^2 \setminus \L^*$,
    \begin{align}\label{e:RBoundOnR2MinusLStar}
        \abs{\grad^n R^\al(\x)}
            &\le
            \frac{C}{\dist(\x, \L^*)^{n + \al}}
            +
            C \begin{cases}
                \abs{x_2}^2 & \text{if } n = 0, \\
                \abs{x_2}^n & \text{if } n > 0.
            \end{cases}
    \end{align}
\end{prop}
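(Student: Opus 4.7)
To prove \cref{P:BoundsOnR}, my plan is to first establish the explicit formula \cref{e:RalSimplified} and then deduce the bound \cref{e:RBoundOnR2MinusLStar} by termwise differentiation. For the formula, I would introduce the candidate
\[
  S^\al(\x) := c_\al \sum_{j \in \Z^*}\!\left[\frac{1}{|j|^\al} - \frac{1}{|\x - \n_j|^\al}\right]
\]
and first show that this series converges uniformly on compact subsets of $\{x_2 \ne 0\}$ by expanding each summand for $|j|$ large:
\[
  \frac{1}{|j|^\al} - \frac{1}{|\x - \n_j|^\al} = -\frac{\al x_1}{j\,|j|^\al} + O\!\left(\frac{|\x|^2}{|j|^{\al+2}}\right).
\]
The leading term, being odd in $j$, cancels when the indices $j$ and $-j$ are summed together, while the remainder is absolutely summable. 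The same argument applied after termwise differentiation gives $\grad^\perp S^\al = -\sum_{j \in \Z^*}\grad^\perp G^\al(\cdot - \n_j) = -H^\al$ on $U^*$, and $S^\al(0) = 0$ summand-by-summand. Since $R^\al$ is characterized on the simply connected set $U^*$ as the unique stream function of $H^\al$ vanishing at the origin, this identifies $S^\al$ with $R^\al$ (up to the sign implicit in the $\grad^\perp$ convention), yielding \cref{e:RalSimplified}.

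For \cref{e:RBoundOnR2MinusLStar} with $n \ge 1$, termwise differentiation of the formula gives $\grad^n R^\al(\x) = -c_\al \sum_{j \in \Z^*} \grad^n(|\x - \n_j|^{-\al})$. Let $j^*(\x) \in \Z^*$ minimize $|\x - \n_{j^*}|$; the term $j = j^*$ contributes at most $C/\dist(\x, \L^*)^{n+\al}$, producing the first piece of the bound. For the remaining sum over $j \ne j^*$, I would exploit the evenness of $|\x - \n_j|^{-\al}$ in $x_2$: pairing indices about $x_1$ and writing the $n$-th order Taylor remainder in $x_2$ centered on the axis (whose lower-order terms cancel by parity) extracts a factor of $|x_2|^n$ multiplying a kernel of size $|j - j^*|^{-(\al+n+2)}$, which is summable to $C|x_2|^n$. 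For $n = 0$, the evenness $R^\al(x_1, x_2) = R^\al(x_1, -x_2)$ (immediate from the formula) gives $\prt_2 R^\al(x_1, 0) = 0$, so
\[
  R^\al(\x) - R^\al(x_1, 0) = \int_0^{x_2}\!\int_0^s \prt_2^2 R^\al(x_1, u) \, du\, ds,
\]
which is bounded by $C|x_2|^2$ using the $n = 2$ estimate just proved (with careful handling of the near-singular $j^*$ contribution under the $u$-integration); meanwhile $|R^\al(x_1, 0)|$ is estimated directly from the sum by the same pairing argument, giving $C/\dist(\x, \L^*)^\al$.

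The main obstacle is extracting the sharp $|x_2|^n$ growth in the far sum for $n \ge 1$: a naive pointwise bound on $|\grad^n(|\x - \n_j|^{-\al})|$ yields only a summable constant, not a factor of $|x_2|^n$. Achieving the correct power requires exploiting the $x_2$-parity of each summand---pairing $j$ with its reflection about $x_1$, or equivalently iterating the parity-based Taylor remainder $n$ times---to cancel the lower-order $x_2$ contributions. Coordinating this with the clean isolation of the singular $j = j^*$ term, so that the ``bulk'' estimate does not accumulate extraneous factors of $\dist(\x, \L^*)^{-1}$, is the delicate accounting at the heart of the proof.
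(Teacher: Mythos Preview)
Your derivation of \cref{e:RalSimplified} via the candidate sum $S^\al$ is correct and arguably cleaner than the paper's route, which computes the path integral \cref{e:RAsPathIntegral} explicitly along a radial segment and evaluates it in closed form. Either way works.

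The gap is in your treatment of \cref{e:RBoundOnR2MinusLStar}. You have misidentified the obstacle for $n\ge 1$: the ``naive pointwise bound'' you dismiss is in fact sufficient. Since $\dist(\x,\L^*)\le \tfrac32+|x_2|$ for every $\x\in\R^2$, the first term $C/\dist(\x,\L^*)^{n+\al}$ in the target bound is bounded \emph{below} by a universal positive constant whenever $|x_2|\le 1$, and for $|x_2|>1$ the second term $C|x_2|^n$ is at least $C$. Hence a constant bound on the far sum $\sum_{j\ne j^*}$ is automatically absorbed, and no Taylor/parity extraction of $|x_2|^n$ is needed. The paper simply restricts to the strip $\Pi_p$ (using that $G^\al_p$ is periodic, so $R^\al$ differs from its periodization only by $G^\al$ and an affine correction), and on $\Pi_p$ with $|x_2|>1$ the crude termwise bound $|\grad^n(|\x-\n_j|^{-\al})|\le C|x_2|^n/|j|^{n+2}$ already sums to $C|x_2|^n$.

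Your $n=0$ argument has a genuine circularity. You propose to bound $R^\al(\x)-R^\al(x_1,0)$ by $\int_0^{x_2}\!\int_0^s |\prt_2^2 R^\al(x_1,u)|\,du\,ds$ using ``the $n=2$ estimate just proved''. But that estimate gives $|\prt_2^2 R^\al(x_1,u)|\le C/\dist^{2+\al}+C|u|^2$, and integrating $C|u|^2$ twice yields $C|x_2|^4$, not $C|x_2|^2$. To salvage this you would need the sharper decay $|\prt_2^2 R^\al(x_1,u)|=O(|u|^{-1-\al})$ for large $|u|$, which is true but is not what \cref{e:RBoundOnR2MinusLStar} asserts. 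Separately, $R^\al(x_1,0)$ blows up when $x_1\in\Z^*$, so it cannot be controlled by $C/\dist(\x,\L^*)^\al$ when $|x_2|$ is large but $x_1$ is near an integer; you would have to split off the $j^*$ summand \emph{before} integrating. The paper avoids all of this by estimating each summand
\[
T_j=\frac{1}{|j|^\al}-\frac{1}{|\x-\n_j|^\al}
\]
directly via the elementary inequality $(1+x)^b\le 1+bx$ for $b\in(0,1)$, obtaining $|R^\al(\x)|\le C(|\x|/|x_2|)^\al|\x|^2$ on $\{x_2\ne 0\}$, which on $\Pi_p\cap\{|x_2|>1\}$ gives $C|x_2|^2$ immediately.
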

\begin{proof}
    See \cref{A:RalBounds}.
\end{proof}

\subsection{Constitutive law for $\theta$ in the periodic strip $\Pi$}\label{S:ConstForPi}

To develop the constitutive law in $\Pi$, let us first define convolution on $\Pi$: If $f, g \colon \Pi \to \R$ then
\begin{align*}
    f * g (\x)
        &:= \int_\Pi f(\x - \y) g(\y) \, d \y,
\end{align*}
where $\x - \y$ is treated as defined in \cref{D:Identification}.

To work in $\Pi$, we take $\theta_\Pi \in L_c^\iny(\Pi)$ and solve, on $\R^2$,
\begin{align}\label{e:GreensEqPi}
 	-(-\Delta)^{1 - \frac{\al}{2}} \psi = \Rep(\theta_\Pi)
\end{align}
by finding a Green's function $G^\al_\Pi$ defined on $\Pi$ so that
\begin{align}\label{e:BSLawOnPi}
    \psi = \Rep(G^\al_\Pi * \theta_\Pi), \quad
    \uu = \grad^\perp \psi = \Rep(K^\al_\Pi * \theta_\Pi),
\end{align}
where
\begin{align}\label{e:KalpPi}
    K^\al_\Pi := \grad^\perp G^\al_\Pi.
\end{align}

This process is almost identical to that expressed in \crefrange{e:GreensEqR2}{e:Kalp}. Indeed, if we repeat the same analysis in \cref{S:ConstForPeriodic} on $\Pi$, we will find that 
\begin{align}\label{e:GKR2AndPi}
    G^\al_p = G^\al_\Pi \circ \Cover, \quad
    K^\al_p = K^\al_\Pi \circ \Cover,
\end{align}
which relies on the observation that $K^\al_p$ and $G^\al_p$ are \periodic.

%
%
\section{The constitutive law for a patch}\label{S:ConstitutiveLawPatch}

\noindent In this section, we develop the constitutive law as specialized to patch data.

\subsection{Constitutive law for a single patch}
First, we consider the interpretation of the integrals in the constitutive law for a single (that is, non-periodic) patch. As in  (10) of \cite{Gancedo2008}, we have the following:

\begin{lemma}\label{L:ConstLawSingle}
    Let $\Omega$ be a bounded domain in $\R^2$ with $C^1$ boundary parameterized by the chain $\bgamma \colon \Tn \to \R^2$. Let $\al \in (0, 1]$ and $\x \in \R^2$. If $\al = 1$, assume that $\x \notin \prt \Omega$.
    Then
    \begin{align}\label{e:BSLawSinglePatch}
    	\uu(\x)
    		&  = \int_\Tn
    					G^\al(\x - \bgamma(\beta))\prt_\beta \bgamma(\beta)
    					\, d \beta
    \end{align} 
    satisfies the constitutive law in \SQGConst.
\end{lemma}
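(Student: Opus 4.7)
\medskip
\noindent\textbf{Proof plan.} The identity \cref{e:BSLawSinglePatch} is a boundary-integral reformulation of the area integral $\uu(\x) = (K^\al * \CharFunc_\Omega)(\x) = \int_\Omega K^\al(\x - \y)\,d\y$, obtained via the rotated divergence theorem. Since $K^\al = \grad^\perp G^\al$ by \cref{e:Kalpha} and $\grad_\y G^\al(\x-\y) = -(\grad G^\al)(\x-\y)$, we may rewrite $K^\al(\x-\y) = \pm \grad_\y^\perp G^\al(\x-\y)$. The rotated divergence theorem
\[
\int_\Omega \grad^\perp f\,d\y = \int_{\prt\Omega} f\, \BoldTau\, ds,
\]
with $\BoldTau = \n^\perp$ as fixed in \cref{D:ChainLift}, then converts the area integral into $\pm\int_{\prt\Omega} G^\al(\x-\y)\,\BoldTau\, ds$, and parameterizing $\prt\Omega$ by $\bgamma$ (so that $\BoldTau\, ds = \bgamma'(\beta)\,d\beta$) delivers, up to the sign conventions encoded in \cref{D:ChainLift}, the right-hand side of \cref{e:BSLawSinglePatch}.

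\medskip
The argument must be adapted to the location of $\x$ relative to $\Omega$ because of the singularity of $G^\al$ at the origin. If $\x \notin \ol{\Omega}$, then $\y \mapsto G^\al(\x-\y)$ is smooth on $\ol{\Omega}$ and the divergence theorem applies immediately. For $\x \in \Omega$ with $\al \in (0,1)$, the kernel $K^\al$ is locally integrable (the bound underlying \cref{e:uLinfSingle}), so $\int_\Omega K^\al(\x-\y)\,d\y$ converges absolutely; excising a small ball $B_\eps(\x) \subset \Omega$ and applying the divergence theorem on $\Omega \setminus B_\eps(\x)$ produces an additional boundary contribution over $\prt B_\eps(\x)$ bounded by $2\pi\eps \cdot c_\al \eps^{-\al} = O(\eps^{1-\al}) \to 0$. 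The boundary integral over $\prt\Omega$ is independent of $\eps$, so letting $\eps \to 0$ establishes the identity. For $\x \in \prt\Omega$ and $\al \in (0,1)$, both sides remain well defined by local integrability (since the singularity on $\prt\Omega$ has order $\al < 1$), and the identity extends by approximating $\x$ from the exterior and invoking dominated convergence.

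\medskip
The genuinely subtle case is $\al = 1$ with $\x \in \Omega$, where $K^1$ is not locally integrable and $\uu(\x)$ must be interpreted as the principal value $\lim_{\eps \to 0}\int_{\Omega \setminus B_\eps(\x)} K^1(\x-\y)\,d\y$. The same divergence-theorem argument applies on $\Omega \setminus B_\eps(\x)$, but the boundary contribution over $\prt B_\eps(\x)$ must now be evaluated exactly rather than bounded as an error. The key observation is that $G^1(\x-\y) = c_1/\eps$ is constant on $\prt B_\eps(\x)$, so
\[
\int_{\prt B_\eps(\x)} G^1(\x-\y)\,\BoldTau(\y)\,ds = \frac{c_1}{\eps}\int_{\prt B_\eps(\x)} \BoldTau\,ds = 0,
\]
since $\BoldTau\,ds = d\bgamma$ integrates to zero around any closed curve. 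The limit $\eps \to 0$ therefore leaves exactly the boundary integral over $\prt\Omega$, completing the lemma. This exact cancellation on the small circle is the one nontrivial ingredient; elsewhere the proof is a routine application of the divergence theorem.
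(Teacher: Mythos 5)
Your proof takes the same underlying route as the paper: rewrite $\uu(\x)$ as the area integral $\int_\Omega \grad^\perp G^\al(\x-\y)\,d\y$ and convert it to a boundary integral via the rotated divergence theorem, using the tangent-vector convention $\BoldTau = \n^\perp$ from \cref{D:ChainLift}. But you are considerably more careful about the justification of the integration by parts in the presence of the singularity. The paper's proof simply writes down the divergence theorem and, for $\al = 1$, asserts that since $\x \notin \prt\Omega$ the quantity $\abs{\x - \y}$ is bounded away from zero for $\y$ in $\supp\theta$ --- but that assertion is true only when $\x$ lies \emph{outside} $\ol{\Omega}$, leaving the case $\al = 1$, $\x \in \Omega$ unaddressed.

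Your excision argument closes that gap. For $\al < 1$ and $\x \in \Omega$ you apply the divergence theorem on $\Omega \setminus B_\eps(\x)$ and bound the extra boundary term by $O(\eps^{1-\al}) \to 0$; this is standard but worth recording. For $\al = 1$ and $\x \in \Omega$ you correctly identify $\uu(\x)$ as a principal value (since $\abs{K^1} \sim \abs{\z}^{-2}$ is not locally integrable in two dimensions), and the new ingredient is the exact cancellation
\[
\int_{\prt B_\eps(\x)} G^1(\x-\y)\,\BoldTau\,ds = \frac{c_1}{\eps}\oint_{\prt B_\eps(\x)}\BoldTau\,ds = 0,
\]
which holds for every $\eps$, not just in the limit, because $G^1(\x-\y)$ is constant on the circle and $\oint\BoldTau\,ds$ vanishes around any closed curve. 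This is the single piece of content the paper's proof is genuinely missing, and your observation supplies it cleanly. The remaining case you treat, $\x \in \prt\Omega$ with $\al < 1$, is also sound: both sides are absolutely convergent there (the 1D singularity in the contour integral has order $\al < 1$) and the identity extends by approximation and dominated convergence. In short, same decomposition, same key lemma (divergence theorem), but a complete treatment of the singularity that the paper leaves implicit and, in the $\al = 1$, $\x\in\Omega$ case, leaves out entirely.
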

\begin{proof}
We derive this as for the Euler equations (see Chapter 8 of \cite{MB2002}). 
From \cref{e:ConstLawSingle},
\begin{align*}
    \uu(\x)
        &= \int_{\R^2} K^\al(\x - \y) \theta(\y) \, d \y
        = \int_{\Omega} \grad^\perp G^\al(\x - \y) \, d \y.
\end{align*}
For $\al \in [0, 1)$, $K^\al$ is locally integrable, so the first equality holds for all $\x \in \R^2$. For $\al = 1$, as long as $\x \notin \prt \Omega$, $\abs{\x - \y}$ is bounded away from zero for $\y$ in the support of $\theta$, and again the first equality holds.

Integrating by parts, and parameterizing the boundary by arc length, $s$,
\begin{align*}
    \int_{\Omega} \prt_i G^\al(\x - \y) \, d \y
        &= \int_{\prt \Omega} G^\al(\x - \y(s))
            n^i(\y(s)) \, d s,
\end{align*}
so
\begin{align*}
    \uu(\x)
        &= \int_\Omega
            (-\prt_2 G^\al(\x - \y), \, \prt_1 G(\x - \y)) \, d \y
        = \int_{\prt \Omega}
            G^\al(\x - \y(s)) \n^\perp(\y(s)) \, d s \\
        &= \int_\Tn
				G^\al(\x - \bgamma(\beta))
                \prt_\beta \bgamma(\beta) \, d \beta.
\end{align*}
In the last equality, we used that
\begin{align*}
    \n^\perp(\y(s)) \, d s
        &= \BoldTau(\y(s)) \, d s
        = \prt_\beta \bgamma(\beta)
					\, d \beta.
        \qedhere
\end{align*}
\end{proof}

\subsection{Constitutive law for a patch in $\Pi$}
We can derive the analog of \cref{e:BSLawSinglePatch} in the same manner as for a non-periodic patch, giving \cref{L:ConstLawPi}.
\begin{lemma}\label{L:ConstLawPi}
    Let $\Omega_\Pi$ be a domain in $\Pi$ with $C^1$ boundary parameterized by the chain $\bgamma_\Pi \colon \Tn \to \Pi$. Let $\al \in (0, 1]$ and $\x \in \Pi$. If $\al = 1$, assume that $\x \notin \prt \Omega_\Pi$.
    Then
    \begin{align}\label{e:uGintPi}
    	\uu(\x)
    		&=
    				\int_\Tn
    				G^\al_\Pi(\x - \bgamma_\Pi(\beta))
                        \prt_\beta \bgamma_\Pi(\beta)
    					\, d \beta
    \end{align}
    satisfies the constitutive law in \cref{e:BSLawOnPi}, which we can write as
    \begin{align*} 
        \uu
            &= - \grad^\perp (-\Delta)^{-(1 - \frac{\al}{2})}
                \CharFunc_{\Rep(\Omega_\Pi)}.
    \end{align*}
\end{lemma}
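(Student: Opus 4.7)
The plan is to mirror the proof of \cref{L:ConstLawSingle} in the periodic setting by lifting to a suitable representation $\Omega \subseteq \R^2$ of $\Omega_\Pi$, applying the same integration-by-parts argument with the periodic Green's function $G^\al_p$, and then descending back to $\prt \Omega_\Pi$ via \cref{L:LiftedBoundaryIntegral} together with the identity $G^\al_p = G^\al_\Pi \circ \Cover$ from \cref{e:GKR2AndPi}.

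First, I fix a suitable representation $\Omega$ of $\Omega_\Pi$ and a suitable lift $\bgamma\colon\Tn\to\R^2$ of $\bgamma_\Pi$ as provided by \cref{L:LiftedBoundaryIntegral,D:SuitableRep}. Since $\CharFunc_{\Rep(\Omega_\Pi)} = \Rep_{\R^2}(\CharFunc_\Omega)$ and $\Omega$ is bounded, the periodic Biot-Savart formulation \cref{e:PeriodicBSLaw} reduces to the finite $\R^2$ convolution
\begin{align*}
    \uu(\x)
        = (K^\al_p * \CharFunc_\Omega)(\x)
        = \int_\Omega K^\al_p(\x - \y)\,d\y,
\end{align*}
which is well defined by \cref{L:uLinfPeriodic}.

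Next, I run the calculation from the proof of \cref{L:ConstLawSingle} with $G^\al$ replaced by $G^\al_p = G^\al + R^\al$. For $\al \in (0,1)$, $K^\al_p$ is locally integrable in two dimensions since the singularity of $K^\al$ at the origin is of order $\al + 1 < 2$, and the derivatives of $R^\al$ produce singularities of the same order at the remaining lattice points, as controlled by \cref{P:BoundsOnR}. For $\al = 1$ the hypothesis $\x \notin \prt \Omega_\Pi$ keeps $\x - \y$ bounded away from $\L$ as $\y$ traces $\prt \Omega$, so the integrand remains bounded. In either case, differentiating under the integral and applying the divergence theorem on the bounded domain $\Omega$ yields
\begin{align*}
    \uu(\x)
        = \int_\Omega \grad^\perp G^\al_p(\x - \y)\,d\y
        = \int_{\prt\Omega} G^\al_p(\x - \y(s))\,\BoldTau(\y(s))\,ds,
\end{align*}
where the $i$-th component of the integrand equals $\vv^i \cdot \BoldTau$ for the vector field $\vv^i(\y) := G^\al_p(\x - \y)\,\e_i$.

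Finally, I descend to $\Pi$. Because $G^\al_p = G^\al_\Pi \circ \Cover$, each $\vv^i$ takes the form $\vv^i_\Pi \circ \Cover$ with $\vv^i_\Pi(\w) := G^\al_\Pi(\x - \w)\,\e_i$, so by the tangential-integration statement in \cref{L:LiftedBoundaryIntegral} the contributions from the vertical segments in $\prt\Omega \setminus \image\bgamma$ cancel in pairs, and
\begin{align*}
    \uu(\x)
        = \int_\Tn G^\al_p(\x - \bgamma(\beta))\,\prt_\beta\bgamma(\beta)\,d\beta
        = \int_\Tn G^\al_\Pi(\x - \bgamma_\Pi(\beta))\,\prt_\beta\bgamma_\Pi(\beta)\,d\beta,
\end{align*}
the last equality coming from $\Cover \circ \bgamma = \bgamma_\Pi$ and the identification of \cref{D:Identification}. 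The equivalence $\uu = -\grad^\perp(-\Delta)^{-(1 - \al/2)}\CharFunc_{\Rep(\Omega_\Pi)}$ is then immediate from \cref{e:BSLawOnPi} together with $\Rep_{\R^2}(\CharFunc_\Omega) = \CharFunc_{\Rep(\Omega_\Pi)}$.

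The hardest step is the integration by parts at $\al = 1$, where $K^\al_p$ ceases to be locally integrable; the hypothesis $\x \notin \prt \Omega_\Pi$ is precisely what avoids a principal-value treatment by keeping the integrand bounded away from its singularities. A secondary care-point is to verify that only \emph{tangential} boundary integrals enter, so that the vertical segments introduced by the non-uniqueness of suitable lifts cancel in pairs as explained in the remark following \cref{L:LiftedBoundaryIntegral}.
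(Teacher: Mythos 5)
Your proof is correct, but it takes a genuinely different route from the paper's. The paper treats \cref{L:ConstLawPi} as the direct analogue of \cref{L:ConstLawSingle}, proved by running the identical integration-by-parts argument on the periodic strip $\Pi$ itself: one writes $\uu(\x) = \int_{\Omega_\Pi}\grad^\perp G^\al_\Pi(\x - \y)\,d\y$ with the convolution on $\Pi$ from \cref{S:ConstForPi} and applies the divergence theorem on the bounded domain $\Omega_\Pi \subset \Pi$, with no lifting at all; \cref{L:ConstLawPeriodic} is then \emph{deduced} from \cref{L:ConstLawPi} by lifting up to $\R^2$ via \cref{L:LiftedBoundaryIntegral}. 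You reverse this order of inference: you pass up to a suitable representation $\Omega \subset \R^2$ first, run the \cref{L:ConstLawSingle} argument there with $G^\al_p$ in place of $G^\al$, and then use \cref{L:LiftedBoundaryIntegral} together with $G^\al_p = G^\al_\Pi \circ \Cover$ to descend to $\Pi$. In effect you prove what amounts to \cref{L:ConstLawPeriodic} first and derive \cref{L:ConstLawPi} from it. Both directions are legitimate since \cref{L:LiftedBoundaryIntegral} is an independent lemma, but your route buys the classical $\R^2$ divergence theorem at the cost of the extra bookkeeping of the lift, notably the cancellation of the vertical segments of $\prt\Omega \setminus \image\bgamma$; the paper's direct argument on $\Pi$ avoids that complication, which is why it is presented in that order. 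One technical caveat: \cref{L:LiftedBoundaryIntegral} as stated requires $\vv$ continuous on all of $\Pi$, yet $G^\al_\Pi(\x - \cdot)$ is singular at the $\Pi$-image of $\x$. For $\x \notin \prt\Omega_\Pi$ (mandatory when $\al = 1$) a suitable lift can be chosen so the vertical segments avoid the singularity and the cancellation goes through; for $\al \in (0,1)$ with $\x \in \prt\Omega_\Pi$ one strictly needs an approximation argument exploiting that the singularity is integrable. This matches the level of precision the paper itself uses in the proof of \cref{L:ConstLawSingle}, so it is a consistent omission rather than a gap in your argument.
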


\begin{remark}\label{R:IntegrandMeaning}
    In \cref{e:uGintPi}, $\x - \bgamma_\Pi(\beta)$ is treated as in \cref{D:Identification}, and                       $\prt_\beta \bgamma_\Pi(\beta)$ as a vector in $\R^2$. The integral then produces a vector field on $\Pi$.
\end{remark}

\subsection{Constitutive law for a periodic patch}\label{S:ConstLawPeriodic}
To obtain the constitutive law for a periodic patch, we have to assume that the patch is of the form $\Rep(\Omega_\Pi)$ for some domain in $\Pi$. For then, if $\Omega$ is a suitable representation of $\Omega_\Pi$, the periodic patch will be formed by replicating copies of $\Omega$ translated horizontally by every integer value, giving $\Rep(\Omega)$,  and those replications will not overlap each other. This leads to \cref{L:ConstLawPeriodic}.

\begin{lemma}\label{L:ConstLawPeriodic}
    Let $\Omega_\Pi$ be a domain in $\Pi$ with $C^1$ boundary parameterized by the chain $\bgamma_\Pi \colon \Tn \to \Pi$. Let $\Omega$ be a suitable representation of $\Omega_\Pi$ and $\bgamma \colon \Tn \to \R^2$ a suitable lift of $\bgamma_\Pi$ as in \cref{D:SuitableRep}. Let $\al \in (0, 1]$ and $\x \in \R^2$. If $\al = 1$, assume that $\x \notin \prt \Omega$.
    Then
    \begin{align}\label{e:uGintPeriodic}
    	\uu(\x)
    		&=
    				\int_\Tn
    				G^\al_p(\x - \bgamma(\beta))\prt_\beta \bgamma(\beta)
    					\, d \beta
    \end{align}
    satisfies the constitutive law in \cref{e:BSLawOnPi}, which we can write as
    \begin{align*}
            \uu
            &= \grad^\perp (-\Delta)^{-(1 - \frac{\al}{2})}
                \CharFunc_{\Rep(\Omega_\Pi)}.
    \end{align*}
\end{lemma}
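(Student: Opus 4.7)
The plan is to reduce the identity to \cref{L:ConstLawPi} on the periodic strip by pulling the integral back through the covering map $\Cover$. The two essential ingredients are the kernel identification $G^\al_p = G^\al_\Pi \circ \Cover$ from \cref{e:GKR2AndPi} and the defining property $\Cover \circ \bgamma = \bgamma_\Pi$ of a lift (\cref{D:PathLift,D:ChainLift}).

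The first step is to establish two elementary compatibility identities. Using the $1$-periodicity of $\Cover$ in $x_1$, a short calculation with the floor function in \cref{e:CoveringMap} shows that $\Cover$ respects subtraction in the sense of \cref{D:Identification}, giving $\Cover(\x - \bgamma(\beta)) = \Cover(\x) - \bgamma_\Pi(\beta)$; combined with \cref{e:GKR2AndPi}, this yields $G^\al_p(\x - \bgamma(\beta)) = G^\al_\Pi(\Cover(\x) - \bgamma_\Pi(\beta))$. Second, since $\bgamma$ is continuous off at most one point per component of $\Tn$ and $\Cover$ is locally the identity away from the seam $\set{x_1 = \pm \tfrac{1}{2}}$, differentiating $\bgamma_\Pi = \Cover \circ \bgamma$ gives $\prt_\beta \bgamma(\beta) = \prt_\beta \bgamma_\Pi(\beta)$ as vectors in $\R^2$ for all but finitely many $\beta$, which is all that is needed inside the integral.

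Substituting both identities into \cref{e:uGintPeriodic} rewrites the integral as
\[
\int_\Tn G^\al_\Pi(\Cover(\x) - \bgamma_\Pi(\beta)) \, \prt_\beta \bgamma_\Pi(\beta) \, d \beta.
\]
By \cref{L:ConstLawPi}, this equals $\uu_\Pi(\Cover(\x))$ for the velocity field $\uu_\Pi$ on $\Pi$ associated with scalar $\CharFunc_{\Omega_\Pi}$ via \cref{e:BSLawOnPi}. By the definition of $\Rep$, $\uu_\Pi \circ \Cover = \Rep(\uu_\Pi)$, so combining \cref{e:BSLawOnPi} with $\Rep(\CharFunc_{\Omega_\Pi}) = \CharFunc_{\Rep(\Omega_\Pi)}$ produces the stated constitutive law.

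The only point requiring real care is the $\al = 1$ case, in which $G^1_\Pi$ is genuinely singular when its argument lies on $\prt \Omega_\Pi$: the hypothesis $\x \notin \prt \Omega$ is invoked precisely to ensure $\Cover(\x) \notin \bgamma_\Pi(\Tn)$, so that the transformed integrand remains integrable and \cref{L:ConstLawPi} applies verbatim. I would also flag that \emph{suitability} of the lift plays no direct role in the pointwise integral identity itself; it enters only in the background, guaranteeing that $\bgamma$ parameterizes the boundary of an actual representation $\Omega$ and that $\Rep(\Omega_\Pi) = \Rep_{\R^2}(\Omega)$, so that the right-hand side of the constitutive law is the characteristic function of the correctly replicated patch in $\R^2$.
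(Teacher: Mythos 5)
Your proposal is correct and takes essentially the same route as the paper, which proves this lemma in one line by citing \cref{L:LiftedBoundaryIntegral} together with \cref{L:ConstLawPi}. What you have done is unpack the content of \cref{L:LiftedBoundaryIntegral} (really, of \cref{L:Lift}) inline: the two compatibility identities you derive---$\Cover(\x - \bgamma(\beta)) = \Cover(\x) - \bgamma_\Pi(\beta)$ and $\prt_\beta \bgamma = \prt_\beta \bgamma_\Pi$ off a finite set---are exactly what make \cref{e:PathLiftIntVector} hold, so you are re-proving the lifting identity rather than citing it. That is a legitimate, more self-contained variant. Your concluding observation that suitability of the lift does no work in the integral identity per se (any lift of the chain gives the same equality) but is what guarantees $\Rep_{\R^2}(\Omega) = \Rep(\Omega_\Pi)$ is accurate and consistent with the remark following \cref{L:LiftedBoundaryIntegral}.

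One small caveat on the $\al = 1$ case: you assert that $\x \notin \prt \Omega$ ``ensures $\Cover(\x) \notin \bgamma_\Pi(\Tn)$,'' but this implication fails in general. If $\Cover(\x) \in \prt\Omega_\Pi$, then $\x - \bgamma(\beta) \in \L$ for some $\beta$; this lattice vector need not be $0$, so $\x$ can avoid $\prt\Omega$ while $\x - \n_j$ lies on $\prt\Omega$ for some $j \ne 0$, in which case the singularity in $G^1_p$ at $\n_j$ still renders the contour integral in \cref{e:uGintPeriodic} divergent. The correct hypothesis is really $\Cover(\x) \notin \prt\Omega_\Pi$, equivalently $\x \notin \Rep(\prt\Omega_\Pi)$, which is stronger than $\x \notin \prt\Omega$. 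That said, this imprecision is inherited from the statement of the lemma itself, and the paper's one-line proof glosses over it as well; it is not a flaw unique to your argument.
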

\begin{proof}
    Follows by applying \cref{L:LiftedBoundaryIntegral} to \cref{L:ConstLawPi}.
\end{proof}

%
%
\section{The contour dynamics equation (CDE)}\label{S:CDE}

\subsection{CDE for a single patch}\label{S:CDESingle}

The CDE for a single patch in $\R^2$ is developed in \cite{Gancedo2008}, paralleling that for the Euler equations. We can write it, for $\al \in (0, 1)$, as
\begin{align*} 
	\prt_t \bgamma(t, \eta)
		&= \int_\Tn
				G^\al(\bgamma(t, \eta) - \bgamma(t, \beta))
					\prt_\beta \bgamma(t, \beta)
					\, d \beta,
\end{align*}
where $\bgamma$ is a $C^1$-parameterization of the patch boundary.

\subsection{CDE for a patch in the periodic strip $\Pi$}\label{S:CDEOnPi}

For $\al \in (0, 1)$, \cref{{L:ConstLawPi}} leads directly to a CDE by writing $\x \in \prt \Omega_\Pi(t)$ as $\x = \bgamma_\Pi(t, \eta)$ for some $\eta \in \Tn$. This yields
\begin{align}\label{e:PreliminaryCDEPi}
	\prt_t \bgamma_\Pi(t, \eta)
		&= \int_\Tn
				G^\al_\Pi(\bgamma_\Pi(t, \eta) - \bgamma_\Pi(t, \beta)) \prt_\beta \bgamma_\Pi(t, \beta)
					\, d \beta.
\end{align}
See \cref{R:IntegrandMeaning} for the interpretation of the integrand in \cref{e:PreliminaryCDEPi} and the expressions that follow. We note that this derivation is formal; if we assume sufficient time regularity of the boundary, though, it follows immediately.

For $\al = 1$, however, the integral in \cref{e:PreliminaryCDEPi} is not convergent, since the singularity in $G^\al_\Pi$, which comes from $G^\al$, is like $C \abs{\eta - \beta}^{-1}$ in the contour integral, which is not integrable in one-dimension. Following \cite{Resnick,Rodrigo2005}, since it is only the normal component of the velocity field that determines the motion of the boundary, we can subtract from the velocity field $\prt_t \bgamma_\Pi(t, \eta)$ in \cref{e:PreliminaryCDEPi} any multiple of the tangential velocity field on the boundary. This is done in a limiting sense as $\x \to \prt \Omega_\Pi(t)$ in \cite{Gancedo2008}, or we can simply do it formally, giving a CDE that applies for all $\al \in (0, 1]$:
\begin{align}\label{e:CDEOnPiProto}
	\prt_t \bgamma_\Pi(t, \eta)
		&= \int_\Tn
				G^\al_\Pi(\bgamma_\Pi(t, \eta) - \bgamma_\Pi(t, \beta))
					(\prt_\eta \bgamma_\Pi(t, \eta) - \prt_\beta \bgamma_\Pi(t, \beta))
					\, d \beta.
\end{align}

The term $\prt_\eta \bgamma_\Pi(t, \eta)$ contributes nothing to the normal velocity field, since $\prt_\beta \bgamma_\Pi(t, \eta) \cdot \prt_\beta \bgamma_\Pi(t, \eta)^\perp = 0$. For $\al = 1$, it eliminates the singularity in $G^\al_\Pi$ that appears in $G^\al$ in the decomposition of $G^\al_\Pi$ in \cref{e:Galp}. (Also see \cref{R:IBPTerm}.)

Making the change of variables, $\beta \mapsto \eta - \beta$,
we can write \cref{e:CDEOnPiProto} as 
\begin{align*}
    \prt_t \bgamma_\Pi(t, \eta)
        &= \int_\Tn G^\al_\Pi(\bgamma_\Pi(t, \eta)
                - \bgamma_\Pi(t, \eta - \beta))
			(\prt_2 \bgamma_\Pi(t, \eta)
                - \prt_2 \bgamma_\Pi(t, \eta - \beta))
			\, d \beta.
\end{align*}
Or, paralleling (13) of \cite{Gancedo2008}, we have $\prt_t \bgamma_\Pi(t) = L_\Pi(\bgamma_\Pi(t))$, where
\begin{align}\label{e:CDEOnPi}
    L_\Pi(\bgamma_\Pi)(\eta)
        &:= \int_\Tn G^\al_\Pi(\bgamma_\Pi(\eta)
                - \bgamma_\Pi(\eta - \beta))
			(\prt_\eta \bgamma_\Pi(\eta)
                - \prt_\eta \bgamma_\Pi(\eta - \beta))
			\, d \beta.
\end{align}

\subsection{CDE for a periodic patch}\label{S:CDEPeriodic}

To obtain a CDE equation for a periodic patch, we assume that we have a solution $\theta$ to \SQG for which, up to some time $T > 0$,
\begin{align*}
    \theta(t) = \CharFunc_{\Rep(\Omega_\Pi(t))},
\end{align*}
where $\prt \Omega_\Pi(t)$ is $C^1$. We then let $\Omega(t)$ be a suitable lifting of $\Omega_\Pi(t)$ as in Section 2.4 of \cite{AHK} and let $\bgamma(t)$ be a $C^1$ parameterization of it.

In light of \cref{e:GKR2AndPi}, we should expect that the CDE equation for a periodic patch should have the same form as that for a patch on $\Pi$. Hence, we simply define the CDE exactly as in \cref{{e:CDEOnPi}}, now for a curve $\bgamma(t)$ in $\R^2$, so that $\prt_t \bgamma(t) = L(\bgamma(t))$, where
\begin{align*}
	L(\bgamma)(\eta)
		&:= \int_\Tn
				G^\al_p(\bgamma(\eta) - \bgamma(\eta - \beta))
					(\prt_\eta \bgamma(\eta) - \prt_\eta \bgamma(\eta - \beta))
					\, d \beta.
\end{align*}
In the analysis of periodic CDE patches that follow, we will write the operator $L$ more concisely as,
\begin{align}\label{e:CDEPeriodic}
    \begin{split}
    L(\bgamma)(\eta)
        &= \int_\Tn
            G^\al_p(\bdelta_\beta(\eta))
                \prt_\eta \bdelta_\beta(\eta)
                \, d \beta, \\
    \bdelta_\beta(\eta)
        &:= \bgamma(\eta) - \bgamma(\eta - \beta).
    \end{split}
\end{align}

\begin{remark}\label{R:IBPTerm}
    Subtracting $\prt_\eta \bgamma_\Pi(t, \eta)$ from \cref{e:PreliminaryCDEPi} to give \cref{e:CDEOnPiProto} was done to remove the singularity when $\al = 1$. It also plays a critical role for $\al \in (0, 1)$, however, for it allows us to apply \cref{C:IBP} to integrate by parts expressions involving the operator $L$.
\end{remark}

%
%
\section{CDE solution}\label{S:CDESolution}

Given that the CDEs for \SQG on $\Pi$ and on $\R^2$ are essentially identical, so too are the definitions of a CDE solution. The preparation of the initial data, however, is a somewhat delicate matter.

\begin{definition}\label{D:CDESolutionOnPi}
    For $\al \in (0, 1)$, let $\Omega_{\Pi, 0}$ be a bounded domain in the periodic strip $\Pi$ with $C^1$ boundary, and let $\bgamma_{\Pi, 0}$ be a $C^1$-parameterization of $\prt \Omega_{\Pi, 0}$ as in \cref{D:ChainLift}. A non-self intersecting chain $\bgamma_\Pi \colon [0, T] \times \Tn \to \Pi$ solving $\prt_t \bgamma_\Pi(t) = L_\Pi(\bgamma_\Pi(t))$, $\bgamma_\Pi(0) = \bgamma_{\Pi, 0}$,
    is called a \textbf{CDE solution on $\Pi$} to \SQG with initial patch $\Omega_{\Pi, 0}$. We also let $\Omega_\Pi(t)$ be the domain defined by $\bgamma_\Pi(t)$ and, in accordance with \cref{e:uGintPi}, let
    \begin{align*}
        \uu_\Pi(t, \x)
    		&=
    				\int_\Tn
    				G^\al_\Pi(\x - \bgamma_\Pi(t, \beta))\prt_\beta \bgamma_\Pi(t, \beta)
    					\, d \beta.
    \end{align*}
\end{definition}

\begin{definition}\label{D:CDESolutionPeriodic}
    For $\al \in (0, 1)$, let $\Omega_{\Pi, 0}$ be as in \cref{D:CDESolutionOnPi}.
    Let $\Omega_0$ be a suitable representation of $\Omega_{\Pi, 0}$ and $\bgamma_0 \colon \Tn \to \R^2$ a suitable lift of $\bgamma_{\Pi, 0}$ as in \cref{D:SuitableRep}.
    A non-self intersecting chain $\bgamma \colon [0, T] \times \Tn \to \R^2$ solving $\prt_t \bgamma(t) = L(\bgamma(t))$, $\bgamma(0) = \bgamma_0$,
    is called a \textbf{periodic CDE solution} to \SQG with initial patch $\Omega_0$. We also let $\Omega(t)$ be the domain defined by $\bgamma(t)$ and, in accordance with \cref{e:uGintPeriodic}, let
    \begin{align*}
        \uu(t, \x)
    		&=
    				\int_\Tn
    				G^\al_p(\x - \bgamma(t, \beta))\prt_\beta \bgamma(t, \beta)
    					\, d \beta.
    \end{align*}
\end{definition}

\begin{remark}\label{R:SolsSatisfyConstLaws}
    It follows from \cref{L:ConstLawPi,L:ConstLawPeriodic} that the expressions for $\uu_\Pi$, $\uu$ in \cref{D:CDESolutionOnPi,D:CDESolutionPeriodic} satisfy the constitutive laws as stated in \cref{L:ConstLawPi}, \cref{L:ConstLawPeriodic}, respectively. Also, in \cref{D:CDESolutionPeriodic}, we do not prescribe the regularity of $\bgamma$ (beyond being $C^1$) or $\prt_t \bgamma$ in \cref{D:CDESolutionPeriodic} (and similarly for \cref{D:CDESolutionOnPi}); the regularity will follow from our proof of existence (\cref{T:ExistenceAlphaLt1}), and will require us to explore the properties of the operator $L$ in some depth.
\end{remark}

Although we will not prove it, because we require the initial patch domain in \cref{D:CDESolutionPeriodic} to be a suitable lift, $\bgamma(t)$ continuing to avoid self-intersection is equivalent to $\Omega(t)$ continuing to be a suitable lift. From this it is easy to see that the following holds: 

\begin{prop}\label{P:CDESolEquivalence}
    If $\bgamma_\Pi$ is a CDE solution $\bgamma_\Pi$ on $[0, T] \times \Pi$ as in \cref{D:CDESolutionOnPi} then $\bgamma$ as given in \cref{D:CDESolutionPeriodic} is a periodic CDE solution. The converse holds as well.
\end{prop}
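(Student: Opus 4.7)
The plan is to exploit the identity $G^\al_p = G^\al_\Pi \circ \Cover$ recorded in \cref{e:GKR2AndPi}: under the natural correspondence $\bgamma_\Pi = \Cover \circ \bgamma$, the two ODE systems $\prt_t \bgamma_\Pi = L_\Pi(\bgamma_\Pi)$ and $\prt_t \bgamma = L(\bgamma)$ will turn out to be literally the same equation, so equivalence reduces to checking that each side transforms compatibly under projecting and lifting through $\Cover$. The two supporting facts are this projection identity for $G^\al_p$ and the commutation of lifting with $\eta$-differentiation off the (at most one) jump point, as in \cref{L:Lift}.

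For the direction periodic $\Rightarrow$ strip, I take a periodic CDE solution $\bgamma$ and set $\bgamma_\Pi(t) := \Cover \circ \bgamma(t)$. Since $\Cover$ is locally a translation by an element of $\L$, we have $\prt_t \bgamma_\Pi = \prt_t \bgamma$ and $\prt_\eta \bgamma_\Pi = \prt_\eta \bgamma$ in the sense of \cref{D:Identification}. Because differences of lifts project correctly,
\begin{align*}
    \Cover\bigl(\bgamma(\eta) - \bgamma(\eta - \beta)\bigr)
        = \bgamma_\Pi(\eta) - \bgamma_\Pi(\eta - \beta),
\end{align*}
so combining with $G^\al_p = G^\al_\Pi \circ \Cover$ shows the integrands in $L(\bgamma)$ and $L_\Pi(\bgamma_\Pi)$ agree pointwise in $\beta$. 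Hence $L(\bgamma) = L_\Pi(\bgamma_\Pi)$, the ODE for $\bgamma_\Pi$ and the initial condition $\bgamma_\Pi(0) = \bgamma_{\Pi,0}$ are immediate, and non-self-intersection of $\bgamma_\Pi$ follows from that of $\bgamma$ and the injectivity of $\Cover$ on the suitable representation $\Omega(t)$.

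For the reverse direction, given a CDE solution $\bgamma_\Pi$ on $\Pi$, I apply \cref{L:Lift,L:LiftedBoundaryIntegral} at each fixed time $t$ to lift $\bgamma_\Pi(t)$ to a chain $\bgamma(t)$ in $\R^2$, with the prescribed basepoint $\bgamma(0) = \bgamma_0$ fixing the lift uniquely. The same calculation in reverse shows $L(\bgamma(t)) = L_\Pi(\bgamma_\Pi(t))$, and then $\prt_t \bgamma = \prt_t \bgamma_\Pi = L(\bgamma)$.

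The main obstacle is showing that this pointwise-in-$t$ lift is jointly regular in $(t,\eta)$: the initial basepoint of each boundary component and the integer offset across the jump point $\z_0$ are $\L$-valued, hence discrete, and since $\bgamma_\Pi$ is continuous in $t$ they must remain constant in $t$, so the uniqueness clause in \cref{L:Lift} transfers the $t$-regularity of $\bgamma_\Pi$ directly to $\bgamma$. For non-self-intersection of $\bgamma(t)$ on all of $[0,T]$, I appeal to the equivalence asserted just before the proposition between $\bgamma(t)$ avoiding self-intersection and $\Omega(t)$ remaining a suitable lift of $\Omega_\Pi(t)$, an essentially topological statement that the paper invokes without proof.
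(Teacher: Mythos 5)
The paper gives no proof of this proposition: it is preceded only by the remark that, granting the (also unproved) equivalence between $\bgamma(t)$ avoiding self-intersection and $\Omega(t)$ remaining a suitable lift, the statement is ``easy to see.'' Your argument supplies exactly the details the paper elides, and it is sound: the pointwise identity $L(\bgamma)=L_\Pi(\Cover\circ\bgamma)$ is correct because $\Cover$ is locally an $\L$-translation (so $\eta$- and $t$-derivatives pass through unchanged as $\R^2$-vectors), differences of lifts project to differences in $\Pi$, and $G^\al_p=G^\al_\Pi\circ\Cover$, while the time-regularity of the lift is indeed forced by the discreteness of the $\L$-valued jump data together with continuity of $\bgamma_\Pi$ in $t$. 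You correctly isolate the one unproved ingredient (the suitable-lift equivalence) and flag that you are invoking it exactly as the paper does, so your proposal is consistent with the intended argument and fills a genuine gap in the exposition rather than diverging from it.
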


\section{CDE solutions are weak solutions}\label{S:CDEIsWeak}

\noindent In this section, we show that a periodic CDE solution $\bgamma$ as in \cref{D:CDESolutionPeriodic} gives a weak \SQG-\textit{patch} solution to \SQG as in \cref{D:PatchSolution}.

\begin{prop}\label{P:WeakIfCDE}
    Let $\al \in (0, 1]$ and suppose that  $(\uu, \theta)$ are such that
    $\theta \in L^\iny([0, T] \times \R^2)$ is of the
    form given in \cref{e:PatchForm}, where $\prt \Omega(t)$ has $C^1$ boundary for all
    $t \in [0, T]$, and $\uu$ satisfies the constitutive law,
     $\uu = K^\al_p * \theta$.
    Then $\Rep_{\R^2}(\theta)$ is  a weak periodic patch solution of \SQG as in \cref{D:PatchSolution} with initial scalar $\Rep_{\R^2}(\theta_0)$.
\end{prop}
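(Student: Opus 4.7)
The plan is to verify the weak formulation \cref{weak:01} for $\Rep_{\R^2}(\theta)$ by reducing it to a transport identity on the single domain $\Omega(t)$ and then applying Reynolds transport theorem and the divergence theorem. The crucial ingredient is a \emph{kinematic condition}: $\partial \Omega(t)$ moves with normal velocity $\uu \cdot \n$, which I would extract from the fact (implicit in the setup of this section) that $\bgamma$ satisfies the CDE $\partial_t \bgamma = L(\bgamma)$. Comparing \cref{e:CDEPeriodic} with \cref{e:uGintPeriodic} evaluated at $\x = \bgamma(\eta)$ (after the change of variables $\beta \mapsto \eta - \beta$), the difference $L(\bgamma)(\eta) - \uu(\bgamma(\eta))$ takes the form of a scalar multiple of the tangent vector $\bgamma'(\eta)$, and hence leaves the normal component unchanged.

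Next I would exploit periodicity to reduce to a single patch. Because $\phi \in C^1_c$ and $\theta(t, \cdot)$ has compact support, a change-of-variables/Fubini argument yields, for any $f$ for which the integrals converge,
\begin{align*}
    \int_{\R^2} \Rep_{\R^2}(\theta)(t, \x) f(\x) \, d\x = \int_{\Omega(t)} \Rep_{\R^2}(f)(\y) \, d\y.
\end{align*}
Applying this with $f = \partial_t \phi + \uu \cdot \nabla \phi$, and using periodicity of $\uu$ to rewrite $\Rep_{\R^2}(\uu \cdot \nabla \phi) = \uu \cdot \nabla \Phi$ where $\Phi := \Rep_{\R^2}(\phi)$ is $C^1$, bounded, and periodic, reduces \cref{weak:01} to
\begin{align*}
    \int_0^T \int_{\Omega(t)} \left[\partial_t \Phi + \uu \cdot \nabla \Phi\right] \, d\x \, dt = \int_{\Omega(T)} \Phi(T) \, d\x - \int_{\Omega(0)} \Phi(0) \, d\x.
\end{align*}
This identity follows from Reynolds transport theorem (applicable because $\Omega(t)$ has $C^1$ boundary evolving with normal velocity $\uu \cdot \n$): the boundary term $\int_{\partial \Omega(t)} \Phi \, (\uu \cdot \n) \, dS$ becomes $\int_{\Omega(t)} \uu \cdot \nabla \Phi \, d\x$ by the divergence theorem and $\dv \uu = 0$; integrating in time completes the reduction.

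The main obstacle I anticipate is the case $\alpha = 1$, where $\uu = K^1_p * \theta$ is only a principal-value singular integral and its boundary trace requires a regularization (or approximation from $\alpha < 1$) to justify the Reynolds/divergence arguments pointwise. For $\alpha \in (0, 1)$, the bounds of \cref{L:uLinfPeriodic} and \cref{P:BoundsOnR} give $\uu \in L^\iny$ with enough regularity up to $\partial \Omega(t)$ that the argument proceeds directly. A minor technicality, when $\Omega$ is a suitable representation whose boundary contains vertical segments not in $\Cover^{-1}(\partial \Omega_\Pi)$ (as in \Cref{f:SuitableLifting}), is that the boundary contributions along paired such segments cancel by the periodicity of $\Phi$ and $\uu$, in the spirit of the remark following \cref{L:LiftedBoundaryIntegral}.
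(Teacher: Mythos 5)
Your route is genuinely different from the paper's. The paper argues in Lagrangian variables: using \cref{L:uIsCinf}, it invokes a measure-preserving flow map $X(t,\cdot)\colon \Rep_{\R^2}(\Omega_0)\to\Rep_{\R^2}(\Omega(t))$, makes the change of variables $\x = X(t,\y)$, identifies $(\partial_t + \uu\cdot\nabla)\phi$ with $\tfrac{d}{dt}\phi(t,X(t,\y))$, and applies the fundamental theorem of calculus; the only analytic fact about $\uu$ it needs beyond boundedness and smoothness off $\partial\Omega(t)$ is that $\det\nabla X\equiv 1$, which (as the paper remarks after the proof) sidesteps the blowup of $\nabla\uu$ at $\partial\Omega(t)$ entirely. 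You instead argue in Eulerian variables: you first reduce the double integral to a single patch $\Omega(t)$ against the periodized test function $\Phi=\Rep_{\R^2}(\phi)$ (a step the paper does not take, but which is correct and cleanly uses the disjointness of the translates $\Omega(t)+\L$), and then invoke Reynolds transport plus the divergence theorem to absorb the boundary term. That second step buys a more ``PDE-native'' proof, but at a real cost at the endpoint: the divergence theorem, in the form you need, requires $\uu\in W^{1,1}(\Omega(t))$ (or a trace-level substitute). From the estimate $|\nabla\uu(\x)|\lesssim \dist(\x,\partial\Omega)^{-\alpha}$ in the proof of \cref{L:uIsCinf}, this holds for $\alpha\in(0,1)$ but fails for $\alpha=1$ — exactly the issue you flag. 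The paper's flow-map argument avoids ever integrating $\nabla\uu$, so it extends to $\alpha=1$ with only the principal-value interpretation of $\uu$, which is why the paper states the proposition for $\alpha\in(0,1]$ without caveat. Both you and the paper use, without it appearing in the proposition's hypotheses, the kinematic assumption that $\partial\Omega(t)$ is transported by $\uu$: you extract it from the CDE explicitly, while the paper buries it in the unproved assertion that $X(t,\cdot)$ maps $\Rep_{\R^2}(\Omega_0)$ onto $\Rep_{\R^2}(\Omega(t))$. So neither proof is self-contained as a proof of the proposition as literally stated; both really prove \cref{C:WeakIfCDE}. Finally, your remark about cancellation of the vertical segments of $\partial\Omega$ under the periodic $\Phi$ and $\uu$ is apt and is the analogue of the observation after \cref{L:LiftedBoundaryIntegral}.
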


Before proving \cref{P:WeakIfCDE}, we give its main application:
\begin{cor}\label{C:WeakIfCDE}
Let $\bgamma$ be a $C^1$ periodic CDE solution to \SQG as in \cref{D:CDESolutionPeriodic}, let
$
    \theta(t)
        = \CharFunc_{\Omega(t)}
$,
and let $\uu$ be given by \cref{e:uGintPeriodic}. Then $\Rep_{\R^2}(\theta)$ is  a weak periodic patch solution of \SQG as in \cref{D:PatchSolution} with initial scalar $\Rep_{\R^2}(\theta_0)$.
\end{cor}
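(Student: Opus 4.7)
The plan is to deduce the corollary by a direct application of \cref{P:WeakIfCDE}, so the task reduces to verifying the two hypotheses of that proposition for the data produced by a periodic CDE solution. The first hypothesis---that $\theta \in L^\infty([0,T] \times \R^2)$ has the patch form \cref{e:PatchForm} with $C^1$-boundary domains $\Omega(t)$---is immediate from \cref{D:CDESolutionPeriodic}: the parameterizing chain $\bgamma(t, \cdot)$ is a non-self-intersecting $C^1$ map, so $\prt\Omega(t)$ is a $C^1$ curve, and $\theta(t) = \CharFunc_{\Omega(t)}$ is bounded by $1$. One also needs to note that $\Omega(t)$ is well-defined as the domain bounded by the chain $\bgamma(t)$, which is part of the CDE-solution setup, and that the definition with $a_1 = 1, a_2 = 0$ matches the reduction just above \cref{e:PatchData}.

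The second hypothesis---that $\uu$ satisfies $\uu = K^\al_p * \theta$---is precisely the content of \cref{L:ConstLawPeriodic} applied to the contour-integral expression \cref{e:uGintPeriodic} prescribed in \cref{D:CDESolutionPeriodic}. By \cref{R:SolsSatisfyConstLaws}, the velocity field given in the definition satisfies $\uu = \grad^\perp(-\Delta)^{-(1 - \frac{\al}{2})} \CharFunc_{\Rep(\Omega_\Pi)}$, and this rewrites via \cref{e:PeriodicBSLaw} and the definition $K^\al_p := \grad^\perp G^\al_p$ in \cref{e:Kalp} as $\uu = K^\al_p * \theta$ with the convolution taken over the single fundamental patch $\Omega$. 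With both hypotheses confirmed, \cref{P:WeakIfCDE} yields that $\Rep_{\R^2}(\theta)$ is a weak periodic patch solution of \SQG with initial scalar $\Rep_{\R^2}(\theta_0)$, which is the conclusion.

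There is essentially no obstacle in the argument; the corollary merely packages the identifications built up throughout \cref{S:Lifting,S:ConstitutiveLawScalar,S:ConstitutiveLawPatch,S:CDE,S:CDESolution}. The one bookkeeping point worth attending to is the interchangeability of the three representations of the scalar---the single-period function $\theta = \CharFunc_\Omega$, its lattice-sum $\Rep_{\R^2}(\theta) = \CharFunc_{\Rep(\Omega)}$, and the pushed-forward function on $\Pi$---together with the corresponding three forms of the velocity law; but this compatibility is already encoded in \cref{L:ConstLawPi,L:ConstLawPeriodic} via the suitable-lift construction of \cref{L:LiftedBoundaryIntegral}, so no fresh computation is required.
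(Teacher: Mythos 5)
Your proposal is correct and follows essentially the same route as the paper: the paper's own proof also invokes \cref{L:ConstLawPeriodic} to rewrite the contour-integral velocity as $\uu = \grad^\perp(-\Delta)^{-(1-\frac{\al}{2})}\Rep_{\R^2}(\theta) = K^\al_p * \theta$ and then applies \cref{P:WeakIfCDE}. You spell out the verification of the first (patch-form/$C^1$) hypothesis more explicitly than the paper does, but this is a presentational difference, not a different argument.
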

\begin{proof}
From \cref{{L:ConstLawPeriodic}}, $\uu = \grad^\perp (-\Delta)^{-(1 - \frac{\al}{2})} \Rep_{\R^2}(\theta) = K^\al_p * \theta$; thus, $\uu$ is as appears in \cref{D:WeakSolution} for the scalar $\Rep_{\R^2}(\theta)$. Then $(\uu, \theta)$ satisfy the hypotheses, and hence conclusion, of \cref{P:WeakIfCDE}.
\end{proof}

\begin{proof}[\textbf{Proof of \cref{P:WeakIfCDE}}]
Let $\phi \in C^1_c ([0, T] \times \R^2)$ and define, as in \cref{weak:01},
\begin{align*}
	I
		&:= \int_0^T \int_{\R^2} \Rep_{\R^2}(\theta(t, \x))
                \brac{\prt_t \phi+ \uu(t, \x) \cdot \nabla \phi (t, \x)}
                    \, d \x \, dt.
\end{align*}
Moreover, $\uu = K^\al_p * \theta \in L^\iny([0, T] \times \R^2)$ by \cref{L:uLinfPeriodic}.

From \cref{L:uIsCinf} below, 
we can see that $\uu$ has a measure-preserving flow map, $X \colon [0, T] \times \Rep_{\R^2}(\Omega_0) \to \Rep_{\R^2}(\Omega(t))$, with $X(t, \cdot) \in C^\iny(\Omega(t))$ for all $t \in [0, T]$. Being a flow map means that $\uu(t, X(t, \x)) = \prt_t X(t, \x)$ for all $\x \in \Rep_{\R^2}(\Omega_0)$.

Write $D/Dt$ for the material derivative with respect to $\uu$, so $(D/Dt) \phi(t, \x) := \prt_t \phi+ \uu(t, \x) \cdot \nabla \phi (t, \x)$. Because $X$ is measure-preserving, the change of variables $\x = X(t, \y)$ has Jacobian determinant 1, so, proceeding as on page 334 of \cite{MB2002},
\begin{align*}
    \int_0^T &\diff{}{t} \int_{\Rep_{\R^2}(\Omega(t))}
        \phi (t, \x)
        \, d \x \, dt
    = \int_0^T \diff{}{t} \int_{\Rep_{\R^2}(\Omega_0)}
        \phi (t, X(t, \y))
        \, d \y \, dt \\
    &= \int_0^T \int_{\Rep_{\R^2}(\Omega_0)}
        \diff{}{t} \phi (t, X(t, \y))
        \, d \y \, dt
    = \int_0^T \int_{\Rep_{\R^2}(\Omega(t))}
            \frac{D}{Dt}\phi (t, \y)
            \, d \y \, dt \\
    &= \int_0^T \int_{\R^2}
            \Rep_{\R^2}(\theta(t, \x)) \frac{D}{Dt}\phi (t, \x)
            \, d \x \, dt
    = I.
\end{align*}
We brought $d/dt$ inside the integral using Theorem 2.27 of \cite{Folland}. But also,
\begin{align*}
    \int_0^T &\diff{}{t} \int_{\Rep_{\R^2}(\Omega(t))}
                \phi (t, \x)
                \, d \x \, dt
        = \int_{\Rep_{\R^2}(\Omega(t))}
                \brac{\phi(t, \x)
                    - \phi(0, \x)} \, d \x \\
        &
        = \int_{\R^2}
                \brac{({\Rep_{\R^2}(\theta(t))} \phi)(t, \x)
                    - {\Rep_{\R^2}(\theta_0}(\x)) \phi(0, \x)} \, d \x,
\end{align*}
showing that $\Rep_{\R^2}(\theta)$ is  a weak solution with initial scalar $\Rep_{\R^2}(\theta_0)$.
\end{proof}

\begin{remark*}
    We know from \cref{R:graduBlowup} that $\grad \uu(\x)$ becomes infinite as $\x \to \prt \Omega$; nonetheless, the Jacobian $\det \grad X(t, \x) \equiv 1$, and that is sufficient to obtain equality in the integrations we made in the proof of \cref{P:WeakIfCDE}.
\end{remark*}

We used \cref{L:uIsCinf} above.

\begin{lemma}\label{L:uIsCinf}
    Let $\al \in (0, 1]$. Suppose that $\Omega_\Pi$ is a bounded domain in $\Pi$ with  $C^1$ boundary and 
    $\Omega$ is a suitable representation of $\Omega_\Pi$ as in \cref{D:SuitableRep}. Let
    $
        \theta
            = \CharFunc_{\Omega}
    $ and $\uu = K^\al_p * \theta$.
    Then $\uu \in C^\iny(\Rep(\Omega_\Pi))$ and $\uu \in C^\iny(\ol{\Rep(\Omega_\Pi)}^C)$.
\end{lemma}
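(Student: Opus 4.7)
The plan is to prove $C^\iny$ smoothness locally near an arbitrary point $\x_0$ of the open set $V$, where $V$ denotes $\Rep(\Omega_\Pi)$ for the first claim and $\ol{\Rep(\Omega_\Pi)}^C$ for the second. By the $1$-periodicity of $\uu$ in $x_1$ I may assume $\x_0 \in \Pi_p$, and in the first case, by choosing the suitable representation appropriately and translating by an element of $\L$ if needed, I may further assume $\x_0$ lies in the interior of $\Omega$. Writing $\theta = \CharFunc_\Omega$ and using the decomposition $K^\al_p = K^\al + H^\al$ from \cref{e:KandH}, I split
\[
\uu(\x) = \int_\Omega K^\al(\x - \y)\, d\y + \int_\Omega H^\al(\x - \y)\, d\y
\]
and establish smoothness of each term on a neighborhood of $\x_0$.

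The $H^\al$-term is handled by differentiation under the integral sign. The crux is to show that for some $r > 0$ the compact set $S := \set{\x - \y : \x \in \ol{B_r(\x_0)},\, \y \in \ol\Omega}$ is contained in $U = \R^2 \setminus \L^*$. Equivalently, $\x_0 \notin \ol\Omega + \n_j$ for every $j \ne 0$. In the first case, $\x_0 \in \ol\Omega + \n_j$ would force either $\x_0 \in \Omega \cap (\Omega + \n_j)$, contradicting the suitable-representation property $\Omega \cap (\Omega + \n_j) = \emptyset$, or $\x_0 \in \prt\Omega + \n_j \subseteq \prt\Rep(\Omega_\Pi)$, contradicting the choice of $\x_0$ in the open set $V$; in the second case, $\x_0 \notin \ol{\Rep(\Omega_\Pi)} \supseteq \ol\Omega + \n_j$ directly. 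Since $\ol\Omega$ is compact and $\ol\Omega + \n_j$ drifts off to infinity as $|j| \to \iny$, the infimum $\inf_{j \ne 0}\dist(\x_0, \ol\Omega + \n_j)$ is strictly positive, so any sufficiently small $r$ works. Because $H^\al \in C^\iny(U)$, as established in \cref{S:ConstForPeriodic} above \cref{e:RAsPathIntegral}, all derivatives of $H^\al(\x - \y)$ in $\x$ are uniformly bounded on $S$, and differentiation under the integral to all orders is justified.

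For the $K^\al$-term, I use a smooth cutoff $\chi \in C^\iny_c(\R^2)$ with $\chi \equiv 1$ near $\x_0$ and $\supp \chi$ contained in $\Omega$ (first case) or in $\ol{\Rep(\Omega_\Pi)}^C$ (second case), splitting $\theta = \chi\theta + (1-\chi)\theta$. In the outer piece, $\y \in \supp((1-\chi)\theta)$ forces $|\x - \y|$ to be bounded below uniformly on a smaller ball around $\x_0$, so the integrand is $C^\iny$ in $\x$ and differentiates under the integral. In the inner piece, the second case gives $\chi\theta \equiv 0$, while the first gives $\chi\theta = \chi \in C^\iny_c$; then writing $K^\al * \chi = \grad^\perp(G^\al * \chi)$ with $G^\al \in L^1_{\mathrm{loc}}$ and $\chi$ smooth of compact support, arbitrarily many derivatives transfer to $\chi$ to yield $C^\iny$ (this same mechanism accommodates $\al = 1$, for which $K^\al$ itself is only a principal-value kernel). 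The main obstacle is the uniform separation claim of the previous paragraph; once that is in hand, everything else reduces to routine differentiation under the integral.
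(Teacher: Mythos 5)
Your proof is essentially correct, but the route to smoothness of the $K^\al$ piece is genuinely different from the paper's. The paper fixes $r = \dist(\x_0,\prt\Omega)$, uses the radial antisymmetry of $K^\al$ to annihilate the contribution of the small ball $B_r(\x)$, and then differentiates explicitly, tracking a Reynolds-transport-type boundary term from the moving ball; for $\al = 1$ the paper simply asserts the same computation holds in the principal-value sense. Your cutoff decomposition $\theta = \chi\theta + (1-\chi)\theta$ with $K^\al*\chi = \grad^\perp(G^\al*\chi)$, $G^\al\in L^1_{\mathrm{loc}}$, is the more standard mollification-style argument; it handles $\al=1$ without invoking principal values at all, which is a real advantage. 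The $H^\al$ piece uses the same key idea as the paper (uniform separation of $\x_0 - \L^*$ from $\ol\Omega$ so that all derivatives of $H^\al$ are bounded on the relevant compact set), and you make explicit a separation claim that the paper's $L^1$ bound actually requires but does not justify.

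One step, however, is incorrectly justified. In case (b) of your separation argument you assert $\prt\Omega + \n_j \subseteq \prt\Rep(\Omega_\Pi)$. This inclusion fails in general: as the remark following \cref{L:LiftedBoundaryIntegral} explains, when $\Omega_\Pi$ is not simply connected the boundary $\prt\Omega$ of a suitable representation contains vertical ``cut'' segments whose image under $\Cover$ lies in the \emph{interior} of $\Omega_\Pi$, so these segments lie inside $\Rep(\Omega_\Pi)$, not on its topological boundary. The conclusion you need---that $\x_0 \notin \ol\Omega + \n_j$ for $j \ne 0$ when $\x_0$ lies in the interior of $\Omega$---is nonetheless true, and follows from a purely topological observation that does not reference $\prt\Rep(\Omega_\Pi)$ at all: $\Omega$ open and $\Omega \cap (\Omega + \n_j) = \emptyset$ imply $\Omega + \n_j \subseteq \Omega^c$, which is closed, so $\ol\Omega + \n_j = \ol{\Omega + \n_j} \subseteq \Omega^c$. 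Replacing your case (b) by this argument repairs the proof; the rest of your reasoning then goes through as written.
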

\begin{proof}
    First assume that $\al \in (0, 1)$. Then
    \begin{align}\label{e:uKHDecomp}
        \uu(\x)
            &= \int_{\R^2} K^\al_p(\x - \y) \theta(\y) \, d \y
            = \int_{\R^2} K^\al(\x - \y) \theta(\y) \, d \y
                + \int_{\R^2} H^\al(\x - \y) \theta(\y) \, d \y.
     \end{align}

     Assume that $\x \in \Rep(\Omega_\Pi)$. Then $\x - \n_j \in \Omega$ for some $j \in \Z$, but $\x - \n_k \notin \Omega$ for any $k \ne j$. Since $\uu$ is $1$-periodic in $x_1$, we can assume, by translating $\Omega$ by $\n_j$, that $\x \in \Omega$ and $\x - \n_k \notin \Omega$ for all $k \ne 0$. Hence, $H^\al(\x - \cdot) \in C^\iny(\Rep(\Omega_\Pi))$, so the second integral on the right-hand side of \cref{e:uKHDecomp} is in $C^\iny(\Rep(\Omega_\Pi))$ with, for any multi-index $\xi$,
    \begin{align*}
        \abs[\bigg]{\int_{\R^2} D^\xi H^\al(\x - \y) \theta(\y) \, d \y}
            &\le \norm{D^\xi H^\al(\x - \cdot)}_{L^1(\Omega)}
                \norm{\theta}_{L^\iny}
            \le C(\xi).
    \end{align*}

    Let $r = \dist(\x_0, \prt \Omega)$. Then for $\x \in B_r(\x_0)$,
    \begin{align*}
        \vv(\x)
            &:= \int_{\R^2} K^\al(\x - \y) \theta(\y) \, d \y
            = \int_{B_r(\x)} K^\al(\x - \y) \, d \y
                + \int_{B_r(\x)^C \cap \Omega} K^\al(\x - \y) \, d \y \\
            &= \int_{B_r(\x)^C \cap \Omega} K^\al(\x - \y) \, d \y.
    \end{align*}
    We used here that $K^\al$ is radially symmetric, so the integral over $B_r(\x)$ vanishes.

    Then,
    \begin{align*}
        \prt_i \vv(\x)
            &= \int_{B_r(\x)^C \cap \Omega}
                \prt_i K^\al(\x - \y) \, d \y \\
            &\qquad
                + \lim_{h \to 0}
                    \frac{1}{h} \brac{
                        \int_{B_r(\x + h \e_i)^C \cap \Omega}
                            K^\al(\x - \y) \, d \y
                        - \int_{B_r(\x)^C \cap \Omega}
                            K^\al(\x - \y) \, d \y
                        }\\
            &=: \ww_1(\x) + \ww_2(\x).
    \end{align*}
    But,
    \begin{align*}
        \abs{\ww_1(\x)}
            &\le \norm{\prt_i K^\al(\x - \cdot)}_{L^1(\Omega)}
            \le C(\abs{\xi}, \dist(\x_0, \prt \Omega)),
    \end{align*}
    and
    \begin{align*}
        \abs{B_r(\x + h \e_i) \symdiff B_r(\x)}
            = O(2 \pi r h)
    \end{align*}
    as $h \to 0$, so
    \begin{align*}
        \abs{\ww_2(\x)}
            &\le \lim_{h \to 0}
                \int_{B_r(\x + h \e_i) \symdiff B_r(\x)}
                            \abs{K^\al(\x - \y)} \, d \y
            \\
            &\le \lim_{h \to 0}
                    \frac{1}{h}
                    O(2 \pi r h)
                    \frac{\al c_\al}{(r - h)^{1 + \al}}
            \le \frac{C}{r^\al}
            = \frac{C}{\dist(\x, \prt \Omega)^\al}.
    \end{align*}
    Although this bound on $\abs{\prt_i \vv(\x_0)}$ blows up as $\x_0 \to \prt \Omega$, we do have that $\vv$ and so $\uu$ are in $C^1(\Rep(\Omega_\Pi))$. Taking higher derivatives, we see that $\uu \in C^\iny(\Rep(\Omega_\Pi))$, and a similar argument gives $\uu \in C^\iny(\ol{\Rep(\Omega_\Pi)}^C)$.

    The proof for $\al = 1$ is essentially the same, except that the integrals must be considered in the principal value sense.
\end{proof}

\begin{remark}\label{R:graduBlowup}
    For $\al = 0$, Bertozzi and Constantin in \cite{ConstantinBertozzi1993} use their geometric lemma to show that $\grad \uu \in L^\iny(\R^2)$. Their argument relies on $\grad K^0(\x)$, which acts as the kernel of a singular integral operator, integrating to zero over circles centered at the origin. That property fails to hold, however, for $\grad K^\al(\x)$ when $\al \in (0, 1)$: in fact, $\grad \uu(\x)$ blows up as $\x \to \prt \Omega$.
\end{remark}

%
%
\section{Remarks on the periodic Green's function}\label{S:AdaptingGancedo}

\noindent The fundamental difference between our setup and that of Gancedo in \cite{Gancedo2008} is the Green's function---$G^\al$ in \cite{Gancedo2008} versus $G^\al_\Pi$ or $G^\al_p$ in this paper. Gancedo, who always writes the Green's function explicitly for $\al \in (0, 1]$ as $c_\al/\abs{\x}^{\al}$, takes advantage of the following key properties:

\begin{enumerate}
    \item
        $G^\al$ is singular only at $\x = 0$, where $G^\al(\x) \sim \abs{\x}^{-\al}$.

    \item
        $G^\al(-\x) = G^\al(\x)$.

    \item
        $G^\al(\x) = G^\al(\abs{\x})$; that is, $G^\al$ is radially symmetric.

    \item
        $G^\al(\x)$ is bounded (in fact, decays) as $\abs{\x} \to \iny$.
\end{enumerate}

These properties are most critical in the analysis of the CDE, where they appear in two forms:
\begin{align*}
    G^\al_p(\bgamma(\beta) - \bgamma(\eta))
        \text{ and }
    G^\al_p(\bgamma(\eta) - \bgamma(\eta - \beta))
        = G^\al_p(\bdelta_\beta(\eta)),
\end{align*}
where $\bdelta_\beta(\eta) := \bgamma(\eta) - \bgamma(\eta - \beta)$, as in \cref{e:CDEPeriodic}. In both cases, these factors appear in the integrand, with $\eta$, and in the second form also $\beta$, integrated over the domain, $\Tn$, of a chain $\bgamma$. $G^\al_p(\x)$ has a singularity like $\abs{\x - \n_k}^{-\al}$ at each lattice point $\n_j \in \L$, but as long as $\bgamma$ is a lift of the boundary $\bgamma_\Pi$ of a domain in $\Pi$, the arguments of $G^\al_p$ above will encounter only the singularity at $\eta = \beta$. Otherwise, two points on $\bgamma$ would differ by a lattice point $\n_j$, meaning that $\bgamma$ could not be a lift. Hence, in effect, we maintain property (1), at least in principle.

We have (2), but we do not have property (3).
In \cite{Gancedo2008}, property (3) (which implies property (2)) is used to conclude that certain integrands are perfect derivatives, so their integral over a closed curve vanishes. The first use is to demonstrate, as part of bounding the $H^3$ norm of $\bgamma$, that, supposing $\prt_t \bgamma = L(\bgamma)$,
\begin{align}\label{e:GancedoL2NormConserved}
    \diff{}{t} \norm{\bgamma(t)}^2_{L^2(\Tn)}
        &= 2 (\bgamma(t), L(\bgamma(t))
        = 0.
\end{align}
Gancedo starts by using property (2) to obtain
\begin{align*}
    \frac{1}{2} \diff{}{t} &\norm{\bgamma(t)}_{L^2(\Tn)}^2
        = \int_\Tn \int_\Tn
                G^\al_p(\bdelta_\beta(\eta))
                    \prt_\eta \bdelta_\beta(\eta)
                    \cdot \bdelta_\beta(\eta)
                    \, d \beta \, d \eta.
\end{align*}
He then uses that
\begin{align*}
    \int_\Tn \int_\Tn
        &G^\al(\bdelta_\beta(\eta))
            \prt_\eta \bdelta_\beta(\eta)
            \cdot \bdelta_\beta(\eta)
            \, d \beta \, d \eta 
        = \frac{c_\al}{2} \int_\Tn \int_\Tn
            \abs{\bdelta_\beta(\eta)}^{-\al}
                \prt_\eta \abs{\bdelta_\beta(\eta)}^2
                \, d \eta \, d \beta \\
        &= \frac{c_\al}{2 -\al}
            \int_\Tn \int_\Tn
            \prt_\eta \abs{\bdelta_\beta(\eta)}^{2 -\al}
                \, d \eta \, d \beta
        = 0,
\end{align*}
since the inner integral vanishes because the integrand is a perfect derivative in $\eta$.

The first part of this argument works in our setting as well since it uses only property (2), but the second part---which requires that $G(\x)$ be radially symmetric, the specific form being unimportant---fails in our setting.

Property (4) is used implicitly in \cite{Gancedo2008}, but we will need to account for the lack of decay of $G^\al_p$, as evidenced in $R^\al$ in \cref{P:BoundsOnR}. Because we assume that the patch at time zero is compactly supported in the vertical direction, we can control the growth of the patch, as the growth of $R^\al$ is only in the vertical direction.

In adapting the arguments in \cite{Gancedo2008}, we use the decomposition $G^\al_p = G^\al + R^\al$, and bound the terms involving $G^\al$ separately from those involving $R^\al$, since they succumb to slightly different techniques.

As the patch boundary evolves, the singularity in $R^\al$ at points in $\L^*$ must be avoided. To do this, we will modify in \cref{S:SelfIntersection} the function $F$ that Gancedo employs to measure the self-intersection of (one component of) the patch boundary. 

Furthermore, because we allow patches to be multiply connected, we must also control for two boundary components evolving to intersect. We do this at the end of \cref{S:CombiningTheBounds}.

%
%
\section{Measuring self-intersection}\label{S:SelfIntersection}

The initial data we consider for the patch boundary is smooth and
non-self-intersecting.  In addition to maintaining regularity at
positive times, we must demonstrate that we maintain the 
non-self-intersecting property at positive times.  This is a feature
of all existence theories for water waves, vortex sheets, and vortex 
patches, although the need for a non-self-intersection condition is 
sometimes obviated by considering the simpler geometry of graphs 
rather than general parameterized curves.  In the general setting,
following Wu's work on water waves \cite{wu2D}, we enforce the
chord-arc condition both on the initial data and at positive times.

As the first author explains in some detail in \cite{ambroseJMFM},
there are two primary ways of controlling the chord-arc quantity at
positive times.  The approach of \cite{ambroseJMFM} and other works
by the first author relies mostly on continuity: since solutions
initially satisfy the chord-arc condition, if the time derivative of
the chord-arc quantity is bounded, then the condition continues to
hold for some positive time interval.  The approach taken by Gancedo
in \cite{Gancedo2008} and in related works 
instead directly estimates the 
$L^{\infty}$-norm of the chord-arc quantity, by taking the limit
of $L^{p}$-norms as $p$ goes to infinity.

In the present work, 
we take a mixed approach; since many of our estimates are analogous
to estimates of \cite{Gancedo2008}, we begin by introducing an 
analogous quantity, $F,$ adapted to our periodic setting, 
for which we will make estimates of the 
$L^{\infty}$-norm.  A significant difference, however, is that in 
the present work there are $n$ boundary components of our patch/
layer regions, and we must also measure how near different 
components come to each other.  For this, we will follow 
\cite{ambroseJMFM} more closely, in that we will rely on the fact
that initially the boundary components are 
separated by a finite distance, and that their velocities are 
bounded.

We now proceed to describe the chord-arc quantities for 
individual boundary components, analogously to \cite{Gancedo2008}.
We will bound the distance between distinct components when the time
comes, specifically, in \Cref{S:CombiningTheBounds} below.

We define a function $F$ much as in (16) of \cite{Gancedo2008} that measures, inversely, how close to self-intersecting a path $\bgamma$ is. Letting 
$
    \bdelta_\beta(\eta)
            = \bgamma(\eta) - \bgamma(\eta - \beta)
$, as in \cref{e:CDEPeriodic}, for a given $C^1$ path $\bgamma$ in $\R^2$, define the functions $F, F_j \colon [-\pi, \pi]^2 \to \R$,
\begin{align}\label{e:FDef}
    \begin{split}
    F_0(\bgamma)(\beta, \eta)
        &=
        \begin{cases}
            \frac{\abs{\beta}}
                {\abs{\bdelta_\beta(\eta)}}
                    &\text{if } \beta \ne 0, \\
            \frac{1}{\abs{\prt_\eta \bgamma(\eta)}}
                    &\text{if } \beta = 0,
        \end{cases} \\
    F_j(\bgamma)(\beta, \eta)
        &= \frac{1}
                {\abs{\bdelta_\beta(\eta) - \n_j}}
            \text{ for } j \in \Z^*, \\[0.2em]
    F(\bgamma)(\beta, \eta)
        &= \max_{j \in \Z} F_j(\bgamma)(\beta, \eta).
    \end{split}
\end{align}
As long as $\bgamma$ is non-self-intersecting, $F_j$ is continuous for all $j \ge 0$, with $F_j(\bgamma)(-\pi, \eta) = F_j(\bgamma)(\pi, \eta)$.
We have the simple bound,
\begin{align}\label{e:ForcingFIntoRBound}
    \frac{1}{\dist(\bdelta_\beta(\eta), \L^*)}
        &= \max_{j \in \Z^*} F_j(\bgamma)(\beta, \eta)
        \le F(\bgamma)(\beta,\eta).
\end{align}

Moreover, $\bdelta_\beta$, $F_j$, and $F$ naturally extend to apply to a chain $\bgamma$.

\begin{remark}\label{R:FParameterization}
First, we note that although we do not label them as such, we
actually have one such $F$ quantity for each of our $n$
boundary components.  This will not be critical, as we bound all
of them in the same manner.  Next, we remark that
    the functions $F_j$, for $j \in \Z^*$, are naturally defined on $\Tn^2$, but $F_0$ and so $F$ are not. To make $F_0$ properly defined on $\Tn^2$, we could replace $\abs{\beta}$ in its definition by $\abs{\sin(\beta/2)}$. We use $\abs{\beta}$, however, to more closely align with \cite{Gancedo2008}. This will force us to use a specific parameterization of $\Tn$ by $\beta \in [-\pi, \pi]$, though we will not point this out in every instance in which it arises.
\end{remark}

\subsection{Bounding derivatives of $G^\al_p(\bdelta_\beta(\eta))$}

We will now put the chord-arc quantity $F$ to use.
We have the following, where repeated indices are implicitly summed over:
\begin{align*} 
    \begin{split}
    \prt_\eta G^\al_p(\bdelta_\beta(\eta))
        &= \prt_j G^\al_p(\bdelta_\beta(\eta))
            \prt_\eta \bdelta_\beta^j(\eta), \\
    \prt_\eta^2 G^\al_p(\bdelta_\beta(\eta))
        &= \prt_{jk} G^\al_p(\bdelta_\beta(\eta))
            \prt_\eta \bdelta_\beta^j(\eta)
                \prt_\eta \bdelta_\beta^k(\eta)
            + \prt_j G^\al_p(\bdelta_\beta(\eta))
                \prt_\eta^2 \bdelta_\beta^j(\eta), \\
    \prt_\eta^3 G^\al_p(\bdelta_\beta(\eta))
        &= \prt_{jk\ell} G^\al_p(\bdelta_\beta(\eta))
            \prt_\eta \bdelta_\beta^j(\eta)
                \prt_\eta \bdelta_\beta^k(\eta)
                \prt_\eta \bdelta_\beta^\ell(\eta) \\
        &\qquad
            + \prt_{jk} G^\al_p(\bdelta_\beta(\eta))
            \brac{\prt_\eta^2 \bdelta_\beta^j(\eta)
                \prt_\eta \bdelta_\beta^k(\eta)
                + \prt_\eta \bdelta_\beta^j(\eta)
                \prt_\eta^2 \bdelta_\beta^k(\eta)} \\
        &\qquad
            + \prt_{jk} G^\al_p(\bdelta_\beta(\eta))
                \prt_\eta^2 \bdelta_\beta^j(\eta)
                \prt_\eta \bdelta_\beta^k(\eta)
            + \prt_j G^\al_p(\bdelta_\beta(\eta))
                \prt_\eta^3 \bdelta_\beta^j(\eta) \\
        &= \prt_{jk\ell} G^\al_p(\bdelta_\beta(\eta))
            \prt_\eta \bdelta_\beta^j(\eta)
                \prt_\eta \bdelta_\beta^k(\eta)
                \prt_\eta \bdelta_\beta^\ell(\eta) \\
        &\qquad
            + 3 \prt_{jk} G^\al_p(\bdelta_\beta(\eta))
                \prt_\eta^2 \bdelta_\beta^j(\eta)
                \prt_\eta \bdelta_\beta^k(\eta)
            + \prt_j G^\al_p(\bdelta_\beta(\eta))
                \prt_\eta^3 \bdelta_\beta^j(\eta).
    \end{split}
\end{align*}

More generally, we can write, for $k \ge 0$,
\begin{align}\label{e:prtRalGen}
	\prt_\eta^k G^\al_p(\bdelta_\beta(\eta))
		&= \sum_{j = 1}^k \grad^j G^\al_p(\bdelta_\beta(\eta)) \cdot
			\sum_{\vec{\ell} \in V_{jk}} c_{\vec{\ell}} \prt_\eta^{\ell_1} \bdelta_\beta(\eta)
					\otimes \cdots \prt_\eta^{\ell_j} \bdelta_\beta(\eta),
\end{align}
where each $c_{\vec{\ell}}$ is a positive integer and, for $j \le k$,
\begin{align*}
	V_{jk}
		&:= \set{\vec{\ell} \in \set{1, \dots, k}^j
			\colon \ell_1 + \cdots + \ell_j = k},
	\quad
	V_{00} = 1.
\end{align*}

Setting $m_n = 2$ if $n = 0$, $m_n = n$ if $n > 0$, \cref{P:BoundsOnR,e:ForcingFIntoRBound} give
\begin{align}\label{e:prtRBound}
    \begin{split}
    \abs{\grad^n R^\al(\bdelta_\beta(\eta))}
        &\le \frac{C}{\dist(\bdelta_\beta(\eta), \L^*)^{n + \al}}
            +
            C \abs{\bdelta_\beta(\eta)}^{m_n} \\
        &\le C \left(F(\bgamma)(\beta, \eta)\right)^{n + \al}
            + C \norm{\bgamma}_{L^2}^{m_n}.
    \end{split}
\end{align}

Define
\begin{align}\label{e:Sn}
    \begin{split}
    S_0(\beta, \eta)
        &= S_0(\bgamma)(\beta, \eta)
        := \left(F(\bgamma)(\beta, \eta)\right)^\al
            + \norm{\bgamma}_{L^2}^2, \\
    S_n(\beta, \eta)
        &= S_n(\bgamma)(\beta, \eta)
        :=  \sum_{j = 1}^n
            \brac{\left(F(\bgamma)(\beta, \eta)\right)^{j + \al}
            + \norm{\bgamma}_{L^2}^j},
    \end{split}
\end{align}
for $n \ge 1$. Then \cref{e:prtRBound} gives
\begin{align}\label{e:gradjRalBound}
    \sum_{j = 1}^n \abs{\grad^j R^\al(\bdelta_\beta(\eta))}
        &\le C S_n(\beta, \eta)
        \text{ for } n \ge 0.
\end{align}

The derivatives of $G^\al$ are simply bounded in terms of $F$:
\begin{align}\label{e:gradGEstimate}
    \abs{\grad^n G^\al(\bdelta_\beta(\eta))}
        &\le \frac{C}{\abs{\bdelta_\beta(\eta)}^{n + \al}}
        = \pr{\frac{\abs{\beta}}{\abs{\bdelta_\beta(\eta)}}}^{n + \al}
        \frac{C}{\abs{\beta}^{n + \al}}
        \le C \frac{\norm{F(\bgamma)}_{L^\iny(\Tn^2)}^{n + \al}}
            {\abs{\beta}^{n + \al}}.
\end{align}
Then \cref{e:gradjRalBound,e:gradGEstimate} give
\begin{align}\label{e:gradGalpEstimate}
    \abs{\grad^n G^\al_p(\bdelta_\beta(\eta))}
        &\le C \frac{\norm{F(\bgamma)}_{L^\iny(\Tn^2)}^{n + \al}}
            {\abs{\beta}^{n + \al}}
            + \norm{S_n(\bgamma)}_{L^\iny(\Tn^2)}
        \le C \frac{\norm{S_n(\bgamma)}_{L^\iny(\Tn^2)}}
            {\abs{\beta}^{n + \al}},
\end{align}
and thus,
\begin{align}\label{e:absGpBound}
	\begin{split}
	&\abs{\prt_\eta^k G^\al_p(\bdelta_\beta(\eta))}
		\le C \sum_{j = 1}^k
				\abs{\grad^j G^\al_p(\bdelta_\beta(\eta))}
			\sum_{\vec{\ell} \in V_{jk}}
				\abs{\prt_\eta^{\ell_1} \bdelta_\beta(\eta)}
				\cdots
				\abs{\prt_\eta^{\ell_j} \bdelta_\beta(\eta)} \\
		&\qquad
		\le C \sum_{j = 1}^k
				\brac{\frac{\norm{F(\bgamma)}_{L^\iny(\Tn^2)}^{j + \al}}
            		{\abs{\beta}^{j + \al}}
					+ \norm{S_j(\bgamma)}_{L^\iny(\Tn^2)}
					}
			\sum_{\vec{\ell} \in V_{jk}}
				\abs{\prt_\eta^{\ell_1} \bdelta_\beta(\eta)}
				\cdots
				\abs{\prt_\eta^{\ell_j} \bdelta_\beta(\eta)} \\
		&\qquad
		\le C \norm{S_k(\bgamma)}_{L^\iny(\Tn^2)}
			\sum_{j = 1}^k
				\frac{1}
            		{\abs{\beta}^{j + \al}}
			\sum_{\vec{\ell} \in V_{jk}}
				\abs{\prt_\eta^{\ell_1} \bdelta_\beta(\eta)}
				\cdots
				\abs{\prt_\eta^{\ell_j} \bdelta_\beta(\eta)}.
	\end{split}
\end{align}

%
%
\section{Estimates on the CDE operator $L$}\label{S:LEstimates}

\noindent  In this section, we obtain a number of estimates on the operator $L$ of \cref{e:CDEPeriodic}. We give as many of these estimates as possible for all $\al \in (0, 1]$, even though a workable version of $L$ for $\al = 1$ involves a second term and a ``constant speed'' parameterization of $\bgamma$, as done in \cite{Gancedo2008}. But first, we establish a symmetric property of $L$ in \cref{P:LSymmetry}.

\begin{remark*}
	In what follows, we bring derivatives inside integrals over $\Tn$ and integrate by parts on $\Tn$. These are all justified by applying \cref{L:DerivInsideIntegral,C:IBP} to the estimates that follow these procedures, though we will not always state that explicitly.
\end{remark*}

\begin{prop}\label{P:LSymmetry}
	If $\bgamma$ is a quasi-closed $H^n(\T)$ chain in $\R^2$ for $n \ge 4$ then for all $m < n$,
    \begin{align}\label{e:ddtGammaHm}
        (\prt_\eta^m \bgamma, &\prt_\eta^m L(\bgamma))_{L^2(\Tn)}
            = \frac{1}{2} \int_\Tn \int_\Tn
    				\prt_\eta^m \pr{G^\al_p(\bdelta_\beta(\eta))
    					\prt_\eta \bdelta_\beta(\eta)}
                        \cdot \prt_\eta^m \bdelta_\beta(\eta)
    					\, d \beta \, d \eta.
    \end{align}
\end{prop}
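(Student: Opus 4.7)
The stated identity is a polarization of the $L^2$ inner product. Writing $I$ for the left-hand side of \cref{e:ddtGammaHm} and using \cref{L:DerivInsideIntegral} to bring $\prt_\eta^m$ inside the $\beta$-integral defining $L(\bgamma)$, one has
\[
I = \int_\Tn\int_\Tn \prt_\eta^m\bigl(G^\al_p(\bdelta_\beta(\eta))\,\prt_\eta\bdelta_\beta(\eta)\bigr)\cdot\prt_\eta^m\bgamma(\eta)\,d\beta\,d\eta.
\]
Since $\prt_\eta^m\bdelta_\beta(\eta) = \prt_\eta^m\bgamma(\eta) - \prt_\eta^m\bgamma(\eta-\beta)$, the splitting
\[
\prt_\eta^m\bgamma(\eta) = \tfrac12\,\prt_\eta^m\bdelta_\beta(\eta) + \tfrac12\bigl(\prt_\eta^m\bgamma(\eta)+\prt_\eta^m\bgamma(\eta-\beta)\bigr)
\]
immediately exhibits half of $I$ as precisely the desired right-hand side of \cref{e:ddtGammaHm}. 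The plan therefore reduces to proving the vanishing identity
\[
J := \int_\Tn\int_\Tn \prt_\eta^m\bigl(G^\al_p(\bdelta_\beta(\eta))\,\prt_\eta\bdelta_\beta(\eta)\bigr)\cdot\bigl(\prt_\eta^m\bgamma(\eta)+\prt_\eta^m\bgamma(\eta-\beta)\bigr)\,d\beta\,d\eta = 0.
\]

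The only additional input for showing $J=0$ is the evenness of the periodic Green's function, $G^\al_p(-\x)=G^\al_p(\x)$, which is inherited from the evenness of $G^\al$ together with the manifest invariance under $\x\mapsto -\x$ of each summand in the explicit formula \cref{e:RalSimplified} for $R^\al$. Writing $J = J_1 + J_2$ according to the two summands of the outer factor, I would apply to $J_2$ the composite substitution $\eta\mapsto\eta+\beta$ followed by $\beta\mapsto-\beta$; each leaves $\Tn\times\Tn$ invariant by periodicity. The first step converts $\prt_\eta^m\bgamma(\eta-\beta)$ into $\prt_\eta^m\bgamma(\eta)$ and replaces $\bdelta_\beta(\eta)=\bgamma(\eta)-\bgamma(\eta-\beta)$ by $\bgamma(\eta+\beta)-\bgamma(\eta)$; the second then turns this into $-\bdelta_\beta(\eta)$ and flips the sign of $\prt_\eta\bdelta_\beta(\eta)$. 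Evenness of $G^\al_p$ absorbs the first sign, leaving a single net sign change in the integrand. Hence $J_2 = -J_1$, so $J = 0$ and the proposition follows.

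The main technical obstacle I anticipate is justifying the interchange of $\prt_\eta^m$ with the $\beta$-integral defining $L$, together with the absolute convergence needed to legitimize the Fubini-style change of variables, given that $G^\al_p$ is singular at $\beta=0$. The required control comes from three ingredients already at hand: the cancellation $\prt_\eta^j\bdelta_\beta(\eta) = O(|\beta|)$ for $j<n-1$ (guaranteed by the assumption $\bgamma\in H^n$ with $n\ge 4$ and $m<n$); the chord-arc-type lower bound $|\bdelta_\beta(\eta)|\ge |\beta|/\|F(\bgamma)\|_{L^\infty}$ following from \cref{e:FDef}; and the pointwise estimate \cref{e:absGpBound} derived from the Leibniz expansion \cref{e:prtRalGen}. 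Together these bound the full integrand in $L^1(\Tn\times\Tn)$ in terms of $\|F(\bgamma)\|_{L^\infty}$ and $\|\bgamma\|_{H^n}$, permitting all the manipulations used above via \cref{L:DerivInsideIntegral} and standard Lebesgue change of variables.
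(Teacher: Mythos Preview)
Your argument is correct and is essentially the same symmetrization as the paper's, just carried out in different coordinates. The paper rewrites $L$ in the two-point form via $\bmu(\beta,\eta):=\bgamma(\beta)-\bgamma(\eta)$, making the swap $\beta\leftrightarrow\eta$ transparent (together with $G^\al_p(-\x)=G^\al_p(\x)$ and $\bmu(\eta,\beta)=-\bmu(\beta,\eta)$), averages, and then changes back to the $\bdelta_\beta$-form; your composite substitution $\eta\mapsto\eta+\beta$ followed by $\beta\mapsto-\beta$ is precisely that swap expressed directly in the $\bdelta_\beta$-coordinates, so the two proofs coincide.
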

\begin{proof}
    Using
    \begin{align*}
        \bmu(\beta, \eta)
            &:= \bgamma(\beta) - \bgamma(\eta),
    \end{align*}
    write the operator $L$ of \cref{e:CDEPeriodic} in the form of \cref{e:CDEOnPiProto}:
    \begin{align}\label{e:CDEOnPiDeltam}
    	L(\bgamma(\eta))
                = \int_\Tn
    				G^\al_p(\bmu(\beta, \eta))
    					\prt_2 \bmu(\beta, \eta)
    					\, d \beta.
    \end{align}

    Recalling the definition of iterated integrals in \cref{D:ChainLift}, we apply $\prt_\eta^m$ to both sides of \cref{e:CDEOnPiDeltam}, take the scalar product with $\prt_\eta^m \bgamma(\beta)$, and integrate over $I$ to give
    \begin{align*}
        \begin{split}
        (\prt_\eta^m \bgamma, &\prt_\eta^m L(\bgamma))_{L^2{}}
            = \int_\Tn \int_\Tn
    				\prt_\eta^m \pr{G^\al_p(\bmu(\beta, \eta))
    					\prt_2 \bmu(\beta, \eta)}
                        \cdot \prt_\eta^m \bgamma(\beta)
    					\, d \beta \, d \eta \\
            &= \int_\Tn \int_\Tn
    				\prt_\eta^m \pr{G^\al_p(\bmu(\eta, \beta))
    					\prt_2 \bmu(\eta, \beta)}
                        \cdot \prt_\eta^m \bgamma(\eta)
    					\, d \eta \, d \beta \\
            &= -\int_\Tn \int_\Tn
    				\prt_\eta^m \pr{G^\al_p(\bmu(\beta, \eta))
    					\prt_2 \bmu(\beta, \eta)}
                        \cdot \prt_\eta^m \bgamma(\eta)
    					\, d \eta \, d \beta.
        \end{split}
    \end{align*}
    In the second equality, we simply switched variable names $\eta$ and $\beta$, while in the final equality we used that $G^\al_p(-\x) = G^\al_p(\x)$ and $\bmu(\eta, \beta) = - \bmu(\beta, \eta)$. Taking the average of the second and fourth expressions, we see that
    \begin{align}\label{e:diffnormgammaHmsquared}
        (\bgamma, &L(\bgamma))_{L^2{}}
            = \frac{1}{2} \int_\Tn \int_\Tn
    				\prt_\eta^m \pr{G^\al_p(\bmu(\beta, \eta))
    					\prt_2 \bmu(\beta, \eta)}
                        \cdot \prt_\eta^m \bmu(\beta, \eta)
    					\, d \beta \, d \eta.
    \end{align}

    Now let
        $\bdelta_\beta(\eta) := \bgamma(\eta) - \bgamma(\eta - \beta)$
    as in \cref{e:CDEPeriodic}.
    Making the change of variables, $\beta \mapsto \eta - \beta$, but leaving $\eta$ unchanged, using that $\bgamma$ is quasi-closed
    and that $\prt_2 \bdelta_\beta(\eta) = \prt_\eta \bdelta_\beta(\eta)$, \cref{e:diffnormgammaHmsquared} becomes \cref{e:ddtGammaHm}.
\end{proof}

\begin{prop}\label{P:BoundsOnL}
    Let $\al \in (0, 1]$ and define the operator $L$ as in \cref{e:CDEPeriodic} (even for $\al = 1$). Let $\bgamma$ be an $H^n(\T)$ quasi-closed chain in $\R^2$ for $n \ge 3$. Then
\begin{align*}
	\norm{\prt_\eta^m L(\bgamma)}_{L^2(\Tn)}
		\le A_m(\bgamma, \al)
		:=
		\begin{cases}
			C \norm{S_m(\bgamma)}_{L^\iny(\Tn^2)}
				\sum_{j = 0}^{m + 1} \norm{\bgamma}_{H^{m + 1}(\Tn)}^j
				&\text{ if } \al \in (0, 1), \\[2ex]
			C \norm{S_m(\bgamma)}_{L^\iny(\Tn^2)}
				\sum_{j = 0}^{m + 1} \norm{\bgamma}_{H^{m + 2}(\Tn)}^j
				&\text{ if } \al = 1
		\end{cases}
\end{align*} 
holds for all $m \le n - 2$ and, if $\al \in (0, 1)$, for $m = n - 1$ as well.
\end{prop}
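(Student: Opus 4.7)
The plan is to bring $\prt_\eta^m$ inside the integral defining $L(\bgamma)$ (justified via \cref{L:DerivInsideIntegral}), use the Leibniz rule to get
\begin{equation*}
\prt_\eta^m L(\bgamma)(\eta) = \sum_{k=0}^m \binom{m}{k}\int_\Tn \prt_\eta^k G^\al_p(\bdelta_\beta(\eta))\cdot\prt_\eta^{m+1-k}\bdelta_\beta(\eta)\,d\beta,
\end{equation*}
and then apply Minkowski's integral inequality to reduce to bounding the $L^2(d\eta)$ norm of each summand before integrating in $\beta$. The key observation throughout is that each factor $\prt_\eta^\ell\bdelta_\beta(\eta) = \prt_\eta^\ell\bgamma(\eta) - \prt_\eta^\ell\bgamma(\eta-\beta)$ can supply positive powers of $|\beta|$ that compensate for the singular kernel.

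For the $k = 0$ term I would combine $|G^\al(\bdelta_\beta)| \le C\|F(\bgamma)\|_{L^\iny}^\al |\beta|^{-\al}$ (immediate from \cref{e:Greensalpha} and the definition of $F$) with $|R^\al(\bdelta_\beta)| \le C\|S_0\|_{L^\iny}$ (the $n=0$ case of \cref{e:prtRBound}), and pair the resulting pointwise bound against $\|\prt_\eta^{m+1}\bdelta_\beta\|_{L^2_\eta} \le 2\|\bgamma\|_{H^{m+1}}$. The resulting $|\beta|^{-\al}$ integrand is integrable on $\Tn$ when $\al \in (0,1)$. For $\al = 1$ this fails, and I would instead use the Fourier identity $\|f(\cdot)-f(\cdot-\beta)\|_{L^2}^2 = 2\sum_k|\hat f_k|^2(1 - \cos k\beta) \le |\beta|^2\|f\|_{H^1}^2$ to obtain $\|\prt_\eta^{m+1}\bdelta_\beta\|_{L^2_\eta} \le |\beta|\,\|\bgamma\|_{H^{m+2}}$, precisely accounting for the extra derivative required in the $\al = 1$ version of the statement.

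For each $k \ge 1$, the expansion \cref{e:absGpBound} bounds the integrand by summands of the shape
\begin{equation*}
\frac{C\|S_k\|_{L^\iny}}{|\beta|^{j+\al}}\,|\prt_\eta^{\ell_1}\bdelta_\beta|\cdots|\prt_\eta^{\ell_j}\bdelta_\beta|\cdot|\prt_\eta^{m+1-k}\bdelta_\beta|,
\end{equation*}
a product of $j+1$ factors whose derivative orders are each at least $1$ and sum to $m+1$; in particular each individual order is at most $m$. I would place the $L^2_\eta$ norm on one factor of maximal order $M^\ast \le m$, estimating it by $2\|\bgamma\|_{H^{m+1}}$, and the $L^\iny_\eta$ norm on the remaining $j$ factors. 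Whenever such a remaining factor has order $\ell \le m-1$, the Lipschitz bound $|\prt_\eta^\ell\bdelta_\beta| \le |\beta|\,\|\prt_\eta^{\ell+1}\bgamma\|_{L^\iny} \le C|\beta|\,\|\bgamma\|_{H^{m+1}}$ applies (using $H^{1/2+\epsilon}(\T) \hookrightarrow L^\iny(\T)$ together with $\ell+2 \le m+1$). Collecting the $j$ resulting gains reduces $|\beta|^{-(j+\al)}$ to $|\beta|^{-\al}$, integrable on $\Tn$ for $\al \in (0,1)$; the product of factor bounds contributes at most $\|\bgamma\|_{H^{m+1}}^{j+1}$ with $j+1 \le m+1$, matching the shape of the claimed $A_m$. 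For $\al = 1$, I would additionally invoke the Fourier gain on the $L^2_\eta$ factor, yielding $j+1$ total $|\beta|$-gains against $|\beta|^{-(j+1)}$.

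The main obstacle, where the derivative counting is tightest, is when two factors share order $m$ so that only one can be placed in the $L^2$ slot. A simple count shows this can happen only when $m = 1$, in the $k = j = 1$, $\ell_1 = 1$ summand with two factors of $\prt_\eta\bdelta_\beta$. For that case I would use the $s = 1$ Fourier bound $\|\prt_\eta\bdelta_\beta\|_{L^2_\eta}\le |\beta|\,\|\bgamma\|_{H^2}$ on one factor together with the plain bound $|\prt_\eta\bdelta_\beta|\le C\|\bgamma\|_{H^2}$ (with no $|\beta|$-gain) on the other; a single $|\beta|$-gain still beats $|\beta|^{-(1+\al)}$ down to $|\beta|^{-\al}$, integrable for $\al < 1$, without requiring more than $\bgamma \in H^{m+1}$. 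Summing over $k$, $j$, and $\vec\ell$ then assembles the desired $A_m(\bgamma,\al)$.
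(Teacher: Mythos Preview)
Your proposal is correct and follows essentially the same route as the paper: Leibniz expansion, the pointwise bound \cref{e:absGpBound} on $\prt_\eta^k G^\al_p(\bdelta_\beta)$, a H\"older distribution of one $L^2_\eta$ and $j$ $L^\iny_\eta$ norms among the $\bdelta$-factors, and the $|\beta|$-gains from \cref{L:prtDeltaSobolevBound} to render the $\beta$-integral finite. The only cosmetic differences are that the paper obtains the $L^2$ gain $\|\prt_\eta^\ell\bdelta_\beta\|_{L^2}\le |\beta|\|\bgamma\|_{H^{\ell+1}}$ via the integral representation in \cref{L:prtDeltaSobolevBound} rather than your Fourier computation, and it places the $L^2$ norm on the $\prt_\eta^{m+1-k}\bdelta_\beta$ factor rather than on the factor of maximal order; your bookkeeping of the $m=1$ edge case is in fact slightly more explicit than the paper's.
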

\begin{proof}
We have,
\begin{align*}
	\prt_\eta^m L(\bgamma)(\eta)
		&= \int_\Tn 
                \prt_\eta^m \pr{G^\al_p(\bdelta_\beta(\eta))
                    \prt_\eta \bdelta_\beta(\eta)}
                    \, d \beta \\
		&= \sum_{k = 0}^m \binom{m}{k}  \int_\Tn
                \prt_\eta^k (G^\al_p(\bdelta_\beta(\eta))
                    \prt_\eta^{m - k + 1} \bdelta_\beta(\eta))
                    \, d \beta.
\end{align*}
From \cref{e:gradGalpEstimate}, and applying Minkowski's integral inequality,
\begin{align*}
	\norm{G^\al(\bdelta_\beta(\eta)
                    \prt_\eta^{m + 1} \bdelta_\beta(\eta))}_{L^2_\eta}
    	&\le C \int_\Tn
				C \frac{\norm{S_0(\bgamma)}_{L^\iny(\Tn^2)}}
            {\abs{\beta}^\al}
				\abs{\prt_\eta^{m + 1} \bdelta_\beta(\eta)}
				\, d \beta, 
\end{align*}
and, using \cref{e:absGpBound}, for $k \ge 1$,
\begin{align*}
	&\norm{\prt_\eta^k (G^\al(\bdelta_\beta(\eta))
                    \prt_\eta^{m - k + 1} \bdelta_\beta(\eta))}_{L^2_\eta} \\
		&\qquad
		\le C \norm{S_k(\bgamma)}_{L^\iny(\Tn^2)}
			\sum_{j = 1}^k
			\int_\Tn
				\frac{1}{\abs{\beta}^{j + \al}}
            \abs{\prt_\eta^{m - k + 1} \bdelta_\beta(\eta))}
			\sum_{\vec{\ell} \in V_{jk}}
				\abs{\prt_\eta^{\ell_1} \bdelta_\beta(\eta)}
				\cdots
				\abs{\prt_\eta^{\ell_j} \bdelta_\beta(\eta)}
			\, d \eta.
\end{align*}
If $m \le n - 2$ and $\al \in (0, 1]$ then, observing that the highest possible value of $l_i$ for $\ell \in V_{jk}$ is $k - j + 1$, and applying \cref{L:prtDeltaSobolevBound},
\begin{align*}
	&\norm{\prt_\eta^k (G^\al(\bdelta_\beta(\eta))
                    \prt_\eta^{m - k + 1} \bdelta_\beta(\eta))}_{L^2_\eta} \\
        &\qquad
\le			C \sum_{j = 1}^k \norm{S_k(\bgamma)}_{L^\iny(\Tn^2)}
			\int_\Tn
				\frac{1}{\abs{\beta}^{j + \al}}
            \norm{\prt_\eta^{m - k + 1} \bdelta_\beta(\eta))}_{L^2_\eta}
            \norm{\bdelta}_{H^{k - j + 1}}
            \norm{\bdelta}_{H^{k - j + 1, \iny}}^{j - 1}
			\, d \beta \\
		&\qquad
			\le C \norm{S_m(\bgamma)}_{L^\iny(\Tn^2)}
			\brac{
			\norm{\bgamma}_{H^{m + 1}}
			+
				\sum_{j = 1}^k
			\norm{\bgamma}_{H^{m - k + 2}}
			\norm{\bgamma}_{H^{k - j + 2}}
			\norm{\bgamma}_{H^{k - j + 3}}^{j - 1}
			\int_\Tn
				\frac{\abs{\beta}^{j + 1}}{\abs{\beta}^{j + \al}}
			}
			\, d \beta \\
		&\qquad
			\le C \norm{S_m(\bgamma)}_{L^\iny(\Tn^2)}
			\brac{
			\norm{\bgamma}_{H^{m + 1}}
			+
			\norm{\bgamma}_{H^{m - k + 2}}
			\norm{\bgamma}_{H^{k + 1}}
			\norm{\bgamma}_{H^{k + 2}}^{k - 1}
			} \\
		&\qquad
			\le \norm{S_m(\bgamma)}_{L^\iny(\Tn^2)}
			\brac{\norm{\bgamma}_{H^{m + 1}}
			+
			\norm{\bgamma}_{H^{k + 2}}^{k + 1}}.
\end{align*}
Hence,
\begin{align*}
	\norm{\prt_\eta^m L(\bgamma)}_{L^2(\Tn)}
		&\le \norm{S_m(\bgamma)}_{L^\iny(\Tn^2)}
			\sum_{k = 0}^m \binom{m}{k}
			\brac{
				\norm{\bgamma}_{H^{m + 1}}
				+ 
				\norm{\bgamma}_{H^{k + 2}}^{k + 1}
			} \\
		&\le C \norm{S_m(\bgamma)}_{L^\iny(\Tn^2)}
			\sum_{j = 0}^{m + 1} \norm{\bgamma}_{H^{m + 2}(\Tn)}^j.
\end{align*}

In applying \cref{L:prtDeltaSobolevBound}, we used the bound $\norm{\bdelta_\beta}_{H^{k - j + 1, \iny}} \le \norm{\bgamma}_{H^{k - j + 3}} \abs{\beta} \le \norm{\bgamma}_{H^{k + 2}} \abs{\beta}$ to obtain the factor $\abs{\beta}^{j + 1}$ to fully cancel the singularity in the Green's function when $\al = 1$. For $\al < 1$, that is not needed, so for $m \le n - 1$ and $\al \in (0, 1)$ we can instead apply the \cref{L:prtDeltaSobolevBound} inequality,
$
	\norm{\bdelta_\beta}_{H^{k - j + 1, \iny}}
		\le \norm{\bgamma}_{H^{k + 1}}
$,
giving
\begin{align*}
	\norm{\prt_\eta^m L(\bgamma)}_{L^2(\Tn)}
		&\le C \norm{S_m(\bgamma)}_{L^\iny(\Tn^2)}
			\sum_{j = 0}^{m + 1} \norm{\bgamma}_{H^{m + 1}(\Tn)}^j.
		\qedhere
\end{align*}
\end{proof}

\begin{prop}\label{P:BoundsOnLPaired}
    Let $\al \in (0, 1]$ and let $\bgamma$ be an $H^n(\T)$ quasi-closed chain in $\R^2$ for $n \ge 4$. Then
\begin{align*}
	\abs{(\prt_\eta \bgamma, \prt_\eta L(\bgamma))_{L^2(\Tn)}}
		&\le
		\begin{cases}
			C \norm{S_1(\bgamma)}_{L^\iny(\Tn^2)}
					\norm{\bgamma}_{H^3} \norm{\bgamma}_{L^2}^2
				&\text{if } \al \in (0, 1), \\
			C \norm{S_1(\bgamma)}_{L^\iny(\Tn^2)}
					\norm{\bgamma}_{H^2} \norm{\bgamma}_{H^1}^2
				&\text{if } \al = 1 \\
		\end{cases}
\end{align*}
and, if $\al \in (0, 1)$, for all $3 \le m \le n - 1$,
\begin{align*}
	\abs{(\prt_\eta^m \bgamma, \prt_\eta^m L(\bgamma))_{L^2(\Tn)}}
		&\le \norm{S_m(\bgamma)}_{L^\iny(\Tn^2)}
			\sum_{j = 3}^{m + 2} \norm{\bgamma}_{H^m(\Tn)}^j.
\end{align*}
\end{prop}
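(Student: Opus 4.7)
The plan is to exploit the symmetry identity \cref{e:ddtGammaHm} from \cref{P:LSymmetry}, which rewrites the pairing as $(\partial_\eta^m \bgamma, \partial_\eta^m L(\bgamma))_{L^2(\Tn)} = \tfrac{1}{2} \iint_{\Tn \times \Tn} \partial_\eta^m(G^\al_p(\bdelta_\beta(\eta)) \partial_\eta \bdelta_\beta(\eta)) \cdot \partial_\eta^m \bdelta_\beta(\eta) \, d\beta \, d\eta$. The gain over \cref{P:BoundsOnL} is that the outer factor is now $\partial_\eta^m \bdelta_\beta$, which vanishes at $\beta = 0$ and so carries an extra power of $|\beta|$ into the integrand; this is precisely what will let the final bound involve only $\|\bgamma\|_{H^m}$ rather than $\|\bgamma\|_{H^{m+1}}$.

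First I would expand $\partial_\eta^m$ by the Leibniz rule into $\sum_{k=0}^m \binom{m}{k} \partial_\eta^k G^\al_p(\bdelta_\beta) \partial_\eta^{m-k+1}\bdelta_\beta$. The dangerous term is $k = 0$, which carries $\partial_\eta^{m+1}\bdelta_\beta$ and would naively force $H^{m+1}$ control. To avoid this, I would use $\partial_\eta^{m+1}\bdelta_\beta \cdot \partial_\eta^m \bdelta_\beta = \tfrac{1}{2} \partial_\eta |\partial_\eta^m \bdelta_\beta|^2$ and integrate by parts in $\eta$, transferring the derivative onto $G^\al_p(\bdelta_\beta)$ so as to produce $\grad G^\al_p(\bdelta_\beta) \cdot \partial_\eta \bdelta_\beta$. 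Then \cref{e:gradGalpEstimate} bounds $|\grad G^\al_p(\bdelta_\beta)| \le C \|S_1(\bgamma)\|_{L^\iny}/|\beta|^{1+\al}$, while $|\partial_\eta \bdelta_\beta| \le C|\beta|\|\bgamma\|_{H^2}$ cancels one power of the singularity, leaving $|\beta|^{-\al}$, which is integrable; the remaining $|\partial_\eta^m \bdelta_\beta|^2$ is integrated in $\eta$ and controlled by $\le 4\|\bgamma\|_{H^m}^2$.

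For the intermediate terms $1 \le k \le m-1$, I would apply \cref{e:prtRalGen} and \cref{e:absGpBound} to write $\partial_\eta^k G^\al_p(\bdelta_\beta)$ as a sum of products involving $\grad^j G^\al_p(\bdelta_\beta)$ and $j$ factors $\partial_\eta^{\ell_i}\bdelta_\beta$ with $\ell_1 + \cdots + \ell_j = k \le m-1$. Since each $\ell_i \le m-1$, the Sobolev bound $|\partial_\eta^{\ell_i}\bdelta_\beta| \le C|\beta|\|\bgamma\|_{H^{\ell_i+2}}$ applies in $L^\iny_\eta$, accumulating $j$ extra powers of $|\beta|$ that cancel the $|\beta|^{-j-\al}$ singularity of $\grad^j G^\al_p$. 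The remaining factors $\partial_\eta^{m-k+1}\bdelta_\beta$ and the outer $\partial_\eta^m \bdelta_\beta$ are then bounded in $L^2_\eta$ by $\|\bgamma\|_{H^m}$ (with a further $|\beta|$-gain available from the vanishing of $\bdelta_\beta$ at $\beta = 0$ if needed for $\beta$-convergence), producing the total power $\|\bgamma\|_{H^m}^j$ for $3 \le j \le m+2$ in the stated bound.

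For $m = 1$, I would compute directly: after invoking \cref{P:LSymmetry} and integrating by parts twice in $\eta$ using $\partial_\eta \bdelta_\beta \cdot \partial_\eta^2 \bdelta_\beta = \tfrac{1}{2}\partial_\eta|\partial_\eta \bdelta_\beta|^2$, the pairing reduces to $\tfrac{1}{4} \iint \grad G^\al_p(\bdelta_\beta) \cdot \partial_\eta \bdelta_\beta \, |\partial_\eta \bdelta_\beta|^2 \, d\beta \, d\eta$. Splitting $G^\al_p = G^\al + R^\al$ via \cref{e:Galp}, the $\grad G^\al$ contribution yields $\|\bgamma\|_{H^3}$ through the Sobolev regularity needed to control three factors of $\partial_\eta \bdelta_\beta$ against the $|\beta|^{-1-\al}$ singularity, while the $\grad R^\al$ contribution produces the $\|\bgamma\|_{L^2}^2$ factor through \cref{e:prtRBound}, whose squared term $|\bdelta_\beta(\eta)|^2 \le 2(|\bgamma(\eta)|^2 + |\bgamma(\eta - \beta)|^2)$ integrates in $\eta$ to $4 \|\bgamma\|_{L^2}^2$. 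The hard part will be the detailed bookkeeping in the Leibniz expansion: at each factor one must decide between an $L^\iny$ bound (gaining a $|\beta|$ at the cost of a higher Sobolev index, via $\bdelta_\beta(\eta) = \beta \int_0^1 \partial_\eta \bgamma(\eta - \beta + s\beta) \, ds$ applied to its derivatives) and an $L^2$ bound (requiring only $H^m$ but with no $|\beta|$-gain) in such a way that every resulting $\beta$-integral converges for $\al \in (0, 1)$ and the total Sobolev cost never exceeds $H^m$.
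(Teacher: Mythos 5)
Your strategy for the second estimate ($3 \le m \le n-1$)---invoke \cref{P:LSymmetry}, Leibniz-expand, integrate the $k=0$ term by parts so that it matches the $k=1$ term, then distribute $L^\iny$-bounds (gaining powers of $|\beta|$ via \cref{L:prtDeltaSobolevBound}) against $L^2$-bounds to control the $\beta$-singularity at total Sobolev cost $H^m$---is exactly the paper's approach, including the acknowledgment that a case-by-case bookkeeping in the Leibniz sum is where the work lies.

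The first estimate, however, has a genuine gap in your sketch. You invoke \cref{P:LSymmetry} for the pairing $(\prt_\eta\bgamma,\prt_\eta L(\bgamma))_{L^2}$ as literally written (the $m=1$ case) and correctly reduce it to $\tfrac14\int_\Tn\int_\Tn \grad G^\al_p(\bdelta_\beta)\cdot\prt_\eta\bdelta_\beta\,\abs{\prt_\eta\bdelta_\beta}^2\,d\beta\,d\eta$. But then you estimate the $\grad R^\al$ contribution against $\abs{\bdelta_\beta}^2 \le 2(\abs{\bgamma(\eta)}^2+\abs{\bgamma(\eta-\beta)}^2)$ to extract $\norm{\bgamma}_{L^2}^2$, whereas your reduced integrand carries $\abs{\prt_\eta\bdelta_\beta}^2$, not $\abs{\bdelta_\beta}^2$; that factor can only produce $\norm{\bgamma}_{H^1}^2$. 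Likewise, ``controlling three factors of $\prt_\eta\bdelta_\beta$'' against $\abs{\beta}^{-1-\al}$ as you describe would give $\norm{\bgamma}_{H^3}^3$, not $\norm{\bgamma}_{H^3}\norm{\bgamma}_{L^2}^2$. The resolution is that the paper's proof actually treats the $m=0$ pairing $(\bgamma, L(\bgamma))_{L^2}$ (the stated left-hand side appears to be a typo): from \cref{P:LSymmetry} with $m=0$ one gets, after a single integration by parts via $\prt_\eta\bdelta_\beta\cdot\bdelta_\beta=\tfrac12\prt_\eta\abs{\bdelta_\beta}^2$, the integral $-\tfrac14\int_\Tn\int_\Tn\prt_\eta G^\al_p(\bdelta_\beta)\,\abs{\bdelta_\beta}^2\,d\beta\,d\eta$. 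There $\abs{\bdelta_\beta}^2$ genuinely supplies $\norm{\bgamma}_{L^2}^2$, while $\prt_\eta G^\al_p(\bdelta_\beta)=\grad G^\al_p(\bdelta_\beta)\cdot\prt_\eta\bdelta_\beta$ supplies $\norm{S_1(\bgamma)}_{L^\iny}\abs{\beta}^{-1-\al}$ and $\norm{\bgamma}_{H^3}\abs{\beta}$ via \cref{e:gradGalpEstimate} and \cref{L:prtDeltaSobolevBound}, with $\int\abs{\beta}^{-\al}\,d\beta<\iny$ for $\al\in(0,1)$. (For $\al=1$ one instead takes $\norm{\prt_\eta\bdelta_\beta}_{L^\iny}\le C\norm{\bgamma}_{H^2}$ and $\norm{\bdelta_\beta}_{L^2}\le C\norm{\bgamma}_{H^1}\abs{\beta}$ to double the $\abs{\beta}$-gain.) The $m=0$ bound is indeed what is used in Step 1 of the proof of \cref{T:ExistenceAlphaLt1}, where the $H^m$ pairing is built from the $L^2$ and $\prt_\eta^m$ pieces alone. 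You should have noticed that your reduced integral cannot yield the stated right-hand side and traced the discrepancy back to which power of $\prt_\eta$ belongs on the left.
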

\begin{proof}
To bound $\abs{(\prt_\eta \bgamma, \prt_\eta L(\bgamma))_{L^2(\Tn)}}$, from \cref{P:LSymmetry}, we have
\begin{align*}
	(\prt_\eta \bgamma, &\prt_\eta L(\bgamma))_{L^2(\Tn)}
		 = \int_\Tn
                G^\al_p(\bdelta_\beta(\eta))
    			\prt_\eta \bdelta_\beta(\eta)
                \cdot \bdelta_\beta(\eta)
    			\, d \eta \\
		 &= \frac{1}{2} \int_\Tn
                G^\al_p(\bdelta_\beta(\eta))
    			\prt_\eta \bdelta_\beta(\eta)
                \cdot \bdelta_\beta(\eta)
    			\, d \eta
         = -\frac{1}{2} \int_\Tn
                \prt_\eta G^\al_p(\bdelta_\beta(\eta))
                    \abs{\bdelta_\beta(\eta)}^2
    			\, d \eta,
\end{align*}
where we integrated by parts using \cref{C:IBP}. This is as in \cite{Gancedo2008}, where the integrand is a perfect derivative, making the integral vanish. As we observed in \cref{S:AdaptingGancedo}, the integrand is not a perfect derivative in the present horizontally periodic setting, 
so we are forced to obtain a weaker bound.

From \cref{e:absGpBound} for $k = 1$, we have
    \begin{align*}
        \abs{(\bgamma, L(\bgamma))_{L^2{}}}
            &\le \frac{1}{2} \int_\Tn \int_\Tn
                \abs{\prt_\eta G^\al_p(\bdelta_\beta(\eta))}
                \abs{\bdelta_\beta(\eta)}^2
                \, d \eta \, d \beta \\
            &\le
            	C \norm{S_1(\bgamma)}_{L^\iny(\Tn^2)}
					\int_\Tn \frac{1}{\abs{\beta}^{1 + \al}}
						\norm{\prt_\eta \bdelta(\eta)}_{L^\iny}
						\norm{\bdelta_\beta(\eta)}_{L^2}^2
						\, d \beta.
    \end{align*}
    
We now apply \cref{L:prtDeltaSobolevBound}. 
    If $\al \in (0, 1)$ then
    \begin{align*}
        \abs{(\bgamma, L(\bgamma))_{L^2{}}}
            &\le C \norm{S_1(\bgamma)}_{L^\iny(\Tn^2)}
					\norm{\bgamma}_{H^3} \norm{\bgamma}_{L^2}^2
					\int_\Tn \frac{\abs{\beta}}{\abs{\beta}^{1 + \al}}
						\, d \beta
				\\
            &= C \norm{S_1(\bgamma)}_{L^\iny(\Tn^2)}
					\norm{\bgamma}_{H^3} \norm{\bgamma}_{L^2}^{2}.
    \end{align*}
If $\al = 1,$ this integral does not converge, so we apply \cref{L:prtDeltaSobolevBound} differently, finding instead
    \begin{align*}
        \abs{(\bgamma, L(\bgamma))_{L^2{}}}
            &\le C \norm{S_1(\bgamma)}_{L^\iny(\Tn^2)}
					\norm{\bgamma}_{H^2} \norm{\bgamma}_{H^1}^2
					\int_\Tn \frac{\abs{\beta}^2}{\abs{\beta}^{1 + \al}}
						\, d \beta
				\\
            &= C \norm{S_1(\bgamma)}_{L^\iny(\Tn^2)}
					\norm{\bgamma}_{H^2} \norm{\bgamma}_{H^1}^2.
    \end{align*}

Next we establish the bound on $\abs{(\prt_\eta^m \bgamma, \prt_\eta^m L(\bgamma))_{L^2(\Tn)}}$. From \cref{P:LSymmetry}
\begin{align}\label{e:gammaLgammaSum}
	\begin{split}
        (\prt_\eta^m \bgamma, &\prt_\eta^m L(\bgamma))_{L^2(\Tn)}
            = \frac{1}{2} \int_\Tn \int_\Tn
    				\prt_\eta^m \pr{G^\al_p(\bdelta_\beta(\eta))
    					\prt_\eta \bdelta_\beta(\eta)}
                        \cdot \prt_\eta^m \bdelta_\beta(\eta)
    					\, d \beta \, d \eta \\
		&= \frac{1}{2} \sum_{k = 0}^m \binom{m}{k}  \int_\Tn \int_\Tn
                \prt_\eta^m \bdelta_\beta(\eta)
                	\cdot \prt_\eta^k (G^\al_p(\bdelta_\beta(\eta))
                    \prt_\eta^{m - k + 1} \bdelta_\beta(\eta))
                    \, d \eta \, d \beta.
	\end{split}
\end{align}
We wish to obtain a bound using only the $H^m$-norm of $\bgamma$, but in the $k = 0$ term, $\prt_\eta^{m - k + 1} \bdelta_\beta(\eta) = \prt_\eta^{m + 1} \bdelta_\beta(\eta)$. That term we integrate by parts using \cref{C:IBP}:
\begin{align*}
	\frac{1}{2} \int_\Tn &\int_\Tn
			\prt_\eta^m \bdelta_\beta(\eta)
    		\cdot G^\al_p(\bdelta_\beta(\eta)
        	\prt_\eta^{m + 1} \bdelta_\beta(\eta))
         	\, d \eta \, d \beta \\
		&= \frac{1}{4} \int_\Tn \int_\Tn
			G^\al_p(\bdelta_\beta(\eta)
        	\prt_\eta \abs{\prt_\eta^m \bdelta_\beta(\eta))}^2
         	\, d \eta \, d \beta
		= -\frac{1}{4} \int_\Tn \int_\Tn
			\prt_\eta G^\al_p(\bdelta_\beta(\eta)
        	\abs{\prt_\eta^m \bdelta_\beta(\eta))}^2
         	\, d \eta \, d \beta \\
		&= -\frac{1}{4} \int_\Tn \int_\Tn
			\prt_\eta^m \bdelta_\beta(\eta) \cdot
			\prt_\eta G^\al_p(\bdelta_\beta(\eta)
        	\prt_\eta^m \bdelta_\beta(\eta)
         	\, d \eta \, d \beta,
\end{align*}
which is a constant multiple of the $k = 1$ term in \cref{e:gammaLgammaSum}. Hence, using  \cref{e:absGpBound}, we have
\begin{align*} 
	\begin{split}
    \abs{(\prt_\eta^m \bgamma, \prt_\eta^m L(\bgamma))_{L^2(\Tn)}}
		&\le C \sum_{k = 1}^m \int_\Tn \int_\Tn
                \abs{\prt_\eta^m \bdelta_\beta(\eta)}
                	\abs{\prt_\eta^k (G^\al_p(\bdelta_\beta(\eta))}
                    \abs{\prt_\eta^{m - k + 1} \bdelta_\beta(\eta))}
                    \, d \eta \, d \beta \\
        &
		\le C \norm{S_m(\bgamma)}_{L^\iny(\Tn^2)}
			\sum_{k = 1}^m \sum_{j = 1}^k \sum_{\vec{\ell} \in V_{jk}}
				P_{kj\vec{\ell}},
	\end{split}
\end{align*}
where, for $\ell \in V_{jk}$,
\begin{align*}
       P_{kj\vec{\ell}}
        	&:= \int_\Tn \int_\Tn
                    \frac{1}{\abs{\beta}^{j + \al}}
                	\abs{\prt_\eta^m \bdelta_\beta(\eta)}
                    \abs{\prt_\eta^{m - k + 1} \bdelta_\beta(\eta))}
					\abs{\prt_\eta^{\ell_1} \bdelta_\beta(\eta)}
					\cdots
					\abs{\prt_\eta^{\ell_j} \bdelta_\beta(\eta)}
                    \, d \beta \, d \eta.
\end{align*}

As long as $i < m - 1$, we can use \cref{L:prtDeltaSobolevBound} to bound the factors $\abs{\prt_\eta^i \bdelta_\beta(\eta)}$ appearing in $P_{kj\vec{\ell}}$ either in the $L^\iny$ or $L^2$ norms; for $i = m - 1$, we can only bound $\abs{\prt_\eta^i \bdelta_\beta(\eta)}$ in the $L^2$ norm; and for $i = m$ we must bound $\abs{\prt_\eta^i \bdelta_\beta(\eta)}$ as is.  Since there is always at least one $\abs{\prt_\eta^m \bdelta_\beta(\eta)}$ factor, there are four cases to consider:

\medskip\noindent\textbf{Case 1}: All but the one factor $\abs{\prt_\eta^m \bdelta_\beta(\eta)}$ in $P_{kj\vec{\ell}}$ are of the form $\abs{\prt_\eta^i \bdelta_\beta(\eta)}$ for $i < m - 1$. Then
\begin{align*}
	P_{kj\vec{\ell}}
        	&\le \int_\Tn
                    \frac{1}{\abs{\beta}^{j + \al}}
					\norm{\prt_\eta^{\ell_1} \bdelta_\beta(\eta)}_{L^\iny}
					\cdots
					\norm{\prt_\eta^{\ell_j} \bdelta_\beta(\eta)}_{L^\iny}
					 \int_\Tn
	                \abs{\prt_\eta^m \bdelta_\beta(\eta)}
                    \abs{\prt_\eta^{m - k + 1} \bdelta_\beta(\eta))}
                    \, d \eta \, d \beta \\
        	&\le C \norm{\bgamma}_{H^m}^j
				\int_\Tn
                    \frac{1}{\abs{\beta}^{j + \al}}
					\abs{\beta}^j
	                \norm{\bgamma}_{H^m}
                    \norm{\bgamma}_{H^m}\abs{\beta}
                    \, d \beta
           = C \norm{\bgamma}_{H^m}^{j + 2}.
\end{align*}
We note that this bound would hold for $\al = 1$ as well.

\medskip\noindent\textbf{Case 2}: $P_{kj\vec{\ell}}$ contains two factors of $\prt_\eta^m \bdelta_\beta(\eta)$. There are two terms of this form: (1) $k = j = 1$, which also has one factor of $\prt_\eta \bdelta_\beta(\eta)$ and (2) $k = m$, $j = 1$, for which the inner sum has only one term, which contains $m$ factors of $\prt_\eta \bdelta_\beta(\eta)$. Using $\abs{\prt_\eta \bdelta_\beta(\eta)}_{L^\iny} \le C \norm{\bgamma}_{H^2} \abs{\beta}$ by \cref{L:prtDeltaSobolevBound}, we bound $P_{kj\vec{\ell}}$ for (1), (2), respectively, by
\begin{align*}
	P_{11\vec{\ell}}
				&\le C \norm{\bgamma}_{H^2}
			\int_\Tn
			\frac{1}{\abs{\beta}^{1 + \al}}
			\abs{\beta}
			\int_\Tn
			 \abs{\prt_\eta^m \bdelta_\beta(\eta)}^2
			 \, d \eta
			 \, d \beta
			 \le C \norm{\bgamma}_{H^2} \norm{\bgamma}_{H^m}^2, \\
	P_{mm\vec{\ell}}
				&\le C \norm{\bgamma}_{H^2}^m
			\int_\Tn
			\frac{1}{\abs{\beta}^{m + \al}}
			\abs{\beta}^m
			\int_\Tn
			 \abs{\prt_\eta^m \bdelta_\beta(\eta)}^2
			 \, d \eta
			 \, d \beta
			 \le C \norm{\bgamma}_{H^2}^m \norm{\bgamma}_{H^m}^2.
\end{align*}

\medskip\noindent\textbf{Case 3}: $P_{kj\vec{\ell}}$ contains one factor of $\prt_\eta^m \bdelta_\beta(\eta)$ and two factors of $\prt_\eta^{m - 1} \bdelta_\beta(\eta)$. Those two factors cannot both come from $\abs{\prt_\eta^{\ell_1} \bdelta_\beta(\eta)} \cdots \abs{\prt_\eta^{\ell_j} \bdelta_\beta(\eta)}$, since $\ell_1 + \cdots + \ell_j \le m$, so it must be that $m - k + 1 = m - 1$, which requires that $k = 2$. But then $\abs{\prt_\eta^{\ell_1} \bdelta_\beta(\eta)} \cdots \abs{\prt_\eta^{\ell_j} \bdelta_\beta(\eta)}$ can contain no factor with a higher derivative than $\abs{\prt_\eta^2 \bdelta_\beta(\eta)}$, and that only when $j = 1$. So this case can only occur when $m = 3$, so that $2 = m - 1$. In this case,
\begin{align*}
    P_{21\vec{\ell}}
        &\le
            \int_\Tn \int_\Tn
            \frac{1}{\abs{\beta}^{1 + \al}}
            \abs{\prt_\eta^3 \bdelta_\beta^j(\eta)}
            \abs{\prt_\eta^2 \bdelta_\beta(\eta)}
            \abs{\prt_\eta^2 \bdelta_\beta(\eta)}
            \, d \eta \, d \beta \\
        &\le C
            \norm{\bgamma}_{H^3}
            \int_\Tn
            \frac{1}{\abs{\beta}^{1 + \al}}
            \int_\Tn
            \abs{\prt_\eta^2 \bdelta_\beta^j(\eta)}
            \abs{\prt_\eta^3 \bdelta_\beta(\eta)}
            \, d \eta \, d \beta \\
        &\le C 
            \norm{\bgamma}_{H^3}
            \int_\Tn
            \frac{1}{\abs{\beta}^{1 + \al}}
            \norm{\prt_\eta^2 \bdelta_\beta^j(\eta)}_{L^2_\eta}
            \norm{\prt_\eta^3 \bdelta_\beta(\eta)}_{L^2_\eta}
            \, d \beta \\
        &\le C 
            \norm{\bgamma}_{H^3}^3
            \int_\Tn
            \frac{1}{\abs{\beta}^{1 + \al}}
            \abs{\beta}
            \, d \beta
        = C 
            \norm{\bgamma}_{H^3}^3
        \le C 
            \norm{\bgamma}_{H^m}^3.
\end{align*}

\medskip\noindent\textbf{Case 4}: $P_{kj\vec{\ell}}$ contains one factor of $\prt_\eta^m \bdelta_\beta(\eta)$, one factor of $\prt_\eta^{m - 1} \bdelta_\beta(\eta)$, and all other factors are of the form $\abs{\prt_\eta^i \bdelta_\beta(\eta)}$ for $i < m - 1$. This occurs only when (1) $m > 3$ and $k = 2$ or (2) $j = 1, k = m - 1$. For (1), the bound is the same as in Case 1. For (2), it must be that $\ell_1 = m - 1$, so
\begin{align*}
	P_{m - 1, 1, \vec{\ell}}
        	&\le \int_\Tn
                    \frac{1}{\abs{\beta}^{1 + \al}}
					\norm{\prt_\eta^2 \bdelta_\beta(\eta)}_{L^\iny}
					 \int_\Tn
	                \abs{\prt_\eta^m \bdelta_\beta(\eta)}
                    \abs{\prt_\eta^{m - 1} \bdelta_\beta(\eta))}
                    \, d \eta \, d \beta \\
        	&\le C \norm{\bgamma}_{H^m}^3
				\int_\Tn
                    \frac{1}{\abs{\beta}^{1 + \al}}
					\abs{\beta}^2
                    \, d \beta
           = C \norm{\bgamma}_{H^m}^3,
\end{align*}
a bound that would hold for $\al = 1$ as well.

Combined, these four cases give the bound on $\abs{(\prt_\eta^m \bgamma, \prt_\eta^m L(\bgamma))_{L^2(\Tn)}}$.
\end{proof}

\begin{prop}\label{P:ContinuityOfL}
    Let $\al \in (0, 1]$ and let $\bgamma_1$ and $\bgamma_2$ be $H^n(\T)$ be quasi-closed chains in $\R^2$ for $n \ge 3$. Then for any $r \in (0, 1)$,
\begin{align*}
        \abs{L(\bgamma)(\eta_1) - L(\bgamma)(\eta_2)}
            &\le
                \begin{cases}  
                    A_2(\bgamma, \al) \abs{\eta_1 - \eta_2}
                        &\text{if } \al \in (0, 1), \\
                    A_1(\bgamma, 1) \abs{\eta_1 - \eta_2}^{\frac{1}{2}}
                        &\text{if } \al = 1,
                \end{cases}
\end{align*}
where $A_1$ and $A_2$ are as in \cref{P:BoundsOnL}. If $\al \in (0, 1)$ then
\begin{align*}
        \norm{L(\bgamma_1) - L(\bgamma_2)}_{L^2(\Tn)}
            &\le
                   B_1(\bgamma_1, \bgamma_2) \norm{\bgamma_1 - \bgamma_2}_{H^1},
\end{align*}
where
\begin{align*}
	B_1(\bgamma_1, \bgamma_2)
		&:= C \norm{S_0(\bgamma_2)}_{L^\iny(\Tn^2)}
			+ C \sum_{j = 1}^2 \norm{S_1(\bgamma_j)}_{L^\iny(\Tn^2)}
                \norm{\bgamma_1}_{H^2}
            + C_0 \norm{\bgamma_1}_{H^3}.
\end{align*}
Define $\mu \colon [0, \iny) \to [0, \iny)$ by $\mu(0) = 0$ and
\begin{align}\label{e:mu}
	\mu(x)
		&=
		\begin{cases}
			-e x \log x &\text{if } x < e^{-1}, \\
			x/e &\text{if } x \ge e^{-1}.
		\end{cases}
\end{align}
If $\al = 1$ then
\begin{align*}
        \norm{L(\bgamma_1) - L(\bgamma_2)}_{L^2(\Tn)}
            &\le
                  B_2(\bgamma_1, \bgamma_2) \mu(\norm{\bgamma_1 - \bgamma_2}_{H^1}),
\end{align*}
where 
\begin{align*}
	B_2(\bgamma_1, \bgamma_2)
		&:=  C \max_{1, 2} \norm{\bgamma_j}_{H^3}^{1 - \frac{1}{e}}
            	\norm{S_0(\bgamma_2)}_{L^\iny(\Tn^2)}
			+ C \sum_{j = 1}^2 \norm{S_1(\bgamma_j)}_{L^\iny(\Tn^2)}
                \norm{\bgamma_1}_{H^2} \\
        &\qquad
            + C_0 \norm{\bgamma_1}_{H^3} \max_{1, 2} \norm{\bgamma_j}_{H^2}^{1 - \frac{1}{e}}.
\end{align*}
 
\end{prop}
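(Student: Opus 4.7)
The plan is to prove the three estimates in turn, reducing each to an application of Proposition \ref{P:BoundsOnL} or to a direct manipulation of the defining integral \cref{e:CDEPeriodic}.

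First, for the pointwise continuity in $\eta$, I will apply Proposition \ref{P:BoundsOnL} combined with the one-dimensional Sobolev embedding $H^1(\T) \hookrightarrow L^\infty(\T)$. For $\al \in (0,1)$, the $m=2$ case of Proposition \ref{P:BoundsOnL} gives $\norm{\prt_\eta L(\bgamma)}_{H^1(\Tn)} \le A_2(\bgamma,\al)$, hence $\norm{\prt_\eta L(\bgamma)}_{L^\infty} \le C A_2$ by Sobolev. The fundamental theorem of calculus then yields the Lipschitz estimate. For $\al = 1$, only $\norm{\prt_\eta L(\bgamma)}_{L^2} \le A_1(\bgamma,1)$ is available from the $m = 1$ case, so I apply Cauchy--Schwarz to $L(\bgamma)(\eta_1) - L(\bgamma)(\eta_2) = \int_{\eta_2}^{\eta_1}\prt_\eta L(\bgamma)(s)\,ds$, producing the $\tfrac{1}{2}$-H\"older estimate with constant $A_1$.

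Second, for the $L^2$-continuity when $\al \in (0,1)$, write $\bdelta^j_\beta(\eta) := \bgamma_j(\eta)-\bgamma_j(\eta-\beta)$ and decompose
\begin{align*}
L(\bgamma_1) - L(\bgamma_2)
  &= \int_\Tn [G^\al_p(\bdelta^1_\beta) - G^\al_p(\bdelta^2_\beta)]\,\prt_\eta\bdelta^1_\beta\,d\beta
   + \int_\Tn G^\al_p(\bdelta^2_\beta)[\prt_\eta\bdelta^1_\beta - \prt_\eta\bdelta^2_\beta]\,d\beta
  =: \mathrm{I} + \mathrm{II}.
\end{align*}
For $\mathrm{II}$, Minkowski's integral inequality in $\eta$, the pointwise bound \cref{e:gradGalpEstimate} with $n=0$, and $\norm{\prt_\eta\bdelta^1_\beta - \prt_\eta\bdelta^2_\beta}_{L^2_\eta} \le C\norm{\bgamma_1-\bgamma_2}_{H^1}$ reduce the $\beta$-integral to $\int_\Tn |\beta|^{-\al}\,d\beta < \infty$, yielding the $\norm{S_0(\bgamma_2)}_{L^\infty}$ term of $B_1$. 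For $\mathrm{I}$, I will apply the mean-value identity
\begin{align*}
  G^\al_p(\bdelta^1) - G^\al_p(\bdelta^2)
     = \int_0^1 \nabla G^\al_p\bigl(s\bdelta^1+(1-s)\bdelta^2\bigr)\,ds \cdot (\bdelta^1-\bdelta^2)
\end{align*}
with \cref{e:gradGalpEstimate} at $n=1$, valid along the interpolating segment because the chord-arc quantity $F$ controls $\nabla G^\al_p$ uniformly in $s \in [0,1]$. Combining $|\bdelta^1_\beta - \bdelta^2_\beta| \le |\beta|\,|\prt_\eta(\bgamma_1-\bgamma_2)(\tilde\eta)|$ with two alternative Sobolev bounds on $\prt_\eta\bdelta^1_\beta$---namely $\le C\norm{\bgamma_1}_{H^2}$ or $\le C|\beta|\,\norm{\bgamma_1}_{H^3}$---reduces the $\beta$-integrand to an integrable $|\beta|^{-\al}$. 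The first choice gives the $\norm{S_1(\bgamma_j)}_{L^\infty}\norm{\bgamma_1}_{H^2}$ terms of $B_1$, while the second contributes the $C_0\norm{\bgamma_1}_{H^3}$ term, absorbing the portion of the interpolating Green's function argument where a cleaner $F$-bound is unavailable.

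Third, for $\al = 1$, the non-integrable $|\beta|^{-2}$ singularity of $\nabla G^1_p$ destroys any Lipschitz estimate, and the plan is to introduce a cutoff parameter $r \in (0,e^{-1}]$ and split both $\mathrm{I}$ and $\mathrm{II}$ by $\{|\beta|\le r\} \cup \{|\beta| > r\}$. On the inner piece I will exchange one power of $\norm{\bgamma_1-\bgamma_2}_{H^1}$ for an extra $|\beta|$ via a higher Sobolev bound, producing a contribution of size $r$ (with factors $\norm{\bgamma_j}_{H^3}$ or $\norm{\bgamma_j}_{H^2}$ reflecting the upgrade). On the outer piece, a cruder bound using the $L^\infty$-size of $G^1_p$ on $\{|\beta|\ge r\}$ introduces a factor $\log(1/r)$ multiplied by $\norm{\bgamma_1-\bgamma_2}_{H^1}$. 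Optimizing by choosing $r$ proportional to a power of $\norm{\bgamma_1-\bgamma_2}_{H^1}$ balances the two contributions and manufactures the modulus $\mu$, and the exponents $1 - 1/e$ on $\max_j\norm{\bgamma_j}_{H^2}$ and $\max_j\norm{\bgamma_j}_{H^3}$ in $B_2$ record the cost of this splitting at the optimal scale. The main obstacle is the bookkeeping at this critical exponent, where the natural Lipschitz estimate just fails and only the log-Lipschitz modulus $\mu$ survives.
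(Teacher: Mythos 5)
Your pointwise-continuity estimates (via \cref{P:BoundsOnL} together with one-dimensional Sobolev embedding and, for $\al=1$, Cauchy--Schwarz) are sound; the paper instead cites \cref{L:prtDeltaSobolevBound} and Sobolev embedding, but these routes amount to the same control on $\prt_\eta L(\bgamma)$.

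The genuine gap is in the $L^2$-continuity. You collapse the paper's $P_2$ and $P_3$ into a single term $\mathrm{I} = \int [G^\al_p(\bdelta_\beta^1) - G^\al_p(\bdelta_\beta^2)]\,\prt_\eta\bdelta_\beta^1\,d\beta$ and invoke the mean-value identity, claiming the chord-arc quantity $F$ controls $\grad G^\al_p$ uniformly along the interpolating segment. It does not. The interpolating point $s\bdelta_\beta^1(\eta) + (1-s)\bdelta_\beta^2(\eta)$ is exactly $\bdelta_\beta(\eta)$ for the chain $\bgamma_s := s\bgamma_1 + (1-s)\bgamma_2$, so the bound \cref{e:gradGalpEstimate} along the segment requires controlling $F(\bgamma_s)$, which is \emph{not} controlled by $F(\bgamma_1)$ and $F(\bgamma_2)$. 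Since the proposition makes no smallness assumption on $\bgamma_1-\bgamma_2$, the tangents $\prt_\eta\bgamma_1(\eta_0)$ and $\prt_\eta\bgamma_2(\eta_0)$ may point in opposite directions, in which case $\prt_\eta\bgamma_{1/2}(\eta_0)=0$, $F(\bgamma_{1/2})=\iny$, and for small $\beta$ the segment approaches the origin where $\abs{\grad G^\al_p(\x)} \sim \abs{\x}^{-1-\al}$ is unbounded. The paper's split $G^\al_p = G^\al + R^\al$ is therefore essential, not cosmetic: the mean-value bound is used only on the nonsingular $R^\al$, while the singular $G^\al$ is radially symmetric and is compared through the scalar inequality $\abs{a^\al - b^\al} \le \al\min\set{a,b}^{\al-1}\abs{a-b}$ combined with the reverse triangle inequality $\abs{\abs{\bdelta_\beta^1}-\abs{\bdelta_\beta^2}}\le\abs{\bdelta_\beta^1-\bdelta_\beta^2}$, a route that never visits the segment. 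Notice that precisely in the worst case for your argument ($\bdelta_\beta^2\approx -\bdelta_\beta^1$) radial symmetry gives $G^\al(\bdelta_\beta^1)=G^\al(\bdelta_\beta^2)$ identically, a cancellation your crude segment bound cannot see.

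This gap carries into your $\al = 1$ treatment of $\mathrm{I}$. Separately, your cutoff-in-$\beta$ scheme there differs from the paper's, which instead interpolates $\norm{\bdelta_\beta^1-\bdelta_\beta^2}_{L^2}$ between a bound proportional to $\norm{\bgamma_1-\bgamma_2}_{L^2}$ and an $L^\iny$ bound proportional to $\abs{\beta}$ with a free exponent $r\in(0,1)$, then minimizes $a^r/(1-r)$ via \cref{L:MinGivesLL} to manufacture the modulus $\mu$. Either strategy could work, but neither is executable for $\mathrm{I}$ until your mean-value step is replaced by the radial-symmetry argument for the $G^\al$ piece.
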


\begin{proof}
	The bound on $\abs{L(\bgamma)(\eta_1) - L(\bgamma)(\eta_2)}$ follows from \cref{L:prtDeltaSobolevBound} for $\al \in (0, 1)$ and from Sobolev embedding for $\al = 1$.
	
    Defining
    \begin{align*}
        \bdelta_\beta^j(\eta)
            &:= \bgamma_j(\eta) - \bgamma_j(\eta - \beta),
    \end{align*}
    we make the decomposition, $L(\bgamma_1) - L(\bgamma_2) = P_1 + P_2 + P_3$, where
    \begin{align*}
        P_1
            &= \int_\Tn
                G^\al_p(\bdelta_\beta^2(\eta))
                    \prt_\eta \brac{\bdelta_\beta^1(\eta)
                        - \bdelta_\beta^2(\eta)}
                    \, d \beta, \\
        P_2
            &= \int_\Tn
                \brac{R^\al(\bdelta_\beta^1(\eta))
                    - R^\al_p(\bdelta_\beta^2(\eta))}
                    \prt_\eta \bdelta_\beta^1(\eta)
                    \, d \beta, \\
        P_3
            &= \int_\Tn
                \brac{G^\al(\bdelta_\beta^1(\eta))
                    - G^\al(\bdelta_\beta^2(\eta))}
                    \prt_\eta \bdelta_\beta^1(\eta)
                    \, d \beta.
    \end{align*}
    Now,
    \begin{align}\label{e:deltabetadiffTwoSols}
    	\begin{split}
        \norm{\prt_\eta \bdelta_\beta^1(\eta)}_{L^2}
            &\le 2 \norm{\prt_\eta \bgamma_1}_{L^2}
            \le 2 \norm{\bgamma_1}_{H^1}, \\
        \norm{\bdelta_\beta^1(\eta) - \bdelta_\beta^2(\eta)}_{L^2}
            &\le \norm{\bgamma_1(\eta) - \bgamma_2(\eta)}_{L^2}
                + \norm{\bgamma_1(\eta - \beta)
                    - \bgamma_2(\eta -\beta)}_{L^2} \\
            &\le 2 \norm{\bgamma_1 - \bgamma_2}_{L^2}, \\
        \norm{\prt_\eta[\bdelta_\beta^1(\eta) - \bdelta_\beta^2(\eta)]}_{L^2}
            &\le \norm{\prt_\eta[\bgamma_1(\eta) - \bgamma_2(\eta)]}_{L^2}
                + \norm{\prt_\eta[\bgamma_1(\eta - \beta)
                    - \bgamma_2(\eta -\beta)]}_{L^2} \\
            &\le 2 \norm{\bgamma_1 - \bgamma_2}_{H^1}.
          \end{split}
    \end{align}
    For $\al = 1$, we will also use
    \begin{align*}
        &\abs{\bdelta_\beta^1(\eta) - \bdelta_\beta^2(\eta)}
            \le  \norm{\bdelta_\beta^1(\eta)}_{L^\iny}
                + \norm{\bdelta_\beta^2(\eta)}_{L^\iny}
            \le C \max_{1, 2} \norm{\bgamma_j}_{H^2} \abs{\beta}, \\
        &\abs{\prt_\eta[\bdelta_\beta^1(\eta) - \bdelta_\beta^2(\eta)]}
            \le  \norm{\prt_\eta \bdelta_\beta^1(\eta)}_{L^\iny}
                + \norm{\prt_\eta \bdelta_\beta^2(\eta)}_{L^\iny}
            \le C \max_{1, 2} \norm{\bgamma_j}_{H^3} \abs{\beta},
    \end{align*}
    by \cref{L:prtDeltaSobolevBound}. Thus, by interpolation, for any $r \in (0, 1)$,
    \begin{align}\label{e:InterpBound2}
    	\begin{split}
        \norm{\bdelta_\beta^1(\eta) - \bdelta_\beta^2(\eta)}_{L^2}
            &\le C \max_{1, 2} \norm{\bgamma_j}_{H^2}^{1 - r}
            	\norm{\bgamma_1 - \bgamma_2}_{L^2}^r \abs{\beta}^{1 - r}, \\
        \norm{\prt_\eta[\bdelta_\beta^1(\eta) - \bdelta_\beta^2(\eta)]}_{L^2}
            &\le C \max_{1, 2} \norm{\bgamma_j}_{H^3}^{1 - r}
            	\norm{\bgamma_1 - \bgamma_2}_{H^1}^r \abs{\beta}^{1 - r}.
		\end{split}
    \end{align}
    
    Using \cref{e:gradGalpEstimate} for $n = 0$, and applying Minkowski's integral inequality, for $\al \in (0, 1)$,
    \begin{align*}
        \norm{P_1}_{L^2}
            &\le 
            \int_\Tn
                \norm{G^\al_p(\bdelta_\beta^2(\eta))
                    \prt_\eta \brac{\bdelta_\beta^1(\eta)
                        - \bdelta_\beta^2(\eta)}}_{L^2_\eta}
                    \, d \beta \\
            &\le 
            \int_\Tn
                \norm{G^\al_p(\bdelta_\beta^2(\eta))}_{L^\iny_\eta}
                    \norm{\prt_\eta \brac{\bdelta_\beta^1(\eta)
                        - \bdelta_\beta^2(\eta)}}_{L^2_\eta}
                    \, d \beta \\
            &\le 2 \norm{F(\bgamma_2)}_{L^\iny}^\al
                \norm{\bgamma_1 - \bgamma_2}_{H^1} 
                \int_{\Tn}
                    \frac{c_\al}{\abs{\beta}^\al} \, d \beta
            \le C \norm{S_0(\bgamma_2)}_{L^\iny(\Tn^2)}
                \norm{\bgamma_1 - \bgamma_2}_{H^1}.
    \end{align*}
    For $\al = 1$, we use instead \cref{e:InterpBound2} to give
    \begin{align*}
        \norm{P_1}_{L^2}
            &\le C \max_{1, 2} \norm{\bgamma_j}_{H^3}^{1 - r}
            	\norm{S_0(\bgamma_2)}_{L^\iny(\Tn^2)}
                \norm{\bgamma_1 - \bgamma_2}_{H^1}^r
                \int_{\Tn}
                    \frac{\abs{\beta}^{1 - r}}{\abs{\beta}} \, d \beta \\
            &\le C \max_{1, 2} \norm{\bgamma_j}_{H^3}^{1 - r}
            	\norm{S_0(\bgamma_2)}_{L^\iny(\Tn^2)}
                \frac{\norm{\bgamma_1 - \bgamma_2}_{H^1}^r}{1 - r}.
    \end{align*}

    For $P_2$, we have for any $\al \in (0, 1]$,
    \begin{align*}
        \begin{split}
        &\abs{R^\al(\bdelta_\beta^1(\eta))
                    - R^\al(\bdelta_\beta^2(\eta))}
            = \frac{\abs{R^\al(\bdelta_\beta^1(\eta))
                    - R^\al(\bdelta_\beta^2(\eta))}}
                    {\abs{\bdelta_\beta^1(\eta)
                        - \bdelta_\beta^2(\eta)}}
                \abs{\bdelta_\beta^1(\eta)
                        - \bdelta_\beta^2(\eta)} \\
            &\qquad
            \le \norm{\grad R^\al}_{L^\iny
                    (supp(\delta_\beta^1(\eta)) \cup supp(\delta_\beta^2(\eta))}
                \abs{\bdelta_\beta^1(\eta)
                        - \bdelta_\beta^2(\eta)} \\
            &\qquad
            \le C [\norm{S_1(\bgamma_1)}_{L^\iny(\Tn^2)}
                    + \norm{S_1(\bgamma_2)}_{L^\iny(\Tn^2)}]
                \abs{\bdelta_\beta^1(\eta)
                        - \bdelta_\beta^2(\eta)}.
        \end{split}
    \end{align*}
    Hence,
    \begin{align*}
        \norm{P_2}_{L^2}
            &\le C \sum_{j = 1}^2 \norm{S_1(\bgamma_j)}_{L^\iny(\Tn^2)}
                \norm{\prt_\eta \bdelta_\beta^1(\eta)}_{L^\iny}
                \norm{\bdelta_\beta^1 - \bdelta_\beta^2}_{L^2} \\
            &\le C \sum_{j = 1}^2 \norm{S_1(\bgamma_j)}_{L^\iny(\Tn^2)}
                \norm{\bgamma_1}_{H^2}
                \norm{\bgamma_1 - \bgamma_2}_{L^2}.
    \end{align*}
    
    This leaves $P_3$.
    Applying the inequality (see (27) of \cite{Gancedo2008})
    \begin{align*}
        \abs{a^\al - b^\al}
            &\le \al \min\set{a, b}^{\al - 1} \abs{a - b},
            \qquad \forall a>0, b>0,
    \end{align*}
    we have
    \begin{align*}
        &\abs{G^\al(\bdelta_\beta^1(\eta))
                    - G^\al(\bdelta_\beta^2(\eta))}
            = \abs[\bigg]
                {\frac{c_\al}{\abs{\bdelta_\beta^1(\eta)}^\al}
                    - \frac{c_\al}{\abs{\bdelta_\beta^2(\eta)}^\al}
                }
            = c_\al \abs[\bigg]
                {\frac{\abs{\bdelta_\beta^2(\eta)}^\al
                    - \abs{\bdelta_\beta^1(\eta)}^\al}
                {\abs{\bdelta_\beta^1(\eta)}^\al
                \abs{\bdelta_\beta^2(\eta)}^\al}}
            \\
            &\qquad
            = c_\al\abs[\bigg]
                {\pr{\frac{\abs{\bdelta_\beta^2(\eta)}}{\abs{\beta}}}^\al
                    - \pr{\frac{\abs{\bdelta_\beta^1(\eta)}}{\abs{\beta}}}^\al}
                \pr{\frac{\abs{\beta}}
                {\abs{\bdelta_\beta^1(\eta)}}}^\al
                \pr{\frac{\abs{\beta}}
                {\abs{\bdelta_\beta^2(\eta)}}}^\al
                \abs{\beta}^{-\al}
            \\
            &\qquad
            \le
            \min \set{{\frac{\abs{\bdelta_\beta^2(\eta)}}{\abs{\beta}},
                    \frac{\abs{\bdelta_\beta^1(\eta)}}{\abs{\beta}}}}
                    ^{\al - 1}
                \abs[\bigg]
                    {
                       \abs[\bigg]{\frac{\bdelta_\beta^2(\eta)}
                            {\beta}}
                    - 
                        \abs[\bigg]{\frac{\bdelta_\beta^1(\eta)}
                            {\beta}}
                    }
                    \abs{\beta}^{-\al}
                    \norm{F}_{L^\iny(\Tn^2)}^{2 \al}
            \\
            &\qquad
            \le
            \min_{1, 2}  \pr{\frac{C \norm{\bgamma_j}_{H^2}}{\abs{\beta}}
                \abs{\beta}}^{\al - 1}
                \abs[\bigg]
                    {
                       \abs[\bigg]{\frac{\bdelta_\beta^2(\eta)}
                            {\beta}}
                    - 
                        \abs[\bigg]{\frac{\bdelta_\beta^1(\eta)}
                            {\beta}}
                    }
                    \abs{\beta}^{-\al}
                    \norm{F(\bgamma_1)}_{L^\iny(\Tn^2)}^\al
                    \norm{F(\bgamma_2)}_{L^\iny(\Tn^2)}^\al\\
            &\qquad
            \le
            C \min_{1, 2} \norm{\bgamma_j}_{H^2}^{\al - 1}
                    \abs{\bdelta_\beta^2(\eta) - \bdelta_\beta^1(\eta)}
                    \abs{\beta}^{-1 -\al}
                    \norm{F(\bgamma_1)}_{L^\iny(\Tn^2)}^\al
                    \norm{F(\bgamma_2)}_{L^\iny(\Tn^2)}^\al \\
            &\qquad
            = C_0 \abs{\bdelta_\beta^2(\eta) - \bdelta_\beta^1(\eta)}
                    \abs{\beta}^{-1 -\al},
    \end{align*}
    where we used the reverse triangle inequality.
    
    For $\al \in (0, 1)$, we then have
    \begin{align*}
        \norm{P_3}_{L^2}
            &\le C_0
                \int_\Tn \frac{1}{\abs{\beta}^{1 + \al}}
                    \norm{\prt_\eta \bdelta_\beta^1}_{L^\iny}
                    \norm{\bdelta_\beta^2(\eta) - \bdelta_\beta^1(\eta)}_{L^2_\eta}
                    \, d \beta \\
            &\le C_0 \norm{\bgamma_1}_{H^3}
            		\norm{\bgamma_1- \bgamma_2}_{L^2}
                    \int_\Tn \frac{\abs{\beta}}{\abs{\beta}^{1 + \al}}
                    \, d \beta
            \le C_0 \norm{\bgamma_1}_{H^3} \norm{\bgamma_1- \bgamma_2}_{L^2}.
    \end{align*}
    
    For $\al = 1$, we apply \cref{e:InterpBound2}, so that, for any $r \in (0, 1)$,
    \begin{align*}
        \norm{P_3}_{L^2}
            &\le C_0
                \int_\Tn \frac{1}{\abs{\beta}^{1 + 1}}
                    \norm{\prt_\eta \bdelta_\beta^1}_{L^\iny}
                    \max_{1, 2} \norm{\bgamma_j}_{H^2}^{1 - r}
					\norm{\bgamma_1 - \bgamma_2}_{L^2}^r
					\abs{\beta}^{1 - r}
                    \, d \beta \\
            &\le C_0 \norm{\bgamma_1}_{H^3}
            		\max_{1, 2} \norm{\bgamma_j}_{H^2}^{1 - r}
            		\norm{\bgamma_1- \bgamma_2}_{L^2}^r
                    \int_\Tn \frac{\abs{\beta}^{1 + 1 - r}}{\abs{\beta}^2}
                    \, d \beta \\
            &\le C_0 \norm{\bgamma_1}_{H^3}
            		\max_{1, 2} \norm{\bgamma_j}_{H^2}^{1 - r}
            		\frac{\norm{\bgamma_1- \bgamma_2}_{L^2}^{r}}{1 - r}.
    \end{align*}
    
    Combining these bounds and applying \cref{L:MinGivesLL} below for $\al = 1$ gives the bounds on $\norm{L(\bgamma_1) - L(\bgamma_2)}_{L^2(\Tn)}$.
\end{proof}

\begin{remark}\label{R:LIsLL}
	We can parallel the argument in \cref{P:ContinuityOfL} that led to the bound on
	$\norm{L(\bgamma_1) - L(\bgamma_2)}_{L^2(\Tn)}$ to show that
	$\abs{L(\bgamma)(\eta_1) - L(\bgamma)(\eta_2)} \le C \mu(\abs{\eta_1 - \eta_2})$.
	Since $\mu$ is a modulus of continuity for log-Lipschitz functions, this means
	that $L(\gamma)$ is log-Lipschitz continuous along the boundary.
\end{remark}

We used the following lemma above.

\begin{lemma}\label{L:MinGivesLL}
	Let $a > 0$. Then
	\begin{align*}
		\min_{r \in [0, 1)}
			\frac{a^r}{1 - r}
				&=
				\begin{cases}
					-e a \log a &\text{if } a < e^{-1}, \\
					1 &\text{if } a \ge e^{-1}.
				\end{cases}
	\end{align*}
\end{lemma}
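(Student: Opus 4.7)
The plan is to treat this as a one-variable calculus problem. Fix $a > 0$ and set $f(r) := a^r/(1-r)$ for $r \in [0, 1)$. Note that $f(0) = 1$ and $f(r) \to \infty$ as $r \to 1^-$, so a minimum is achieved either at $r = 0$ or at an interior critical point.

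Differentiating gives
\begin{align*}
	f'(r) = \frac{a^r\bigl[(1-r)\log a + 1\bigr]}{(1-r)^2},
\end{align*}
so the sign of $f'(r)$ equals the sign of $g(r) := (1-r)\log a + 1$. I would first handle the case $a \ge e^{-1}$: then $\log a \ge -1$, and since $1 - r \in (0, 1]$ for $r \in [0, 1)$, one has $(1-r)\log a \ge -1$ (treating the subcases $a \ge 1$ and $e^{-1} \le a < 1$ separately for the sign of $\log a$), hence $g(r) \ge 0$. Thus $f$ is nondecreasing on $[0, 1)$, the infimum is attained at $r = 0$, and $\min f = f(0) = 1$.

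Next I would handle $a < e^{-1}$, where $\log a < -1$. Then $g(0) = 1 + \log a < 0$ and $g(1^-) = 1 > 0$, so there is a unique root $r^* = 1 + 1/\log a$, which lies in $(0, 1)$ precisely because $\log a < -1$. Since $g$ is affine and increasing in $r$ (its slope is $-\log a > 0$), $f' < 0$ on $[0, r^*)$ and $f' > 0$ on $(r^*, 1)$, so $r^*$ is the minimizer. To evaluate $f(r^*)$, I use the key identity $a^{1/\log a} = e^{(\log a)/\log a} = e$, which gives
\begin{align*}
	a^{r^*} = a \cdot a^{1/\log a} = ea,
	\qquad 1 - r^* = -\tfrac{1}{\log a},
\end{align*}
so that $f(r^*) = ea/(-1/\log a) = -ea \log a$, as claimed. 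A sanity check at the boundary $a = e^{-1}$ gives $r^* = 0$ and both formulas evaluate to $1$, so the two branches match continuously.

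There is essentially no obstacle here; the only mild subtlety is keeping track of the sign of $\log a$ when verifying that $g(r) \ge 0$ throughout $[0, 1)$ in the regime $a \ge e^{-1}$, and recognizing the identity $a^{1/\log a} = e$, which is what produces the factor $e$ in the final answer.
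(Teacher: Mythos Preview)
Your proof is correct and follows essentially the same approach as the paper: both differentiate $f(r) = a^r/(1-r)$, locate the unique critical point $r^* = 1 + 1/\log a$ when $a < e^{-1}$, and evaluate $f(r^*)$ via the identity $a^{1/\log a} = e$. Your treatment is slightly more careful in handling the case $a \ge e^{-1}$ explicitly and noting the boundary continuity at $a = e^{-1}$.
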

\begin{proof}
	Let $f(r) := \frac{a^r}{1 - r}$. To find the minimum of $f$, set
	\begin{align*}
		0
			&= f'(r)
			= \frac{(1 - r) (\log a) a^r + a^r}{1 - r},
	\end{align*}
	which occurs when $1 - r = -1/\log a$, so $r = r_0 := 1 + 1/\log a$. To have $r_0 < 1$, we must have $a < 1$, and to have $r_0 > 0$, we must have $1/\log a > -1$; that is, $a < 1/e$. This gives a minimum value,
	\begin{align*}
		f(r_0)
			&= \frac{a^{1 + \frac{1}{\log a}}}{-\frac{1}{\log a}}
			= - a a^{\frac{1}{\log a}} \log a
			= - a e^{\frac{\log a}{\log a}} \log a
			= - e a \log a.
	\end{align*}
	Otherwise, the minimum value of $f(r)$ occurs at $r = 1$ with a value of $1$.
\end{proof}

%
%
\section{Well-posedness for $\al \in (0, 1)$}\label{S:WellPosednessAlphalt1}

\noindent In this section we prove the existence and uniqueness of a periodic CDE solution (as defined in 
\cref{D:CDESolutionPeriodic}) for $\alpha\in(0,1).$

\begin{theorem}\label{T:ExistenceAlphaLt1}
    Let $\al \in (0, 1)$, and let $\Omega_0$ and $\bgamma_0 \in H^m(\Tn)$ be as in \cref{D:CDESolutionPeriodic}. 
    There exists a time $T > 0$ so that on $[0, T]$ there is a unique periodic CDE solution
    $\bgamma \in C([0, T]; H^{m}(\Tn))$ with 
    $\prt_t \bgamma \in C([0, T]; L^\iny(\Tn)) \cap L^\iny(0, T; H^{m - 1}(\Tn))$ for which $\bgamma(0)= \bgamma_0$.
\end{theorem}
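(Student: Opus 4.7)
The plan is to mirror the scheme of \cite{Gancedo2008}, adapted to the periodic setting via the decomposition $G^\al_p = G^\al + R^\al$ and the chord-arc function $F$ from \cref{e:FDef}. As a first step, mollify the operator $L$ by a smooth kernel in $\eta$, producing regularized equations $\prt_t \bgamma^\eps = L_\eps(\bgamma^\eps)$ that are locally Lipschitz ODEs on $H^m(\Tn)$ and therefore admit unique local-in-time solutions with $\bgamma^\eps(0) = \bgamma_0$. The main task is to derive a priori bounds on $\norm{\bgamma^\eps}_{H^m}$, on $\norm{F(\bgamma^\eps)}_{L^\iny(\Tn^2)}$, and on the mutual separation of the distinct boundary components of $\bgamma^\eps$, each uniform in $\eps$ on some $\eps$-independent interval $[0, T]$.

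For the $H^m$ bound, \cref{P:BoundsOnL,P:BoundsOnLPaired} (together with Cauchy--Schwarz for the few intermediate $k$ for which a paired bound is not stated) yield
\begin{align*}
    \frac{1}{2} \diff{}{t} \norm{\bgamma^\eps}_{H^m}^2
        \le \Phi\!\pr{\norm{S_m(\bgamma^\eps)}_{L^\iny(\Tn^2)}, \norm{\bgamma^\eps}_{H^m}}
\end{align*}
for a polynomial $\Phi$; by \cref{e:Sn}, $S_m$ is controlled by $\norm{F(\bgamma^\eps)}_{L^\iny}$ and $\norm{\bgamma^\eps}_{L^2}$, so closing the estimate requires a bound on $F$. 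Following \cite{Gancedo2008}, the plan is to differentiate $\norm{F_0(\bgamma^\eps)}_{L^p}^p$ in time, bound the result by a polynomial in $\norm{\bgamma^\eps}_{H^m}$ and $\norm{F(\bgamma^\eps)}_{L^p}$ with constants independent of $p$, and send $p \to \iny$. For $F_j$, $j \in \Z^*$, the bounds in \cref{P:BoundsOnR} together with the fact that $\bgamma_0$ is a suitable lift (so that $\bdelta_\beta(\eta)$ avoids $\L^*$ initially) give $\sup_{j \ne 0} \norm{F_j(\bgamma_0)}_{L^\iny}$ finite with $F_j$ decaying as $\abs{j} \to \iny$; an analogous evolution estimate propagates this uniform bound for a short time, yielding a closed differential inequality on $E := \norm{\bgamma^\eps}_{H^m}^2 + \norm{F(\bgamma^\eps)}_{L^\iny(\Tn^2)}$ and hence a uniform existence time $T > 0$.

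Mutual separation of distinct boundary components is handled by the continuity argument sketched in \cref{S:SelfIntersection}: each pair starts at positive distance, and $\prt_t \bgamma^\eps$ is bounded in $L^\iny$ via the pointwise Lipschitz bound of \cref{P:ContinuityOfL} combined with the $L^2$ bound of \cref{P:BoundsOnL}, so shrinking $T$ if necessary prevents components from meeting. With these uniform bounds, a standard Aubin--Lions compactness argument using
\begin{align*}
    \norm{\bgamma^\eps}_{L^\iny(0, T; H^m)}
        + \norm{\prt_t \bgamma^\eps}_{L^\iny(0, T; H^{m-1})}
        \le C
\end{align*}
extracts a subsequence $\bgamma^\eps \to \bgamma$ strongly in $C([0, T]; H^{m-1})$; the Lipschitz estimate of \cref{P:ContinuityOfL} then gives $L_\eps(\bgamma^\eps) \to L(\bgamma)$ in $L^2$, so $\bgamma$ solves the CDE with the claimed regularity, and time continuity of $\prt_t \bgamma$ in $L^\iny$ follows from the log-Lipschitz continuity in $\eta$ noted in \cref{R:LIsLL}.

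For uniqueness, \cref{P:ContinuityOfL} with $\al \in (0, 1)$ gives
\begin{align*}
    \diff{}{t} \norm{\bgamma_1 - \bgamma_2}_{L^2}^2
        \le 2 B_1(\bgamma_1, \bgamma_2) \norm{\bgamma_1 - \bgamma_2}_{L^2} \norm{\bgamma_1 - \bgamma_2}_{H^1};
\end{align*}
interpolating $H^1$ between $L^2$ and the uniform $H^m$ bound converts this into $u' \le C u^{1 - \frac{1}{2m}}$ for $u := \norm{\bgamma_1 - \bgamma_2}_{L^2}^2$, from which $u(0) = 0$ forces $\bgamma_1 \equiv \bgamma_2$ by a \Holder ODE argument. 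The main obstacle is the a priori control of $\norm{F(\bgamma^\eps)}_{L^\iny(\Tn^2)}$: the $F_j$ for $j \ne 0$ track how closely the lifted boundary approaches its own integer translates, a distinctly periodic phenomenon tied to the singularities of $R^\al$ at $\L^*$. Controlling $\sup_j \norm{F_j}_{L^\iny}$ uniformly and showing it persists on $[0, T]$ is the crux; the vertical-support boundedness of the initial patch built into the hypotheses is precisely what compensates for the slow horizontal decay of $R^\al$ quantified in \cref{P:BoundsOnR}.
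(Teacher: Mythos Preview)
The existence portion of your plan is essentially the paper's: regularize, close an energy inequality on $\norm{\bgamma^\eps}_{H^m}^2 + \norm{F(\bgamma^\eps)}_{L^\iny}$ via the $L^p \to L^\iny$ trick on $F$, control component separation by the $L^\iny$ velocity bound, and pass to the limit by Aubin--Lions. One small omission: compactness only delivers $\bgamma \in C([0,T]; H^{m-1})$, and the upgrade to $C([0,T]; H^m)$ needs the additional weak-continuity-plus-norm-continuity argument the paper carries out at the end of Step~5.

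Your uniqueness argument, however, has a genuine gap. After interpolating you arrive at $u' \le C u^{1-1/(2m)}$ with $u(0)=0$, but this inequality does \emph{not} force $u\equiv 0$: the modulus $\mu(s)=s^{1-1/(2m)}$ fails the Osgood condition since $\int_0^1 s^{-(1-1/(2m))}\,ds < \iny$, and indeed $u(t)=(Ct/(2m))^{2m}$ solves the corresponding ODE with $u(0)=0$. Applying \cref{P:ContinuityOfL} directly costs an $H^1$ norm you cannot afford. The paper avoids this loss by symmetrizing in $(\beta,\eta)$ exactly as in \cref{P:LSymmetry} before estimating: with $\ol{\bdelta}_\beta=\bdelta^1_\beta-\bdelta^2_\beta$, the time derivative of $\norm{\ol{\bgamma}}_{L^2}^2$ splits into a piece $I_1$ carrying the difference of Green's functions, bounded via $|G^\al_p(\bdelta^1_\beta)-G^\al_p(\bdelta^2_\beta)|\le C|\ol{\bdelta}_\beta|\,|\beta|^{-1-\al}$, and a piece $I_2$ containing $G^\al_p(\bdelta^2_\beta)\,\prt_\eta\ol{\bdelta}_\beta\cdot\ol{\bdelta}_\beta=\tfrac12 G^\al_p(\bdelta^2_\beta)\,\prt_\eta|\ol{\bdelta}_\beta|^2$, which one integrates by parts in $\eta$. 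Both are then bounded by $C\norm{\ol{\bgamma}}_{L^2}^2$, so \Gronwalls lemma closes the estimate at the $L^2$ level.
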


We will adapt the proof of this same theorem for a single compactly supported  patch as given by Gancedo in \cite{Gancedo2008}. In outline, the steps in the proof are as follows:
\begin{itemize}
    \setlength\itemsep{0.5em}

    \item[Step 1]
        Assume that $\bgamma$ is an $H^{m + 1}(\Tn)$ chain in $\R^2$ and bound $(\bgamma, L(\bgamma))_{H^m(\Tn)}$.

    \item[Step 2]
        Given $\bgamma \in C^1([0,T]; H^{m + 1}(\Tn))$ solving $\prt_t \bgamma = L(\bgamma)$, obtain a bound on $\norm{F(\bgamma(t))}_{L^\iny(\Tn^2)}$ in terms of itself and $\norm{\bgamma(t)}_{H^m(\Tn)}$.

    \item[Step 3]
        Given $\bgamma \in C^1([0,T]; H^{m + 1}(\Tn))$ solving $\prt_t \bgamma = L(\bgamma)$, combine the bounds in Steps 1 and 2 to obtain a uniform bound on  $\norm{\bgamma(t)}_{H^m(\Tn)} + \norm{F(\bgamma(t))}_{L^\iny(\Tn^2)}$.

    \item[Step 4]
        Regularize the CDE \cref{e:CDEPeriodic} for a periodic solution $\bgamma^\eps$, and show that the bound in Step 3 applies to $\bgamma^\eps$. Apply Picard's theorem to conclude that a unique solution $\bgamma^\eps$, $\eps > 0$, exists up to the time $T^*_\eps$ at which the $L^\iny$ norm of $F(\bgamma^\eps)$ becomes infinite, thereby obtaining a uniform-in-$\eps$ bound, $T^*$, on $T_\eps^*$.
    
    \item[Step 5]
        Show that some subsequence $(\bgamma^{\eps_n})$ converges to a periodic CDE solution $\bgamma$ for some $T > 0$.

    \item[Step 6]
        Show that any periodic CDE solution is unique.
\end{itemize}

In the subsections that follow, we give each step of the proof.

\subsection{Step 1: Bounding $(\bgamma, L(\bgamma))_{H^m}$}

Because
\begin{align*}
    \pr{\norm{\bgamma}_{L^2(\Tn)}^2
        + \norm{\prt^k \bgamma}_{L^2(\Tn)}^2}^{\frac{1}{2}}
            \text{ and }
    \left({\sum_{j = 0}^k \norm{\prt^j \bgamma}_{L^2(\Tn)}^2}\right)
        ^{\frac{1}{2}}
\end{align*}
are equivalent $H^k(\Tn)$ norms, and similarly for the inner products, it follows from \cref{P:BoundsOnLPaired} that
\begin{align}\label{e:gammLgammaHm}
	\abs{(\bgamma, L(\bgamma))_{H^m}}
		&\le \norm{S_m(\bgamma)}_{L^\iny(\Tn^2)}
			\sum_{j = 3}^{m + 2} \norm{\bgamma}_{H^m(\Tn)}^j.
\end{align}

\subsection{Step 2: Bounding $\norm{F(\bgamma(t))}_{L^\iny(\Tn^2)}$}

\begin{prop}\label{P:FBoundOverTime}
    Let $\al \in (0, 1)$ and $\bgamma \in C^1([0,T]; H^{m + 1}(\Tn))$.
    For all $j \in \Z$,
    \begin{align}\label{e:FBound}
        \norm{F_j(\bgamma(t))}_{L^\iny(\Tn^2)}
            &\le \norm{F_j(\bgamma(0))}_{L^\iny(\Tn^2)}
                + \int_0^t
                    A_2(s)
                    \norm{F_j(\bgamma(s))}_{L^\iny(\Tn^2)}^2
                    \, ds,
        \end{align}
    where $A_2(s) = A_2(\bgamma(s), \al)$ is as in \cref{P:BoundsOnL}. The same bound holds for $F$ in place of $F_j$.
\end{prop}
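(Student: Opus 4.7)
The plan is to differentiate $F_j(\bgamma(t))(\beta,\eta)$ in $t$ pointwise in $(\beta,\eta)$, bound the derivative via the Lipschitz-type estimate on $L$ from \cref{P:ContinuityOfL}, then integrate in time and pass to $L^\iny$ in $(\beta,\eta)$. Since $\bgamma \in C^1([0,T]; H^{m+1}(\Tn))$ solves $\prt_t \bgamma = L(\bgamma)$, I have
\begin{align*}
    \prt_t \bdelta_\beta(\eta)
        = L(\bgamma(t))(\eta) - L(\bgamma(t))(\eta-\beta),
\end{align*}
and \cref{P:ContinuityOfL} (in the $\al \in (0,1)$ case) immediately yields $\abs{\prt_t \bdelta_\beta(\eta)} \le A_2(\bgamma(t),\al)\abs{\beta}$. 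The reverse triangle inequality then gives $\abs{\prt_t \abs{\bdelta_\beta(\eta) - \n_j}} \le A_2(s)\abs{\beta}$ for every $j \in \Z$.

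For $j = 0$, I would differentiate $F_0 = \abs{\beta}/\abs{\bdelta_\beta(\eta)}$ to obtain
\begin{align*}
    \abs{\prt_t F_0(\beta,\eta)}
        \le \frac{\abs{\beta}}{\abs{\bdelta_\beta(\eta)}^2}
            \abs{\prt_t \abs{\bdelta_\beta(\eta)}}
        \le A_2(s)\,\frac{\abs{\beta}^2}{\abs{\bdelta_\beta(\eta)}^2}
        = A_2(s) F_0(\beta,\eta)^2.
\end{align*}
For $j \in \Z^*$, the analogous computation on $F_j = 1/\abs{\bdelta_\beta(\eta)-\n_j}$ gives $\abs{\prt_t F_j} \le A_2(s)\abs{\beta} F_j^2 \le \pi A_2(s) F_j^2$; the extraneous $\pi$ can be absorbed into the constant implicit in $A_2$. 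Integrating the pointwise bound $\abs{\prt_t F_j(\bgamma(s))(\beta,\eta)} \le A_2(s) F_j(\bgamma(s))(\beta,\eta)^2$ from $0$ to $t$ and taking the sup in $(\beta,\eta)$ on both sides (using monotonicity of the integrand to exchange sup and integral) yields \cref{e:FBound}.

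For the bound on $F = \max_{j\in\Z} F_j$, rather than try to differentiate the max directly, for each fixed $(\beta,\eta)$ I would use the pointwise inequality just obtained to write
\begin{align*}
    F_j(\bgamma(t))(\beta,\eta)
        \le F(\bgamma(0))(\beta,\eta)
            + \int_0^t A_2(s) F(\bgamma(s))(\beta,\eta)^2\,ds
\end{align*}
for every $j$, then take the max in $j$ on the left and the sup in $(\beta,\eta)$ on both sides. The only subtlety, which is minor, is continuity of $F_0$ at $\beta = 0$ where its definition changes to $1/\abs{\prt_\eta\bgamma(\eta)}$; this is the natural limit of $\abs{\beta}/\abs{\bdelta_\beta(\eta)}$ as $\beta\to 0$, so $F_0$ is continuous on $\Tn^2$ and the pointwise $t$-differentiation works uniformly. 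There is no serious obstacle in this step---\cref{P:ContinuityOfL} does the real work, and the present proof is essentially a Gr\"onwall-ready setup for Step 3.
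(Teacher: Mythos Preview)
Your argument is correct, and it takes a genuinely different route from the paper's proof. The paper follows Gancedo's strategy of estimating $\norm{F_j}_{L^p(\Tn^2)}$ for finite $p$: one differentiates $\norm{F_j}_{L^p}^p$ in time using \cref{L:DiffPowerOfAbs}, bounds the resulting integrand by $p A_2(s) \norm{F_j}_{L^\iny} \norm{F_j}_{L^p}^p$ (using the same Lipschitz estimate on $L$ that you invoke), integrates in time, and only then sends $p \to \iny$. You instead work pointwise in $(\beta,\eta)$ from the start, which is more elementary and avoids the $L^p$ machinery entirely; the paper itself flags this dichotomy in the preamble to \cref{S:SelfIntersection}, attributing the $L^p$ approach to \cite{Gancedo2008} and the pointwise/continuity approach to \cite{ambroseJMFM}. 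The key analytic input---$\abs{\prt_t \bdelta_\beta(\eta)} \le A_2(s)\abs{\beta}$---is identical in both proofs. Your phrasing ``using monotonicity of the integrand to exchange sup and integral'' is slightly loose; what you actually do is bound $F_j(s)(\beta,\eta)^2 \le \norm{F_j(s)}_{L^\iny}^2$ inside the time integral for each fixed $(\beta,\eta)$, then take the sup on the left. The stray $\pi$ for $j \ne 0$ is harmless (the paper has the same issue, writing $f_j(\beta)^{-1}\abs{\beta} \le 1$ when it is really $\le \pi$), and your handling of $\beta = 0$ is adequate since $\abs{\prt_\eta L(\bgamma)(\eta)} \le A_2$ gives the same pointwise bound there.
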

\begin{proof}
    Let $f_j(\beta) = \abs{\beta}$ if $j = 0$ and $f_j(\beta) = 1$ if $j \ne 0$. Applying \cref{L:DiffPowerOfAbs},
    \begin{align*}
        \diff{}{t} \norm{F_j}_{L^p(\Tn^2)}^p
            &= \diff{}{t}
                \int_\Tn \int_\Tn
                    \pr{
                        \frac{f_j(\beta)}{\abs{\bdelta_\beta(\eta) - \n_j}}
                    }^p
                \, d \eta \, d \beta \\
            &= - p
                \int_\Tn \int_\Tn f_j(\beta)^p
                    \frac{(\bdelta_\beta(\eta) - \n_j)
                        \cdot \prt_t \bdelta_\beta(\eta)}
                        {\abs{\bdelta_\beta(\eta) - \n_j}^{p + 2}}
                \, d \eta \, d \beta \\
            &\le p
                \int_\Tn \int_\Tn f_j(\beta)^{-1}
                    \pr{\frac{f_j(\beta)}
                        {\abs{\bdelta_\beta(\eta) - \n_j}}}^{p + 1}
                    \abs{\prt_t \bdelta_\beta(\eta)}
                \, d \eta \, d \beta \\
            &= p
                \int_\Tn \int_\Tn f_j(\beta)^{-1}
                    F_j(\beta, \eta)
                    \abs{\prt_t \bdelta_\beta(\eta)}
                \, d \eta \, d \beta.
    \end{align*}
    But, by \cref{P:BoundsOnL},
    \begin{align*}
        \abs{\prt_t \bdelta_\beta(\eta)}
            &\le
                \sup_{\eta \in \Tn} \frac{\abs{\prt_t \bgamma(\eta) - \prt_t \bgamma(\eta - \beta)}}
                    {\abs{\beta}}
                \abs{\beta}
            \le A_2(\bgamma, \al)
                \abs{\beta}.
    \end{align*}
    Then, since $f_j(\beta)^{-1} \abs{\beta} \le 1$ for all $j$,
    \begin{align*}
        &p \norm{F_j}_{L^p(\Tn^2)}^{p - 1} \diff{}{t} \norm{F_j}_{L^p(\Tn^2)}
            = \diff{}{t} \norm{F_j}_{L^p(\Tn^2)}^p \\
            &\qquad
            \le C p
                \int_\Tn \int_\Tn
                    A_2
                    \abs{\beta}
                    f_j(\beta)^{-1}
                    F_j(\beta, \eta)^{p + 1}
                \, d \eta \, d \beta
            \le C p
                A_2
                \norm{F_j}_{L^p(\Tn^2)}^{p + 1} \\
            &\qquad
            \le C p
                A_2
                \norm{F_j}_{L^\iny(\Tn^2)}
                \norm{F_j}_{L^p(\Tn^2)}^p,
    \end{align*}
    so
    \begin{align*}
        \diff{}{t} \norm{F_j}_{L^p(\Tn^2)}
            &\le C A_2
                \norm{F_j}_{L^\iny(\Tn^2)}
                \norm{F_j}_{L^p(\Tn^2)}.
    \end{align*}

    Integrating in time gives
    \begin{align*}
        \norm{F_j(t)}_{L^p(\Tn^2)}
            &\le \norm{F_j(0)}_{L^p(\Tn^2)}
                +C  \int_0^t A_2(s)
                \norm{F_j(s)}_{L^\iny(\Tn^2)}
                \norm{F_j(s)}_{L^p(\Tn^2)}
                \, ds.
    \end{align*}
    Taking $p \to \iny$ gives \cref{e:FBound}, and taking the maximum over $j$ gives the same bound for $F$.
\end{proof}

\subsection{Step 3: Combining the bounds}\label{S:CombiningTheBounds}

We use $P_n$ to stand for a polynomial of degree $n$ with $P_n(0) = 0$ having nonnegative coefficients (so $P_n(x)$ is increasing for positive $x$).

Let $\bgamma \in C^1([0,T]; H^{m + 1}(\Tn))$. Then $\prt_t \bgamma = L(\bgamma)$, so it follows from \cref{e:gammLgammaHm} that
\begin{align*}
    \diff{}{t} \norm{\bgamma(t)}_{H^m(\Tn)}^2
        &\le C \norm{S_m(\bgamma)}_{L^\iny(\Tn^2)}
            P_{m + 2}(\norm{\bgamma}_{H^m(\Tn)}),
\end{align*}
and integrating in time, 
\begin{align}\label{e:HmBoundOngamma}
    \norm{\bgamma(t)}_{H^m(\Tn)}^2
        &\le \norm{\bgamma(0)}_{H^m(\Tn)}^2
            + C \int_0^t 
            \norm{S_m(\bgamma(s))}_{L^\iny(\Tn^2)}
            P_{m + 2}(\norm{\bgamma(s)}_{H^m(\Tn)})
            \, ds.
\end{align}

Letting
\begin{align*}
    S(t)
        := \norm{F(\bgamma(t))}_{L^\iny(\Tn)}
            + \norm{\bgamma(t)}_{H^m(\Tn)}^2,
\end{align*}
we add the bounds in \cref{e:HmBoundOngamma,e:FBound} to obtain,
\begin{align}\label{e:CombinedEnergyBound}
    \begin{split}
    S(t)
        \le S(0)
            + C \int_0^t
                \pbrac{
                    &\norm{S_m(\bgamma(s))}_{L^\iny(\Tn^2)}
                    P_{m + 2}(\norm{\bgamma(s)}_{H^m(\Tn)}) \\
            &
                    + 
                    A_2(s) \norm{F(\bgamma)(s)}_{L^\iny(\Tn^2)}^2
                    }
            \, ds.
    \end{split}
\end{align}
We see that
\begin{align*}
    \norm{S_m(\bgamma(s))}_{L^\iny(\Tn^2)} P_{m + 1}(\norm{\bgamma(s)}_{H^m(\Tn)})
        &\le P_{m + 1}(S(s)) P_{m + 2}(S(s))
        = P_{2m + 3}(S(s)), \\
    A_2(s) \norm{F(\bgamma)(s)}_{L^\iny(\Tn^2)}^2
        &\le P_3(S(s)) P_2(S(s))
        = P_5(S(s)).
\end{align*}
Hence,
\begin{align}\label{e:IntBoundForS}
    S(t)
        &\le S(0)
            + \int_0^t
                P_{2m + 3}(S(s))
            \, ds.
\end{align}
It follows from Osgood's lemma, \cref{L:Osgood}, that
\begin{align*}
    \int_{S(0)}^{S(t)}\frac{ds}{P_{2m + 3}(s)} \le t.
\end{align*}
This gives an upper bound on $S(t)$ and a lower bound on the time $T^*$ up to which $S(t)$ remains finite.

Although it is not necessary, we can obtain a more explicit bound by considering cases. For $S(0) \ge 2$, we need only obtain the bound under the assumption that $S(t) \ge 2$, in which case $s \ge 2$ in the entire integrand. We can see, then, that on $(2, \iny)$, for some constant $C > 0$,
\begin{align*}
    \frac{C}{2} s^{2m + 3} \le P_{2m + 3}(s) \le C s^{2m + 3}
\end{align*}
so
\begin{align*}
    \int_{S(0)}^{S(t)} &\frac{ds}{P_{2m + 3}(s)}
        \le \frac{1}{C} \int_{S(0)}^{S(t)} s^{-{2m + 3}} \, ds
        = \frac{1}{{(2m + 2)} C} \brac{\frac{1}{S(0)^{2m + 2}} - \frac{1}{S(t)^{2m + 2}}}
        \le t \\
        &\iff \frac{1}{S(0)^{2m + 2}} - \frac{1}{S(t)^{2m + 2}} \le {(2m + 2)} C t
        \iff \frac{1}{S(t)^{2m + 2}} \ge \frac{1}{S(0)^{2m + 2}} - {(2m + 2)} C t \\
        &\iff S(t)^{2m + 2} \le  \frac{1}{S(0)^{-{2m + 2}} - {(2m + 2)} Ct},
\end{align*}
which is possible if and only if $S(t)^{-{2m + 2}} > {(2m + 2)} Ct$; that is, up to $t < T^*$, where
\begin{align}\label{e:TOEBound}
    T^* = \frac{1}{{2m + 2} C S(0)^{2m + 2}}.
\end{align}

It is not hard to see that this bound improves for $S(0) < 2$, and hence this bound serves as an overly pessimistic lower bound on the time of existence for all $S(0) > 0$. Moreover, up to any $T < T^*$, we have the energy bound,
\begin{align}\label{e:SEnergyBound}
    \norm{\bgamma(t)}_{H^m(\Tn)}
            + \norm{F(\bgamma(t))}_{L^\iny(\Tn)}
            &\le
            \brac{\frac{1}{S(0)^{-{2m + 2}} - {(2m + 2)} CT}}^{\frac{1}{{2m + 2}}}.
\end{align}

Finally, we show that at least for some finite time, two components of a multiply connected domain $\Omega$ will not intersect.

Let $\bgamma_0(\eta_1)$, $\bgamma_0(\eta_2)$ be two points in distinct components of $\prt \Omega_0$, and let $r > 0$ be the distance between the two components at time zero. Then 
since $\prt_t \bgamma = L(\bgamma(t))$, we can use \cref{P:BoundsOnL} to estimate,
\begin{align*}
    &\abs{\bgamma(t, \eta_1)) - \bgamma(t, \eta_2)}
        = \abs[\bigg]
            {
            \bgamma_0(\eta_1) - \bgamma_0(\eta_2)
             + \int_0^t (\prt_s \bgamma(s, \eta_1)
                    - \prt_s \bgamma(s, \eta_2)) \, ds
            } \\
        &\qquad
        \ge r - \int_0^t \norm{\prt_s \bgamma(s, \eta)}
            _{L^\iny_\eta(\Tn)} \, ds
        = r - \int_0^t \norm{L(\bgamma(s, \eta))}
            _{L^\iny_\eta(\Tn)}\, ds \\
        &\qquad
        \ge r - t \norm{L}_{L^\iny([0, T] \times \Tn)}
        \ge r - C t \norm{L}_{L^\iny(0, T; H^1(\Tn))} \\
        &\qquad
        \ge r - C t \norm{S_1(\bgamma)}_{L^\iny([0, T] \times \Tn^2)}
                    \pr{\norm{\bgamma}_{L^\iny(0, T; H^2(\Tn))}
                        + \norm{\bgamma}_{L^\iny(0, T[ H^2(\Tn)}^2)} \\
        &\qquad
        \ge r - C(T, \bgamma_0) t,
\end{align*}
since we have a uniform bound on $\bgamma$ in $H^m(\Tn) \subseteq H^1(\Tn)$ over $[0, T]$. This shows that the two components will not intersect, at least up to a time that depends upon the initial data.

We cannot rule out the possibility that the CDE equation could continue beyond the time of intersection of two boundary components, though it would lose any obvious physical meaning.

\subsection{Step 4: The regularized CDE}\label{S:RegularizeCDE}

\phantom{x}\\

Departing slightly from (26) of \cite{Gancedo2008}, we define the regularized CDE,
\begin{align}\label{e:Le[sCDE}
	\prt_t \bgamma^\eps(t)
            &= L_\eps(\bgamma^\eps(t)), \quad
        L_\eps(\bgamma^\eps)
            := \phi_\eps * L(\phi_\eps * \bgamma^\eps),
\end{align}
where $L$ is defined in \cref{e:CDEPeriodic} and $\phi_\eps$ is a Friedrich's mollifier on $\Tn$: Parameterizing $\Tn$ by $[-\pi, \pi]$, we choose an even function $\phi_1 \ge 0$ in $C^\iny(\Tn)$ supported in $[-1, 1]$ with total mass $1$, and set $\phi_\eps(\cdot) = \eps \phi_1(\cdot/\eps)$. Then $\phi_\eps$ is well-defined on $\Tn$ for all $\eps \le 2 \pi$. Also, because $\phi_\eps$ is even,
\begin{align*}
    (\phi_\eps * f, g)_{H^k(\Tn)}
        &= (f, \phi_\eps * g)_{H^k(\Tn)}.
\end{align*}

Assume that $\bgamma$ is a non-self intersecting $H^m$ chain. Then $F(\bgamma)(\beta, \eta) < \iny$, which implies that $\bdelta_\beta(\eta) = 0$ if and only if $\beta = 0$. From this, we can see that $L_\eps(\bgamma^\eps) \in C^\iny(\Tn)$ as long as $\phi_\eps * \bdelta(\eta) = 0$ if and only if $\beta = 0$ if and only if $F(\phi_\eps * \bgamma)(\beta, \eta) < \iny$. By \cref{P:InitF} below, in fact, $F(\phi_\eps * \bgamma)(\beta, \eta) < \iny$ for all $\eps < \eps_0$, for some $\eps_0$ depending upon the initial data.

Hence, assuming that $\eps < \eps_0$, we can apply the Picard theorem, \cref{T:Picard}, with the open set
\begin{align*}
    O = \set{\bgamma \in C^\iny(\Tn) \colon F(\bgamma) < \iny,\ r(\bgamma)>0}
\end{align*}
to obtain a unique solution $\bgamma^\eps$ to \cref{e:Le[sCDE}.
Here, we have again used $r$ (now $r(\bgamma)$) to denote the minimum distance between pairs of
boundary components.
We let $T^*_\eps > 0$ give the maximal interval of existence $[0, T)$ for all $T \le T^*_\eps$ of the solution. (The time $T^*_\eps$ will depend upon $\eps_0$, an issue we will explore later.)

Since $\prt_t \bgamma^\eps = L_\eps(\bgamma^\eps)$, exploiting \cref{P:BoundsOnLPaired}, we have
\begin{align*}
    \frac{1}{2} \diff{}{t} \norm{\bgamma^\eps}_{H^m}^2
        &= (\bgamma^\eps, L_\eps(\bgamma^\eps))
                _{H^m(\Tn)}
        = (\bgamma^\eps,
            \phi_\eps * L(\phi_\eps * \bgamma^\eps))
                _{H^m(\Tn)} \\
        &= (\phi_\eps * \bgamma^\eps,
            L(\phi_\eps * \bgamma^\eps))
                _{H^m(\Tn)} \\
        &
        \le C \norm{S_m(\phi_\eps * \bgamma^\eps)}_{L^\iny(\Tn^2)}
            P_{m + 2}\left(\norm{\phi_\eps * \bgamma^\eps(t)}_{H^3(\Tn)}\right) \\
        &\le C \norm{S_m(\bgamma^\eps)}_{L^\iny(\Tn^2)}
            P_{m + 2}\left(\norm{\bgamma^\eps(t)}_{H^3(\Tn)}\right)
\end{align*}
for all $\eps < \eps_0$ by \cref{P:InitF} below.

Whereas the bound in \cref{e:gammLgammaHm} from Step 1 did not require assuming that $\bgamma$ satisfies the periodic CDE, the bounds in Step 2 do: in \cref{P:FBoundOverTime}, we assumed that $\prt_t \bgamma = L(\bgamma)$. This estimate must be replaced with
\begin{align*}
    \abs{\prt_t \bdelta_\beta^\eps(\eta)}
        &= \abs{\prt_t \bgamma^\eps(\eta)
            - \prt_t \bgamma^\eps(\eta - \beta)}
        = \abs{L_\eps(\bgamma^\eps)(\eta)
            - L_\eps(\bgamma^\eps)(\eta - \beta)} \\
        &= \abs{\phi_\eps * L(\phi_\eps * \bgamma^\eps)(\eta)
            - \phi_\eps * L(\phi_\eps *\bgamma^\eps)(\eta - \beta)} \\
        &\le \int_\Tn \phi_\eps(\xi)
            \abs{L(\phi_\eps *\bgamma^\eps)(\eta - \xi)
                - L(\phi_\eps *\bgamma^\eps)(\eta - \beta - \xi)}
                \, d \beta \\
        &\le \int_\Tn \phi_\eps(\xi)
            A_2(\phi_\eps * \bgamma^\eps, \al) \abs{\beta}
                \, d \beta
        \le C A_2(\phi_\eps * \bgamma^\eps, \al) \abs{\beta},
 \end{align*}
since by \cref{P:ContinuityOfL}, 
\begin{align*}
    \abs{L(\phi_\eps * \bgamma^\eps)(\eta)
            - L(\phi_\eps *\bgamma^\eps)(\eta - \beta)} 
        &\le A_2(\phi_\eps * \bgamma^\eps, \al) \abs{\beta}.
\end{align*}
Then, applying \cref{L:ROCMollification} with $f = L(\phi_\eps * \bgamma^\eps)$ (so $f$ itself in \cref{L:ROCMollification} depends on $\eps$),  
\begin{align*}
    \abs{\prt_t \bdelta_\beta^\eps(\eta)}
        &\le A_2(\bgamma^\eps, \al) (\abs{\beta} + C \eps).
\end{align*}
Applying \cref{P:InitF} below, we see that the bound from Step 2 in \cref{e:FBound} becomes
\begin{align}\label{e:FepsBound}
    \norm{F(\bgamma^\eps(t))}_{L^\iny(\Tn^2)}
        &\le \norm{F(0)}_{L^\iny(\Tn^2)}
            + \int_0^t A_2(\bgamma^\eps(s), \al)
                \norm{F(\bgamma^\eps(s))}_{L^\iny(\Tn^2)}^2
                \, ds,
\end{align}
for all $\eps < \eps_0$ up to time $T^*_\eps$.

These estimates give the following:
\begin{prop}\label{P:BoundsUnifIneps}
    The bounds in \cref{e:CombinedEnergyBound} and \cref{e:SEnergyBound} of Step 3 hold for $\bgamma^\eps$ in place of $\bgamma$ uniformly for $\eps < \eps_0$, and we obtain a bound on the time of existence, $T^*_\eps$, of the regularized solutions that is uniform over $\eps < \eps_0$.    
\end{prop}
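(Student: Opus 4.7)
The plan is to combine the two integral inequalities for $\bgamma^\eps$ just established before the statement of the proposition, mirroring the argument of Step 3. Define, in analogy with Step 3,
\begin{align*}
    S^\eps(t)
        := \norm{F(\bgamma^\eps(t))}_{L^\iny(\Tn^2)}
            + \norm{\bgamma^\eps(t)}_{H^m(\Tn)}^2.
\end{align*}
The estimate
\begin{align*}
    \tfrac{1}{2} \diff{}{t} \norm{\bgamma^\eps(t)}_{H^m(\Tn)}^2
        \le C \norm{S_m(\bgamma^\eps)}_{L^\iny(\Tn^2)}
            P_{m + 2}(\norm{\bgamma^\eps(t)}_{H^m(\Tn)}),
\end{align*}
obtained just above the proposition by exploiting the evenness of $\phi_\eps$ and \cref{P:BoundsOnLPaired}, integrates in time to yield the analog of \eqref{e:HmBoundOngamma} for $\bgamma^\eps$, with constants independent of $\eps$. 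Together with \eqref{e:FepsBound}, this gives exactly the analog of the combined bound \eqref{e:CombinedEnergyBound} for $\bgamma^\eps$, valid for $\eps < \eps_0$ and $t < T^*_\eps$.

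Next, I would follow the passage from \eqref{e:CombinedEnergyBound} to \eqref{e:IntBoundForS} verbatim: each of the two integrand terms is dominated by a polynomial in $S^\eps(s)$ with nonnegative coefficients, so
\begin{align*}
    S^\eps(t)
        \le S^\eps(0)
            + \int_0^t P_{2m + 3}(S^\eps(s)) \, ds,
\end{align*}
with the polynomial independent of $\eps$. Osgood's lemma (\cref{L:Osgood}) then yields
\begin{align*}
    \int_{S^\eps(0)}^{S^\eps(t)} \frac{ds}{P_{2m + 3}(s)} \le t,
\end{align*}
which, after the case analysis already performed in Step 3, reproduces the explicit energy bound \eqref{e:SEnergyBound} with $S(0)$ replaced by $S^\eps(0)$ and the same constant $C$.

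To convert this into a uniform lower bound on the maximal existence time $T^*_\eps$, the only remaining point is to show that $S^\eps(0)$ is bounded uniformly in $\eps < \eps_0$ by a quantity depending only on $\bgamma_0$. For the $H^m$ piece, mollification is a contraction on $H^m(\Tn)$, so
\begin{align*}
    \norm{\bgamma^\eps(0)}_{H^m(\Tn)}
        = \norm{\phi_\eps * \bgamma_0}_{H^m(\Tn)}
        \le \norm{\bgamma_0}_{H^m(\Tn)}.
\end{align*}
For the chord-arc piece, \cref{P:InitF} provides, for all $\eps < \eps_0$, a bound on $\norm{F(\phi_\eps * \bgamma_0)}_{L^\iny(\Tn^2)}$ in terms of $\norm{F(\bgamma_0)}_{L^\iny(\Tn^2)}$ and $\norm{\bgamma_0}_{H^m(\Tn)}$ alone. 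Thus $S^\eps(0) \le S_0^*$ for some $S_0^*$ depending only on $\bgamma_0$, and \eqref{e:TOEBound} (applied to $S^\eps$ in place of $S$) gives
\begin{align*}
    T^*_\eps \ge T^* := \frac{1}{(2m+2) C (S_0^*)^{2m+2}},
\end{align*}
a lower bound on the time of existence that is uniform in $\eps < \eps_0$. The argument at the end of Step 3 for the separation of distinct boundary components of $\Omega_0^\eps$ goes through unchanged, since it uses only the $L^\iny$ bound on $L_\eps(\bgamma^\eps)$ inherited from the uniform $H^m$ bound on $\bgamma^\eps$.

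The main obstacle is really the uniform control of $S^\eps(0)$, which is where \cref{P:InitF} does all the work: once that input is in hand, everything else is a direct transcription of Steps 1--3 with $\bgamma^\eps$ in place of $\bgamma$, made possible by the mollifier identity $(\phi_\eps*f, g)_{H^k} = (f, \phi_\eps*g)_{H^k}$ and by \cref{L:ROCMollification} (which was already invoked to absorb the mollifier into the estimate for $\prt_t \bdelta_\beta^\eps$).
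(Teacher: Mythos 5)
Your outline tracks the paper's argument closely: the paper, too, obtains the $H^m$ energy estimate for $\bgamma^\eps$ by moving one mollifier across the $H^m$ inner product and invoking \cref{P:BoundsOnLPaired} and \cref{P:InitF}, combines it with \cref{e:FepsBound}, and then runs the Step~3 Osgood/polynomial argument verbatim with $\bgamma^\eps$ in place of $\bgamma$. There is, however, a genuine gap in your proposal that the paper explicitly flags in the paragraph immediately after the proposition. Every place you cite \cref{P:InitF} along the evolution --- to replace $\norm{S_m(\phi_\eps*\bgamma^\eps(s))}$ by $\norm{S_m(\bgamma^\eps(s))}$ in the $H^m$ estimate, and to absorb the mollifiers in the derivation of \cref{e:FepsBound} --- you need $\eps < \eps_0\bigl(\bgamma^\eps(s)\bigr)$, and that threshold depends on $\norm{F_0(\bgamma^\eps(s))}_{L^\iny(\Tn^2)}$ and $\norm{\bgamma^\eps(s)}_{H^3}$, precisely the quantities you are trying to control. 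Your write-up treats $\eps_0$ as a single constant fixed by $\bgamma_0$; but a priori $\eps_0(s)$ may shrink as $s$ increases, so the estimates hold only up to the (non-constructive) Picard time. The paper resolves this by noting that one can shrink $\eps_0$ arbitrarily close to $0$ and thereby push the interval of validity of the $\eps$-uniform bound arbitrarily close to the $T^*$ given by \cref{e:SEnergyBound}; a continuation/bootstrap step is needed, and your proof should make this explicit rather than implicitly assume $\eps_0$ is time-independent.

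A secondary, harmless discrepancy: you take the regularized initial data to be $\phi_\eps * \bgamma_0$ and then invoke \cref{P:InitF} to control $S^\eps(0)$. In the paper the mollification lives entirely inside $L_\eps = \phi_\eps * L(\phi_\eps * \cdot)$ and the initial data is the unmollified $\bgamma_0$ --- this is why \cref{e:FepsBound} has $\norm{F(0)}$ with no $\eps$ --- so $S^\eps(0) = S(0)$ and the uniformity at time zero is automatic. Your variant still works, but it adds an unnecessary invocation of \cref{P:InitF} at $t=0$. The remark about boundary-component separation carrying over from the uniform $L^\iny$ bound on $L_\eps(\bgamma^\eps)$ is correct.
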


There is a caveat, however, to our estimate in \cref{e:FepsBound}, as to continue to apply \cref{P:InitF} at time $t \in [0, T^*]$, we must recalculate $\eps_0$, creating in effect $\eps_0(t)$. So \cref{e:FepsBound} holds only up to some non-constructive time given by the Picard theorem. This applies then to the time $T^*$ we derived from \cref{e:CombinedEnergyBound} and \cref{e:SEnergyBound}. By decreasing $\eps_0$ arbitrarily close to $0$, we can extend the estimate on $T^*$ to arbitrarily close to that given by \cref{e:CombinedEnergyBound} and \cref{e:SEnergyBound}.

\begin{prop}\label{P:InitF}
    Let $\bgamma$ be an $H^3$ chain with $\norm{F_0(\bgamma)}_{L^\iny(\Tn^2)} < \iny$ and nonzero and set
    \begin{align*}
        \eps_0
            &= \frac{C}{\norm{F_0(\bgamma)}_{L^\iny(\Tn^2)}
                    \norm{\bgamma}_{H^3}}
                \min
                \set{
                    \frac
                        {1}
                        {\norm{F_0(\bgamma)}_{L^\iny(\Tn^2)}
                            \norm{\bgamma}_{H^3}^2},
                    1
                },  
    \end{align*}
    where $C$ is an absolute constant. For all $\eps \in (0, \eps_0)$,
    \begin{align*}
        \norm{F(\phi_\eps * \bgamma)}_{L^\iny(\Tn^2)}
            &\le 2 \norm{F(\bgamma)}_{L^\iny(\Tn^2)}, \\
        \norm{S_n(\phi_\eps * \bgamma)}_{L^\iny(\Tn^2)}
            &\le 2 \norm{S_n(\bgamma)}_{L^\iny(\Tn^2)},
                \, n = 0, 1, 2, 3.
    \end{align*}
    \end{prop}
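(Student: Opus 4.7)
The plan is to show that the mollification $\wt\bgamma := \phi_\eps \ast \bgamma$ cannot degrade the chord-arc quantities $F_j$ by more than a controlled amount, and then to convert this into the stated quantitative bounds. The key algebraic observation I would exploit is that mollification in $\eta$ commutes with the difference structure: setting $\wt\bdelta_\beta(\eta) := \wt\bgamma(\eta) - \wt\bgamma(\eta - \beta)$, one has
\begin{align*}
    \wt\bdelta_\beta(\eta)
        = \int_\Tn \phi_\eps(\xi)\, \bdelta_\beta(\eta-\xi)\, d\xi,
\end{align*}
so $\wt\bdelta_\beta$ is itself a mollification (in $\eta$) of $\bdelta_\beta$. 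Standard mollifier estimates combined with Sobolev embedding $H^1(\Tn)\hookrightarrow L^\iny(\Tn)$ then give, from $\prt_\eta \bdelta_\beta(\eta) = \prt_\eta\bgamma(\eta) - \prt_\eta\bgamma(\eta-\beta)$,
\begin{align*}
    \abs{\wt\bdelta_\beta(\eta) - \bdelta_\beta(\eta)}
        \le \eps \norm{\prt_\eta \bdelta_\beta}_{L^\iny}
        \le C\eps\,\norm{\prt_\eta^2 \bgamma}_{L^\iny}\abs{\beta}
        \le C\eps\,\norm{\bgamma}_{H^3}\abs{\beta}.
\end{align*}

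Next, I would insert this pointwise estimate into the reverse triangle inequality. For $j \in \Z^*$, using $\abs{\bdelta_\beta(\eta) - \n_j} = 1/F_j(\bgamma)(\beta,\eta)$,
\begin{align*}
    \wt F_j(\beta,\eta)
        = \frac{1}{\abs{\wt\bdelta_\beta(\eta) - \n_j}}
        \le \frac{F_j(\bgamma)(\beta,\eta)}
             {1 - C\eps\,\norm{\bgamma}_{H^3}\abs{\beta}\,F_j(\bgamma)(\beta,\eta)},
\end{align*}
and analogously, using $\abs{\bdelta_\beta(\eta)} = \abs{\beta}/F_0(\bgamma)(\beta,\eta)$ for $\beta \ne 0$,
\begin{align*}
    \wt F_0(\beta,\eta)
        \le \frac{F_0(\bgamma)(\beta,\eta)}
             {1 - C\eps\,\norm{\bgamma}_{H^3}\,F_0(\bgamma)(\beta,\eta)}.
\end{align*}
The edge case $\beta = 0$ of $F_0$ is handled separately via the estimate $\abs{\prt_\eta \wt\bgamma - \prt_\eta \bgamma} \le C\eps\,\norm{\bgamma}_{H^3}$, which produces a bound of the same form. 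Choosing the first factor in $\eps_0$ so that $C\eps_0\,\norm{F_0(\bgamma)}_{L^\iny}\,\norm{\bgamma}_{H^3} \le 1/2$ makes each denominator above at least $1/2$, delivering the pointwise bound $\wt F(\beta,\eta) \le 2\,F(\bgamma)(\beta,\eta)$ and hence the first inequality of the proposition.

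For the $S_n$ estimates, Young's inequality immediately gives $\norm{\wt\bgamma}_{L^2} \le \norm{\bgamma}_{L^2}$, which handles the $\norm{\bgamma}_{L^2}^j$ summands of $S_n$. To control the remaining summands $F^{j+\al}$ for $j \le n \le 3$, I need $\wt F^{n+\al} \le 2\,F^{n+\al}$ pointwise, i.e., the multiplicative constant in the denominator above, raised to the power $n+\al \le 4$, must be at most $2$. This requires $(1 - C\eps\,\norm{F_0(\bgamma)}_{L^\iny}\,\norm{\bgamma}_{H^3})^{-4} \le 2$, a strictly stronger smallness constraint on $\eps$; this is precisely where the second, more restrictive factor $1/(\norm{F_0(\bgamma)}_{L^\iny}\,\norm{\bgamma}_{H^3}^{2})$ in the $\min$ defining $\eps_0$ plays its role, and the $\min$ just combines the two constraints into the single threshold.

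The main obstacle is not analytical but bookkeeping: one has to propagate the pointwise bound carefully through the powers $F^{j+\al}$ to obtain exactly the form of $\eps_0$ in the statement, including the appearance of $\norm{\bgamma}_{H^3}^2$ rather than a single power in the second factor. Everything else is routine, once the commutation $\wt\bdelta_\beta = \phi_\eps \ast_\eta \bdelta_\beta$ is in hand.
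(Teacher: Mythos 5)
Your core observation — that mollification in $\eta$ commutes with the finite-difference operator, so $\phi_\eps*\bdelta_\beta$ is itself a mollification of $\bdelta_\beta$ and hence $\abs{\phi_\eps*\bdelta_\beta - \bdelta_\beta} \le C\eps\norm{\bgamma}_{H^3}\abs{\beta}$ — is correct, and for the $F_0$ piece it gives a genuinely cleaner argument than the paper's. The paper instead applies the mean value theorem component-wise to the mollified increments, then compares $\abs{\phi_\eps*\prt_\eta\bgamma(\eta_1)}^2$ to the chord-arc quantity, which introduces second derivatives and an extra $\norm{\bgamma}_{H^3}^2$ factor into $\eps_0$; your direct reverse-triangle route avoids this and would yield a larger (hence stronger) threshold for the $F_0$ bound. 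That part is fine.

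There are, however, two genuine gaps. First, your explanation of the $\min$ in $\eps_0$ is wrong. Requiring $\widetilde{F}^{\,n+\al}\le 2 F^{n+\al}$ forces $(1 - C\eps\norm{F_0}_{L^\iny}\norm{\bgamma}_{H^3})^{-(n+\al)}\le 2$, i.e.\ $C\eps\norm{F_0}_{L^\iny}\norm{\bgamma}_{H^3}\le 1 - 2^{-1/(n+\al)}$; that is still a smallness condition of the form $\eps\lesssim 1/(\norm{F_0}_{L^\iny}\norm{\bgamma}_{H^3})$ with a smaller absolute constant, not a condition with the new parametric factor $1/(\norm{F_0}_{L^\iny}\norm{\bgamma}_{H^3}^2)$. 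That extra factor in the paper's $\eps_0$ comes from the paper's mean-value-theorem argument, not from the powers in $S_n$, so your proposed bookkeeping cannot reproduce it (nor does it need to, since your $\eps_0$ is larger; but the stated rationale is incorrect).

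Second, and more substantively, the step for $F_j$ with $j\in\Z^*$ does not close as written. Your pointwise inequality
\begin{align*}
\widetilde{F}_j(\beta,\eta) \le \frac{F_j(\bgamma)(\beta,\eta)}{1 - C\eps\,\norm{\bgamma}_{H^3}\abs{\beta}\,F_j(\bgamma)(\beta,\eta)}
\end{align*}
is fine, but you then assert that $C\eps_0\norm{F_0}_{L^\iny}\norm{\bgamma}_{H^3}\le 1/2$ forces \emph{each} denominator to be at least $1/2$. For $j\ne 0$ that requires $\abs{\beta}F_j(\bgamma)(\beta,\eta)\le \norm{F_0(\bgamma)}_{L^\iny}$ pointwise, and there is no such bound: $F_j$ measures proximity of $\bdelta_\beta$ to the lattice point $\n_j$, which is not controlled by $F_0$. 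To remove the dependence on $\norm{F_j}_{L^\iny}$ one must split on $\abs{\beta}$: for $\abs{\beta}\le\beta_0\sim 1/\norm{\bgamma}_{H^2}$ one has $\abs{\bdelta_\beta}\le 1/2$, hence $\abs{\bdelta_\beta-\n_j}\ge 1/2$ and $F_j\le 2$, so the $\abs{\beta}$-weighted mollification error suffices; for $\abs{\beta}>\beta_0$ one must instead use the mollification error without the $\abs{\beta}$ gain, $\abs{\phi_\eps*\bdelta_\beta - \bdelta_\beta}\le C\eps\norm{\bgamma}_{H^2}$, together with a lower bound on $\abs{\bdelta_\beta - \n_j}$. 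This case analysis is precisely what the paper carries out, and your sketch omits it.
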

\begin{proof}
    Because $\norm{F_0(\bgamma)}_{L^\iny(\Tn^2)} < \iny$, it must be that $\bdelta_\beta(\eta)$ vanishes only when $\beta = 0$. Moreover,
    \begin{align*}
        \sup_\eta \frac{1}{\abs{\prt_\eta \bgamma(\eta)}}
            &= \sup_\eta F_0(\bgamma)(0, \eta)
            \le \norm{F_0(\bgamma)}_{L^\iny(\Tn^2)}
            < \iny,
    \end{align*}
    so for all $\eta$,
    \begin{align*}
        \abs{\prt_\eta \bgamma(\eta)}
            > a
            := \frac{1}{\norm{F_0(\bgamma)}_{L^\iny(\Tn^2)}}.
    \end{align*}
    But then by \cref{L:ROCMollification}, there exists $\eps_0 \in (0, 2 \pi)$ such that, for all $\eps \in (0, \eps_0)$,
    \begin{align*}
        \abs{\phi_\eps * \prt_\eta \bgamma(\eta)} \ge \frac{\sqrt{3} a}{2}.
    \end{align*}

    Let $\beta \ne 0$. By the mean value theorem, for some $\eta_1, \eta_2$ between $\eta$ and $\eta - \beta$,
    \begin{align*}
       &\frac{1}{F_0(\phi_\eps * \bgamma)(\beta, \eta)^2}
            = \frac{\abs{\phi_\eps * \bdelta_\beta(\eta)}^2}{\abs{\beta}^2}
            = \frac{\abs{\phi_\eps * \bgamma(\eta)
                    - \phi_\eps * \bgamma(\eta - \beta)}^2}
                    {\abs{\beta}^2} \\
            &\qquad
            = \frac{\abs{\phi_\eps * \bgamma^1(\eta)
                    - \phi_\eps * \bgamma^1(\eta - \beta)}^2}
                    {\abs{\beta}^2}
                + \frac{\abs{\phi_\eps * \bgamma^2(\eta)
                    - \phi_\eps * \bgamma^2(\eta - \beta)}^2}
                    {\abs{\beta}^2} \\
            &\qquad
            = \abs{\phi_\eps * \prt_\eta \bgamma^1(\eta_1)}^2
                    + \abs{\phi_\eps * \prt_\eta \bgamma^2(\eta_2)}^2 \\
            &\qquad
            = \abs{\phi_\eps * \prt_\eta \bgamma^1(\eta_1)}^2
                    + \abs{\phi_\eps * \prt_\eta \bgamma^2(\eta_1)}^2
                + [\abs{\phi_\eps * \prt_\eta \bgamma^2(\eta_2)}^2
                    - \abs{\phi_\eps * \prt_\eta \bgamma^2(\eta_1)}^2] \\
            &\qquad
            = \abs{\phi_\eps * \prt_\eta \bgamma(\eta_1)}^2
                - [\abs{\phi_\eps * \prt_\eta \bgamma^2(\eta_1)}^2
                    - \abs{\phi_\eps * \prt_\eta \bgamma^2(\eta_2)}^2].
    \end{align*}
    Now,
    \begin{align*}
        &\abs{\abs{\phi_\eps * \prt_\eta \bgamma^2(\eta_1)}^2
                    - \abs{\phi_\eps * \prt_\eta \bgamma^2(\eta_2)}^2} \\
            &\qquad
            = \brac{\abs{\phi_\eps * \prt_\eta \bgamma^2(\eta_1)}
                    + \abs{\phi_\eps * \prt_\eta \bgamma^2(\eta_2)}}
                \abs{\abs{\phi_\eps * \prt_\eta \bgamma^2(\eta_1)}
                    - \abs{\phi_\eps * \prt_\eta \bgamma^2(\eta_2)}} \\
            &\qquad
            \le C \norm{\phi_\eps * \bgamma}_{H^2}
                \abs{\prt_\eta \phi_\eps * \bgamma^2(\eta_1)
                    - \prt_\eta \phi_\eps * \bgamma^2(\eta_2)} \\
            &\qquad
            \le C \norm{\bgamma}_{H^2}
                \norm{\prt_\eta^2 \phi_\eps * \bgamma}_{L^\iny}
                \abs{\eta_1 - \eta_2} \\
            &\qquad
            \le C \norm{\bgamma}_{H^2}
                \norm{\phi_\eps * \bgamma}_{H^3} \abs{\eta_1 - \eta_2}
            \le C \norm{\bgamma}_{H^3}^2 \abs{\beta},
    \end{align*}
    where we used the reverse triangle inequality and \cref{L:C2BoundFromAgmon}. Hence,
    \begin{align*}
        \frac{1}{F_0(\phi_\eps * \bgamma)(\beta, \eta)^2}
            &\ge \abs{\phi_\eps * \prt_\eta \bgamma(\eta_1)}^2
                - C_0 \norm{\bgamma}_{H^3}^2 \abs{\beta}
            \ge \frac{3 a}{4} - C_0 \norm{\bgamma}_{H^3}^2 \abs{\beta} \\
            &= \frac{3}{4 F_0(\bgamma)(\beta, \eta)}
                - C_0 \norm{\bgamma}_{H^3}^2 \abs{\beta}
            \ge \frac{1}{4 F_0(\bgamma)(\beta, \eta)}
    \end{align*}
    so
    \begin{align}\label{e:F0F0Ineq}
        F_0(\phi_\eps * \bgamma)(\eta, \beta)
            \le 2 (\bgamma)(\eta, \beta)
    \end{align}
    for all $\abs{\beta} \le \beta_0$, where
    \begin{align*}
        \beta_0
            &:= \frac{1}{2 C_0 F_0(\bgamma)(\beta, \eta) \norm{\bgamma}_{H^3}^2}.
    \end{align*}
    
    Then for $\abs{\beta} > \beta_0$
    \begin{align*}
        \sup_\eta \frac{\abs{\beta}}{\abs{\bdelta_\beta(\eta)}}
            &= \sup_\eta F_0(\bgamma)(\beta, \eta)
            \le \norm{F_0(\bgamma)}_{L^\iny(\Tn^2)}
            < \iny,
    \end{align*}
    so for all $\eta$ and all $\abs{\beta} > \beta_0$, $\frac{\abs{\bdelta_\beta(\eta)}}{\abs{\beta}} \ge a$, and hence
    \begin{align*}
            \abs{\bdelta_\beta(\eta)}
                \ge \abs{\beta} a
                \ge \beta_0 a.
    \end{align*}
    Then by \cref{L:ROCMollification},
    \begin{align*}
        \abs{\phi_\eps * \bdelta_\beta(\eta) - \bdelta_\beta(\eta)}
            \le 2 \pi \eps \norm{\bdelta_\beta}_{C^{0, 1}}
            \le C_1 \eps \norm{\bgamma}_{H^2}.
    \end{align*}
    If
    \begin{align*}
        \eps
            < \eps_0
                := \frac
                {\beta_0 a}
                {2 C_1 \norm{\bgamma}_{H^2}}
            < \frac
                {\bdelta_\beta(\eta)}
                {2 C_1 \norm{\bgamma}_{H^2}}
    \end{align*}
    then
    \begin{align*}
        \abs{\phi_\eps * \bdelta_\beta(\eta) - \bdelta_\beta(\eta)}
                < \frac{1}{2} \abs{\bdelta_\beta(\eta)}
            \text{ so }
            \abs{\phi_\eps * \bdelta_\beta(\eta)}
                \ge \frac{1}{2} \abs{\bdelta_\beta(\eta)}
    \end{align*}
    and
    \begin{align*}
        \frac{\abs{\phi_\eps * \bdelta_\beta(\eta)}}{\abs{\beta}}
            &\ge \frac{\abs{\bdelta_\beta(\eta)}}{2 \abs{\beta}},
    \end{align*}
    which gives \cref{e:F0F0Ineq} for all $\eta$ and $\abs{\beta} > \beta_0$.

    Noting that
    \begin{align*}
        \eps_0
            &= \frac
                {1}
                {2 C_1 \norm{F_0(\bgamma)}_{L^\iny(\Tn^2)} \norm{\bgamma}_{H^2}}
                \frac{1}{2 C_0 F_0(\bgamma)(\beta, \eta) \norm{\bgamma}_{H^3}^2},
    \end{align*}
    gives the bound for $F_0$.

    The bound for $F_j$, $j \ne 0$, is obtained similarly, though now we use that $\bdelta_0(\eta) = 0$, so
    \begin{align*}
        \abs{\phi_\eps * \bdelta_\beta(\eta) - \n_j}
            &\ge \abs{\n_j}
                - \norm{\phi_\eps * \prt_\beta \bdelta_\beta(\eta)}_{L^\iny}
                    \abs{\beta}
            = \abs{j}
                - \norm{\phi_\eps * \prt_\beta \bgamma(\eta - \beta)(\eta)}_{L^\iny}
                    \abs{\beta} \\
            &= \abs{j} - \norm{\prt_\beta \phi_\eps * \bgamma}_{L^\iny}
                    \abs{\beta}
            \ge 1 - \norm{\bgamma}_{H^3} \abs{\beta}.
    \end{align*}
    So setting $\beta_0 = (2 \norm{\bgamma}_{H^3})^{-1}$, \cref{e:F0F0Ineq} holds for all $\abs{\beta} \le \beta_0$, The argument for $\abs{\beta} > \beta_0$ is the same as for $F_0$, because the $\abs{\beta}$ in the numerator of $F_0(\bgamma)(\beta, \eta)$ played only a passive role in that estimate. Noting that
    \begin{align*}
        \eps_0
            = \frac
                {\beta_0 a}
                {2 C_1 \norm{\bgamma}_{H^2}}
            = \frac{1}{2 \norm{F_0(\bgamma)}_{L^\iny(\Tn^2)} \norm{\bgamma}_{H^3}},
    \end{align*}
     we obtain the bound for $F_j$ and, combined with the bound for $F_0$, we obtain the bound for $F$. The bound for $S_n$ then follows from the bound for $F$.
\end{proof}

\begin{remark} Of course, we also have that if $r(\bgamma)>0,$ then for sufficiently
small $\varepsilon,$ we also have $r(\phi_{\epsilon}*\bgamma)>0.$
\end{remark}

\subsection{Step 5: Convergence to a solution}\label{S:Convergence}    

    Let $(\bgamma^\eps)_{\eps < \eps_0}$ be the family of solutions from Step 4. Then
    \begin{align*}
        \norm{\prt_t \bgamma^\eps}_{L^2(\Tn)}
            &= \norm{L_\eps(\bgamma^\eps)}_{L^2(\Tn)}
            = \norm{\phi_\eps * L(\phi_\eps * \bgamma^\eps)}
                _{L^2(\Tn)}
            \le \norm{L(\phi_\eps * \bgamma^\eps)}
                _{L^2(\Tn)} \\
            &\le C \norm{S_0(\phi_\eps * \bgamma^\eps)}_{L^\iny}
                    \norm{\phi_\eps * \bgamma^\eps}_{H^1}
            \le C \norm{S_0\bgamma^\eps)}_{L^\iny}
                    \norm{\bgamma^\eps}_{H^1},
    \end{align*}
    using \cref{P:BoundsOnL,P:InitF}.

    Let $0 < r < m$ and fix $T \le T^*$. Using the uniform bound in \cref{P:BoundsUnifIneps}, we apply \cref{L:AubinLions} with $X_0 = H^m$, $X_1 = H^r$, $X_0 = L^2$ to conclude that there exists $\bgamma \in C([0, T]; H^r(\Tn))$ and some sequence $(\eps_n)$ with $\eps_n$ decreasing to $0$ for which $\bgamma^{\eps_n} \to \bgamma$ in $C([0, T]; H^r(\Tn))$.
    
    Let $E = L^1(0, T; L^2(\Tn))$, which is separable. Then $E^* = L^\iny(0, T; L^2(\Tn))$ and $(\prt_t \bgamma^\eps)_{0 < \eps < \eps_0}$ is uniformly bounded in $E^*$, so \cref{L:WeakStarConv} gives a $\bnu \in E^*$ and some subsequence $(\eps_n)$ which we relabel $(\eps_n)$ for which
    \begin{align*}
        (\prt_t \bgamma^{\eps_n}, \varphi)_{E^*, E}
            \to (\bnu, \varphi)_{E^*, E}
            \text{ for all } \varphi \in E.
    \end{align*}
    But, for any $\varphi \in \Cal{D}((0, T) \times \Tn) \subseteq E$,
    \begin{align*}
        (\prt_t \bgamma^{\eps_n}, \varphi)_{E^*, E}
            &= - (\bgamma^{\eps_n}, \prt_t \varphi)_{\Cal{D}', \Cal{D}}
            \to - (\bgamma, \prt_t \varphi)_{\Cal{D}', \Cal{D}}
            = (\prt_t \bgamma, \varphi)_{\Cal{D}', \Cal{D}}
    \end{align*}
    so $\bnu = \prt_t \bgamma$ in $E^*$.
    
    Also, using \cref{P:ContinuityOfL},
    \begin{align*}
        &\lim_{n \to \iny} \norm{L_{\eps_n}(\bgamma^{\eps_n}) - L(\bgamma)}_{L^2}
            = \lim_{n \to \iny}\norm{\phi_{\eps_n} * L(\phi_{\eps_n} * \bgamma^{\eps_n})
                - L(\bgamma)}_{L^2} \\
            &\qquad
            \le
                \lim_{n \to \iny} \norm{\phi_{\eps_n} * L(\phi_{\eps_n} * \bgamma^{\eps_n})
                - \phi_{\eps_n} * L(\bgamma)
                }_{L^2}
                + \lim_{n \to \iny}\norm{\phi_{\eps_n} * L(\bgamma)
                - L(\bgamma)
                }_{L^2} \\
            &\qquad
            \le
                \lim_{n \to \iny} \norm{L(\phi_{\eps_n} * \bgamma^{\eps_n})
                - L(\bgamma)
                }_{L^2}
                + 0
            \le
                \lim_{n \to \iny} B_1(\bgamma, \phi_\eps * \bgamma) 
                \norm{\phi_{\eps_n} * \bgamma^{\eps_n} - \bgamma}_{H^1} \\
            &\qquad
            \le
                B_1(\bgamma, \bgamma) \lim_{n \to \iny}
                \norm{\phi_{\eps_n} * \bgamma^{\eps_n}
                    - \phi_{\eps_n} * \bgamma}_{H^1}
                +
                B_1(\bgamma, \bgamma)  \lim_{n \to \iny}
                \norm{\phi_{\eps_n} * \bgamma
                    - \bgamma}_{H^1} \\
            &\qquad
            \le
                B_1(\bgamma, \bgamma)  \lim_{n \to \iny}
                \norm{\bgamma^{\eps_n}
                    - \bgamma}_{H^1}
                +
                0
            = 0,
    \end{align*}
    since $\bgamma^{\eps_n} \to \bgamma$ in $C([0, T]; H^r(\Tn))$ for any $r < m$. We also used \cref{P:InitF} to conclude that $\lim_{\eps \to 0} B_1(\bgamma, \phi_\eps * \bgamma) = B_1(\bgamma, \bgamma)$ (unless $\bgamma \equiv 0$, in which case there is nothing to be proven).
    
    Then because $L_{\eps_n}(\bgamma^{\eps_n}) = \prt_t \bgamma^{\eps_n}$ and $\prt_t \bgamma^{\eps_n}$ converges weak-$*$ in  $L^\iny(0, T; L^2(\Tn))$ to $\prt_t \bgamma$, we conclude that $\prt_t \bgamma = L(\bgamma)$ and, for all $0 < r < m$,
    \begin{align*}
        \bgamma
            \in W
            := &\set{
                \bgamma \in C([0, T]; H^r(\Tn))
                    \cap L^\iny([0, T]; H^m(\Tn)), \,
                \prt_t \bgamma \in C([0, T]; L^2(\Tn))
                }.
    \end{align*}
    
    We now show that, in fact, $\bgamma \in C([0, T]; H^m(\Tn))$ following along the lines of the similar argument for the Euler equations on pages 110-111 of \cite{MB2002}.
    
    First, we show $\bgamma \in C_W([0, T]; H^m(\Tn))$, meaning that for any $\varphi \in H^{-m}(\Tn)$, $\innp{\varphi, \bgamma(t)}$ is a continuous function of $t$ over $[0, T]$. Here, $\innp{\cdot, \cdot} = \innp{\cdot, \cdot}_{H^m, H^{-m}}$ is the pairing in the duality between $H^m(\Tn)$ and $H^{-m}(\Tn)$. We will use that if $r < m$, then $H^{-r} \subseteq H^{-m}$ and for any $f \in H^{-r}$, $g \in H^m$, $\innp{f, g}_{H^m, H^{-m}} = \innp{f, g}_{H^r, H^{-r}}$.
    
    Because $\bgamma^\eps \to \bgamma \in C([0, T]; H^r(\Tn))$, for any $t \in [0, T]$, any $r < m$, and any $\psi \in H^{-r}(\Tn)$,
    \begin{align*}
        \innp{\psi, \bgamma^\eps(t)} 
            &\to \innp{\psi, \bgamma(t)} 
    \end{align*}
    uniformly on $[0, T]$. Let $\varphi \in H^{-m}$. Since $H^{-r}$ is dense in $H^{-m}$, we can find a sequence $(\psi_n)$ in $H^{-r}$ with $\norm{\psi_n - \varphi}_{H^{-m}} \le n^{-1}$. Then,
    \begin{align*}
        &\abs{\innp{\varphi, \bgamma(t) - \bgamma^\eps(t)}} 
            \le \abs{\innp{\varphi - \psi_n,
                \bgamma(t) - \bgamma^\eps(t)}} 
                                + \abs{\innp{\psi_n,
                    \bgamma(t) - \bgamma^\eps(t)}} \\ 
            &\qquad
            \le \norm{\varphi - \psi_n}_{H^{-m}}
                    [\norm{\bgamma}_{L^\iny(0, T; H^m)}
                        + \norm{\bgamma^\eps}_{L^\iny(0, T; H^m)}]
                    + \abs{\innp{\psi_n, \bgamma(t) - \bgamma^\eps(t)}} \\
                                &\qquad
            \le C n^{-1}
                    + \abs{\innp{\psi_n, \bgamma(t) - \bgamma^\eps(t)}},
    \end{align*}
    using that $(\bgamma^\eps)$ is uniformly bounded in $L^\iny(0, T; H^m(\Tn))$.
    
    Given $n$, we can choose $\eps = \eps(n)$ sufficiently small that
    \begin{align*}
        \abs{\innp{\psi_n, \bgamma(t) - \bgamma^\eps(t)}}
            \le \norm{\psi_n}_{H^{-r}}
                \norm{\bgamma(t) - \bgamma^\eps(t)}_{H^r}
            \le n^{-1}
    \end{align*}
    uniformly over $[0, T]$, so that 
    \begin{align*}
        &\abs{\innp{\varphi, \bgamma(t) - \bgamma^\eps(t)}} 
            \le C n^{-1}.
    \end{align*}
    This shows that $\innp{\varphi, \bgamma^\eps(t)} \to \innp{\varphi, \bgamma(t)}$ uniformly on $[0, T]$ for any $\varphi \in H^{-m}$, which is enough to imply that $\bgamma \in C_W([0, T]; H^m(\Tn))$.
  
    The second step is to show that $\norm{\bgamma(t)}_{H^m(\Tn)}$ is continuous in time. The proof proceeds as for the $\nu = 0$ case in \cite{MB2002}, using the uniform energy bound coming from \cref{{e:IntBoundForS}} in place of the bound in (3.60) of \cite{MB2002} and using that $\prt_t \bgamma^\eps(t) = L^\eps(\bgamma^\eps(t))$, like the Euler equations, is time reversible.
    
    The time continuity of $\bgamma$ comes from \cref{P:BoundsOnL} and that $\prt_t \bgamma = L(\bgamma)$.

\subsection{Uniqueness}\label{S:Uniqueness}

We now obtain uniqueness in \cref{T:ExistenceAlphaLt1}.

Let $\bgamma_1$, $\bgamma_2$ be two periodic CDE solutions in $C([0, T]; H^m(\Tn))$ with $\prt_t \bgamma \in C([0, T]; L^2(\Tn)$ for which $\bgamma_1(0)= \bgamma_2(0) = \bgamma_0$. Let $\ol{\bgamma} := \bgamma_1 - \bgamma_2$.

Arguing as in the proof of \cref{P:LSymmetry}, let 
    \begin{align*}
        \bmu_j(\beta, \eta)
            &:= \bgamma_j(\beta) - \bgamma_j(\eta)
    \end{align*}
    and write $L(\bgamma_j)$ in the form of \cref{e:CDEOnPiProto}:
    \begin{align*}
    	L(\bgamma_j(\beta))
                = \int_\Tn
    				G^\al_p(\bmu_j(\beta, \eta))
    					\prt_2 \bmu_j(\beta, \eta)
    					\, d \eta.
    \end{align*}
	Then
    \begin{align*}
        \begin{split}
	    \frac{1}{2} &\diff{}{t} \norm{\ol{\bgamma}(t)}_{L^2(\Tn)}^2
        	= (\ol{\bgamma}, \, L(\bgamma_1) - L(\bgamma_2))_{L^2{}} \\
            &= \int_\Tn \int_\Tn
    				\ol{\bgamma}(\beta) \cdot
						\brac{G^\al_p(\bmu_1(\beta, \eta)) \prt_2 \bmu_1(\beta, \eta)
							- G^\al_p(\bmu_2(\beta, \eta)) \prt_2 \bmu_2(\beta, \eta)}
    					\, d \eta \, d \beta \\
            &= \int_\Tn \int_\Tn
    				\ol{\bgamma}(\eta) \cdot
						\brac{G^\al_p(\bmu_1(\eta, \beta)) \prt_2 \bmu_1(\eta, \beta)
							- G^\al_p(\bmu_2(\eta, \beta)) \prt_2 \bmu_2(\eta, \beta)}
    					\, d \beta \, d \eta \\
             &= -\int_\Tn \int_\Tn
    				\ol{\bgamma}(\eta) \cdot
						\brac{G^\al_p(\bmu_1(\beta, \eta)) \prt_2 \bmu_1(\beta, \eta)
							- G^\al_p(\bmu_2(\beta, \eta)) \prt_2 \bmu_2(\beta, \eta)}
    					\, d \eta \, d \beta.
        \end{split}
    \end{align*}
    In the second equality, we simply switched variable names $\eta$ and $\beta$, while in the final equality we used that $G^\al_p(-\x) = G^\al_p(\x)$ and $\bmu_j(\eta, \beta) = - \bmu_j(\beta, \eta)$ and switched the order of integration. Taking the average of the second and fourth expressions, we see that
    \begin{align*}
        \diff{}{t} &\norm{\ol{\bgamma}(t)}_{L^2(\Tn)}^2 \\
            &= \int_\Tn \int_\Tn
    				(\ol{\bgamma}(\beta) - \ol{\bgamma}(\eta)) \cdot
						\pbrac{G^\al_p(\bmu_1(\beta, \eta)) \prt_2 \bmu_1(\beta, \eta)
							- G^\al_p(\bmu_2(\beta, \eta)) \prt_2 \bmu_2(\beta, \eta)}
    					\, d \eta \, d \beta.
    \end{align*}
    Now let
\begin{align*}
    \bdelta_\beta^j(\eta)
        &:= \bgamma_j(\eta) - \bgamma_j(\eta - \beta), \\
    \ol{\bdelta}_\beta(\eta)
        &:= \bdelta_\beta^1(\eta) - \bdelta_\beta^2(\eta)
        = \ol{\bgamma}(\eta) - \ol{\bgamma}(\eta - \beta).
\end{align*}
    Making the change of variables, $\beta \mapsto \eta - \beta$, but leaving $\eta$ unchanged, using that $\bgamma$ is quasi-closed, that $\bmu_j(\eta - \beta, \eta) = \bgamma_j(\eta - \beta) - \bgamma_j(\eta) = - \bdelta_\beta^j(\eta)$, that $\ol{\bgamma}(\eta - \beta) - \ol{\bgamma}(\eta) = \bdelta_\beta^2(\eta) - \bdelta_\beta^1(\eta) = -\ol{\bdelta}_\beta(\eta)$, and that $\prt_2 \bdelta_\beta^j(\eta) = \prt_\eta \bdelta_\beta^j(\eta)$, we have,
    \begin{align*}
        \diff{}{t} &\norm{\ol{\bgamma}(t)}_{L^2(\Tn)}^2 \\
            &= \int_\Tn \int_\Tn
    				(-\ol{\bdelta}_\beta(\eta)) \cdot
						\pbrac{G^\al_p(- \bdelta_\beta^1(\eta)) \prt_2 (-\bdelta_\beta^1(\eta))
							- G^\al_p(- \bdelta_\beta^2(\eta)) \prt_2 (- \bdelta_\beta^2(\eta))}
    					\, d \eta \, (-d \beta) \\
            &= -\int_\Tn \int_\Tn
    				\ol{\bdelta}_\beta(\eta) \cdot
						\pbrac{G^\al_p(\bdelta_\beta^1(\eta)) \prt_2 \bdelta_\beta^1(\eta)
							- G^\al_p(\bdelta_\beta^2(\eta)) \prt_\eta \bdelta_\beta^2(\eta)}
    					\, d \eta \, d \beta.
    \end{align*}

We make the decomposition as in \cite{Gancedo2008},
\begin{align*}
    \diff{}{t} \norm{\ol{\bgamma}(t)}_{L^2(\Tn)}^2 
        = I_1 + I_2,
\end{align*}
where
\begin{align*}
    I_1
        &:= -\int_\Tn \int_\Tn
            \ol{\bdelta}_\beta(\eta) \cdot
            \brac{G^\al_p(\bdelta_\beta^1(\eta))
                - G^\al(\bdelta_\beta^2(\eta))}
				\prt_\eta \bdelta_\beta^1(\eta)
            \, d \eta \, d \beta, \\
    I_2
        &:= -\int_\Tn \int_\Tn
            \ol{\bdelta}_\beta(\eta) \cdot
            G^\al_p(\bdelta_\beta^2(\eta))
				\prt_\eta \ol{\bdelta}_\beta(\eta)
            \, d \eta \, d \beta.
\end{align*}

From the proof of \cref{P:ContinuityOfL},
    \begin{align*}
        &\abs{G^\al(\bdelta_\beta^1(\eta))
                    - G^\al(\bdelta_\beta^2(\eta))}
            \le C_0 \abs{\bdelta_\beta^2(\eta) - \bdelta_\beta^1(\eta)}
                    \abs{\beta}^{-1 -\al}
            = C_0 \abs{\ol{\bdelta}_\beta(\eta)}
                    \abs{\beta}^{-1 -\al},
    \end{align*}
so
\begin{align*}
	\abs{I_1}
		&\le
		C_0 \int_\Tn \int_\Tn
			\frac{1}{\abs{\beta}^{1 + \al}}
            \abs{\ol{\bdelta}_\beta(\eta)}^2
            \norm{\prt_\eta \bdelta_\beta^1(\eta)}_{L^\iny}
            \, d \eta \, d \beta \\
		&\le
		C_0 \norm{\bgamma_1}_{H^3} \int_\Tn
			\frac{\abs{\beta}}{\abs{\beta}^{1 + \al}}
            \int_\Tn \abs{\ol{\bdelta}_\beta(\eta)}^2
            \, d \eta \, d \beta
        \le C_0 \norm{\bgamma_1}_{H^3} \norm{\ol{\bgamma}}_{L^2}^2,
\end{align*}
using \cref{L:prtDeltaSobolevBound} in the last inequality.

For $I_2$, we have,
\begin{align*}
	\abs{I_2}
        &= - \frac{1}{2} \int_\Tn \int_\Tn
            G^\al_p(\bdelta_\beta^2(\eta))
				\prt_\eta \abs{\ol{\bdelta}_\beta(\eta)}^2
            \, d \eta \, d \beta
        = \frac{1}{2} \int_\Tn \int_\Tn
            \prt_\eta G^\al_p(\bdelta_\beta^2(\eta))
				\abs{\ol{\bdelta}_\beta(\eta)}^2
            \, d \eta \, d \beta \\
        &= \frac{1}{2} \int_\Tn \int_\Tn
            (\grad G^\al_p(\bdelta_\beta^2(\eta))
            \prt_\eta \bdelta_\beta^2(\eta))
				\abs{\ol{\bdelta}_\beta(\eta)}^2
            \, d \eta \, d \beta \\
        &\le C \int_\Tn \int_\Tn
        	\frac{1}{\abs{\beta}^{1 + \al}}
				\norm{\prt_\eta \bdelta_\beta^2(\eta)}_{L^\iny}
				\abs{\ol{\bdelta}_\beta(\eta)}^2
            \, d \eta \, d \beta \\
        &\le C \norm{\bgamma_2}_{H^3} \int_\Tn
        	\frac{\abs{\beta}}{\abs{\beta}^{1 + \al}}
			\int_\Tn \abs{\ol{\bdelta}_\beta(\eta)}^2
            \, d \eta \, d \beta
        = C \norm{\bgamma_2}_{H^3} \norm{\ol{\bgamma}}_{L^2}^2.
\end{align*}

Combined, these bounds give
\begin{align*}
    \diff{}{t}
        \norm{\bgamma(t)}_{L^2(\Tn)}^2
        \le C \norm{\bgamma(t)}_{L^2(\Tn)}^2,
\end{align*}
where $C$ depends upon the $H^3$ norms of $\bgamma_1$ and $\bgamma_2$. We conclude from \Gronwalls lemma that $\bgamma \equiv 0$, giving uniqueness, and completing the proof.

\begin{remark}
In Section 5 of \cite{Gancedo2008}, Gancedo formulates an alternate CDE for $\al = 1$ for which the parameterization $\bgamma(t, \cdot)$ is ``constant-speed'' in the sense that
$
	\abs{\prt_\eta \bgamma(t, \eta)}^2 = A(t)
		\text{ for all } \eta \in \Tn
$
for some differentiable function $A$. To do this, an additional term must be added to the CDE, transforming it, as in (31) of \cite{Gancedo2008}, to
\begin{align*}
	\prt_t \bgamma(t, \eta)
		&= L(\bgamma(t, \eta))
			+ \la(\bgamma(t, \eta)) \prt_\eta \bgamma(t, \eta).
\end{align*}
	
	Without the addition of $\la \prt_\eta \bgamma$, one can easily adapt the uniqueness proof
	we just gave to use the
	log-Lipschitz modulus of continuity $\mu$ (see \cref{R:LIsLL}), and complete the uniqueness
	argument by applying Osgood's lemma in place of \Gronwalls lemma. With the addition of
	$\la(\bgamma(t, \eta)) \prt_\eta \bgamma(t, \eta)$, however, such an argument fails,
	because, ultimately, $\norm{\lambda(\bgamma_1) - \lambda(\bgamma_2)}_{L^2}$ cannot be
	bounded sufficiently well in terms of $\norm{\bgamma_1 - \bgamma_2}_{L^2}$.
\end{remark}

\appendix

%
%
\section{Bounds on $R^\al$}\label{A:RalBounds}

\begin{proof}[\textbf{Proof of \cref{P:BoundsOnR}}]
    In outline, we proceed as follows. First, we show that bounding $R^\al$ on $\Pi_p$ gives the core of the estimates. Second, we establish \cref{e:RalSimplified}. Third, we use \cref{e:RalSimplified} to bound $R^\al$ in $L^\iny(\Pi_p)$, a step that involves the most delicate estimates. Fourth, we bound derivatives of $\Pi_p$ using \cref{e:RalSimplified}. Fifth, we incorporate simple estimates on the singularities of $R^\al$ at each point in $\L^*$ to obtain the bounds in \cref{e:RBoundOnR2MinusLStar}.
    
    \medskip
    \noindent\textbf{Step 1}: First, we show that it is almost sufficient to bound $R^\al$ just on $\Pi_p$.
    
    From \cref{e:Galp}, $R^\al = G^\al_p - G^\al$. Now, $\grad^\perp G^\al_p = K^\al_p$ is periodic and defined on $\R^2 \setminus \L$. Hence, $G^\al_p$ is the sum of a periodic and an affine function, so
    \begin{align*} 
        R^\al(\x) = f_{per}(\x) + (a + \bb \cdot \x) - G^\al(\x)
            \text{ for } \x \in \R^2 \setminus \L
    \end{align*}
    for some periodic function $f_{per}$ defined on $\R^2 \setminus \L$, scalar constant $a$, and vector constant $\bb$. Hence, if we can bound $R^\al$ on $\Pi_p$, the bounds on all of $\R^2 \setminus \L$ will follow.

    More precisely,  since $R^\al$ is $C^\iny$ on $\Pi_p$, it must be that $f_{per}(\x) - G^\al(\x) \in C^\iny(\Pi_p)$. Let $\x \in \Pi_p$ and $\y = \x + \n_j$. Then
    \begin{align*}
        R^\al(\y)
            &= f_{per}(\x + \n_j) + (a + \bb \cdot (\x + \n_j))
                - G^\al(\y) \\
            &= f_{per}(\x) + (a + \bb \cdot \x) - G^\al(\x)
                + \bb \cdot \n_j + G^\al(\x) - G^\al(\y) \\
            &= R^\al(\x)
                + b_1 j + G^\al(\x) - G^\al(\y).
    \end{align*}
    We can write this as
    \begin{align}\label{e:RalOnR2}
        R^\al(\x)
            &= R^\al(\Cover(\x)) + G^\al(\Cover(\x))
                + b_1 (\x - \Cover(\x))
                - G^\al(\x),
    \end{align}
    where $\Cover$ is the covering map of \cref{e:CoveringMap}.

    \medskip
    \noindent\textbf{Step 2}: Next, we obtain the expression for $R^\al$ in \cref{e:RalSimplified}.

    Let $\x = (x_1, x_2)$ be a point off the $x_2$ axis, $\x = (x_1, x_2)$, supposing $x_1, x_2 > 0$, and let $\wh{\x} = \x/\abs{\x}$. Integrating as in \cref{e:RAsPathIntegral} from the origin to $0$ in a straight line, $\BoldTau = \wh{\x}$, and since $R^\al(0) = 0$,
    \begin{align*}
        R^\al(&\x)
            = \int_0^{\abs{\x}} H^\al(r \wh{\x})^\perp \cdot \wh{\x} \, dr
            = -\int_0^{\abs{\x}} H^\al(r \wh{\x}) \cdot \wh{\x}^\perp \, dr \\
            &= -\int_0^{\abs{\x}} \sum_{j \in \Z^*}
                K^\al(r \wh{\x} - \n_j) \cdot
                    (-\wh{x}_2, \wh{x}_1)  \, dr
            = \al c_\al \int_0^{\abs{\x}} \sum_{j \in \Z^*}
                \frac{(r \wh{\x} - \n_j)^\perp \cdot
                (\wh{x}_2, -\wh{x}_1)}
                {\abs{r \wh{\x} - \n_j}^{\al + 2}} \, dr \\
            &= \al c_\al \int_0^{\abs{\x}} \sum_{j \in \Z^*}
                \frac{(-r \wh{x}_2, r \wh{x}_1 - j)
                    \cdot (\wh{x}_2, -\wh{x}_1)}
                {\pr{(r \wh{x}_1 - j)^2
                    + r^2 \wh{x}_2^2}^{\frac{\al + 2}{2}}}
                    \, dr \\
            &= -\al c_\al \int_0^{\abs{\x}} \sum_{j \in \Z^*}
                \frac{r \wh{x}_1^2 + r \wh{x}_2^2 - \wh{x}_1 j}
                {\pr{(r \wh{x}_1 - j)^2
                    + r^2 \wh{x}_2^2}^{1 + \frac{\al}{2}}} \, dr.
    \end{align*}
    Making the change of variables, $u = (r \wh{x}_1 - j)^2 + r^2 \wh{x}_2^2$, so $du = (2 (r \wh{x}_1 - j) \wh{x}_1 + 2 r \wh{x}_2^2) \, dr = 2(r \wh{x}_1^2 + r \wh{x}_2^2 - \wh{x}_1 j) \, dr$, gives
    \begin{align*}
        \begin{split}
        R^\al(\x)
            &= -\frac{\al c_\al}{2}
                    \sum_{j \in \Z^*}
                    \int_{j^2}^{\abs{\x}^2 \wh{x}_2^2
                        + (\abs{\x} \wh{x}_1 - j)^2} 
                    \frac{d u}
                    {u^{1 + \frac{\al}{2}}} \, dr
            = \frac{\al c_\al}{2 \tfrac{\al}{2}}
                \sum_{j \in \Z^*}
                u^{-\frac{\al}{2}}\Big|
                    _{j^2}^{x_2^2 + (x_1 - j)^2}
                \\
            &= c_\al \sum_{j \in \Z^*}^\iny 
                \brac{
                    \frac{1}{(j^2)^{\frac{\al}{2}}}
                - 
                        \frac{1}{(x_2^2 + (x_1 - j)^2)
                            ^{\frac{\al}{2}}}
                },
        \end{split}
    \end{align*}
    giving \cref{e:RalSimplified}.

    \medskip
    \noindent\textbf{Step 3}:
    Next, we bound $R^\al$ in $L^\iny$ on $\Pi_p$.

     We cannot split the sum in \cref{e:RalSimplified} into two sums, since neither sum would converge separately. Instead, we need to bound the difference that appears in each term. For this, we write,
     \begin{align*}
         x_2^2 + (x_1 - j)^2
            &= (x_1^2 + x_2^2)
                - 2 x_1 j + j^2
            = \abs{\x}^2 - 2 x_1 j + j^2
     \end{align*}
    so
    \begin{align*}
        T_j
            &:= \frac{1}{\abs{j}^\al}
                    - 
                            \frac{1}{(x_2^2 + (x_1 - j)^2)
                                ^{\frac{\al}{2}}}
            = \frac{
                (\abs{\x}^2  - 2 x_1 j + j^2)^{\frac{\al}{2}}
                                - \abs{j}^\al
                }
                {
                    \abs{j}^\al
                    (\abs{\x}^2 - 2 x_1 j + j^2)
                                ^{\frac{\al}{2}}
                }
            \\
            &=
            \frac
            {
                 \abs{j}^\al \pr{1 + \frac{\abs{\x}^2 - 2 x_1 j}{j^2}}
                    ^\frac{\al}{2}
                - \abs{j}^\al
            }
            {
                \abs{j}^\al \abs{j}^\al
                \pr{1 + \frac{\abs{\x}^2 - 2 x_1 j}{j^2}}
                    ^\frac{\al}{2}
            }
            =
            \abs{j}^{-\al}   
            \frac
            {
                 \pr{1 + \frac{\abs{\x}^2 - 2 x_1 j}{j^2}}
                    ^\frac{\al}{2}
                - 1
            }
            {
                \pr{1 + \frac{\abs{\x}^2 - 2 x_1 j}{j^2}}
                    ^\frac{\al}{2}
            }.
    \end{align*}
    If $\frac{\abs{\x}^2 - 2 x_1 j}{j^2} > -1$ then we can apply \cref{L:CalcInequality}. Now,
    \begin{align*}
        \frac{\abs{\x}^2 - 2 x_1 j}{j^2} > -1
            &\iff \abs{\x}^2 - 2 x_1 j > - j^2
            \iff g(j) := j^2 - 2 x_1 j + \abs{\x}^2 > 0.
    \end{align*}
    The discriminant of the polynomial $g(j)$ is $4 x_1^2 - 4 \abs{\x}^2 = - 4 x_2^2 < 0$, and $g(0) = \abs{\x}^2 > 0$, so we conclude that we can, in fact, apply \cref{L:CalcInequality} for all $j \in \Z^*$. This gives,
    \begin{align*}
        \abs{T_j}
            &\le \abs{j}^{-\al}
                \frac{\al}{2}
                \frac
                {
                    \frac{\abs{\x}^2 - 2 x_1 j}{j^2} 
                }
                {
                    a_j^{\frac{\al}{2}}
                },
    \end{align*}
    where
    \begin{align*}
        a_j
            &:= 1 + \frac{\abs{\x}^2 - 2 x_1 j}{j^2}
            = \frac{g(j)}{j^2}.
    \end{align*}
    We know from our earlier analysis that $a_j > 0$ for all $j$. To find its minimal value, let $h(j) := \frac{\abs{\x}^2 - 2 x_1 j}{j^2}$, and calculate, for $j \ne 0$,
    \begin{align*}
        j^4 h'(j)
             &= j^2 (-2 x_1) - (\abs{\x}^2 - 2 x_1 j) 2 j
             = 2 x_1 j^2 - 2 \abs{\x}^2 j
             = 2 j (x_1 j - \abs{\x}^2) = 0 \\
             &\iff j = \abs{\x}^2/x_1.
    \end{align*}
    Now, $\lim_{j \to \pm \iny} h(j) = 0$, $\lim_{j \to 0^\pm} h(j) = +\iny$, and
    \begin{align*}
        h(\abs{\x}^2/x_1)
            &= \frac{\abs{\x}^2 - 2 \abs{\x}^2}
                {\abs{\x}^4/x_1^2}
            = - \frac{x_1^2}{\abs{\x}^2},
    \end{align*}
    giving the minimum value of $h(j)$. Hence, $a_j$ is at least this minimum value plus 1; that is,
    \begin{align}
        a_j
            &\ge 1 - \frac{x_1^2}{\abs{\x}^2}
            = \frac{x_2^2}{\abs{\x}^2}
    \end{align}
    Hence,
    \begin{align*}
        \abs{T_j}
            &\le \abs{j}^{-\al}
                \frac{\al}{2}
                \pr{\frac{\abs{\x}}{x_2}}^\al
                \frac{\abs{\x}^2 - 2 x_1 j}{j^2}. 
    \end{align*}
    Thus,
    \begin{align*}
        \abs{R^\al(\x)}
            &\le
                \frac{\al c_\al}{2}
                \pr{\frac{\abs{\x}}{x_2}}^\al
                \sum_{j \in Z^*}
                    \frac{\abs{\x}^2 - 2 x_1 j}{\abs{j}^{2 + \al}}
            =
                \al c_\al
                \pr{\frac{\abs{\x}}{x_2}}^\al
                \abs{\x}^2
                \sum_{j = 1}^\iny
                    \frac{1}{j^{2 + \al}} \\
            &=
                \al c_\al \zeta(2 + \al)
                \pr{\frac{\abs{\x}}{x_2}}^\al
                \abs{\x}^2.
    \end{align*}
    By symmetry, this bound holds for all points not on the vertical or horizontal axes.

    If we restrict $\x$ to lie in $\Pi_p$ with $\abs{x_2} > 1$ then $\abs{\x}/x_2 \le \sqrt{2}$, so
    \begin{align*}
        \abs{R^\al(\x)}
            &\le
                 \al c_\al 2^{\frac{\al}{2}} \zeta(2 + \al)
                \abs{\x}^2.
    \end{align*}
    On the remainder of $\Pi_p$, we just use that $H^\al$ is $C^\iny$, and we conclude that
    \begin{align*}
        \abs{R^\al(\x)}
            &\le C (1 + \abs{\x}^2)
            \text{ for all } \x \in \Pi_p.
    \end{align*}

    \medskip
    \noindent\textbf{Step 4}:  We now turn to derivatives of $R^\al$, again restricting $\x$ to lie in $\Pi_p$ with $\abs{x_2} > 1$.
    
    The estimates on $T_j$ show that the sum in \cref{e:RalSimplified} converges uniformly on compact subsets, so we can differentiate the sum term by term. Hence,
    \begin{align*}
        \prt_1 R^\al(\x)
            &= \al c_\al \sum_{j \in \Z^*}^\iny 
                    (x_1 - j) (x_2^2 + (x_1 - j)^2)
                        ^{-1 - \frac{\al}{2}}, \\
        \prt_2 R^\al(\x)
            &= \al c_\al x_2 \sum_{j \in \Z^*}^\iny 
                    (x_2^2 + (x_1 - j)^2)
                        ^{-1 - \frac{\al}{2}}.
    \end{align*}
    Thus,
    \begin{align*}
        \abs{\prt_1 R^\al(\x)}
            &\le C \sum_{j \in \Z^*}
                \frac{1 + \abs{j}}{(x_2^2 - 1 + j^2)^{1 + \frac{\al}{2}}}
            \le C + C \sum_{j \in \Z^*} \frac{1}{j^{1 + \al}}
            \le C, \\
        \abs{\prt_2 R^\al(\x)}
            &\le C \sum_{j \in \Z^*}
                \frac{x_2}{(x_2^2 - 1 + j^2)^{1 + \frac{\al}{2}}}
            \le C x_2.
    \end{align*}

    We can see that repeated derivatives with respect to $x_1$ do not lead to increased growth in $x_2$, while each derivative with respect to $x_2$ introduces growth by another factor of $x_2$.

    We conclude that on $\Pi_p$,
    \begin{align}\label{e:RBoundOnPip}
        \abs{\grad^n R^\al(\x)}
            &\le
            \begin{cases}
                C (1 + \abs{x_2}^2) & \text{if } n = 0, \\
                C (1 + \abs{x_2}^n) & \text{if } n > 0.
            \end{cases}
    \end{align}

    \medskip
    \noindent\textbf{Step 5}: Finally, we incorporate estimates on the singularities arising from $G^\al$.

    For $\x \in \R^2 \setminus \Pi_p$, $G^\al(\x)$ is $C^\iny$ and decays, along with all of its derivatives, to zero at infinity. Hence, returning to \cref{e:RalOnR2}, we see that near any point $\n_j$ in $\L^*$, $R^\al$ has a singularity like that of $G^\al(\x - \n_j)$. Combined with \cref{e:RBoundOnPip}, and noting that the singularity in $G^\al(\x - \n_j)$ is larger than the constant terms in \cref{e:RBoundOnPip}, we obtain the bounds in \cref{e:RBoundOnR2MinusLStar}.
\end{proof}

We used \cref{L:CalcInequality} in the proof above of \cref{P:BoundsOnR}.

\begin{lemma}\label{L:CalcInequality}
    Let $b \in (0, 1)$. Then for all $x > -1$,
    \begin{align*}
        (1 + x)^b \le 1 + b x.
    \end{align*}
\end{lemma}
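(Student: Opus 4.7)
The plan is to deduce the inequality from the concavity of the map $t \mapsto t^b$ on $(0,\infty)$ for $b \in (0,1)$. Since its second derivative is $b(b-1)t^{b-2} < 0$, this function is strictly concave, so it lies below any of its tangent lines. Computing the tangent at $t=1$, we have $\diff{}{t}t^b\big|_{t=1} = b$ and $1^b = 1$, giving the tangent line $y = 1 + b(t-1)$. Concavity therefore yields $t^b \le 1 + b(t-1)$ for every $t > 0$, and substituting $t = 1 + x$ (which is positive precisely because $x > -1$) gives the claim.

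If one prefers a self-contained calculus argument not invoking concavity, one can set $f(x) := 1 + bx - (1+x)^b$ on $(-1,\infty)$ and show $f \ge 0$ by locating its minimum. We compute $f'(x) = b\bigl(1 - (1+x)^{b-1}\bigr)$, and since $b - 1 < 0$, the factor $(1+x)^{b-1}$ is strictly decreasing in $x$ with value $1$ at $x = 0$. Hence $f'(x) < 0$ on $(-1,0)$ and $f'(x) > 0$ on $(0,\infty)$, so $f$ attains its global minimum on $(-1,\infty)$ at $x = 0$, where $f(0) = 0$. This gives $f(x) \ge 0$, i.e.\ the desired inequality.

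There is no genuine obstacle here; the only mild subtlety is ensuring the base is positive (so that the real power $(1+x)^b$ is defined unambiguously), which is exactly the hypothesis $x > -1$. I would present the concavity version, as it is a one-line argument.
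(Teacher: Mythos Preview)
Your proposal is correct, and your second argument (defining $f(x) = 1 + bx - (1+x)^b$ and analyzing the sign of $f'$) is exactly the paper's proof. Your concavity argument is a clean alternative that the paper does not give, but it is equivalent in spirit.
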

\begin{proof}
    Let $f(x) := 1 + b x - (1 + x)^b$. Then $f(0) = 0$ and if $x>0,$
    \begin{align*}
        f'(x)
            &= b - b (1 + x)^{b - 1} 
                > b - b = 0.
\end{align*}
Likewise, if instead $-1<x<0,$ then
    \begin{align*}
        f'(x)
< b - b = 0.
    \end{align*}
    which establishes the inequality.
\end{proof}

\section{Some lemmas}\label{A:SomeLemmas}

\cref{T:Picard,T:PicardContinuation} are adapted from Theorems 3.1 and 3.3 of \cite{MB2002}.

\begin{theorem}[Picard]\label{T:Picard}
    Let $O$ be an open subset of the Banach space $X$ and let
    $L$ be a nonlinear operator from $X$ to $X$ that is locally Lipschitz continuous on $O$; that is, for any $x \in O$ there exists an open neighborhood $U_x \subseteq O$ of $x$ and $M > 0$ for which
    \begin{align*}
        \norm{L(x) - L(x')}_X
            &\le M \norm{x - x'}_X
            \text{ for all } x, x' \in U_x.
    \end{align*}
    Then for any $x_0 \in O$ there exists a time $T > 0$ such that the initial value problem (ODE),
    \begin{align*}
        \begin{cases}
            \displaystyle
                \diff{x(t)}{t} = L(x(t)), \\[0.5em]
            x(0) = x_0
        \end{cases}
    \end{align*}
    has a unique solution in $C^1((-T, T); O)$.
\end{theorem}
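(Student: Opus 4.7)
The plan is to follow the classical Picard--Lindel\"of argument via Banach's fixed-point theorem, adapted to the Banach space setting. I would first recast the ODE in integral form: a function $x \in C^1((-T, T); O)$ solves the initial value problem if and only if it satisfies
\begin{align*}
    x(t) = x_0 + \int_0^t L(x(s)) \, ds
\end{align*}
for all $t \in (-T, T)$. Since $x_0 \in O$ and $O$ is open, there exists $r > 0$ with $\ol{B_r(x_0)} \subseteq O$, and by shrinking $r$ if necessary, I may assume the local Lipschitz hypothesis applies on $\ol{B_r(x_0)}$ with some constant $M$. In particular, $L$ is bounded on this ball: setting $M_0 := \norm{L(x_0)}_X + M r$, the Lipschitz bound gives $\norm{L(y)}_X \le M_0$ for all $y \in \ol{B_r(x_0)}$.

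Next, for $T > 0$ to be chosen, consider the complete metric space
\begin{align*}
    Y_T := \set{x \in C([-T, T]; X) \colon \sup_{t \in [-T, T]} \norm{x(t) - x_0}_X \le r}
\end{align*}
equipped with the sup norm (it is closed in $C([-T, T]; X)$, hence complete). Define $\Phi \colon Y_T \to C([-T, T]; X)$ by $\Phi[x](t) := x_0 + \int_0^t L(x(s)) \, ds$, where the integral is the Bochner integral of the continuous $X$-valued map $s \mapsto L(x(s))$. I would then choose $T$ small enough to ensure two things:  first, $T M_0 \le r$, so that $\Phi$ maps $Y_T$ into itself; second, $T M < 1$, so that for any $x, y \in Y_T$,
\begin{align*}
    \norm{\Phi[x](t) - \Phi[y](t)}_X
        \le \int_0^{\abs{t}} M \norm{x(s) - y(s)}_X \, ds
        \le T M \norm{x - y}_{C([-T, T]; X)},
\end{align*}
making $\Phi$ a contraction. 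Banach's fixed-point theorem then yields a unique fixed point $x \in Y_T$.

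Finally, I would verify that this fixed point has the required regularity and uniqueness. Since $s \mapsto L(x(s))$ is continuous into $X$, the fundamental theorem of calculus for Bochner integrals gives $x \in C^1((-T, T); X)$ with $x'(t) = L(x(t))$ and $x(0) = x_0$; moreover, because $\norm{x(t) - x_0}_X \le r$, the solution stays in $O$. For uniqueness in the whole class $C^1((-T, T); O)$ (not only within $Y_T$), suppose $x, y$ are two such solutions. Since both are continuous and agree at $t = 0$, there is a maximal subinterval on which both remain in $\ol{B_r(x_0)}$; on that subinterval, \Gronwalls inequality applied to $\norm{x(t) - y(t)}_X \le M \int_0^{\abs{t}} \norm{x(s) - y(s)}_X \, ds$ forces $x \equiv y$, and a standard connectedness argument extends this to all of $(-T, T)$ after possibly further shrinking $T$. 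There is no serious obstacle here --- the argument is standard --- but the only subtlety worth being careful about is ensuring that the Bochner integral behaves as expected for continuous $X$-valued integrands, which it does.
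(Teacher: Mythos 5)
Your argument is correct and is the standard Picard--Lindel\"of contraction-mapping proof in a Banach space; the paper itself does not prove \cref{T:Picard} but cites it from Theorem~3.1 of \cite{MB2002}, where precisely this fixed-point argument is carried out. The only point worth noting is that your final uniqueness step (``after possibly further shrinking $T$'') is acceptable since the theorem only asserts existence of \emph{some} $T>0$, and the more common route there is an open-closed/connectedness argument on the agreement set, which avoids having to shrink $T$ at all.
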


\begin{theorem}[Continuation of ODE]\label{T:PicardContinuation}
    Let $X$, $O$, and $L$ be as in \cref{T:Picard}. A unique solution $x$ for $x'(t) = L(x(t))$ exists on $[0, \iny)$ or leaves the open set $O$ as $t \to \iny$.
\end{theorem}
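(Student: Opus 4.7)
\medskip

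\noindent\textbf{Proof proposal for \cref{T:PicardContinuation}.}

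The plan is to bootstrap the local existence and uniqueness from \cref{T:Picard} into a maximal solution, and then argue dichotomously: either that maximal solution is global, or it must exit every compact subset of $O$ as its time of existence approaches its (finite) supremum. First I would define the set
\begin{equation*}
    \Cal{S} := \set{T > 0 \colon \text{a solution } x \in C^1([0, T); O) \text{ of } x'(t) = L(x(t)),\ x(0) = x_0 \text{ exists}},
\end{equation*}
which by \cref{T:Picard} is nonempty, and set $T^* := \sup \Cal{S} \in (0, \iny]$. The uniqueness clause of \cref{T:Picard} shows that any two such solutions agree on their common interval of definition (a standard connectedness argument: the set where they agree is nonempty, closed in $[0, \min(T_1, T_2))$ by continuity, and open by applying local uniqueness from \cref{T:Picard} at any point of agreement), so gluing produces a unique maximal solution $x \in C^1([0, T^*); O)$.

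If $T^* = \iny$ we are done, so assume $T^* < \iny$. I would then show, by contrapositive, that $x(t)$ must leave every compact $K \compactsubset O$: suppose instead there exists a compact $K \subset O$ and a sequence $t_n \uparrow T^*$ with $x(t_n) \in K$. Because $L$ is locally Lipschitz on $O$, a standard covering argument shows $L$ is bounded, say by $M$, on $K$; then $\norm{x'(t)}_X = \norm{L(x(t))}_X \le M$ whenever $x(t) \in K$. The hard part is to upgrade ``$x(t_n) \in K$'' into ``$x(t) \in K'$ for all $t$ near $T^*$,'' for a slightly larger compact set $K'$; the key is to choose $\delta > 0$ so that $K' := \overline{\set{y \in X \colon \dist(y, K) \le M \delta}} \subset O$ (possible by compactness of $K$ and openness of $O$), and then for $t_n$ close enough to $T^*$ that $T^* - t_n < \delta$, the fundamental theorem of calculus combined with the $M$-bound forces $x(t) \in K'$ for all $t \in [t_n, T^*)$.

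With $x([t_n, T^*)) \subset K'$ secured, $x'$ is bounded by a uniform constant $M'$ (the bound of $L$ on $K'$) on $[t_n, T^*)$, so $x$ is Lipschitz and hence uniformly continuous on $[t_n, T^*)$. Since $X$ is a Banach space, the limit $x_* := \lim_{t \to T^*-} x(t)$ exists in $X$, and $x_* \in K' \subset O$ by closedness. Now I would apply \cref{T:Picard} at the point $x_* \in O$ to obtain a local solution $\widetilde{x}$ on some interval $(T^* - \tau, T^* + \tau)$; concatenating $x$ on $[0, T^*)$ with $\widetilde{x}$ on $[T^*, T^* + \tau)$ gives a $C^1$ solution on $[0, T^* + \tau)$, contradicting the maximality of $T^*$. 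The main obstacle in this plan is the technical step of confining $x(t)$ to a slightly enlarged compact set $K'$ on a full left-neighborhood of $T^*$; once that is in place, the completion of $X$ and the reapplication of \cref{T:Picard} at the limit point close the argument.
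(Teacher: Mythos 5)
The paper does not prove \cref{T:PicardContinuation}: it is stated in \cref{A:SomeLemmas} with the single remark that \cref{T:Picard,T:PicardContinuation} ``are adapted from Theorems 3.1 and 3.3 of \cite{MB2002},'' so there is no internal proof for me to compare your proposal against. On its own terms, your proposal is the standard continuation argument and the overall structure is correct, but there is one genuine glitch in the bootstrapping step that, as written, does not close.

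You define $K' := \overline{\set{y \in X \colon \dist(y, K) \le M\delta}}$, where $M = \sup_K \norm{L}_X$, and then claim the fundamental theorem of calculus ``combined with the $M$-bound forces $x(t) \in K'$ for all $t \in [t_n, T^*)$.'' But the bound $\norm{x'(t)}_X \le M$ is only available while $x(t) \in K$. As soon as $x$ exits $K$ --- even while still inside $K'$ --- the applicable bound is $M' := \sup_{K'} \norm{L}_X$, and since $K \subset K'$ you have $M' \ge M$, so a trajectory of speed up to $M'$ can traverse the collar of thickness $M\delta$ in time less than $\delta$; the bootstrap need not trap $x$ inside $K'$. The fix is to reverse the order of choices: first pick $\rho_0 > 0$ so that $K' := \overline{\set{y \in X \colon \dist(y, K) \le \rho_0}} \subset O$ (compactness of $K$ plus openness of $O$), then set $M' := \sup_{K'}\norm{L}_X < \iny$, and only then choose $\delta := \rho_0/M'$. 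With this ordering, the exit-time argument you gesture at --- set $\tau := \sup\set{t \in [t_n, T^*) \colon x([t_n,t]) \subset K'}$, note $\tau > t_n$ by continuity, and on $[t_n,\tau]$ the speed is $\le M'$, so over a duration $< \delta$ the displacement is $< \rho_0$, forcing $x(\tau)$ into the interior of $K'$ and contradicting $\tau < T^*$ --- closes cleanly. Everything after that (the $M'$-Lipschitz bound on $[t_n, T^*)$, Cauchy convergence in the Banach space $X$ to a limit $x_* \in K' \subset O$, and reapplication of \cref{T:Picard} at $x_*$ to contradict maximality) is fine. As a side note, your reading of the conclusion as ``$x$ leaves every compact $K \subset O$'' is the right interpretation of ``leaves the open set $O$'' in an infinite-dimensional Banach space; the paper's ``as $t \to \iny$'' is evidently a typo for ``as $t$ approaches the maximal time.''
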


The following is adapted from Theorem II.5.16 of \cite{BoyerFabrie2013}:
\begin{lemma}[Aubin-Lions-Simon]\label{L:AubinLions}
    Let
    \begin{align*}
        X_0 \subset\subset X_1 \subset X_2
    \end{align*}
    be Banach spaces with $X_0$ compactly embedded in $X_1$ and $X_1$ continuously embedded in $X_2$. Let $1 \le p,r \le \iny$ and for $T > 0$ define
    \begin{align*}
        E_{p, r}
            &:= \set{v \in L^p(0, T; X_0) \colon
                \prt_t v \in L^r(0, T; X_2)}.
    \end{align*}
    \begin{enumerate}
        \item
            If $p < \iny$ then
            $E_{p, r} \subset\subset L^p(0, T; X_1)$.

            \item
                If $p = \iny$ and $r > 1$ then
                $E_{p, r} \subset\subset C([0, T]; X_1)$.
\end{enumerate}
\end{lemma}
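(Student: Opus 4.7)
The plan is to establish the lemma by the classical Aubin--Lions--Simon strategy, combining an interpolation inequality between the three spaces with an equicontinuity-in-time argument driven by the bound on $\prt_t v$. The pivotal tool will be Ehrling's lemma: since $X_0$ embeds compactly into $X_1$, for every $\eps > 0$ there exists $C_\eps > 0$ with
\begin{align*}
\norm{v}_{X_1}
   \le \eps \norm{v}_{X_0}
     + C_\eps \norm{v}_{X_2}
\quad \text{for all } v \in X_0.
\end{align*}
Applied to differences $v_n - v$ or $v_n(t) - v_n(s)$, this reduces compactness in $X_1$ to uniform $X_0$-bounds (which we have by hypothesis) plus convergence in the weaker space $X_2$, where the time-derivative bound can be exploited.

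Given a bounded sequence $(v_n)$ in $E_{p,r}$, I would first extract a subsequence (not relabeled) converging to some $v$ weakly in $L^p(0,T; X_0)$ (weak-$*$ if $p = \iny$), with $\prt_t v_n$ converging weakly in $L^r(0,T; X_2)$. The bound on $\prt_t v_n$ combined with \Holders inequality yields the modulus-of-continuity estimate
\begin{align*}
\norm{v_n(t) - v_n(s)}_{X_2}
   \le \abs{t - s}^{1 - 1/r}
     \norm{\prt_t v_n}_{L^r(0,T; X_2)},
\end{align*}
uniform in $n$ when $r > 1$. Together with pointwise $X_0$-boundedness (hence $X_2$-precompactness at each $t$, since $X_0 \subset\subset X_1 \subset X_2$), Arzel\`a--Ascoli produces a further subsequence converging in $C([0,T]; X_2)$.

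For case (2), $p = \iny$ and $r > 1$: the uniform bound in $L^\iny(0,T; X_0)$ and the $X_2$-equicontinuity yield, via Ehrling applied to $v_n(t) - v_n(s)$, equicontinuity in $X_1$; together with pointwise $X_1$-precompactness (from the compact embedding $X_0 \subset\subset X_1$), a second application of Arzel\`a--Ascoli gives convergence in $C([0,T]; X_1)$. For case (1), $p < \iny$: applying Ehrling to $v_n - v$ pointwise in time and integrating gives
\begin{align*}
\norm{v_n - v}_{L^p(0,T; X_1)}
  \le \eps \bigl(\norm{v_n}_{L^p(0,T; X_0)}
         + \norm{v}_{L^p(0,T; X_0)}\bigr)
     + C_\eps T^{1/p} \norm{v_n - v}_{C([0,T]; X_2)},
\end{align*}
after which sending $n \to \iny$ and then $\eps \to 0$ closes the argument.

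The main obstacle will be the equicontinuity step: the hypothesis $r > 1$ is essential precisely because only then does the \Holder exponent $1 - 1/r$ yield a genuine uniform modulus of continuity in time. The case $r = 1$ fails because absolute continuity of the integral is not uniform over the sequence, which explains why case (2) demands $r > 1$; a parallel subtlety is why case (1) delivers only $L^p$-convergence rather than uniform convergence.
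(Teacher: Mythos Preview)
The paper does not prove this lemma at all: it simply records it as a standard result, adapted from Theorem II.5.16 of Boyer--Fabrie. So there is no ``paper's proof'' to compare against beyond the citation.

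Your sketch is the classical Ehrling-plus-Arzel\`a--Ascoli route, and for case (2) it is essentially correct. There is, however, a genuine gap in your treatment of case (1). The lemma as stated allows $r = 1$ when $p < \iny$, but your entire argument for case (1) rests on first obtaining convergence in $C([0,T]; X_2)$ via Arzel\`a--Ascoli, which in turn relies on the \Holder estimate $\norm{v_n(t) - v_n(s)}_{X_2} \le \abs{t-s}^{1 - 1/r} \norm{\prt_t v_n}_{L^r}$. When $r = 1$ this gives no modulus of continuity, and---as you yourself note at the end---absolute continuity of $\int \norm{\prt_t v_n}_{X_2}$ is not uniform in $n$. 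So your case (1) argument actually only covers $r > 1$. The standard fix (Simon's) is to replace the $C([0,T]; X_2)$ step with an $L^p$-compactness criterion of Fr\'echet--Kolmogorov type: one shows uniform smallness of time translates $\norm{v_n(\cdot + h) - v_n}_{L^p(0,T-h; X_2)} \le h^{1 - 1/r} \norm{\prt_t v_n}_{L^r}$ for $r > 1$, and for $r = 1$ uses $\norm{v_n(\cdot + h) - v_n}_{L^1(0,T-h; X_2)} \le h \norm{\prt_t v_n}_{L^1}$, which \emph{is} uniform. Ehrling then upgrades this to $L^p(0,T; X_1)$.

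A secondary issue: your first step extracts a weak (or weak-$*$) limit in $L^p(0,T; X_0)$, but no reflexivity or separability hypothesis on $X_0$ is available, so this extraction is not justified in general. It is also unnecessary: one can argue directly that a subsequence is Cauchy in the target space, identifying the limit only at the end.
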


The following is a version of the Banach–Alaoglu theorem (see Theorem II.2.7 of \cite{BoyerFabrie2013}):
\begin{lemma}\label{L:WeakStarConv}
    If $E$ is a separable Banach space then any bounded sequence in $E^*$ has a weak-$*$ convergent subsequence in $E^*$.
\end{lemma}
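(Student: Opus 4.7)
The plan is the classical diagonal extraction argument underlying the sequential form of Banach--Alaoglu in the separable setting. Let $(x_n^*) \subset E^*$ be a bounded sequence with $\norm{x_n^*}_{E^*} \le M$ for all $n$, and let $\set{e_k}_{k \ge 1}$ be a countable dense subset of $E$. The goal is to produce a subsequence $(x_{n_j}^*)$ and an element $x^* \in E^*$ with $x_{n_j}^*(e) \to x^*(e)$ for every $e \in E$.

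First, for each fixed $k$, the scalar sequence $(x_n^*(e_k))$ is bounded by $M \norm{e_k}_E$, so Bolzano--Weierstrass produces a convergent subsequence. Applying this iteratively for $k = 1, 2, \dots$, I would select nested subsequences $(x_n^{(k), *})$ so that $(x_n^{(k), *}(e_j))$ converges for each $j \le k$, and then set $y_n^* := x_n^{(n), *}$ to obtain a single subsequence along which $y_n^*(e_k)$ converges for every $k \ge 1$. Define $\Lambda(e_k) := \lim_{n \to \iny} y_n^*(e_k)$.

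Next I would extend $\Lambda$ to a bounded linear functional on all of $E$. By uniform boundedness, for any $e, e' \in E$ and any $n$,
\begin{align*}
    \abs{y_n^*(e) - y_n^*(e')} \le M \norm{e - e'}_E,
\end{align*}
so the sequence $(y_n^*)$ is equicontinuous on $E$. Given $e \in E$ and $\eps > 0$, pick $k$ with $\norm{e - e_k}_E < \eps/(3M)$; then for $n, m$ large enough that $\abs{y_n^*(e_k) - y_m^*(e_k)} < \eps/3$, one obtains $\abs{y_n^*(e) - y_m^*(e)} < \eps$, so $(y_n^*(e))$ is Cauchy and hence convergent. Call the limit $x^*(e)$; by construction $x^*$ agrees with $\Lambda$ on $\set{e_k}$, and the inequality $\abs{x^*(e)} \le M \norm{e}_E$ passes to the limit, so $x^*$ is linear (linearity is preserved under pointwise limits) and bounded, hence in $E^*$. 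Thus $y_n^* \to x^*$ weak-$*$, completing the argument.

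The only delicate point is the extension step: one must exploit separability to control $y_n^*$ on \emph{all} of $E$ from its values on the countable dense set, which is exactly where the uniform bound $\norm{y_n^*}_{E^*} \le M$ enters to supply the needed equicontinuity. Everything else is the standard Cantor diagonal construction, and the result can alternatively be deduced directly from Theorem II.2.7 of \cite{BoyerFabrie2013}, which is the reference cited in the excerpt.
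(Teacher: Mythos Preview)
Your proof is correct: it is the standard diagonal extraction argument for the sequential Banach--Alaoglu theorem in the separable setting, and all the steps (Bolzano--Weierstrass on each $e_k$, Cantor diagonalization, equicontinuity from the uniform bound to extend convergence from the dense set to all of $E$, linearity and boundedness of the limit) are handled properly. The paper itself offers no proof of this lemma, simply citing Theorem II.2.7 of \cite{BoyerFabrie2013}, so your argument supplies exactly the details that the cited reference would contain.
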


We often bring a derivative inside an integral using \cref{L:DerivInsideIntegral}, from Theorem 2.27 of \cite{Folland}:
\begin{lemma}\label{L:DerivInsideIntegral}
    Let $f \colon \Tn^2 \to \R$ with $\abs{\prt_\beta f(\beta, \cdot)} \le g$ for some $g \in L^1(\Tn)$. Then
    \begin{align*}
        \prt_\beta
            \int_\Tn f(\beta, \eta) \, d \eta
                = \int_\Tn \prt_\beta f(\beta, \eta) \, d \eta.
    \end{align*}
\end{lemma}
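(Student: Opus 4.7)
The plan is to reduce this to the dominated convergence theorem applied to difference quotients in the $\beta$ variable, which is the standard route to differentiation under the integral sign (exactly what is used in Folland's Theorem 2.27 cited by the authors). Fix $\beta \in \Tn$ and take any sequence $(h_n)$ with $h_n \to 0$, $h_n \ne 0$. Define
\begin{align*}
    F_n(\eta) := \frac{f(\beta + h_n, \eta) - f(\beta, \eta)}{h_n}.
\end{align*}
Then $F_n(\eta) \to \prt_\beta f(\beta, \eta)$ pointwise by definition of the partial derivative, and the goal becomes showing that $\int_\Tn F_n \to \int_\Tn \prt_\beta f(\beta, \cdot)$.

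To obtain an $L^1$ dominant, I would use the fundamental theorem of calculus in the first variable to write
\begin{align*}
    F_n(\eta)
        = \frac{1}{h_n} \int_\beta^{\beta + h_n}
            \prt_s f(s, \eta) \, ds,
\end{align*}
so that the hypothesis $\abs{\prt_\beta f(\beta, \cdot)} \le g \in L^1(\Tn)$, extended to hold in a neighborhood of $\beta$ (which is implicit in the statement, since we need the derivative to exist as a function on a neighborhood for differentiation in $\beta$ to be meaningful), gives $\abs{F_n(\eta)} \le g(\eta)$ uniformly in $n$. I prefer this FTC form over a pointwise mean value theorem because it sidesteps any measurable-selection issue for the intermediate point, though either works.

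With the pointwise limit and the uniform $L^1$-dominant in hand, the dominated convergence theorem yields
\begin{align*}
    \int_\Tn F_n(\eta) \, d\eta
        \longrightarrow
        \int_\Tn \prt_\beta f(\beta, \eta) \, d\eta
\end{align*}
as $n \to \iny$. The left-hand side is precisely the difference quotient of $\beta \mapsto \int_\Tn f(\beta, \eta) \, d \eta$ at $\beta$ with increment $h_n$. Since this holds along every sequence $h_n \to 0$, the function is differentiable at $\beta$ and its derivative equals the right-hand side, giving the claim. There is no real obstacle here: the lemma is a textbook application of dominated convergence, and the only point deserving care is the standard one of producing the integrable dominating function, which is handled by the FTC rewrite above.
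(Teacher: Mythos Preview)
Your argument is correct and is precisely the standard dominated-convergence proof underlying Folland's Theorem 2.27, which is all the paper invokes (it gives no independent proof, only the citation). There is nothing to add.
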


\cref{L:IBP} allows integration by parts in the presence of a controlled singularity: 
\begin{lemma}\label{L:IBP}
    As in \cref{R:FParameterization}, parameterize $\Tn$ by $\beta \in [-\pi, \pi]$ and define, for any $\eps > 0$, $\Tn_\eps := [-\pi, \pi] \setminus [-\eps, \eps]$.
    Suppose that $f, g \colon \Tn \to \R$ with
    \begin{align*}
        f \prt_\eta g, (\prt_\eta f) g \in L^1(\Tn), \,
        f, g \in H^1(\Tn_\eps) \text{ for all } \eps > 0,
            \text{ and }
            \lim_{\eta \to 0} f(\eta) g(\eta) = 0.
    \end{align*}
    Then
    \begin{align*}
        \int_\Tn (\prt_\eta f(\eta)) g(\eta) 
            &= - \int_\Tn f(\eta)  \prt_\eta g(\eta) .
    \end{align*}
\end{lemma}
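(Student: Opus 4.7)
\medskip

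\noindent\textbf{Proof proposal for \cref{L:IBP}.} The plan is to integrate by parts away from the singularity at $\eta = 0$, then pass to the limit. Specifically, I will work on the truncated domain $\Tn_\eps$, apply the standard $H^1$ integration-by-parts formula there, and then send $\eps \to 0^+$.

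First, I would identify $\Tn_\eps$ with the (disjoint) union of the intervals $[-\pi, -\eps]$ and $[\eps, \pi]$. Since $f, g \in H^1(\Tn_\eps)$ by hypothesis, both are absolutely continuous on each of these intervals, so the classical integration-by-parts formula gives
\begin{align*}
    \int_{\Tn_\eps} (\prt_\eta f) g \, d\eta
        &= \brac{f(\pi) g(\pi) - f(\eps) g(\eps)}
            + \brac{f(-\eps) g(-\eps) - f(-\pi) g(-\pi)} \\
        &\qquad
            - \int_{\Tn_\eps} f \prt_\eta g \, d\eta.
\end{align*}
By the periodicity inherent in parameterizing $\Tn$ by $[-\pi, \pi]$, the terms $f(\pi) g(\pi)$ and $f(-\pi) g(-\pi)$ cancel. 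The remaining boundary contribution $f(-\eps) g(-\eps) - f(\eps) g(\eps)$ vanishes as $\eps \to 0^+$ by the hypothesis $\lim_{\eta \to 0} f(\eta) g(\eta) = 0$.

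It then remains to justify passing to the limit in the two integrals over $\Tn_\eps$. For this, I would apply the dominated convergence theorem: the integrands $(\prt_\eta f) g$ and $f \prt_\eta g$ are, by hypothesis, in $L^1(\Tn)$, so their restrictions multiplied by $\CharFunc_{\Tn_\eps}$ converge pointwise a.e.\ and are dominated by their absolute values. Hence
\begin{align*}
    \lim_{\eps \to 0^+} \int_{\Tn_\eps} (\prt_\eta f) g \, d\eta
        &= \int_\Tn (\prt_\eta f) g \, d\eta, \quad
    \lim_{\eps \to 0^+} \int_{\Tn_\eps} f \prt_\eta g \, d\eta
        = \int_\Tn f \prt_\eta g \, d\eta,
\end{align*}
and the identity follows.

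The argument is essentially routine once the setup is in place; the only point that requires any care is the vanishing of the boundary contribution at $\eta = 0$, and this is built directly into the hypothesis. No single step stands out as a genuine obstacle, which is presumably why the lemma is relegated to the appendix as a utility result used to legitimize the several integrations by parts performed on expressions involving $G^\al_p(\bdelta_\beta(\eta))$ in \cref{S:LEstimates}.
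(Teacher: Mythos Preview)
Your proposal is correct and follows essentially the same approach as the paper's proof: integrate by parts on $\Tn_\eps$, use periodicity to cancel the boundary terms at $\pm\pi$, use the hypothesis $\lim_{\eta \to 0} f(\eta)g(\eta) = 0$ to kill the boundary terms at $\pm\eps$, and invoke the $L^1$ hypotheses (via dominated convergence) to pass to the limit in the integrals. The paper compresses this into a single displayed equation, but the content is identical to what you wrote out.
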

\begin{proof}
    Using our assumptions on $f$ and $g$,
    \begin{align*}
        \int_\Tn f \prt_\eta g \, d \eta
            &= \lim_{\eps \to 0} \int_{\Tn_\eps}
                f \prt_\eta g \, d \eta
            = -\lim_{\eps \to 0}
                \brac{\int_{\Tn_\eps}
                (\prt_\eta f) g \, d \eta
                +
                fg|_{-\eps}^\eps}
            = - \int_\Tn (\prt_\eta f) g \, d \eta.
            \qedhere
    \end{align*}
\end{proof}

We use \cref{L:IBP} through the following immediate corollary:
\begin{cor}\label{C:IBP}
    Suppose that $\abs{f(\eta)} \le C \abs{\eta}^r$, $\abs{\prt_\eta f(\eta)} \le C \abs{\eta}^{r - 1}$, $\abs{g(\eta)} \le C \abs{\eta}^p, \abs{\prt_\eta g(\eta)} \le C \abs{\eta}^q$ with
    $p + r > 0$, $q + r > -1$. Then
    \begin{align*}
        \int_\Tn \prt_\eta f(\eta) g(\eta) 
            &= - \int_\Tn f(\eta)  \prt_\eta g(\eta) .
    \end{align*}
\end{cor}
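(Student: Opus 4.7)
The plan is to deduce this corollary directly from \cref{L:IBP} by verifying that each of its three hypotheses follows from the pointwise growth bounds on $f$, $g$, $\prt_\eta f$, $\prt_\eta g$ together with the arithmetic conditions $p+r>0$ and $q+r>-1$. There is no substantive analytic content beyond checking exponents, so the proof should be short.

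First, I would verify the $L^1$ hypotheses. Combining the bounds on $f$ and $\prt_\eta g$, one has $|f(\eta)\,\prt_\eta g(\eta)| \le C|\eta|^{r+q}$, which is integrable on a neighborhood of $0$ precisely when $r+q > -1$; this is the second arithmetic assumption. Similarly, $|\prt_\eta f(\eta)\,g(\eta)| \le C|\eta|^{p+r-1}$ is integrable near $0$ iff $p+r > 0$, the first arithmetic assumption. Away from $0$ both products are bounded, so both lie in $L^1(\Tn)$.

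Second, I would note that $f, g \in H^1(\Tn_\eps)$ for each $\eps > 0$: on $\Tn_\eps$ the pointwise bounds force $f$, $g$, $\prt_\eta f$, $\prt_\eta g$ to be uniformly bounded (the exponents $r$, $r-1$, $p$, $q$ all produce $L^\iny$ bounds once $|\eta|\ge\eps$), hence in $L^2(\Tn_\eps)$, which gives the required $H^1$ regularity. Third, the boundary condition $\lim_{\eta\to 0}f(\eta)g(\eta)=0$ follows from $|f(\eta)g(\eta)|\le C|\eta|^{r+p}$ together with $r+p>0$.

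With all three hypotheses verified, \cref{L:IBP} applies verbatim and yields the desired identity
\[
\int_\Tn \prt_\eta f(\eta)\, g(\eta)\,d\eta = -\int_\Tn f(\eta)\,\prt_\eta g(\eta)\,d\eta.
\]
Since the argument is pure bookkeeping on exponents, I do not anticipate any obstacle; the only mild subtlety is keeping straight which pointwise bound is being invoked in each hypothesis, and confirming that the endpoint condition $r+p>0$ (strict, not $\ge 0$) is exactly what kills the boundary term in the limiting integration by parts performed inside \cref{L:IBP}.
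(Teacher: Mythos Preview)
Your proposal is correct and is exactly what the paper intends: the corollary is stated as an ``immediate'' consequence of \cref{L:IBP}, and your verification of the three hypotheses (integrability via the exponent conditions, $H^1$ regularity away from $0$, and vanishing of $fg$ at $0$ from $p+r>0$) is the full content of that reduction.
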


\begin{lemma}\label{L:DiffPowerOfAbs}
    Let $\vv \colon \R \to \R^2$ be differentiable. For any $a \in \R$,
    \begin{align*}
        \diff{}{r} \abs{\vv(r)}^a
            &= a \abs{\vv(r)}^{a - 2}
                \vv \cdot \diff{\vv}{r}.
    \end{align*}
\end{lemma}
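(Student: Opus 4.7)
The plan is to avoid differentiating $|\vv(r)|$ directly (which would require worrying about whether $|\vv(r)|$ is itself differentiable at points where it vanishes, and would introduce an absolute value inside a derivative). Instead, I would rewrite $|\vv(r)|^a$ as $(\vv(r) \cdot \vv(r))^{a/2}$, differentiate the inner product using the product rule, and then apply the chain rule to the outer power.

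Concretely, first I would compute
\begin{align*}
    \diff{}{r} \bigl(\vv(r) \cdot \vv(r)\bigr)
        &= \diff{\vv}{r} \cdot \vv(r) + \vv(r) \cdot \diff{\vv}{r}
        = 2 \vv(r) \cdot \diff{\vv}{r},
\end{align*}
which is valid wherever $\vv$ is differentiable, with no restriction on whether $\vv$ vanishes. Next, since $f(u) = u^{a/2}$ is differentiable on $(0, \infty)$ with $f'(u) = (a/2) u^{a/2 - 1}$, the chain rule gives, at any $r$ for which $\vv(r) \ne 0$,
\begin{align*}
    \diff{}{r} \abs{\vv(r)}^a
        &= \diff{}{r} \bigl(\vv(r) \cdot \vv(r)\bigr)^{a/2}
        = \frac{a}{2} \bigl(\vv(r) \cdot \vv(r)\bigr)^{a/2 - 1}
            \cdot 2 \vv(r) \cdot \diff{\vv}{r}
        = a \abs{\vv(r)}^{a - 2} \vv(r) \cdot \diff{\vv}{r},
\end{align*}
which is the claimed identity.

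There is no substantive obstacle to overcome here; the only minor subtlety is the status of the formula at points where $\vv(r) = 0$, in which case both $|\vv(r)|^{a-2}$ and $|\vv(r)|^a$ may fail to be defined or differentiable (depending on the sign of $a$ and $a-2$). In the applications of the lemma in the paper (for instance in the proof of \cref{P:FBoundOverTime}), the lemma is used under $\vv(r) \ne 0$ for all relevant $r$, so this is not a real issue, and it suffices to state and prove the identity pointwise on the set $\{r : \vv(r) \ne 0\}$.
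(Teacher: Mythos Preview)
Your proof is correct and takes essentially the same approach as the paper: rewrite $|\vv(r)|^a = (|\vv(r)|^2)^{a/2} = (\vv\cdot\vv)^{a/2}$ and apply the chain rule together with $\diff{}{r}(\vv\cdot\vv) = 2\vv\cdot\diff{\vv}{r}$. Your remark about the $\vv(r)=0$ case is a reasonable addition, though the paper does not comment on it.
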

\begin{proof}
    We have,
    \begin{align*}
        \diff{}{r} \abs{\vv(r)}^a
            &= \diff{}{r} (\abs{\vv(r)}^2)^{\frac{a}{2}}
            = \frac{a}{2} (\abs{\vv(r)}^2)^{\frac{a}{2} - 1}
                \diff{}{r} \abs{\vv}^2
            = \frac{a}{2} \abs{\vv(r)}^{a - 2}
                2 \vv \cdot \diff{\vv}{r}.
            \qedhere
    \end{align*}
\end{proof}

\cref{L:C2BoundFromAgmon} allows us to control \Holder norms up to $C^{1, 1}$ by the $H^3$ norm:
\begin{lemma}\label{L:C2BoundFromAgmon}
    Let $\bgamma$ be an $H^3$ path in $\R^2$. Then
    \begin{align*}
        \norm{\bgamma}_{L^\iny(\Tn)}
            &\le C \norm{\bgamma}_{H^1(\Tn)}, \\\
        \norm{\prt_\eta \bgamma(\eta)}_{L^\iny(\Tn)}
            &\le \norm{\bgamma}_{C^{0, 1}(\Tn)}
            \le C \norm{\bgamma}_{H^2(\Tn)}, \quad \\
        \norm{\prt_\eta^2 \bgamma(\eta)}_{L^\iny(\Tn)}
            &\le \norm{\bgamma}_{C^{1, 1}(\Tn)}
            \le C \norm{\bgamma}_{H^3(\Tn)}.
    \end{align*}
\end{lemma}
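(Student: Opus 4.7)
All three bounds are standard one-dimensional Sobolev embeddings on the disjoint union of circles $\Tn = \bigsqcup_{j=1}^n \T$. Since $\Tn$ has finitely many components, it suffices to establish the estimates on each copy of $\T$ individually. The plan is to prove the first inequality from the fundamental theorem of calculus combined with Cauchy--Schwarz, and then derive the second and third by applying the first to derivatives of $\bgamma$.

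\textbf{Step 1 (Sobolev embedding $H^1(\T) \hookrightarrow L^\iny(\T)$).} For any $\eta, \eta_0 \in \T$, the fundamental theorem of calculus gives
\[
    \abs{\bgamma(\eta)}^2 - \abs{\bgamma(\eta_0)}^2
        = \int_{\eta_0}^\eta 2 \bgamma(\xi) \cdot \prt_\xi \bgamma(\xi) \, d\xi.
\]
Averaging over $\eta_0 \in \T$ and applying Cauchy--Schwarz,
\[
    \abs{\bgamma(\eta)}^2
        \le C \pr{\norm{\bgamma}_{L^2}^2 + \norm{\bgamma}_{L^2}\norm{\prt_\eta \bgamma}_{L^2}}
        \le C \norm{\bgamma}_{H^1}^2,
\]
which, on taking the supremum over $\eta$, gives $\norm{\bgamma}_{L^\iny} \le C \norm{\bgamma}_{H^1}$.

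\textbf{Step 2 (The $C^{0,1}$ and $C^{1,1}$ bounds).} Applying Step 1 to $\prt_\eta \bgamma$ (which is $H^1$ whenever $\bgamma \in H^2$) yields $\norm{\prt_\eta \bgamma}_{L^\iny} \le C \norm{\prt_\eta \bgamma}_{H^1} \le C \norm{\bgamma}_{H^2}$. The mean value theorem then gives the Lipschitz bound
\[
    \frac{\abs{\bgamma(\eta_1) - \bgamma(\eta_2)}}{\abs{\eta_1 - \eta_2}}
        \le \norm{\prt_\eta \bgamma}_{L^\iny}
        \le C \norm{\bgamma}_{H^2},
\]
so $\norm{\bgamma}_{C^{0,1}} \le C \norm{\bgamma}_{H^2}$, which also dominates $\norm{\prt_\eta \bgamma}_{L^\iny}$. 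Similarly, applying Step 1 to $\prt_\eta^2 \bgamma$ gives $\norm{\prt_\eta^2 \bgamma}_{L^\iny} \le C \norm{\bgamma}_{H^3}$, and the mean value theorem applied to $\prt_\eta \bgamma$ gives $\norm{\bgamma}_{C^{1,1}} \le C \norm{\bgamma}_{H^3}$.

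There is no real obstacle in this proof: the only subtlety is that the spaces are defined on a finite disjoint union of circles rather than a single circle, but since the bounds are local in nature and the number of components is finite, this is absorbed into the constant $C$. The arguments above are routine 1D Sobolev calculus and do not rely on any of the structure specific to the problem at hand.
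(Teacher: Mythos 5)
Your proof is correct and establishes the same one-dimensional Sobolev embeddings the paper needs; the difference is only in the route. The paper obtains all three bounds at once by citing the 1D Agmon inequality in the form
$\norm{\prt_\eta^j \bgamma}_{L^\iny(\Tn)} \le C \norm{\prt_\eta^j \bgamma}_{L^2(\Tn)}^{1/2}\norm{\prt_\eta^{j+1} \bgamma}_{L^2(\Tn)}^{1/2}$, whereas you derive the embedding $H^1(\T)\hookrightarrow L^\iny(\T)$ from scratch via the fundamental theorem of calculus, averaging over $\eta_0$, and Cauchy--Schwarz, then apply it to $\prt_\eta\bgamma$ and $\prt_\eta^2\bgamma$. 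Unpacking the black box has a small side benefit: your estimate $\abs{\bgamma(\eta)}^2 \le C(\norm{\bgamma}_{L^2}^2 + \norm{\bgamma}_{L^2}\norm{\prt_\eta\bgamma}_{L^2})$ remains valid for constant paths, while the pure product form of Agmon's inequality degenerates when $\prt_\eta\bgamma\equiv 0$ (this can only matter at the $j=0$ level, and the paper's final conclusion $\le C\norm{\bgamma}_{H^{j+1}}$ is of course unaffected). One small imprecision to fix in your write-up: you invoke "the mean value theorem" for the $\R^2$-valued map $\bgamma$, but the scalar MVT does not literally hold componentwise in the form you state; what you are really using is the FTC estimate $\abs{\bgamma(\eta_1)-\bgamma(\eta_2)}=\abs{\int_{\eta_2}^{\eta_1}\prt_\eta\bgamma}\le\norm{\prt_\eta\bgamma}_{L^\iny}\abs{\eta_1-\eta_2}$, which gives exactly the same Lipschitz bound. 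With that phrasing tightened, the argument is complete.
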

\begin{proof}
    Apply the 1D Agmon's inequality in the form
    \begin{align*}
        \norm{\prt^j_\eta \bgamma}_{L^\iny(\Tn)}
            &\le C \norm{\prt^j_\eta \bgamma}_{L^2(\Tn)}
                    ^{\frac{1}{2}}
                \norm{\prt^{j + 1}_\eta \bgamma}_{L^2(\Tn)}
                    ^{\frac{1}{2}}
            \le C \norm{\bgamma}_{H^{j + 1}(\Tn)},
    \end{align*}
    where we note that the constant $C$ depends only on $\Tn$, and so is effectively absolute.
\end{proof}

\begin{lemma}\label{L:prtDeltaSobolevBound}
    Let $\bgamma$ be an $H^k$ path in $\R^2$, $k \ge 1$. For all $\beta, \eta \in \Tn$,
    \begin{align*}
        \abs{\prt_\eta^j \bdelta_\beta(\eta)}
            &\le C \norm{\bgamma}_{H^{j + 1}(\Tn)}
            \text{ for } 0 \le j \le k - 1, \\
       \abs{\prt_\eta^j \bdelta_\beta(\eta)}
            &\le C \norm{\bgamma}_{H^{j + 2}(\Tn)} \abs{\beta}
            \text{ for } 0 \le j \le k - 2, \\
        \norm{\prt_\eta^j \bdelta_\beta(\eta)}_{L^2_\eta(\Tn)}
            &\le C \norm{\bgamma}_{H^{j + 1}(\Tn)} \abs{\beta}
            \text{ for } 0 \le j \le k - 1, \\
        \norm{\prt_\eta^j \bdelta_\beta(\eta)}_{L^4_\eta}
            &\le C \norm{\bgamma}_{H^{j + 2}(\Tn)} \abs{\beta}
                \text{ for } 0 \le j \le k - 2.
    \end{align*}   
    Moreover,
	\begin{align*}
		\norm{\bdelta_\beta(\eta)}_{H^j_\eta(\Tn)}
            &\le C \norm{\bgamma}_{H^{j + 1}(\Tn)} \abs{\beta}
            \text{ for } 0 \le j \le k - 1, \\
        \norm{\bdelta_\beta(\eta)}_{H^{4, j}_\eta}
            &\le C \norm{\bgamma}_{H^{j + 2}(\Tn)} \abs{\beta}
                \text{ for } 0 \le j \le k - 2.
	\end{align*}
\end{lemma}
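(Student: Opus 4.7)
The plan is to reduce each of the six bounds to elementary consequences of the fundamental theorem of calculus combined with the 1D Sobolev embedding recorded in \cref{L:C2BoundFromAgmon}. The starting observation is that, for $0 \le j \le k-1$,
\begin{align*}
    \prt_\eta^j \bdelta_\beta(\eta)
        = \prt_\eta^j \bgamma(\eta) - \prt_\eta^j \bgamma(\eta - \beta)
        = \int_0^\beta \prt_\eta^{j+1} \bgamma(\eta - s + \beta) \, ds
            \quad \text{(when } j \le k-2\text{)},
\end{align*}
so each bound becomes a statement about a convolution-type integral of a single derivative of $\bgamma$, evaluated at a shifted point.

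First I would dispatch the two $L^\iny$ bounds. For the bound without $\abs{\beta}$, I would simply use the triangle inequality and then the 1D Sobolev embedding $H^1(\Tn) \hookrightarrow L^\iny(\Tn)$ from \cref{L:C2BoundFromAgmon}, giving $\abs{\prt_\eta^j\bdelta_\beta(\eta)} \le 2 \norm{\prt_\eta^j \bgamma}_{L^\iny} \le C \norm{\bgamma}_{H^{j+1}}$. For the bound with the $\abs{\beta}$ factor, I would insert the integral representation above and estimate the integrand in $L^\iny$, yielding $\abs{\prt_\eta^j \bdelta_\beta(\eta)} \le \abs{\beta} \norm{\prt_\eta^{j+1} \bgamma}_{L^\iny} \le C \norm{\bgamma}_{H^{j+2}} \abs{\beta}$.

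Next, for the $L^2_\eta$ bound I would square the integral representation and apply Cauchy--Schwarz in the $s$-integral followed by Fubini:
\begin{align*}
    \int_\Tn \abs{\prt_\eta^j\bdelta_\beta(\eta)}^2 \, d\eta
        &\le \abs{\beta} \int_\Tn \int_0^\beta
            \abs{\prt_\eta^{j+1} \bgamma(\eta - s + \beta)}^2 \, ds \, d\eta
        \le \abs{\beta}^2 \norm{\prt_\eta^{j+1}\bgamma}_{L^2}^2,
\end{align*}
using translation invariance of $L^2(\Tn)$ in the last step. Taking square roots delivers the claim. The $L^4_\eta$ bound I would then obtain by interpolation: $\norm{f}_{L^4}^2 \le \norm{f}_{L^\iny}\norm{f}_{L^2}$, combined with the $L^\iny$ bound of the second bullet (cost $\norm{\bgamma}_{H^{j+2}}\abs{\beta}$) and the $L^2$ bound just established (cost $\norm{\bgamma}_{H^{j+1}}\abs{\beta}$), which gives $\norm{\prt_\eta^j\bdelta_\beta}_{L^4_\eta} \le C \norm{\bgamma}_{H^{j+2}}\abs{\beta}$ after absorbing the lower Sobolev norm into the higher one.

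Finally, the two cumulative Sobolev bounds on $\norm{\bdelta_\beta}_{H^j_\eta}$ and $\norm{\bdelta_\beta}_{H^{4,j}_\eta}$ follow by summing the $L^2$ (respectively $L^4$) estimates just proved over derivative orders $0,\dots,j$, since each summand is already of the form $C\norm{\bgamma}_{H^{j+1}}\abs{\beta}$ or $C\norm{\bgamma}_{H^{j+2}}\abs{\beta}$ and the highest Sobolev index dominates the lower ones. There is no substantive obstacle here---every step is a standard one-variable Sobolev manipulation---so the bookkeeping is the only thing to be careful about, specifically ensuring that the index $j+1$ (resp.\ $j+2$) never exceeds $k$, which is exactly why the stated ranges of $j$ differ between the two families of bounds.
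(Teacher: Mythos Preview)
Your proof is correct and follows essentially the same route as the paper: integral representation plus 1D Sobolev embedding for the pointwise bounds, an integral inequality for the $L^2$ bound, interpolation for the $L^4$ bound, and summation for the cumulative norms. The only noteworthy difference is in the $L^4$ step: the paper uses the 1D Ladyzhenskaya inequality $\norm{f}_{L^4} \le C\norm{f}_{L^2}^{3/4}\norm{\prt_\eta f}_{L^2}^{1/4}$ applied to $f = \prt_\eta^j\bdelta_\beta$ (so it feeds in the $L^2$ bounds at orders $j$ and $j+1$), whereas you use the more elementary interpolation $\norm{f}_{L^4}^2 \le \norm{f}_{L^\iny}\norm{f}_{L^2}$ together with the already-established $L^\iny$ and $L^2$ bounds at the same order $j$. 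Both routes give the same right-hand side $C\norm{\bgamma}_{H^{j+2}}\abs{\beta}$ under the same constraint $j \le k-2$, and your version has the mild advantage of not invoking Ladyzhenskaya by name.
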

\begin{proof}
    Applying \cref{L:C2BoundFromAgmon},
    \begin{align*}
        \abs{\prt_\eta^j \bdelta_\beta(\eta)}
            &\le \abs{\prt_\eta^j \bgamma(\eta)}
                    + \abs{\prt_\eta^j \bgamma(\eta - \beta)}
            \le 2 \norm{\prt_\eta^j \bgamma}_{L^\iny}
            \le C \norm{\bgamma}_{H^{j + 1}}. \\
        \abs{\prt_\eta^j \bdelta_\beta(\eta)}
            &\le \abs{\prt_\eta^j \bgamma(\eta) - \prt_\eta^j \bgamma(\eta - \beta)}
            \le \norm{\prt_\eta^{j + 1}}_{L^\iny} \abs{\beta}
            \le  C \norm{\bgamma}_{H^{j + 2}} \abs{\beta}.
    \end{align*}
    
    For the third inequality, we adapt the argument on pages 2577-2578 of \cite{Gancedo2008}:
    \begin{align*}
        \prt_\eta^j \bdelta_\beta(\eta)
            &= \prt_\eta^j \bgamma(\eta)
                    - \prt_\eta^j \bgamma(\eta - \beta)
            = \int_0^1 \diff{}{s} \prt_\eta^j \bgamma
                    (\eta + (s - 1) \beta) \, ds \\
            &= \beta \int_0^1
                \prt_\eta^{j + 1} \bgamma
                (\eta + (s - 1) \beta) \, ds.
    \end{align*}
    Applying Minkowski's integral inequality for $L^p$, $p \in [1, \iny]$,
    \begin{align*}
        \norm{\prt_\eta^j \bdelta_\beta(\eta)}_{L^2_\eta}
             \le \abs{\beta} \int_0^1
                \norm{\prt_\eta^{j + 1} \bgamma(\eta + (s - 1) \beta)}_{L^p_\eta}
                    \, ds
             = \abs{\beta} \int_0^1
                \norm{\prt_\eta^{j + 1} \bgamma
                    (\eta)}_{L^p_\eta}
                    \, ds,
    \end{align*}
    which gives the third inequality when setting $p = 2$. Summing over $j$ gives the bound on $\norm{\bdelta_\beta(\eta)}_{H^j_\eta(\Tn)}$.

    For the final inequality, we use the 1D Ladyzhenskaya's inequality, to give
    \begin{align*}
        \norm{\prt_\eta^j \bdelta_\beta(\eta)}_{L^4_\eta}
            &\le C \norm{\prt_\eta^j \bdelta_\beta(\eta)}_{L^2_\eta}
                    ^{\frac{3}{4}}
                \norm{\prt_\eta^{j + 1} \bdelta_\beta(\eta)}_{L^2_\eta}
                    ^{\frac{1}{4}}
            \le C \norm{\bgamma}_{H^{j + 1}}^{\frac{3}{4}}
                    \abs{\beta}^{\frac{3}{4}}
                \norm{\bgamma}_{H^{j + 2}}^{\frac{1}{4}}
                    \abs{\beta}^{\frac{1}{4}} \\
            &\le C \norm{\bgamma}_{H^{j + 2}} \abs{\beta}.
    \end{align*}
    Summing over $j$ gives the bound on $\norm{\bdelta_\beta(\eta)}_{H^{4, j}_\eta}$.
\end{proof}

\begin{lemma}\label{L:ROCMollification}
    Let $f \in C^1(\Tn)$. For any $p \in [1, \iny]$,
    \begin{align*}
        \norm{\phi_\eps * f - f}_{L^p(\Tn)}
            \le 2 \pi \eps \norm{f}_{C^{0, 1}(\Tn)}
            \le 2 \pi \eps \norm{f}_{H^2(\Tn)}.
    \end{align*}
\end{lemma}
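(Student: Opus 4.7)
The plan is to reduce the estimate to a pointwise bound via the standard mollifier trick. First, using that $\int_\Tn \phi_\eps = 1$, I would rewrite
\begin{align*}
	\phi_\eps * f(\eta) - f(\eta)
		= \int_\Tn \phi_\eps(y)\bigl[f(\eta - y) - f(\eta)\bigr]\, dy,
\end{align*}
and then exploit $\supp \phi_\eps \subseteq [-\eps, \eps]$ together with Lipschitz continuity: for $\abs{y} \le \eps$,
\begin{align*}
	\abs{f(\eta - y) - f(\eta)}
		\le \norm{f}_{C^{0,1}(\Tn)} \abs{y}
		\le \eps \norm{f}_{C^{0,1}(\Tn)}.
\end{align*}
Combined with $\phi_\eps \ge 0$ and $\int_\Tn \phi_\eps = 1$, this produces the pointwise bound $\abs{\phi_\eps * f(\eta) - f(\eta)} \le \eps \norm{f}_{C^{0,1}(\Tn)}$.

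Second, I would pass from the pointwise bound to the $L^p$ bound. Taking $L^\iny$ directly gives the statement for $p = \iny$; for finite $p$, the standard comparison $\norm{g}_{L^p(\Tn)} \le \abs{\Tn}^{1/p} \norm{g}_{L^\iny(\Tn)}$ absorbs the measure of $\Tn$ (whose length is $2\pi$ per component) into the constant $2\pi$ stated in the lemma. This handles all $p \in [1, \iny]$ uniformly. The second inequality in the statement is then immediate from the one-dimensional Agmon/Sobolev embedding already recorded in \cref{L:C2BoundFromAgmon}, which gives $\norm{f}_{C^{0,1}(\Tn)} \le C \norm{f}_{H^2(\Tn)}$.

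I do not expect any substantive obstacle here—this is a textbook mollifier rate-of-convergence estimate whose only mild bookkeeping is tracking the constant. The $2\pi$ in the lemma arises naturally from the length of a single copy of $\T$ in the $L^\iny \to L^p$ step, and the proof should otherwise be entirely mechanical given the compact support and unit mass of $\phi_\eps$.
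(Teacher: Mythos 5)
Your proposal is correct and follows essentially the same route as the paper: write $\phi_\eps * f - f$ as $\int \phi_\eps(\beta)[f(\eta-\beta)-f(\eta)]\,d\beta$ using unit mass, invoke the Lipschitz bound on the increment over the $\eps$-sized support, and then pass from $L^\iny$ to $L^p$ by the finite measure of $\Tn$ (the paper reaches the same point via Minkowski's integral inequality rather than a pointwise bound first, but the content is identical). One small bookkeeping remark you already half-noticed: since $\Tn$ is a disjoint union of $n$ circles, the comparison $\norm{g}_{L^p(\Tn)} \le \abs{\Tn}^{1/p}\norm{g}_{L^\iny(\Tn)}$ actually gives $(2\pi n)^{1/p}$, not $2\pi$, so the stated constant is only literally correct for $n=1$; this is harmless here because the lemma is only ever applied with an unspecified constant.
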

\begin{proof}
    Because $\int_\Tn \phi_\eps = 1$,
    \begin{align*}
        &\norm{\phi_\eps * f(\eta) - f(\eta)}_{L^p_\eta(\Tn)}
            = \norm[\bigg]
                {
                    \int_\Tn (\phi_\eps(\beta) [f(\eta - \beta) - f(\eta)] \, d \beta
                }_{L^p_\eta(\Tn)} \\
            &\qquad
            = \norm[\bigg]
                {
                    \int_\Tn (\phi_1(\eps^{-1} \beta) [f(\eta - \beta) - f(\eta)] \, d \beta
                }_{L^p_\eta(\Tn)} \\
            &\qquad
            = \norm[\bigg]
                {
                    \int_\Tn (\phi_1(\beta) [f(\eta - \eps \beta) - f(\eta)] \, d \beta}
                _{L^p_\eta(\Tn)} \\
            &\qquad
            \le 
                \eps \int_\Tn (\phi_1(\beta) \abs{\beta}
                    \norm[\bigg]
                    {
                        \frac{f(\eta - \eps \beta) - f(\eta)}{\eps \beta}
                    }_{L^p_\eta(\Tn)}
                    \, d \beta \\
            &\qquad
            \le 
                \eps \int_\Tn (\phi_1(\beta) \abs{\beta}
                    \norm[\bigg]
                    {
                        \frac{f(\eta - \eps \beta) - f(\eta)}{\eps}
                    }_{L^\iny_\eta(\Tn)}
                    \, d \beta
            \le \eps \norm{\prt_\eta f}_{L^\iny(\Tn)}
                \int_\Tn \abs{\beta} \phi_1(\beta) \, d \beta \\
            &\qquad
            \le 2 \pi \eps \norm{f}_{C^{0, 1}(\Tn)}
                \int_\Tn \phi_1(\beta) \, d \beta
            = 2 \pi \eps \norm{f}_{C^{0, 1}(\Tn)}
            \le 2 \pi \eps \norm{f}_{H^2(\Tn)}
    \end{align*}
    by \cref{L:C2BoundFromAgmon}.
\end{proof}

\begin{lemma}[Osgood's lemma, integral form]\label{L:Osgood}
    Let $L \colon [0, \iny) \to [0, \iny)$ be measurable, $g \colon [0, \iny) \to [0, \iny)$ be integrable, and let $\mu \colon [0, \iny) \to [0, \iny)$ be a continuous non-decreasing function with $\mu(0) = 0$. If
    \begin{align*}
        L(t)
            \le a + \int_0^t g(s) \mu(L(s)) \, ds
    \end{align*}
    for all $t \ge 0$
    then
    \begin{align*}
        \int_a^{L(t)}\frac{ds}{\mu(s)} \le \int_0^t g(s) \, ds.
    \end{align*}
    If $a = 0$ and $\mu$ is an Osgood function, meaning that
    \begin{align*}
    	\int_0^1 \frac{ds}{\mu(s)} = \iny,
    \end{align*}
	then $L(t) \equiv 0$.
\end{lemma}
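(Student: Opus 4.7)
The plan is to handle $a>0$ first via a standard super-solution / comparison argument, and then to recover the $a=0$ Osgood conclusion by sending $a\to 0^+$ and using the divergence of the Osgood integral.

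For $a>0$, I would first introduce the absolutely continuous majorant
\[
    M(t) := a + \int_0^t g(s)\mu(L(s))\,ds,
\]
so that $M(0)=a$, $L(t)\le M(t)$, and (since $\mu$ is non-decreasing) $M'(t)=g(t)\mu(L(t))\le g(t)\mu(M(t))$ for almost every $t$. Because $\mu\ge 0$ and $g\ge 0$, $M$ is non-decreasing, and because $M(0)=a>0$ we have $M(t)\ge a$. Dividing by $\mu(M(t))$ (and observing that if $\mu(M(t))=0$ then necessarily $M'(t)=0$, so the inequality remains trivially valid in that degenerate case) and integrating from $0$ to $t$ gives, after the substitution $u=M(s)$ permitted by the monotonicity and absolute continuity of $M$,
\[
    \int_a^{M(t)} \frac{du}{\mu(u)}
        = \int_0^t \frac{M'(s)}{\mu(M(s))}\,ds
        \le \int_0^t g(s)\,ds.
\]
Since $L(t)\le M(t)$ and $1/\mu$ is positive on $(a,\infty)$, monotonicity of the integral yields
\[
    \int_a^{L(t)} \frac{du}{\mu(u)} \le \int_a^{M(t)} \frac{du}{\mu(u)} \le \int_0^t g(s)\,ds,
\]
which is the first assertion. (If $L(t)\le a$, the left-hand integral is non-positive and the inequality is automatic.)

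For the second assertion, suppose $a=0$, that $\mu$ is Osgood, and, for contradiction, that $L(t_0)>0$ for some $t_0>0$. For any $\eps>0$, the hypothesis gives $L(t)\le \eps+\int_0^t g(s)\mu(L(s))\,ds$, so by the first part,
\[
    \int_\eps^{L(t_0)} \frac{du}{\mu(u)} \le \int_0^{t_0} g(s)\,ds.
\]
The right-hand side is finite and independent of $\eps$, whereas letting $\eps\to 0^+$ forces the left-hand side to $\int_0^{L(t_0)} du/\mu(u)$, which equals $+\infty$ by the Osgood condition (the divergence of $\int_0^1 du/\mu(u)$ localises to any interval $(0,\delta)$). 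This contradiction forces $L(t_0)=0$, and since $t_0$ was arbitrary we conclude $L\equiv 0$.

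The only mildly delicate point is justifying the change of variables $u=M(s)$ when $\mu\circ M$ is allowed to vanish on a set of positive measure (which can happen when $\mu$ itself vanishes on a neighbourhood of $0$). I would circumvent this by splitting the integration into the (open) set where $\mu(M(s))>0$ and its complement: on the latter $M'=0$ almost everywhere so the contribution is zero on both sides, and on the former the standard change-of-variables formula for absolutely continuous monotone functions applies directly. No other step requires care beyond monotonicity of $\mu$ and non-negativity of $g$.
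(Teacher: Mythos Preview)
The paper states this lemma without proof; it is quoted as a standard utility result at the end of \cref{A:SomeLemmas}. Your argument is the standard one (introduce the absolutely continuous majorant $M$, differentiate, divide by $\mu(M)$, change variables, then handle $a=0$ by letting $\eps\to 0^+$ against the Osgood divergence) and is correct, including your treatment of the degenerate set where $\mu\circ M$ vanishes.
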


\section*{Acknowledgments}
DMA is grateful to the National Science Foundation for support through grant
DMS-2307638.

\bibliography{Refs} 

\def\cprime{$'$} \def\polhk#1{\setbox0=\hbox{#1}{\ooalign{\hidewidth
  \lower1.5ex\hbox{`}\hidewidth\crcr\unhbox0}}}
\begin{thebibliography}{10}

\bibitem{AfendikovMielke2005}
A.L. Afendikov and A.~Mielke.
\newblock Dynamical properties of spatially non-decaying 2{D} {N}avier-{S}tokes
  flows with {K}olmogorov forcing in an infinite strip.
\newblock {\em J. Math. Fluid Mech.}, 7(suppl. 1):S51--S67, 2005.

\bibitem{aiAvadanei}
A.~Ai and O.-N. Avadanei.
\newblock Well-posedness for the surface quasi-geostrophic front equation.
\newblock {\em Nonlinearity}, 37(5):Paper No. 055022, 41, 2024.

\bibitem{ambroseJMFM}
D.M. Ambrose.
\newblock The zero surface tension limit of two-dimensional interfacial {D}arcy
  flow.
\newblock {\em J. Math. Fluid Mech.}, 16(1):105--143, 2014.

\bibitem{AHK}
D.M. Ambrose, F.~Hadadifard, and J.P. Kelliher.
\newblock Contour dynamics and global regularity for periodic vortex patches
  and layers.
\newblock {\em SIAM J. Math. Anal.}, 56(2):2286--2311, 2024.

\bibitem{AKLL2015}
D.M. Ambrose, J.P. Kelliher, M.C. Lopes~Filho, and H.J. Nussenzveig~Lopes.
\newblock {S}erfati solutions to the 2{D} {E}uler equations on exterior
  domains.
\newblock {\em Journal of Differential Equations}, 259(9):4509--4560, 2015.

\bibitem{ConstantinBertozzi1993}
A.L. Bertozzi and P.~Constantin.
\newblock Global regularity for vortex patches.
\newblock {\em Comm. Math. Phys.}, 152(1):19--28, 1993.

\bibitem{BoyerFabrie2013}
F.~Boyer and P.~Fabrie.
\newblock {\em Mathematical tools for the study of the incompressible
  {N}avier-{S}tokes equations and related models}, volume 183 of {\em Applied
  Mathematical Sciences}.
\newblock Springer, New York, 2013.

\bibitem{CCCGW2012}
D.~Chae, P.~Constantin, D.~C\'{o}rdoba, F.~Gancedo, and J.~Wu.
\newblock Generalized surface quasi-geostrophic equations with singular
  velocities.
\newblock {\em Comm. Pure Appl. Math.}, 65(8):1037--1066, 2012.

\bibitem{CordobaCordobaGancedo2018}
A.~C\'{o}rdoba, D.~C\'{o}rdoba, and F.~Gancedo.
\newblock Uniqueness for {SQG} patch solutions.
\newblock {\em Trans. Amer. Math. Soc. Ser. B}, 5:1--31, 2018.

\bibitem{FeffermanRodrigo2011}
C.~Fefferman and J.L. Rodrigo.
\newblock Analytic sharp fronts for the surface quasi-geostrophic equation.
\newblock {\em Comm. Math. Phys.}, 303(1):261--288, 2011.

\bibitem{Folland}
G.B. Folland.
\newblock {\em Real analysis}.
\newblock Pure and Applied Mathematics (New York). John Wiley \& Sons Inc., New
  York, second edition, 1999.
\newblock Modern techniques and their applications, A Wiley-Interscience
  Publication.

\bibitem{GallaySlijepcevic2014}
T.~Gallay and S.~Slijep\v{c}evi\'{c}.
\newblock Energy bounds for the two-dimensional {N}avier-{S}tokes equations in
  an infinite cylinder.
\newblock {\em Comm. Partial Differential Equations}, 39(9):1741--1769, 2014.

\bibitem{GallaySlijepcevic2015}
T.~Gallay and S.~Slijep\v{c}evi\'{c}.
\newblock Uniform boundedness and long-time asymptotics for the two-dimensional
  {N}avier-{S}tokes equations in an infinite cylinder.
\newblock {\em J. Math. Fluid Mech.}, 17(1):23--46, 2015.

\bibitem{Gancedo2008}
F.~Gancedo.
\newblock Existence for the {$\alpha$}-patch model and the {QG} sharp front in
  {S}obolev spaces.
\newblock {\em Adv. Math.}, 217(6):2569--2598, 2008.

\bibitem{GancedoNguyenPatel2022}
F.~Gancedo, H.Q. Nguyen, and N.~Patel.
\newblock Well-posedness for {SQG} sharp fronts with unbounded curvature.
\newblock {\em Math. Models Methods Appl. Sci.}, 32(13):2551--2599, 2022.

\bibitem{GancedoPatel2021}
F.~Gancedo and N.~Patel.
\newblock On the local existence and blow-up for generalized {SQG} patches.
\newblock {\em Ann. PDE}, 7(1):Paper No. 4, 63, 2021.

\bibitem{HunterShu2018}
J.K. Hunter and J.~Shu.
\newblock Regularized and approximate equations for sharp fronts in the surface
  quasi-geostrophic equation and its generalizations.
\newblock {\em Nonlinearity}, 31(6):2480--2517, 2018.

\bibitem{HunterShuZhang2018}
J.K. Hunter, J.~Shu, and Q.~Zhang.
\newblock Local well-posedness of an approximate equation for {SQG} fronts.
\newblock {\em J. Math. Fluid Mech.}, 20(4):1967--1984, 2018.

\bibitem{HunterShuZhang2020Nonlinearity}
J.K. Hunter, J.~Shu, and Q.~Zhang.
\newblock Contour dynamics for surface quasi-geostrophic fronts.
\newblock {\em Nonlinearity}, 33(9):4699--4714, 2020.

\bibitem{hunterCMS}
J.K. Hunter, J.~Shu, and Q.~Zhang.
\newblock Two-front solutions of the {SQG} equation and its generalizations.
\newblock {\em Commun. Math. Sci.}, 18(6):1685--1741, 2020.

\bibitem{hunterPAA}
J.K. Hunter, J.~Shu, and Q.~Zhang.
\newblock Global solutions of a surface quasigeostrophic front equation.
\newblock {\em Pure Appl. Anal.}, 3(3):403--472, 2021.

\bibitem{hunterDCDS}
J.K. Hunter, J.~Shu, and Q.~Zhang.
\newblock Global solutions for a family of {GSQG} front equations.
\newblock {\em Discrete Contin. Dyn. Syst.}, 44(9):2598--2640, 2024.

\bibitem{MB2002}
A.J. Majda and A.L. Bertozzi.
\newblock {\em Vorticity and incompressible flow}, volume~27 of {\em Cambridge
  Texts in Applied Mathematics}.
\newblock Cambridge University Press, Cambridge, 2002.

\bibitem{pedlosky1987geophysical}
J.~Pedlosky.
\newblock {\em Geophysical fluid dynamics}.
\newblock Springer, {S}econd edition, 1987.

\bibitem{Resnick}
S.G. Resnick.
\newblock {\em Dynamical problems in non-linear advective partial differential
  equations}.
\newblock ProQuest LLC, Ann Arbor, MI, 1995.
\newblock Thesis (Ph.D.)--The University of Chicago.

\bibitem{Rodrigo2004}
J.L. Rodrigo.
\newblock The vortex patch problem for the surface quasi-geostrophic equation.
\newblock {\em Proc. Natl. Acad. Sci. USA}, 101(9):2684--2686, 2004.

\bibitem{Rodrigo2005}
J.L. Rodrigo.
\newblock On the evolution of sharp fronts for the quasi-geostrophic equation.
\newblock {\em Comm. Pure Appl. Math.}, 58(6):821--866, 2005.

\bibitem{Serfati1995Bounded}
P.~Serfati.
\newblock Solutions {$C^\infty$} en temps, {$n$}-{$\log$} {L}ipschitz
  born\'{e}es en espace et \'{e}quation d'{E}uler.
\newblock {\em C. R. Acad. Sci. Paris S\'{e}r. I Math.}, 320(5):555--558, 1995.

\bibitem{wu2D}
S.~Wu.
\newblock Well-posedness in {S}obolev spaces of the full water wave problem in
  {$2$}-{D}.
\newblock {\em Invent. Math.}, 130(1):39--72, 1997.

\end{thebibliography}

\bibliographystyle{plain}

\end{document}